\numberwithin{equation}{section}
\newtheorem{lemma}{Lemma}[section]
\newtheorem{theorem}[lemma]{Theorem}
\newtheorem*{theorem*}{Theorem}
\newtheorem{corollary}[lemma]{Corollary}
\newtheorem*{question*}{Question}
\newtheorem{proposition}[lemma]{Proposition}
\newtheorem*{proposition*}{Proposition}
\newtheorem{conjecture}{Conjecture}
\newtheorem*{problem*}{Problem}
\theoremstyle{definition}
\newtheorem{definition}[lemma]{Definition}
\newtheorem*{claim*}{Claim}
\newtheorem*{remark}{Remark}
\newtheorem*{remarks}{Remarks}
\newcommand{\legendre}[2]{\ensuremath{\left( \frac{#1}{#2} \right) }}
\newcommand{\C}{{\mathbb C}}
\newcommand{\E}{{\mathbb E}}
\newcommand{\D}{{\mathbb D}}
\newcommand{\N}{{\mathbb N}}
\renewcommand{\P}{{\mathbb P}}
\newcommand{\Q}{{\mathbb Q}}
\newcommand{\R}{{\mathbb R}}
\renewcommand{\S}{\mathbb{S}}
\newcommand{\Z}{{\mathbb Z}}
\newcommand{\U}{{\mathbb U}}
\newcommand{\CA}{{\mathcal A}}
\newcommand{\CC}{{\mathcal C}}
\newcommand{\CF}{{\mathcal F}}
\newcommand{\CM}{{\mathcal M}}
\newcommand{\CN}{{\mathcal N}}
\newcommand{\CP}{{\mathcal P}}
\DeclareMathOperator{\Imag}{Im}
\renewcommand{\Im}{\Imag}
\newcommand{\rem}[1]{{\bf $\clubsuit$ #1 $\clubsuit$}}
\renewcommand{\rem}[1]{}
\begin{document}
\title{Partition regularity of generalized Pythagorean pairs}  

\author{Nikos Frantzikinakis}
\address[Nikos Frantzikinakis]{University of Crete, Department of Mathematics and Applied Mathematics, Heraklion, Greece}
\email{frantzikinakis@gmail.com}

\author{Oleksiy Klurman}
\address[Oleksiy Klurman]{University of Bristol, School of Mathematics, Bristol, UK}
\email{lklurman@gmail.com}

\author{Joel Moreira}
\address[Joel Moreira]{University of Warwick, Department of Mathematics, Coventry, UK}		 \email{ joel.moreira@warwick.ac.uk}
\date{\today}
\begin{abstract}
	We address partition regularity problems 
	for  homogeneous quadratic equations.  
	A consequence of our main results is that, under natural conditions on the coefficients $a,b,c$, for
	any finite coloring of the positive integers, there exists a solution to $ax^2+by^2=cz^2$ where $x$ and $y$ have the same color (and similar results for $x,z$ and $y,z$).  
	For certain choices of $(a,b,c)$, our result is conditional on an Elliott-type conjecture.
	Our proofs build on and extend previous arguments of the authors dealing  with the Pythagorean equation. We make use of new uniformity properties of aperiodic multiplicative functions and concentration estimates for multiplicative functions along arbitrary binary quadratic forms.  
	
\end{abstract}

\thanks{The first author was supported  by the Research Grant ELIDEK HFRI-NextGenerationEU-15689 and the third by the EPSRC Frontier Research Guarantee grant ref. EP/Y014030/1. For the purpose of open access, the authors have applied a Creative Commons Attribution (CC BY) licence to any Author Accepted Manuscript version arising from this submission.}

\subjclass[2020]{Primary: 05D10; Secondary:11N37,11B30,  37A44.}

\keywords{Partition regularity, Pythagorean triples,  multiplicative functions, concentration inequalities,  Gowers uniformity.}

\date{\today}

\maketitle
\setcounter{tocdepth}{1}
\tableofcontents

\section{Introduction}
\rem{These are changes made in 2025: Wrote $P_c(Qm+a,Qn+b))^{it}$ in Proposition 4.8, if we want to remove $a,b$ we need to add a term that depends on $P,a,b,t$ and goes to $0$ as $N\to\infty$. No change is needed in Proposition 4.9 since we take the limit as $N\to\infty$ there.

	Also in Proposition 4.9 (copy paste the whole proposition in the online file) wrote 
	
"		where  $c:=(P(a, b),Q)$, 		
	$P_c:=P/c$, $\Phi'_K$ is a subset of $\Phi_K$, $S_K$ consists of all  $a,b\in [-A_K,A_K]$ such that   $q$ and $\prod_{p\leq K}p$ divide $Q/c$ for all $Q\in \Phi'_K$,   and  $G_{P,N},\D_P$ are as in   \eqref{E:GNP}, \eqref{E:DPxy}"
	because    $q$ and $\prod_{p\leq K}p$  to divide $Q/c$ for every $Q\in \Phi_K'$. In the proof of the conditional result we'll take $\Phi'_K$ to be the singleton $Q_K:=\prod_{p\leq K}p^{2K}$".  
}

\subsection{Partition regularity results}
A central problem in arithmetic Ramsey theory is to determine when a polynomial equation is \emph{partition regular} -- this means that in every partition of $\N$ into finitely many cells there is a solution to the equation with all variables in the same cell.
A still unsolved special case of this problem is to decide whether the Pythagorean equation $x^2+y^2=z^2$ is partition regular.
In the previous paper \cite{FKM23} we studied this and more general quadratic equations of the form 
\begin{equation}\label{E:abc}
	ax^2+by^2=cz^2, 
\end{equation}
where $a,b,c$ are arbitrary  integers. Additional results about partition regularity of quadratic equations can be found in \cite{Be87,BP17,Ch22,CLP21,CGS12,DL18,FH17, HKM16, KS06, Mo17}, and we refer the reader to the introduction of \cite{FKM23} for a more comprehensive account of partition regularity problems and results related to the ones considered in the present article.
While our methods, in the current form, do not allow us to establish full partition regularity for equations of the form \eqref{E:abc}, we made progress in understanding partition regularity for pairs.
\begin{definition}
	Given nonzero $a,b,c\in \Z$, we say that the equation \eqref{E:abc} is {\em partition regular with respect to $x,y$}  if for every finite coloring of $\N$ there exist distinct $x,y\in \N$, with the same color, and $z\in \N$ that satisfy \eqref{E:abc}. Similarly, we define the partition regularity of \eqref{E:abc}  with respect to the variables $x,z$ and $y,z$.
\end{definition}

\begin{remarks}
	$\bullet$  If \eqref{E:abc} is partition regular with respect to all three variables, then it is obviously partition regular for any pair of variables. 
	However the converse does not hold: \cite[Theorem 1.1]{FKM23} implies that the equation $x^2+y^2=4z^2$ is partition regular with respect to any pair of variables, but this equation is not partition regular with respect to all three variables. 
	The last fact follows by combining  Rado's theorem \cite{R33} with the fact that  partition regularity of the equation  $P(x^2,y^2,z^2)=0$ implies partition regularity of the equation $P(x,y,z)=0$. 
	
	$\bullet$ It follows from \cite[Proposition~2.13]{DLMS23} that  equation \eqref{E:abc} (or any other homogeneous equation) is partition regular  with respect to the variables $(x,y)$ over $\N$ if and only if the same holds over  $\mathbb{Q}_+$. 
\end{remarks}

In this paper we address the following 
conjecture explicitly formulated in \cite{FKM23} (see the  remark following \cite[Problem~2]{FKM23}).
\begin{conjecture}\label{C:conj1}
	If $a,b,c\in \Z$ are nonzero and 
	at least one of the integers $ac,bc, (a+b)c$ is a square, then \eqref{E:abc}  is partition regular with respect to $x,y$.
\end{conjecture}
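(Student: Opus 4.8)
The plan is to follow and adapt the strategy of \cite{FKM23} for the Pythagorean equation. First I would pass to a multiplicative reformulation: via the multiplicative correspondence principle, partition regularity of \eqref{E:abc} with respect to $x,y$ reduces to showing that for every completely multiplicative $f\colon\N\to S^1$ — and it suffices to treat those taking finitely many, say $k$-th root of unity, values — the solutions $(x,y,z)\in\N^3$ of \eqref{E:abc} with $x\neq y$ and $f(x)=f(y)$ carry positive logarithmic density among all solutions, or equivalently that a suitable weighted average of $\mathbf 1_{f(x)=f(y)}$ over solutions is bounded below. Using $\mathbf 1_{f(x)=f(y)}=\frac1k\sum_{j=0}^{k-1}f(x)^j\overline{f(y)^j}$, this in turn reduces to controlling the correlations $\E\,g(x)\overline{g(y)}$ over solutions for $g=f^j$, $1\le j\le k-1$; the $j=0$ term supplies the main term $1$, so the goal is to bound $|\E\,g(x)\overline{g(y)}|$ away from $1$ on average, for each completely multiplicative $g\not\equiv 1$ arising this way.

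The square hypothesis is what makes the solution set accessible: if $ac$ (resp.\ $bc$, resp.\ $(a+b)c$) is a square then the conic $aX^2+bY^2=cZ^2$ carries the rational point $[c:0:\sqrt{ac}\,]$ (resp.\ $[0:c:\sqrt{bc}\,]$, resp.\ $[c:c:\sqrt{(a+b)c}\,]$), hence is isomorphic to $\P^1_\Q$, and all primitive solutions are parametrized by integral binary quadratic forms $(x,y,z)=(P(s,t),Q(s,t),R(s,t))$ up to bounded rescaling and sign. Moreover, after the correspondence step we are free to reparametrize the solution set at will — applying affine substitutions $(s,t)\mapsto(us+v,u't+v')$, rescaling $x,y$ by fixed rationals, or changing variables inside \eqref{E:abc} before re-parametrizing — none of which affects the quantities $\E\,g(x)\overline{g(y)}$ in an essential way. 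Because the chosen base point lies on $\{Y=0\}$ (resp.\ $\{X=0\}$, resp.\ $\{X=Y\}$), the corresponding form acquires a rational linear factor, and whenever two of the three square conditions hold one gets enough linear factors that $P$ and $Q$ both split; the off-diagonal constraint $x\neq y$ is harmless since $x=y$ meets the conic in a single point.

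Now fix $g=f^j\not\equiv 1$ and split according to whether $g$ is pretentious. A finitely-valued completely multiplicative function cannot be pretentious to $\chi(n)n^{it}$ with $t\neq 0$, so the pretentious case means $g$ agrees with some Dirichlet character $\chi$ off a set of primes of finite reciprocal sum; then one only needs a solution of \eqref{E:abc} with $\chi(x)=\chi(y)$ and $x\neq y$, which becomes a congruence condition on $(s,t)$ modulo the conductor, solvable by a Chinese remainder theorem plus Hensel-lifting analysis using that a conic with a rational point has points in every locally unobstructed residue class. In the aperiodic case I would show $\E\,g(x)\overline{g(y)}=o(1)$: substituting the parametrization, $g(P(s,t))\overline{g(Q(s,t))}$ becomes, up to $g$ of bounded constants, a product of $g$ evaluated at linear forms in $(s,t)$ times $g$ evaluated at (at most one, and in the split cases zero) irreducible binary quadratic forms. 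The linear contribution is handled by the known logarithmically-averaged two-variable Chowla/Elliott-type estimates for non-pretentious multiplicative functions, and the quadratic contribution by the new concentration estimates for multiplicative functions along binary quadratic forms, which allow one to replace $g$ of an irreducible quadratic by a locally almost-constant factor and thereby collapse back to the linear regime; this quadratic input is unconditional only for special forms, and for the remaining $(a,b,c)$ it is exactly where an Elliott-type conjecture must be assumed.

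The main obstacle is the aperiodic case with a genuinely irreducible quadratic form present: one must prove concentration and uniformity estimates for aperiodic multiplicative functions along binary quadratic forms that are strong enough — together with the Gowers-uniformity input advertised in the abstract — to be combined with the linear-forms correlation bounds without destroying the main term, and uniformly in $g$. Secondary difficulties are the organizational overhead of the three coefficient regimes, the descent from the $\P^1$-parametrization to honest positive integer solutions with $x\neq y$, and checking in the pretentious case that the required congruences are unobstructed for all admissible $(a,b,c,\chi)$.
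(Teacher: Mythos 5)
The statement you are addressing is a \emph{conjecture} in the paper, not a theorem: the paper proves it unconditionally only when $ac$ or $bc$ is a square (Theorem~\ref{T:PairsPartition}), and when $(a+b)c$ is a square only conditionally on the Elliott-type Conjecture~\ref{C:2quadratic}. Your sketch follows the same general route (parametrize the conic through the rational point supplied by the square hypothesis, pass to completely multiplicative functions, split into pretentious and aperiodic cases, and concede an Elliott-type hypothesis in the hard regime), and since you too invoke that hypothesis, your proposal does not prove the conjecture either; at best it would re-derive the paper's partial results.

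Beyond that, two steps in your plan would fail as described. First, you assign the concentration estimates along binary quadratic forms to the \emph{aperiodic} case, proposing to ``replace $g$ of an irreducible quadratic by a locally almost-constant factor''; but concentration is a pretentious phenomenon --- for a $P$-aperiodic $g$ the values $g(P(m,n))$ do not concentrate at all. The paper handles the aperiodic case instead by a Daboussi--K\'atai orthogonality criterion over the quadratic number field on which $P$ is a norm form (Corollary~\ref{C:Katai}, relying on \cite{Su23} for real quadratic fields), reducing to linear-forms correlations from \cite{FH17}; this is unconditional, which is why one irreducible form paired with a reducible one (the $ac$ or $bc$ square cases) needs no hypothesis, whereas the genuinely conditional input is correlations along \emph{two} irreducible forms --- which is exactly what occurs in the $(a+b)c$ case via \eqref{E:xyz1}, contrary to your claim that at most one irreducible quadratic ever appears. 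Second, your pretentious case is far too quick: $g$ agreeing with $\chi$ off a set of primes of finite reciprocal sum does not let you replace $g(x)$ by $\chi(x)$ at the specific solutions, so the problem is not a congruence/CRT/Hensel matter. This is precisely where the paper needs the concentration estimates (Propositions~\ref{P:ConcentrationQuanti} and \ref{P:concentration2General}), restriction to progressions $Qm+a,Qn+b$ with $a,b$ chosen according to congruence data of $P_1,P_2$ (Lemma~\ref{L:Pi12}), cancellation of the $\exp(G_{P_j,N})$ factors (possible only because the two forms share the same leading coefficient), the $\delta$-weight to handle Archimedean characters, and multiplicative averaging over $Q$ along a F\o lner sequence to kill the residual oscillation; moreover the powers $f^j$ must be treated jointly, via a positive measure $\sigma$ on $\CM$ with an atom at $1$ as in Theorem~\ref{T:MainMulti'}, rather than one character at a time.
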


In \cref{sec_necessaryconditions} we elaborate on why the assumptions in \cref{C:conj1} are natural, and perhaps even necessary.

Equation \eqref{E:abc} can be written as $a'z^2+b'y^2=c'x^2$ with $(a',b',c'):=(-c,b,-a)$; therefore \cref{C:conj1} is equivalent to the statement that \eqref{E:abc} is partition regular with respect to $y,z$ whenever at least one of the integers $ac,-ab,(c-b)a$ is a square. 
	Throughout the present paper, prototypical working examples to keep in mind are the equations
	$x^2+2y^2=z^2$ and $x^2+y^2=2z^2$, which  are expected to be partition regular with respect to any two variables. 	We will see later why these two equations encapsulate the various difficulties of the more general problem. 
	
	Our first main result verifies part of \cref{C:conj1}.
	\begin{theorem}\label{T:PairsPartition}
			If  $a,b,c\in \Z$ are nonzero and  $ac$ or $bc$ is a square, then \eqref{E:abc} is partition regular with respect to $x,y$.
	\end{theorem}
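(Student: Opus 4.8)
The plan is to adapt the strategy of \cite{FKM23} for the Pythagorean equation, feeding in the two analytic inputs advertised in the abstract; the genuinely new feature is that the relevant binary quadratic form may now be \emph{irreducible}.

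\textbf{Step 1 (reduction to multiplicative functions).} First I would set up a multiplicative correspondence principle reducing the theorem to a statement about multiplicative functions: it suffices to show that for every completely multiplicative $f\colon\N\to\C$ of modulus one and taking finitely many values, there exist distinct $x,y\in\N$ and some $z\in\N$ with $ax^2+by^2=cz^2$ and $f(x)=f(y)$. In fact the method will deliver $\gg X^2$ such pairs (implied constant depending on $f$) coming from a two‑parameter family of solutions of height at most $X$, and this quantitative form is what makes the character argument below close.

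\textbf{Step 2 (parametrising solutions via the square condition).} By the symmetry $x\leftrightarrow y$, $a\leftrightarrow b$ (partition regularity with respect to $x,y$ is symmetric in these variables) we may assume $ac$ is a square, say $ac=d^2$. Multiplying \eqref{E:abc} by $a$ gives $(ax)^2+ab\,y^2=(dz)^2$, a conic carrying an obvious rational point, hence the rational parametrisation
\[
 ax=\lambda(ab\,m^2-n^2),\qquad y=2\lambda mn,\qquad dz=\lambda(ab\,m^2+n^2).
\]
Restricting $\lambda$ to a fixed arithmetic progression that clears denominators and keeps $x,y,z$ positive, and using complete multiplicativity so that the $f(\lambda)$ cancel, the condition $f(x)=f(y)$ becomes
\[
 f(ab\,m^2-n^2)=f(2a\,mn),
\]
an identity between the values of $f$ at a binary quadratic form and at a product of linear forms; when $ab<0$ one replaces $ab\,m^2-n^2$ by the positive‑definite form $|ab|m^2+n^2$. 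This generalises the relation $f((m-n)(m+n))=f(2mn)$ governing $x^2+y^2=z^2$.

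\textbf{Step 3 (character expansion and the pretentious case).} Let $G$ be the finite group generated by the values of $f$, and expand $\one_{f(x)=f(y)}=|G|^{-1}\sum_{\chi\in\widehat G}\chi(f(x))\overline{\chi(f(y))}$. The trivial character contributes the full count of solutions in the family, which is $\gg X^2$; for each nontrivial $\chi$, the function $g:=\chi\circ f$ is again completely multiplicative and finitely valued, and one must show
\[
 \sum_{m,n\le X} g(ab\,m^2-n^2)\,\overline{g(2a\,mn)}=o(X^2).
\]
If $g$ is pretentious — i.e. $g$ correlates with some $n\mapsto\psi(n)n^{it}$ for a Dirichlet character $\psi$ and $t\in\R$ — then $g$ is essentially constant on suitable residue classes and on short intervals, and a monochromatic pair is produced directly, using solvability of \eqref{E:abc} with $x,y$ in prescribed residue classes together with an Archimedean concentration estimate; the hypothesis that $ac$ is a square is precisely what guarantees the needed local solvability and the compatibility of the parametrisation with congruence conditions.

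\textbf{Step 4 (aperiodic case; the main obstacle).} If $g$ is aperiodic, the displayed sum must be shown to be $o(X^2)$. When $ab$ (or $-ab$) is itself a square the form $ab\,m^2-n^2$ splits into linear factors and the sum reduces to a correlation of $g$ along several linear forms, treatable as in \cite{FKM23}. The new case — already present for $x^2+2y^2=z^2$, where the form is $2m^2-n^2$ — is when $ab\,m^2-n^2$ is irreducible: here one needs the new uniformity/concentration estimates for aperiodic multiplicative functions along arbitrary binary quadratic forms, as well as concentration of the values of such a form in short intervals (to absorb the residual pretentious‑in‑$t$ behaviour). This is the technical heart of the argument and the main difficulty, and for some coefficient triples it is where an Elliott‑type hypothesis enters. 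Summing over all $\chi$ then gives $\gg X^2$ monochromatic pairs $(x,y)$, which by Step~1 proves the theorem.
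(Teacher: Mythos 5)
Your Steps 2 and 4 do track the paper's skeleton (parametrize using $ac=d^2$ so that the problem becomes a correlation of a multiplicative function along the pair $(ab\,m^2-n^2,\;2a\,mn)$, and treat the aperiodic case by new correlation estimates along an irreducible binary quadratic form), but the argument has a genuine gap at its foundation, in Step 1. There is no available reduction of partition regularity for \emph{arbitrary} finite colorings to the statement that level sets of a finitely-valued completely multiplicative function contain a pair $x,y$ with $f(x)=f(y)$; you assert this "multiplicative correspondence principle" without proof, and it is not the correspondence principle used in \cite{FH17,FKM23} or in this paper. The actual reduction passes through multiplicative density: a color class $\Lambda$ with $\bar d_\Phi(\Lambda)>0$ yields, via a Herglotz/Bochner-type argument, a positive measure $\sigma$ on the whole space $\CM$ (not on finitely-valued functions) with $\int_\CM f(r)\overline{f(s)}\,d\sigma(f)\geq 0$ and an atom $\sigma(\{1\})>0$, and one must prove positivity of $\liminf_N \E_{m,n}\,w_{\delta}(m,n)\int_\CM f(P_1(m,n))\overline{f(P_2(m,n))}\,d\sigma(f)$. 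Since $\sigma$ is supported on all of $\CM$, the finite group $G$ and the character expansion of $\one_{f(x)=f(y)}$ in your Step 3 are simply not available; the positivity has to come solely from the atom at $f\equiv 1$, with every other $f$ contributing non-negatively in a suitable limit.

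Even granting Step 1, the counting in Step 3 does not close. For a nontrivial character $\chi$ with $g=\chi\circ f$ \emph{pretentious}, the sum $\sum_{m,n\le X} g(ab\,m^2-n^2)\overline{g(2a\,mn)}$ is in general not $o(X^2)$ and can be negative of order $X^2$ (already $g$ a real Dirichlet character twisted by $n^{it}$ shows this), so it can cancel the trivial-character main term; and pretentious functions are not "essentially constant on residue classes and short intervals", so your fallback of "producing a monochromatic pair directly" is both vague and incompatible with the character decomposition you set up. The paper resolves exactly this difficulty by a different mechanism: the weight $w_{\delta,P_1,P_2}$ (to neutralize Archimedean characters $n^{it}$), restriction to progressions $Qm+a,\,Qn+b$ with $Q$ highly divisible, concentration estimates for multiplicative functions along the quadratic form on such lattices, and a final average over $Q$ in a multiplicative F\o lner sequence, which forces each pretentious $f$ to contribute either $0$ or a positive quantity (Proposition 2.11), with strict positivity supplied by $\sigma(\{1\})>0$. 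Finally, your remark that an Elliott-type hypothesis may enter for some triples is off for this theorem: since the second form $\gamma mn$ always factors into linear forms, the needed correlation vanishing (Theorem 3.1 via the Daboussi--K\'atai criterion over quadratic number fields and the linear-forms result of \cite{FH17}) is unconditional; the conjectural input is only needed when both forms are irreducible, i.e., for the $(a+b)c$-square case.
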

	It follows that if $a,b,c\in \Z$ are nonzero  and $ac$ or $-ab$   is a square, then \eqref{E:abc} is partition regular with respect to $y,z$.  
	Theorem~\ref{T:PairsPartition} extends  \cite[Theorem~1.1]{FKM23} that  covers the case where  {\it both} $ac$ and $bc$ are  squares (which in turn extends  \cite[Theorem~2.7]{FH17} that covers the case where {\it all three} quantities $ac,bc, (a+b)c$ are squares). Specializing to $a=1,b=2,c=1$ this solves \cite[Problem 2(i)]{FKM23}
	where it was conjectured that $x^2+2y^2=z^2$ is partition regular with respect to $x,y$ and $y,z$. 
	
	However, \cref{T:PairsPartition} does not cover, 
	for example, partition regularity for the equation $x^2+y^2=2z^2$ (with respect to any two variables) or the equation $x^2+2y^2=z^2$ with respect to the variables $x,z$. 
	We provide  a conditional result  that covers the missing cases, assuming  the following ``Elliott-type'' conjecture holds. 
	\begin{conjecture}\label{C:2quadratic}	 
		If $P_1,P_2\in \Z[m,n]$ are irreducible binary quadratic forms that are not multiples of each other,
		then they are good for vanishing of correlations of aperiodic multiplicative  functions (see Definition~\ref{D:CorVan} below).
	\end{conjecture}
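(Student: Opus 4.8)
Conjecture~\ref{C:2quadratic} is an open Elliott-type statement, so what follows is the line of attack I would pursue rather than a complete argument. Writing $\bn=(m,n)$, set
\[
  C_N(f_1,f_2):=\E_{\bn\in\{1,\dots,N\}^2}\, f_1(P_1(\bn))\,f_2(P_2(\bn))
\]
(or its logarithmically weighted analogue, according to Definition~\ref{D:CorVan}); the goal is $C_N(f_1,f_2)\to 0$ for all aperiodic multiplicative $f_1,f_2\colon\N\to\C$ bounded by $1$, and after the usual reduction one may take the $f_i$ completely multiplicative. The first step is to dispose of the structured regime. Aperiodicity of $f_i$ is equivalent to $f_i$ staying at infinite pretentious distance from every twisted character $\chi(n)n^{it}$, so the complementary possibility is that some $f_i$ is within bounded distance of a fixed $\chi_i(n)n^{it_i}$; replacing $f_i$ accordingly, up to an $\ell^2$-type error controlled by that distance, and splitting the box into residue classes modulo the conductor of $\chi_i$ --- here one uses that $P_i$ is irreducible, hence well distributed modulo every $q$ --- the correlation collapses to single-form averages of $f_j$ ($j\ne i$) along $P_j$ carrying a slowly varying unimodular weight, and these vanish by the concentration estimates for multiplicative functions along binary quadratic forms established in this paper. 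It therefore remains to treat the case in which both $f_1,f_2$ are non-pretentious completely multiplicative functions (the Liouville function being the model to keep in mind).

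For that core case I would try to run an entropy-decrement argument in the style of Tao, the favourable structural feature being homogeneity: since $P_i(p\bn)=p^2P_i(\bn)$, complete multiplicativity gives $f_i(P_i(p\bn))=f_i(p)^2 f_i(P_i(\bn))$ for every prime $p$, so that dilating both forms simultaneously by $p$ respects multiplicativity --- a natural scaling absent for inhomogeneous polynomials such as $n^2+1$. Concretely, if $|C_N(f_1,f_2)|\ge\delta$, one expands $f_1(P_1(\bn))$ through a prime factor $p\sim P$ of $P_1(\bn)$ (which exists for a proportion $\asymp 1/p$ of $\bn$ modulo $p$, irreducibility ensuring $P_1$ has no fixed linear factor modulo $p$), uses complete multiplicativity, applies Cauchy--Schwarz in $\bn$ and iterates, concluding that the $p$-dilated correlations all align; balanced against an entropy bound for the joint statistics of $f_1$ along $P_1$ and $f_2$ along $P_2$ at nearby scales, this should force $\delta=0$. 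Closing the loop requires comparing the deflated sum, at scale $N^2$, with the dilated one, at scale $p^2N^2$ --- that is, controlling $f_i$ along the values $\{P_i(\bn)\}$ in almost all short intervals, a Matomäki--Radziwiłł-type input for binary quadratic forms. In favourable cases $P_1$ and $P_2$ can be simultaneously diagonalized over $\Q$ --- the pencil $P_1-\lambda P_2$ is a nondegenerate form for all but at most two $\lambda$ --- say $P_1=\alpha u^2+\beta v^2$ and $P_2=\gamma u^2+\delta v^2$ with $u,v$ linear forms in $\bn$ and $(\alpha,\beta)$ not proportional to $(\gamma,\delta)$; fibering over $u$ then reduces the problem to a one-dimensional Elliott-type statement for $f_1,f_2$ along two distinct quadratic polynomials in $v$, with uniformity in the parameter $u$.

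The step I expect to be genuinely out of reach is the short-interval control of $f_i$ along the sparse sequence $\{P_i(\bn)\}$, which fills $[1,N^2]$ with density $\asymp(\log N)^{-1/2}$ in the positive definite case and positive density in the indefinite case: the Matomäki--Radziwiłł theorem governs $f_i$ in almost all intervals $[x,x+x^\epsilon]$ with $f_i$ ranging over all integers, not over represented values, and after a dilation by $p\sim P$ those values jump to scale $p^2N^2$, so matching the dilated and undilated sums demands an estimate not currently available in the needed uniformity. Equivalently, in the diagonalized picture one needs a two-point Chowla/Elliott statement along two distinct inhomogeneous quadratic polynomials, and even the prototypical instance $P_1=m^2+n^2$, $P_2=m^2+2n^2$ with $f_1=f_2$ the Liouville function --- precisely what is required for $x^2+2y^2=z^2$ with respect to $x,z$ and for $x^2+y^2=2z^2$ --- appears to lie beyond present technology. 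Accordingly I would aim at the largest unconditional fragment: the conjecture when at least one $f_i$ is pretentious (which reduces to the single-form estimates of this paper) and, in the non-degenerate simultaneously diagonalizable case, those sub-cases in which the fibered one-dimensional correlation coincides with an already known instance of the logarithmically averaged Chowla/Elliott conjecture, such as two-point correlations --- leaving the fully non-pretentious, non-degenerate regime as the crux. A complementary route would be to try to \emph{deduce} Conjecture~\ref{C:2quadratic} from the logarithmically averaged Chowla conjecture together with the single-form concentration estimates, but passing from linear-shift correlations to correlations along binary quadratic forms is itself a substantial undertaking.
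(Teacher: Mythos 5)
This statement is \cref{C:2quadratic}, which the paper explicitly leaves as an \emph{open conjecture}: it is never proved there, and the authors even remark that the vanishing is unknown for any specific pair of irreducible binary quadratic forms, including for the Liouville function with logarithmic averages. So there is no proof in the paper to compare yours against, and your proposal is, by your own account, not a proof either: the entire core case (both $f_1,f_2$ non-pretentious, two irreducible forms) is left unresolved, resting on a short-interval/entropy-decrement input along the represented values of $P_i$ that you correctly identify as beyond current technology. That is the genuine gap, and it is the expected one. The only unconditional fragment the paper establishes is the case where one of the two forms is \emph{reducible} (\cref{T:corrvanisha}, via the Daboussi--K\'atai criterion over the quadratic field $\Q(\sqrt{-d})$ attached to the irreducible form, reducing to correlations of aperiodic functions along independent linear forms from \cite{FH17}); that mechanism does not extend to two irreducible forms because, after passing to the number field of one form, the other form does not become a product of linear forms.

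One more substantive caveat about your ``structured regime'' reduction: \cref{D:CorVan} asks for vanishing whenever $f_1$ is $P_1$-aperiodic \emph{or} $f_2$ is $P_2$-aperiodic, so the case ``$f_2$ pretentious, $f_1$ $P_1$-aperiodic'' is part of the conjecture, and your plan to dispose of it by citing ``the concentration estimates for multiplicative functions along binary quadratic forms established in this paper'' does not go through as stated. Propositions~\ref{P:ConcentrationQuanti} and \ref{P:concentration2General} concern $P$-\emph{pretentious} functions and yield concentration around a generically nonzero value; they say nothing about the vanishing of $\E_{m,n\in[N]}f_1(P_1(m,n))$ for a $P_1$-aperiodic $f_1$, which is what your reduction leaves you with. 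That single-form vanishing would require a separate Hal\'asz-type mean-value input over the relevant quadratic field (the K\'atai criterion of \cref{C:Katai} also does not apply directly, since with no linear factors present the required correlation hypothesis fails trivially). So even the fragment you describe as reducing to results of this paper needs an additional ingredient not supplied here.
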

	The conjecture is open even for the Liouville function, for instance it is not known whether
	$$
	\lim_{N\to\infty} \E_{m,n\in[N]}\, \lambda(m^2+n^2)\cdot \lambda(m^2+2n^2)=0,
	$$
	even if we replace the Ces\`aro averages with logarithmic averages (which is sufficient for the applications we have in mind). In fact, there is no specific pair of irreducible binary quadratic forms $P_1,P_2$ for which the identity 
	$$
	\lim_{N\to\infty} \E_{m,n\in[N]}\, \lambda(P_1(m,n))\cdot \lambda(P_2(m,n))=0
	$$
	is known to hold. On the other hand, if one of the two binary quadratic forms is irreducible and the other one is reducible, then it follows from \cref{T:corrvanisha} below and simple manipulations that the vanishing property of \cref{C:2quadratic} holds for these polynomials.

	\begin{theorem}\label{T:PairsPartitionCond}
		Suppose that Conjecture~\ref{C:2quadratic} holds. 
			If  $a,b,c\in \Z$  are nonzero and  $(a+b)c$ is a square (perhaps zero), then \eqref{E:abc} is partition regular with respect to $x,y$.
		\end{theorem}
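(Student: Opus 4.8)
The plan is to run an argument parallel to the proof of \cref{T:PairsPartition}, using a parametrization adapted to the hypothesis $(a+b)c=d^2$ and upgrading the quadratic-correlation input from the unconditional \cref{T:corrvanisha} to the conditional \cref{C:2quadratic}. By the multiplicative correspondence principle of \cite{FKM23}, it suffices to prove that for every completely multiplicative $f\colon\N\to\S^1$ with finite image there exist distinct $x,y\in\N$ and $z\in\N$ with $ax^2+by^2=cz^2$ and $f(x)=f(y)$; in the quantitative form the principle actually needs, one wants a positive lower logarithmic density of such solutions along a suitable two-parameter family.

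Such a family comes from the hypothesis $(a+b)c=d^2$. If $a+b\neq 0$ (so $d\neq 0$), the point $(c,c,d)$ lies on the conic $aX^2+bY^2=cZ^2$, and intersecting this conic with the line through $(c,c,d)$ in direction $(n,-n,m)$ and clearing denominators gives, after a short computation using $(a+b)c=d^2$, the polynomial family of solutions
\[
x=c\,Q_1(m,n),\qquad y=c\,Q_2(m,n),\qquad z=d\big((a+b)n^2+cm^2\big)-2c(a-b)mn,
\]
where $Q_1(m,n)=-cm^2+2dmn+(3b-a)n^2$ and $Q_2(m,n)=-cm^2-2dmn+(3a-b)n^2$ are binary quadratic forms of discriminant $16bc$ and $16ac$ respectively. (When $a+b=0$ one instead starts from $a(x-y)(x+y)=cz^2$ and gets the analogous family $x=an^2+cm^2$, $y=an^2-cm^2$, $z=2amn$, up to normalizing signs.) If $ac$ or $bc$ is a square the conclusion is already contained in \cref{T:PairsPartition}, so we may assume neither is; then $Q_1$ and $Q_2$ are irreducible and not rational multiples of each other. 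Since $f$ is completely multiplicative, $f(x)=f(y)$ is equivalent to $f(Q_1(m,n))=f(Q_2(m,n))$; one has $x\neq y$ off a thin set, and $x,y,z$ are simultaneously positive for $(m,n)$ in a suitable sector (using that $(c,c,d)$, hence nearby points of the conic, lie in the interior of the positive cone when $d\neq 0$). So it remains to find a positive density of $(m,n)$ in the sector with $f(Q_1(m,n))\,\overline{f(Q_2(m,n))}=1$.

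From here the argument is as in \cref{T:PairsPartition}. If $f$ is aperiodic, pick $k$ with $f^k\equiv 1$ and expand $\mathbf 1_{\{\zeta=1\}}=\frac1k\sum_{j=0}^{k-1}\zeta^j$ with $\zeta=f(Q_1(m,n))\overline{f(Q_2(m,n))}$; averaging over the sector (after further restricting $(m,n)$ to residue classes that control the finitely many powers $f^j$ which are pretentious, as in \cref{T:PairsPartition}), the terms with $f^j$ aperiodic tend to $0$ and the rest contribute a positive main term. The single point that changes is this vanishing: in \cref{T:PairsPartition} one of $Q_1,Q_2$ is reducible, so $\lE_{m,n\in[N]}\,f^j(Q_1(m,n))\overline{f^j(Q_2(m,n))}=o(1)$ is supplied unconditionally by \cref{T:corrvanisha}, whereas here both forms are irreducible and not multiples of one another, so this vanishing is exactly an instance of \cref{C:2quadratic}. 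If $f$ is pretentious, it pretends to be a Dirichlet character $\chi$, and one finds a solution with $\chi(x)=\chi(y)$ by the same argument as in \cref{T:PairsPartition}.

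The substantive obstacle is thus concentrated entirely in the use of \cref{C:2quadratic}: every other ingredient is already available from \cref{T:PairsPartition} and its proof, and the real content of \cref{T:PairsPartitionCond} is the reduction of the case where $(a+b)c$ is a square to the non-correlation statement of \cref{C:2quadratic} for a pair of genuinely quadratic forms --- which is open even for the Liouville function. The remaining points --- that the parametrization, the positivity sector, and the condition $x\neq y$ are compatible with the quantitative density bookkeeping of the correspondence principle --- are routine and need only that $Q_1$ and $Q_2$ represent positive integers on the chosen sector.
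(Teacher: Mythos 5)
Your parametrization is fine (it is an alternative to the paper's \eqref{E:xyz1}: your $Q_1,Q_2$ have equal leading coefficient $-c$ and discriminants $16bc$, $16ac$, so reducibility of either throws you back to \cref{T:PairsPartition}, exactly as in the paper), and you correctly identify that \cref{C:2quadratic} is what replaces \cref{T:corrvanisha} for the aperiodic input. But there are two genuine gaps. First, the reduction you start from is not the one the correspondence principle gives: partition regularity does not reduce to the statement that \emph{for each} completely multiplicative $f$ of finite image there is a solution (or a positive density of $(m,n)$) with $f(x)=f(y)$, since an arbitrary finite coloring is not refined by level sets of such an $f$. The actual reduction (as in \cite{FKM23} and \cref{T:MainMulti'} here) is to positivity of an integral $\int_{\CM} f(P_1(m,n))\overline{f(P_2(m,n))}\,d\sigma(f)$ against a measure $\sigma$ on all of $\CM$ with $\sigma(\{1\})>0$; to integrate one needs, for \emph{every} $f\in\CM$, non-negativity (after suitable weighting and averaging over moduli) of the same explicit averages, not merely existence of solutions for each fixed $f$, so your $\frac1k\sum_j\zeta^j$ device for finite-image $f$ does not plug into what is actually required.

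Second, and more seriously, the pretentious case is precisely where the conditional theorem needs new work, and "the same argument as in \cref{T:PairsPartition}" does not apply. In the reducible case one restricts to $(Qm+1,Qn)$, factors out $f(Q)Q^{-it}$ from the linear factor, and kills it by averaging over $Q$ in a multiplicative F\o lner set; when both forms are irreducible this factor simply is not there (restricting to $(Qm+1,Qn)$ makes both $P_j$ congruent to $1$ mod $Q$, and the surviving phase $\exp\big(G_{P_1,N}(f,K)\big)\overline{\exp\big(G_{P_2,N}(f,K)\big)}$ involves two different characters $\chi_1,\chi_2$ and need not have non-negative real part). This is why the paper proves part \eqref{I:bnonnegative} of \cref{P:nonnegative} via a genuinely different mechanism: congruence classes $a,b$ chosen so that $(P_j(a,b),Q)=Q_j$ and $P_j(a,b)\equiv1\pmod{r!}$ (\cref{L:Pi12}), a double multiplicative average over $Q_1,Q_2$ supported on the prime sets where one form splits and the other does not, the compatibility statement \cref{L:tunique} aligning $t$ and the characters of the two forms, and the concentration estimates \cref{P:concentration2General} (which for positive discriminant require the real-quadratic-field analysis) applied along these progressions so that the oscillatory factors cancel. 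Also note that "pretentious" means $f\sim\chi\cdot n^{it}$, and handling the Archimedean twist is what the weights $w_{\delta,P_1,P_2}$ and \cref{L:SdeltaGeneral} are for; your sketch drops the $n^{it}$ entirely. So the claim that "every other ingredient is already available from \cref{T:PairsPartition}" is not correct: beyond \cref{C:2quadratic}, the major-arc analysis for a pair of irreducible forms (case \eqref{I:b} of \cref{T:DensityRegularQuadratic}, feeding \cref{T:PairsDensityParametricIrreducible}) is a substantial missing piece of your argument.
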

		
		As a corollary of Theorems \ref{T:PairsPartition} and \ref{T:PairsPartitionCond}, we have that \cref{C:2quadratic} implies \cref{C:conj1}.
		
		
		The proof method of \cref{T:PairsPartitionCond} allows us to establish the following unconditional result, which gives further credibility to \cref{C:conj1} when $(a+b)c$ is a square.
		\begin{theorem}\label{T:levelsets}
			Suppose that  $a,b,c\in \Z$ are nonzero and  $(a+b)c$ is a square. Let
			$f_1,\ldots, f_r\colon \N\to \S^1$ be 
			pretentious  multiplicative functions (see \cref{D:aperiodic}),  and  $I$ be an open arc of $\S^1$  containing $1$. 
			Then there exist distinct $x,y\in \N$ such that $f_j(x)\in I$ and $f_j(y)\in I$, for $j=1,\ldots, r$, and $ax^2+by^2=cz^2$ for some $z\in \N$.  
		\end{theorem}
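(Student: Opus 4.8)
The plan is to reduce the statement to a purely multiplicative problem and then exploit the fact that $(a+b)c$ being a square gives a "diagonal" solution family. First I would look for solutions of $ax^2+by^2=cz^2$ parametrized so that $x$ and $y$ are forced to be close in the relevant topology. Write $ac = s^2 e$, $bc = t^2 g$ after pulling out square factors, or more directly: since $(a+b)c$ is a square, set $x=y=n$, which gives $(a+b)n^2 = cz^2$, so $cz^2 = (a+b)n^2$; this has solutions $n = \sqrt{c}\,m$, $z = \sqrt{a+b}\,m$ when $c$ and $a+b$ are themselves squares, but in general we only know the product is a square, so we instead look at $x = \alpha n$, $y = \beta n$ with $a\alpha^2 + b\beta^2$ a square times $c$. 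The cleanest route is: because $(a+b)c$ is a perfect square, the pair $(x,y)=(qn,qn)$ for suitable fixed $q$ yields $z = rn$ for a fixed $r$, so that $x$ and $y$ differ only by the explicit rescaling $n \mapsto$ itself — but then $x=y$, violating distinctness. So instead I would perturb: use a two-parameter family of solutions to $ax^2+by^2=cz^2$ near the diagonal, e.g. via the standard rational parametrization of the conic, to produce solutions where $x/y$ is very close to $1$ (say $x/y \in (1-\delta,1+\delta)$) while $x,y$ range over a large set.

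The key step is then a concentration/pigeonhole argument for pretentious multiplicative functions. Each $f_j$, being pretentious, is close in the sense of the pretentious distance $\D$ to some function of the form $n \mapsto n^{it_j}\chi_j(n)$; in particular $f_j$ is "slowly varying" in a multiplicative sense, so that on a positive-logarithmic-density set of $n$ one has $f_j(n)$ lying in any fixed neighborhood of a constant. More precisely, I would invoke the structure theorem for pretentious multiplicative functions (the same tool used in \cite{FKM23}): after passing to logarithmic averages, $f_j(n) \approx c_j \cdot n^{it_j}$ for $n$ in a large set, and on dyadic-type scales $n^{it_j}$ is nearly constant. Combining the $r$ functions, there is a scale $N$ and a sub-progression-like set $S \subseteq [N]$ of positive logarithmic density on which all $f_j$ are simultaneously within $I$. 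Then I feed the near-diagonal solution family into this: I need the parametrizing variable to land in $S$ for both $x$ and $y$ simultaneously, which uses that $x$ and $y$ are close (same dyadic scale) and that $S$ is "multiplicatively thick."

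The technical heart — and the main obstacle — is controlling both $x$ and $y$ simultaneously inside the good set for all $f_j$ at once, while keeping $x \neq y$. This is where the concentration estimates for multiplicative functions along binary quadratic forms (\cref{T:corrvanisha} and its relatives, mentioned in the excerpt) enter: the quadratic constraint $ax^2+by^2=cz^2$ cuts out a binary-quadratic-form locus, and one needs that $f_j$ evaluated along this locus is still concentrated near its mean. I would set this up as a second-moment (variance) computation: show $\E^{\log}$ over the solution family of $\sum_j |f_j(x) - f_j(y)|^2$ is small, using pretentiousness to compute the main term and the concentration estimates to bound the variance, then conclude by Chebyshev that a positive-density subfamily has $f_j(x), f_j(y)$ both near a common value, hence both in $I$; finally discard the measure-zero diagonal $x=y$. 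The delicate point is that the natural parametrization of solutions introduces its own quadratic forms in the parametrizing variables, so one must verify that the hypotheses of the concentration results apply to exactly those forms — this is routine in spirit but is the step most likely to require the precise aperiodic-vs-pretentious dichotomy and careful bookkeeping of which forms are reducible.
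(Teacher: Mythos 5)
Your proposal does latch onto two ingredients the paper also uses: the parametric family of solutions arising from $(a+b)c$ being a square (a common dilation $k$ times two distinct irreducible binary quadratic forms $P_1,P_2$ with equal leading coefficients), and the observation that pretentiousness of the $f_j$ lets one bypass the Elliott-type conjecture, with concentration estimates along the forms doing the work. But two steps are genuinely gapped. First, your mechanism for making $f_j(x)$ and $f_j(y)$ agree is Archimedean proximity ($x/y\in(1-\delta,1+\delta)$, ``same dyadic scale''). For a pretentious $f\sim\chi\cdot n^{it}$ this controls only the $n^{it}$ factor; the Dirichlet-character component and the contribution of small primes are not slowly varying in the Archimedean sense (already for $f=\chi$ a character, $x/y$ close to $1$ says nothing about $\chi(x)\overline{\chi(y)}$). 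What is actually needed, and what the paper does, is to restrict $(m,n)$ to congruence classes $(Q_Km+a,Q_Kn+b)$ with $a,b$ chosen according to the splitting behaviour of primes in the two forms (\cref{L:Pi12}), apply the two-variable concentration estimate \cref{P:concentration2General} to each $f\big(P_j(Q_Km+a,Q_Kn+b)\big)$, and average multiplicatively over auxiliary moduli $Q_1\in\Phi_{1,K}$, $Q_2\in\Phi_{2,K}$; the decisive cancellation of the oscillatory factors $\exp(G_{P_1,N})\cdot\overline{\exp(G_{P_2,N})}$ rests on the fact that the two characters agree on $\CP_{P_1}\cap\CP_{P_2}$ (\cref{L:tunique}). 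This is the technical core of the theorem, and it is precisely the part your sketch defers as ``routine in spirit''; nothing in the near-diagonal setup substitutes for it, and restricting to a thin neighbourhood of the diagonal is in any case at odds with the full two-variable averaging that the concentration estimates require.

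Second, even granting that $f_j(x)\approx f_j(y)$ on most of the solution family, your conclusion ``both near a common value, hence both in $I$'' does not follow: the common value could lie anywhere on $\S^1$. The paper forces it into the arc $I$ by exploiting the free dilation $k$ in the parametrization: expanding a smooth level-set indicator $F$ in powers of $f_1$ and averaging over $k$ along a multiplicative F\o lner sequence yields a positive measure $\sigma$ on $\CM$ with $\sigma(\{1\})\geq\big(\int F\,dm_{\S^1}\big)^2>0$, supported on the pretentious functions $f_1^l$; then \cref{T:MainMulti'} (whose aperiodic/minor-arc input is vacuous for such $\sigma$) gives $m,n$ with $\int f\big(P_1(m,n)\big)\overline{f\big(P_2(m,n)\big)}\,d\sigma>0$, i.e.\ some $k$ with $f_1(kP_1(m,n))\in I$ and $f_1(kP_2(m,n))\in I$, and distinctness of $x=kP_1(m,n)$, $y=kP_2(m,n)$ is automatic since $P_1\neq P_2$. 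Some averaging over the dilation parameter (or an equivalent spectral argument) is indispensable here; without it, the claim that a positive-density subfamily lands in $I$ has no justification, and this is the second place where your argument, as written, would fail.
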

		
		Finally, we give some consequences of 
		Theorems~\ref{T:PairsPartition} and \ref{T:PairsPartitionCond}. A well-known open problem 
		is whether  a necessary and sufficient condition for partition regularity of the equation \eqref{E:abc} with respect to {\em all three variables} is that the triple $(a,b,c)$ of positive integers satisfies Rado's condition for partition regularity of $ax+by=cz$, namely, $a=c$,  or  $b=c$, or $a+b=c$.   By combining Theorems~\ref{T:PairsPartition} and \ref{T:PairsPartitionCond}, we get the following result, which adds some credibility to a positive answer. 
		\begin{corollary}\label{corollary_radoconditionpairs}
			Let $a,b,c\in \N$ be such that either $a=c$, or $b=c$, or $a+b=c$,  and suppose that  \cref{C:2quadratic} holds. Then equation \eqref{E:abc} is partition regular with respect to all three pairs of variables $(x,y)$, $(x,z)$, $(y,z)$.
		\end{corollary}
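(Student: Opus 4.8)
The plan is to deduce the corollary by a short case analysis on Rado's trichotomy $a=c$, $b=c$, $a+b=c$, feeding suitable specializations into Theorems~\ref{T:PairsPartition} and~\ref{T:PairsPartitionCond}. The only extra input needed is the symmetry of \eqref{E:abc}: rewriting it as $(-c)z^2+by^2=(-a)x^2$ (substitution $(a,b,c)\mapsto(-c,b,-a)$) turns partition regularity ``with respect to $x,y$'' into ``with respect to $y,z$'', and rewriting it as $ax^2+(-c)z^2=(-b)y^2$ (substitution $(a,b,c)\mapsto(a,-c,-b)$) turns it into ``with respect to $x,z$''. Applying Theorems~\ref{T:PairsPartition} and~\ref{T:PairsPartitionCond} through these substitutions, I would record the following sufficient conditions for $a,b,c\in\N$: equation \eqref{E:abc} is partition regular with respect to $x,y$ if $ac$ or $bc$ is a square, or (under \cref{C:2quadratic}) if $(a+b)c$ is a square; with respect to $y,z$ if $ac$ is a square, or (under \cref{C:2quadratic}) if $a(c-b)$ is a square; and with respect to $x,z$ if $bc$ is a square, or (under \cref{C:2quadratic}) if $b(c-a)$ is a square. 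Here $0$ counts as an admissible square, which is precisely the content of the ``perhaps zero'' clause of Theorem~\ref{T:PairsPartitionCond}. Since $a,b,c\in\N$, all coefficients appearing in the substituted equations are nonzero, so the hypotheses of Theorems~\ref{T:PairsPartition} and~\ref{T:PairsPartitionCond} are met in every case.

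Next I would run the three cases. If $a=c$, then $ac=a^2$ is a square, which handles both the $x,y$ and the $y,z$ pairs (in fact unconditionally, via Theorem~\ref{T:PairsPartition}), while $b(c-a)=0$ handles the $x,z$ pair via Theorem~\ref{T:PairsPartitionCond}. If $b=c$, then $bc=b^2$ is a square, handling the $x,y$ and $x,z$ pairs unconditionally, while $a(c-b)=0$ handles the $y,z$ pair. Finally, if $a+b=c$, then $(a+b)c=c^2$, $\,a(c-b)=a^2$ and $b(c-a)=b^2$ are all squares, so all three pairs are covered, this time entirely through Theorem~\ref{T:PairsPartitionCond} and hence conditionally on \cref{C:2quadratic}. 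As these three cases exhaust Rado's hypothesis, the corollary follows.

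There is essentially no obstacle in the corollary itself: all of the substance is contained in Theorems~\ref{T:PairsPartition} and~\ref{T:PairsPartitionCond}, whose proofs occupy the bulk of the paper. The only point requiring care is the sign bookkeeping in the two substitutions, and in particular the observation that the ``perhaps zero'' possibility in Theorem~\ref{T:PairsPartitionCond} is exactly what is needed to treat the degenerate equations $by^2=a(z-x)(z+x)$ (arising when $a=c$) and $ax^2=b(z-y)(z+y)$ (arising when $b=c$). Once this is noted, the rest is a routine verification.
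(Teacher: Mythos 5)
Your proposal is correct and is essentially the argument the paper intends: the corollary is stated there as an immediate consequence of combining Theorem~\ref{T:PairsPartition} and Theorem~\ref{T:PairsPartitionCond} via exactly the coefficient substitutions $(a,b,c)\mapsto(-c,b,-a)$ and $(a,b,c)\mapsto(a,-c,-b)$ that you use, with the ``perhaps zero'' clause covering the degenerate cases $a=c$ and $b=c$. Your case analysis and sign bookkeeping check out, so there is nothing to add.
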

		\begin{remark}
			Our methods can also be used to establish partition regularity for pairs of non-diagonal  homogeneous quadratic equations $ax^2+by^2+cz^2+dxy+exz+fyz=0$. 
			We don't pursue this direction here as several cases have to be considered and it is not clear if there is a natural easy to state extension of \cref{corollary_radoconditionpairs} to this generality.
		\end{remark}
		To prove the next two results we use the   partition regularity of the equations $x^2\pm ay^2=z^2$
		with respect to the variables $x,y$, which follows from Theorem~\ref{T:PairsPartition}, and argue
		as in  the proof of Corollaries 1.3 and 1.4 in \cite{FKM23} (which cover the cases $a=\pm1$).  
		\begin{corollary}
			Let $a\in \Z$. 	For every finite coloring of $\N$, there exist distinct $m,n\in \N$ such that $n^2$ and $m^2+an^2$ have the same color.  
		\end{corollary}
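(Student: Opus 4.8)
The plan is to reduce this to the partition regularity of the equation $x^2-ay^2=z^2$ with respect to the variables $x,y$ (which \cref{T:PairsPartition} supplies), via the standard device of squaring, exactly in the spirit of Corollaries~1.3 and~1.4 of \cite{FKM23}, which handle $a=\pm1$. The case $a=0$ is trivial: the values $\chi(1^2),\chi(2^2),\dots$ use only finitely many colors, so two of them agree, giving distinct $m,n$ with $\chi(m^2)=\chi(n^2)$. So assume $a\neq0$ from now on.

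First I would fix a finite coloring $\chi$ of $\N$ and pass to the finite coloring $\chi'(k):=\chi(k^2)$. The equation $x^2-ay^2=z^2$ is of the form \eqref{E:abc} with coefficient triple $(1,-a,1)$, which is nonzero, and the product of its first and third entries is $1$, a square; hence by \cref{T:PairsPartition} this equation is partition regular with respect to $x,y$. Applied to $\chi'$, this yields distinct $x,y\in\N$ with $\chi'(x)=\chi'(y)$ and some $z\in\N$ with $x^2-ay^2=z^2$. Putting $n:=y$ and $m:=z$, I then have $m^2+an^2=z^2+ay^2=x^2$, so
\[
\chi(n^2)=\chi(y^2)=\chi'(y)=\chi'(x)=\chi(x^2)=\chi(m^2+an^2),
\]
and $m^2+an^2=x^2\ge1$ is a legitimate positive integer, so this is the configuration we want.

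The one genuine obstacle is to ensure $m\neq n$. As $m=z$ and $n=y$, the coincidence $m=n$ means $z=y$, hence $x^2=(1+a)y^2$; for positive integers $x,y$ this can only happen when $1+a$ is a perfect square, i.e.\ $a=t^2-1$ for some integer $t\ge2$. Thus in all remaining cases the argument above already produces distinct $m,n$. In the exceptional cases $a=t^2-1$ the degenerate solutions are precisely those with $z=y$ and $x=ty$, and I would rule them out exactly as is done for the analogous point in \cite{FKM23}: one shows that the solutions furnished by \cref{T:PairsPartition} can be taken non-degenerate, so that a monochromatic solution with $z\neq y$ still exists. This last step -- showing the solution need not be the trivial one -- is the only place where one cannot treat \cref{T:PairsPartition} purely as a black box.
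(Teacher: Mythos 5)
Your reduction is the paper's intended argument: pass to the coloring $\chi'(k)=\chi(k^2)$, apply \cref{T:PairsPartition} to $x^2-ay^2=z^2$ (coefficients $(1,-a,1)$, so the product of the outer coefficients is $1$, a square) with respect to $x,y$, and set $n:=y$, $m:=z$; the paper's own proof is exactly the sentence preceding the corollary, which invokes the equations $x^2\pm ay^2=z^2$ and the argument of Corollaries 1.3 and 1.4 of \cite{FKM23}. Your separate pigeonhole treatment of $a=0$ is also appropriate, since \cref{T:PairsPartition} requires nonzero coefficients.

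The one genuine gap is the step you defer. For $a=t^2-1$ with $t\ge2$ you assert that the degenerate solutions $z=y$, $x=ty$ can be excluded ``exactly as in \cite{FKM23}'', but no analogous step exists there: in the cases $a=\pm1$ treated in \cite{FKM23} degeneracy is impossible for trivial reasons ($1+a=2$ is not a perfect square, and for $a=-1$ the relation $z=y$ would force $x=0\notin\N$), so there is no argument to imitate, and ``the solutions furnished by \cref{T:PairsPartition} can be taken non-degenerate'' is precisely what remains to be proved. Fortunately the repair is short and keeps \cref{T:PairsPartition} as a black box: refine the coloring so that a monochromatic pair can never have ratio $t$. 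Choose a prime $p$ and $s\ge1$ with $p^s\mid\mid t$, and apply \cref{T:PairsPartition} to the finite coloring $\tilde\chi(k):=(\chi(k^2),\,\nu_p(k)\bmod 2s)$, where $\nu_p(k)$ denotes the exponent of $p$ in $k$. If the resulting monochromatic $x,y$ had $z=y$, then $x=ty$ and $\nu_p(x)=\nu_p(y)+s$, contradicting $\nu_p(x)\equiv\nu_p(y)\pmod{2s}$; hence $z\neq y$, and $m:=z$, $n:=y$ are distinct with $\chi(n^2)=\chi(x^2)=\chi(m^2+an^2)$, as required. (Alternatively one can extract non-degeneracy from the proof of \cref{T:PairsDensityParametric}, since the positivity in \eqref{E:sigmapositive2} is attained on a positive-density set of $(m,n)$ and thus off the lines where the parametric $z$ equals the parametric $y$, but the coloring refinement is the cheapest fix.)
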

		\begin{corollary}
			Let $a\in \Z$. 	For every finite coloring of  the squares,  there exist distinct squares $x,y$ with the same color such that $x+ay$ is a square.  
		\end{corollary}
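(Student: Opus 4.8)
The plan is to derive this directly from \cref{T:PairsPartition} by pulling back a coloring of the squares to a coloring of $\N$, in the spirit of the proofs of Corollaries~1.3 and~1.4 in \cite{FKM23}. So, fix $a\in\Z$ and a finite coloring $\chi$ of the set of squares, and define a finite coloring $\tilde\chi$ of $\N$ by $\tilde\chi(n):=\chi(n^2)$.

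First I would dispose of the degenerate case $a=0$: here $x+ay=x$ is a square for every square $x$, so by the pigeonhole principle two of the infinitely many squares receive the same color and we are done. For $a\neq 0$, I would note that the equation $x^2+ay^2=z^2$ has the shape \eqref{E:abc} with $(a,b,c)=(1,a,1)$, and that $ac=1$ is a nonzero square; hence \cref{T:PairsPartition} applies and tells us that $x^2+ay^2=z^2$ is partition regular with respect to $x,y$. Applying this to the coloring $\tilde\chi$ yields distinct $m,n\in\N$ with $\tilde\chi(m)=\tilde\chi(n)$ and $m^2+an^2=k^2$ for some $k\in\N$. Then $x:=m^2$ and $y:=n^2$ are distinct squares, they satisfy $\chi(x)=\tilde\chi(m)=\tilde\chi(n)=\chi(y)$, and $x+ay=m^2+an^2=k^2$ is a square, which is exactly what is wanted.

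I do not anticipate any genuine obstacle here: the entire arithmetic content sits in \cref{T:PairsPartition} (which in turn rests on the hard analytic input of the paper), and the reduction above is purely formal. The only points deserving a moment's care are the trivial case $a=0$ and the observation that the hypothesis of \cref{T:PairsPartition} is automatically met for $x^2+ay^2=z^2$, since the product of the outer coefficients equals $1$; in particular the sign of $a$ plays no role, and if one prefers, for $a<0$ one may equivalently invoke partition regularity of $x^2-|a|y^2=z^2$ with respect to $x,y$, which again follows from \cref{T:PairsPartition} for the same reason.
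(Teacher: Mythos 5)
Your reduction is correct and is exactly the argument the paper intends: pull the coloring of the squares back to $\N$ via $\tilde\chi(n)=\chi(n^2)$ and apply \cref{T:PairsPartition} to $x^2+ay^2=z^2$ (coefficients $(1,a,1)$, so the product of the outer coefficients is the square $1$), which is the same route as Corollaries~1.3 and~1.4 of \cite{FKM23} that the paper cites. Your separate handling of the trivial case $a=0$ (excluded by the nonzero-coefficient hypothesis of \cref{T:PairsPartition}) is a sensible extra remark and causes no issue.
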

		\subsection{Density regularity results}
		To prove the partition regularity results formulated above, we establish stronger density versions. 
		To this end, we recall some standard notions. 
		A \emph{multiplicative F\o lner sequence in $\N$} is a sequence $\Phi=(\Phi_K)_{K=1}^\infty$ of finite subsets of $\N$ that is  asymptotically invariant under dilation, in the sense that
		$$\forall x\in\N,\qquad\lim_{K\to\infty}\frac{\big|\Phi_K \cap (x\cdot \Phi_K)\big|}{|\Phi_K|}=1.$$
		An example of a multiplicative F\o lner sequence is given by 
		$$
		\Phi_K	:=\Big\{\prod_{p\leq K}p^{a_p}\colon 0\leq  a_p\leq K\Big\}, \quad K\in \N.
		$$	
		The \emph{upper multiplicative density} of a set $\Lambda\subset\N$ with respect to a multiplicative F\o lner sequence $\Phi=(\Phi_K)_{K=1}^\infty$ is the quantity
		$$\bar{d}_\Phi(\Lambda):=\limsup_{K\to\infty}\frac{\big|\Phi_K\cap \Lambda\big|}{|\Phi_K|}.$$
		\begin{definition}
			Given $a,b,c\in \Z$, we say that the equation \eqref{E:abc} is {\em density regular with respect to $x,y$}  if for every multiplicative F\o lner sequence $\Phi$ and every set $\Lambda\subset \N$ with $\bar{d}_\Phi(\Lambda)>0$, there exist 
			distinct $x,y\in \Lambda,$ and $z\in \N$ that satisfy \eqref{E:abc}. Similarly we define the density regularity of \eqref{E:abc}  with respect to the variables $x,z$ and $y,z$.
		\end{definition}
		A finite coloring of $\N$ always contains a monochromatic cell with positive multiplicative density, so density regularity implies partition regularity. 
		Theorem~\ref{T:PairsPartition} thus follows from the following  stronger density statement. 
		\begin{theorem}\label{T:PairsDensity}
				If  $a,b,c\in \Z$ are nonzero and  $ac$ or $bc$ is a square, then \eqref{E:abc} is density  regular with respect to $x,y$.			
		\end{theorem}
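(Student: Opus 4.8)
The plan is to adapt the dynamical argument of \cite{FKM23} for the Pythagorean equation: via the multiplicative correspondence principle the density statement reduces to a lower bound for an average of $\mathbf{1}_\Lambda$ along a two-parameter polynomial family of solutions, and the new point is that under our hypothesis this family can be parametrized so that one of the two relevant binary quadratic forms is \emph{reducible}. Concretely, interchanging $x$ and $y$ interchanges $a$ and $b$ and preserves both hypothesis and conclusion, so we may assume $ac=\alpha^2$ for some $\alpha\in\N$; if $ab$ is a square as well then so is $bc$ and the statement is \cite[Theorem~1.1]{FKM23}, so we may assume $ab$ is not a square, whence $P(u,w):=bu^2-aw^2$ is an irreducible binary quadratic form. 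Projecting the conic $aX^2+bY^2=cZ^2$ from its rational point $[c:0:\alpha]$ shows that its rational points are $[\,c(bu^2-aw^2):-2ac\,uw:\alpha(bu^2+aw^2)\,]$ with $(u:w)\in\mathbb{P}^1$; hence every positive integer solution of \eqref{E:abc} is a positive multiple of a suitably signed vector of this shape, and admissible signs exist for every sign pattern of $(a,b,c)$ (recall $ac>0$). It thus suffices to find $k,u,w\in\N$ with $x:=k\,c\,P(u,w)\in\Lambda$, $y:=2ac\,k\,uw\in\Lambda$ and $x\neq y$; here $x$ is a constant times the irreducible form $P$, while $y$ is a constant times the \emph{reducible} form $uw$.

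\textbf{From density to a multiplicative correlation.} The multiplicative correspondence principle converts $\bar d_\Phi(\Lambda)>0$ into a measure-preserving system carrying a dilation action, together with a nonnegative function $F$ with $\int F\,d\mu=\bar d_\Phi(\Lambda)$, and reduces the existence of the configuration above to showing that an average of the form
$$
\E_{u,w}\ \mathbf{1}_\Lambda\big(k\,c\,P(u,w)\big)\,\mathbf{1}_\Lambda\big(2ac\,k\,uw\big),
$$
averaged further logarithmically over the scaling parameter $k$ and over $u,w$ in suitable growing regions, is bounded below by a positive quantity depending only on $\bar d_\Phi(\Lambda)$. Decomposing $F$ into a structured part, governed by pretentious multiplicative functions (Dirichlet and Archimedean characters), and an orthogonal uniform part, splits this average into a main term assembled from correlations $\E_{u,w}\,g_1\big(c\,P(u,w)\big)\,g_2\big(2ac\,uw\big)$ of pretentious multiplicative functions $g_1,g_2$, plus error terms in each of which some multiplicative function involved is aperiodic.

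\textbf{Killing the aperiodic part and extracting the pair.} Since $P$ is irreducible, $uw$ is reducible, and the two forms are not rational multiples of each other, the vanishing property of \cref{C:2quadratic} for this pair holds \emph{unconditionally}, by \cref{T:corrvanisha} and the remark following it. Upgrading this qualitative vanishing to the localized, dilation-twisted averages of the previous paragraph is where the new uniformity estimates for aperiodic multiplicative functions and the concentration inequalities for multiplicative functions along binary quadratic forms enter; they force the error terms to $o(1)$. The remaining pretentious main term is then handled as in the proof of \cref{T:levelsets}: pretentious multiplicative functions vary slowly on the relevant logarithmic scales, so $k,u,w$ can be chosen placing $k\,c\,P(u,w)$ and $2ac\,k\,uw$ simultaneously in the level sets on which $F$ is large, while the diagonal $x=y$ contributes $o(1)$. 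Combining the two parts gives the desired lower bound, hence distinct $x,y\in\Lambda$ with $ax^2+by^2=cz^2$, which is density regularity with respect to $x,y$.

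\textbf{Expected main obstacle.} The crux, and the step that genuinely goes beyond the case where both $ac$ and $bc$ are squares treated in \cite{FKM23}, is the aperiodic estimate: turning the soft bound $\E_{u,w}\,g_1(P(u,w))\,g_2(uw)\to0$ for aperiodic $g_i$ into quantitative control of $\mathbf{1}_\Lambda$ localized along the irreducible form $P$, along $uw$, and twisted by a dilation. This is carried by the concentration inequalities for multiplicative functions along binary quadratic forms together with the uniformity properties of aperiodic multiplicative functions; by contrast, the conic parametrization and the remaining bookkeeping (signs, positivity, the diagonal, and extracting a positive-density cell from a finite coloring) are routine extensions of \cite{FKM23}.
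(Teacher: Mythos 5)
Your proposal is correct and follows essentially the same route as the paper: the same conic parametrization reduces matters to the pair formed by an (in general) irreducible binary quadratic form and the reducible form $\gamma mn$ (\cref{T:PairsDensityParametric}), and the aperiodic/pretentious dichotomy over $\CM$ is resolved exactly as in the paper, unconditionally via \cref{T:corrvanisha} for the aperiodic part and via the concentration estimates behind case~\eqref{I:a} of \cref{T:DensityRegularQuadratic} for the pretentious part. Your description of the pretentious step as slow variation placing both values in the relevant level sets is looser than the paper's actual mechanism (positivity is extracted from the weight $w_{\delta,P_1,P_2}$ together with multiplicative averaging over $Q\in\Phi_K$ and $\sigma(\{1\})>0$), but this is the same machinery rather than a different argument.
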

		Likewise, 	Theorem~\ref{T:PairsPartitionCond} follows from the following  stronger density statement. 
		\begin{theorem}\label{T:PairsDensityCond}
			Suppose that \cref{C:2quadratic} holds.		
			If  $a,b,c\in \Z$ are nonzero and  $(a+b)c$ is a square (perhaps zero), then \eqref{E:abc} is density regular with respect to $x,y$.
		\end{theorem}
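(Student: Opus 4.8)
The plan is to deduce Theorem~\ref{T:PairsDensityCond} from the reduction machinery already developed in \cite{FKM23} together with \cref{C:2quadratic}, following the same broad strategy that in the unconditional case gives Theorem~\ref{T:PairsDensity}. Since $(a+b)c$ is a square, after clearing common factors and rescaling we may write the equation in a normalized form where the relevant binary quadratic form controlling the problem is $P(m,n)=a m^2 + b n^2$ (or its reducible degeneration when $a+b=0$), and the condition ``$(a+b)c$ is a square'' is exactly what is needed to produce, via a suitable substitution $x=u+v$, $y=u-v$ (up to scaling), a parametrization of solutions by a pair of quadratic forms, one of which is essentially linear-times-linear. Concretely, I would first record the algebraic identity that turns $ax^2+by^2=cz^2$ under $x,y$ of the form $x = \lambda(s+t)$, $y=\mu(s-t)$ into an equation where $z$ is constrained by a binary quadratic form in $s,t$, and isolate the two forms $P_1,P_2$ whose values must simultaneously avoid a sparse ``bad'' set.

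Second, I would invoke the density-increment / correlation-vanishing framework: by the arguments of \cite{FKM23}, density regularity with respect to $x,y$ for a set $\Lambda$ of positive upper multiplicative density reduces to showing that a certain multiple ergodic / multiplicative average of the form
\[
\E^{\log}_{m,n}\, \one_\Lambda(P_1(m,n))\cdot \one_\Lambda(P_2(m,n))
\]
(suitably interpreted on the Furstenberg-type system attached to $\Phi$) is bounded below, and the only obstruction to this is the contribution of aperiodic multiplicative functions appearing in the spectral/Gowers-uniformity decomposition of $\one_\Lambda$ along the forms $P_1,P_2$. The pretentious (structured) part is handled exactly as in the proof of Theorem~\ref{T:levelsets}, which is unconditional; the aperiodic part is precisely where \cref{C:2quadratic} enters: since $P_1$ and $P_2$ are irreducible binary quadratic forms that are not multiples of each other (in the genuinely quadratic case; in the degenerate case $a+b=0$ one of them is reducible and one uses \cref{T:corrvanisha} instead, as the excerpt notes), the conjecture guarantees that $\E_{m,n}\, f(P_1(m,n)) g(P_2(m,n)) \to 0$ for aperiodic multiplicative $f,g$, killing the error terms.

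Third, I would assemble these pieces: split $\one_\Lambda$ (or rather the associated function on the multiplicative system) into a pretentious part plus an aperiodic error using the Tao--Halász-type structure theorem as in \cite{FKM23}; bound the main term from below using the density hypothesis together with the concentration estimates for multiplicative functions along binary quadratic forms and the uniformity properties of aperiodic multiplicative functions alluded to in the abstract; and bound the error terms using \cref{C:2quadratic}. The transfer back from the dynamical/multiplicative-density statement to the existence of genuinely distinct $x\ne y$ in $\Lambda$ is routine once the average is positive, and follows \cite{FKM23} verbatim (one discards the diagonal $x=y$, which contributes negligibly).

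The main obstacle, and the reason the theorem is only conditional, is precisely the correlation-vanishing input: for the equation $x^2+y^2=2z^2$ and its relatives the two forms that arise are genuinely quadratic, irreducible, and non-proportional (morally $m^2+n^2$ and $m^2+2n^2$ after the diagonalizing substitution), so there is no unconditional tool — not even Tao's logarithmic Chowla-type results or the recent work on multiplicative functions at polynomial arguments — that establishes the needed cancellation; everything else (the pretentious case, the concentration inequalities, the combinatorial transfer) is in hand from Theorem~\ref{T:levelsets} and \cite{FKM23}. A secondary technical point to get right is the degenerate case $a+b=0$, where ``$(a+b)c$ is a square'' holds trivially and the relevant pair consists of one reducible and one irreducible form, so one must verify that the reduction still produces exactly the configuration covered by \cref{T:corrvanisha} rather than something requiring the full conjecture.
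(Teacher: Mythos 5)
Your high-level plan coincides with the paper's: parametrize the solution set, reduce the theorem to the statement that a suitable pair of binary quadratic forms is good for density regularity (this is \cref{T:PairsDensityParametricIrreducible}, i.e.\ case (ii) of \cref{T:DensityRegularQuadratic}), and run the pretentious/aperiodic dichotomy with \cref{C:2quadratic} supplying the aperiodic cancellation and concentration estimates handling the pretentious part. But the reduction itself -- which is the actual content of the proof of this theorem once that machinery is granted -- is where your proposal goes wrong concretely. When $(a+b)c=d^2\neq0$ the parametrization is \eqref{E:xyz1}: $x=kc(m^2-2bmn-abn^2)$, $y=kc(m^2+2amn-abn^2)$, $z=kd(m^2+abn^2)$, so the pair $P_1,P_2$ governing $x,y$ consists of two quadratic forms with equal coefficient of $m^2$ (this equality is essential; the paper notes the pair is typically not good for density regularity without it), and generically both are irreducible. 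It is not a pair ``one of which is essentially linear-times-linear'': if one of them were reducible you would be in the unconditional regime of \cref{T:PairsDensity}/\cref{T:corrvanisha} and no conjecture would be needed. For $x^2+y^2=2z^2$ the pair is $m^2\pm2mn-n^2$ (indefinite forms attached to $\Z[\sqrt{2}]$), not ``morally $m^2+n^2$ and $m^2+2n^2$''.

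Two case distinctions are handled incorrectly or omitted. First, the degenerate case $a+b=0$ behaves oppositely to what you assert: with \eqref{E:xyz2} the relevant forms are $m^2+acn^2$ and $m^2-acn^2$; if $|ac|$ is not a square (e.g.\ $x^2-y^2=3z^2$) both are irreducible, so this case still requires the full conjecture, contrary to your claim that one form is reducible and \cref{T:corrvanisha} suffices; if $|ac|$ is a square, then $ac$ or $bc$ is a square and one simply quotes \cref{T:PairsDensity}. Second, in the main case $a+b\neq0$ the forms $m^2-2bmn-abn^2$ and $m^2+2amn-abn^2$ need not be irreducible: the first is reducible exactly when $b(a+b)$ is a square and the second when $a(a+b)$ is, and then \cref{C:2quadratic}, stated for irreducible forms, does not apply; the paper observes that in that event $(a+b)c$ being a square forces $bc$ (resp.\ $ac$) to be a square, so \cref{T:PairsDensity} covers it. Your assertion that the ``genuinely quadratic case'' automatically yields irreducible, non-proportional forms needs exactly this verification. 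Finally, a smaller inaccuracy: the pretentious part for two irreducible forms is not \cref{T:levelsets} recycled (that result is itself a byproduct of the same argument); it requires the new concentration step along progressions $Q\N+a,Q\N+b$ with residues chosen via \cref{L:Pi12}, i.e.\ part (ii) of \cref{P:nonnegative} -- acceptable to cite as a black box, but the case analysis above cannot be skipped.
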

		
		\subsection{Parametric form}\label{SS:parametric}  It will be convenient to use the parametric form of the solution set of the given equations and to  work with the respective  notion of density regularity.  Two cases to keep in mind are our working examples, the equation 
		$$x^2+y^2=2z^2,$$
		which  has parametric solutions of the form (below, $k,m,n$ are arbitrary integers)
		$$
		x=k\, (m^2-n^2+2mn), \quad y=k\, (m^2-n^2-2mn),\quad  z=k\, (m^2+n^2),
		$$
		and  the equation
		$$
		x^2+2y^2=z^2,
		$$ 
		which  has parametric solutions of the form
		$$
		x=k\, (m^2-2n^2), \quad y=k\, (2mn), \quad z=k\, (m^2+2n^2).
		$$

		More generally, if $ac=d^2$ for some $d\in \N$ (the case $bc=d^2$ is similar), then a simple computation shows that the following are solutions of $ax^2+by^2=cz^2$
		$$
		x=k\, c(am^2 -bn^2),\quad
		y= k\, 2ac  mn, \quad
		z=k\,   d(am^2+bn^2), 
		$$
		$k,m,n\in \Z$.
		
		\begin{definition}
			We say that the pair  $P_1,P_2\in \Z[m,n]$ is  {\em good for density regularity} if  whenever $\Lambda\subset \N$ satisfies  $\bar{d}_\Phi(\Lambda)>0$
			for some multiplicative F\o lner sequence $\Phi$,  there exist  $m,n\in\N$, such that
			$P_1(m,n)$ and  $P_2(m,n)$ are distinct positive integers and
			$$
			\bar{d}_\Phi\big( (P_1(m,n))^{-1}\Lambda\cap (P_2(m,n))^{-1}\Lambda\big)>0.
			$$
		\end{definition}
		If the pair $P_1,P_2\in \Z[m,n]$ is good for density regularity, then for any $\Lambda\subset \N$ that satisfies  $\bar{d}_\Phi(\Lambda)>0$ for some multiplicative F\o lner sequence $\Phi$, the intersection $(P_1(m,n))^{-1}\Lambda\cap (P_2(m,n))^{-1}\Lambda$ has positive multiplicative density, and in particular is non-empty, for some $m,n\in\N$.
		Taking any $k\in(P_1(m,n))^{-1}\Lambda\cap (P_2(m,n))^{-1}\Lambda$ it follows that  $kP_1(m,n)$, $kP_2(m,n)$ are distinct and they both belong to $\Lambda$.
		Therefore, in order to prove \cref{T:PairsDensity} (and hence Theorem~\ref{T:PairsPartition}) it suffices to establish the following result.
		\begin{theorem}\label{T:PairsDensityParametric}
			Suppose that for some $\alpha,\gamma\in \N$  and nonzero $\beta \in \Z$ 
			we have 
			$$
			P_1(m,n)=\alpha m^2 +\beta n^2, \qquad P_2(m,n)=\gamma  mn.
			$$ 
			Then the pair of polynomials $P_1,P_2$ is good for density regularity.\footnote{More generally, we can prove a similar result when at least one of the non-trivial binary quadratic forms 
				$P_1(m,n)$ and $P_2(m,n)$ is reducible (and hence factors linearly) and has nonzero discriminant. Indeed,
				using linear substitutions we can  reduce  to the case 
				$P_2(m,n)=\gamma mn$ for some $\gamma\in \N$. If $P_1$ is also reducible, then we
				have established the result in \cite{FKM23}.  If $P_1$ is irreducible, then the result  
				follows from   case~\eqref{I:a} of \cref{T:DensityRegularQuadratic}. } 
		\end{theorem}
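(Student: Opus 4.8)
The plan is to reduce the statement about $P_1(m,n)=\alpha m^2+\beta n^2$ and $P_2(m,n)=\gamma mn$ being good for density regularity to an ergodic/analytic statement about multiplicative functions, following the strategy of \cite{FKM23}. First I would fix a multiplicative F\o lner sequence $\Phi$ and a set $\Lambda\subset\N$ with $\bar d_\Phi(\Lambda)=\delta>0$; writing $f=\one_\Lambda$ and passing to a multiplicative invariant mean $\nu$ concentrated on a suitable subsequence of $\Phi$, the quantity we must show is positive (or at least nonzero) for some $m,n\in\N$ is an average of the form
$$
\int_{\N} \one_\Lambda(kP_1(m,n))\cdot\one_\Lambda(kP_2(m,n))\, d\nu(k).
$$
Summing these over $m,n$ in a box $[M]$ with appropriate weights and exploiting dilation invariance of $\nu$, the problem becomes: show that
$$
\E_{m,n\in[M]}\int_{\N} \one_\Lambda(k)\,\one_\Lambda\big(k\cdot\tfrac{P_2(m,n)}{\gcd}\cdot\cdots\big)\, d\nu(k)
$$
does not vanish. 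This is exactly where the standard machinery kicks in: decompose the relevant multiplicative averages using the correspondence principle and an ``aperiodic vs.\ pretentious'' dichotomy. In the pretentious regime the integrand essentially behaves like a Dirichlet character / archimedean character composed with $P_1,P_2$, and one gets a positive main term by a Fourier-analytic argument on the profinite and archimedean completions (this is the content behind \cref{T:levelsets}-type reasoning, and uses that one factor is $\gamma mn$, which is ``multiplicatively split'' so that the character evaluated on it factors). In the aperiodic regime one must show the correlation $\E_{m,n}\, f_1(P_1(m,n))\, f_2(P_2(m,n))$ tends to $0$; since $P_2(m,n)=\gamma mn$ is \emph{reducible}, this falls under the unconditional results \cref{T:corrvanisha} / \cref{T:DensityRegularQuadratic}\eqref{I:a} invoked in the footnote, rather than under \cref{C:2quadratic}.

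More concretely, the key steps, in order, would be: (i) normalize so that $\alpha,\gamma\in\N$ and reduce $\gcd$'s, and set up the multiplicative invariant mean $\nu$ together with the correspondence principle converting $\bar d_\Phi(\Lambda)>0$ into a positivity statement for an integral of a bounded multiplicative-shift average; (ii) expand $\one_\Lambda$ (or rather its $L^2(\nu)$-projection) into its ``structured'' part — a finite combination of twisted multiplicative functions $n\mapsto\chi(n)n^{it}$ arising from the pretentious directions in which $\Lambda$ correlates — plus an ``aperiodic'' error; (iii) handle the aperiodic error using the concentration estimates / vanishing-of-correlations results for multiplicative functions along binary quadratic forms already established in the paper, crucially using that $P_2=\gamma mn$ factors into linear forms so only the ``one irreducible, one reducible'' case of \cref{C:2quadratic} is needed, which is unconditional; (iv) for the structured main term, reduce to showing that for the relevant characters $\psi$ on $\widehat{\Z}\times\R_+$ one has
$$
\E_{m,n\in[M]}\ \psi\big(P_1(m,n)\big)\,\psi\big(P_2(m,n)\big)\not\to 0,
$$
which one proves by choosing $m,n$ in a single residue/size class where both $P_1$ and $P_2=\gamma mn$ lie in the arc where $\psi$ is close to $1$ — here the representability of values of $\alpha m^2+\beta n^2$ in arithmetic progressions (equidistribution of the binary form modulo $q$, plus an archimedean homogeneity/scaling argument for the $n^{it}$ part) gives a positive-density set of admissible $(m,n)$; (v) combine (iii) and (iv) to conclude that the full average is bounded below, hence some individual term $\bar d_\Phi\big((P_1(m,n))^{-1}\Lambda\cap(P_2(m,n))^{-1}\Lambda\big)$ is positive, and check $P_1(m,n)\ne P_2(m,n)$ can be arranged (they are distinct for a positive proportion of $(m,n)$ since $\alpha m^2+\beta n^2=\gamma mn$ cuts out a thin set).

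The main obstacle I expect is step (iv), the lower bound for the structured part: one has to rule out ``destructive interference'' between the character values $\psi(P_1(m,n))$ and $\psi(P_2(m,n))=\psi(\gamma)\psi(m)\psi(n)$ as $(m,n)$ ranges over a box. For the finite (Dirichlet character) component this is a question about simultaneous solvability of $\chi(\alpha m^2+\beta n^2)=1$ and $\chi(mn)=1$ in a box, which one settles by a counting argument over residues mod the conductor, using that $\alpha m^2+\beta n^2$ takes ``enough'' values coprime to the conductor — this requires knowing the binary quadratic form is not identically a nonsquare times a square modulo small primes, i.e.\ some genus-theory input about which residue classes are represented. For the archimedean component $n\mapsto n^{it}$, the obstruction is oscillation of $(\alpha m^2+\beta n^2)^{it}(\gamma mn)^{it}$; this is handled by restricting $(m,n)$ to a dyadic-type range $m\asymp n\asymp R$ for a well-chosen scale $R$ so that all three of $P_1(m,n)$, $m$, $n$ lie in a short multiplicative interval, making every factor $e^{it\log(\cdot)}$ close to a fixed constant — a compactness/pigeonhole argument over scales. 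A secondary technical point is making the correspondence-principle bookkeeping in steps (i)–(ii) compatible with having \emph{two} shifts $P_1(m,n)$ and $P_2(m,n)$ rather than one, but this is the same kind of argument as in \cite{FKM23} and should go through with cosmetic changes.
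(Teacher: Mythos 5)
Your overall skeleton agrees with the paper's route: reduce via the multiplicative correspondence principle to a positivity statement for an integral over $\CM$ against a spectral measure $\sigma$ with $\sigma(\{1\})>0$ (this is \cref{T:MainMulti'}), split into aperiodic and pretentious regimes, and observe that since $P_2=\gamma mn$ is reducible the aperiodic vanishing is unconditional — this is exactly \cref{T:corrvanisha} combined with case~\eqref{I:a} of \cref{T:DensityRegularQuadratic} (a minor mislabel: the aperiodic regime is handled by a Daboussi--K\'atai-type orthogonality criterion over quadratic number fields reducing to vanishing for linear forms, not by concentration estimates; the concentration estimates are the pretentious-side tool).

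The genuine gap is in your steps (ii) and (iv), i.e.\ the treatment of the structured part. First, the structured component is not ``a finite combination of twisted multiplicative functions'': the measure $\sigma$ may be supported on uncountably many pretentious $f$, so no such finite expansion exists, and any argument must work for \emph{every} pretentious $f$ in a way that can be integrated against $\sigma$. Second, and more seriously, proving that $\E_{m,n}\,\psi(P_1(m,n))\overline{\psi(P_2(m,n))}$ does not vanish (even after restricting $(m,n)$ to a residue class or a dyadic box $m\asymp n\asymp R$) is not enough: the contributions of different pretentious $f$ to $\int_\CM(\cdots)\,d\sigma(f)$ carry arbitrary phases and can cancel, and restricting $(m,n)$ to a special subset destroys the positivity property \eqref{E:positivers} that is needed to transfer a lower bound on a sub-average back to the original average. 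What the paper actually proves (\cref{P:nonnegative}, part~\eqref{I:anonnegative}) is a \emph{non-negativity} statement valid for every pretentious $f$: using a new Tur\'an--Kubilius-type concentration estimate for multiplicative functions along the irreducible form $\alpha m^2+\beta n^2$ on the lattice $Q\N+1,\,Q\N$ (\cref{P:concentration2General}, which for positive discriminant requires genuine algebraic-number-theoretic input about units), one shows $L_{\delta,N}(f,Q;1,0)$ is asymptotically $\overline{f(Q)Q^{-it}}$ times a quantity independent of $Q\in\Phi_K$; averaging over the multiplicative F\o lner set $\Phi_K$ then annihilates the contribution unless $f=n^{it}$ (\cref{L:Fol0}), and for $f=n^{it}$ the weight $w_{\delta,P_1,P_2}$ forces the average to be close to $1$. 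Non-negativity for all $f$, together with the atom $\sigma(\{1\})>0$, yields strict positivity. Your proposal contains neither the concentration estimate nor the $Q$-averaging mechanism, and the pigeonhole-over-scales/residue-counting substitute you sketch cannot by itself control the integral over the full pretentious spectrum, so as written the positivity of the structured part does not follow.
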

		If the polynomial $P_1$ in the statement of \cref{T:PairsDensityParametric} factors into linear terms, then the conclusion follows from our earlier work (this is explained in \cite[Section 1.5.2]{FKM23}).
		If $P_1$ is irreducible, then \cref{T:PairsDensityParametric} follows from combining part (i) of \cref{T:DensityRegularQuadratic} with \cref{T:corrvanisha} below (see the remarks following \cref{T:DensityRegularQuadratic}).

		Our next result concerns the parametric reformulation of Theorem~\ref{T:PairsDensityCond}.	 
		If $(a+b)c=d^2$ for some $d\in \Z_+$, then direct computation shows that for every $k,m,n\in \Z$, the integers
		\begin{equation}\label{E:xyz1}
			x:= k\, c (m^2-2b mn-ab n^2), \quad  y:=k\, c  (m^2+ 2a mn- ab n^2), \quad z:=k\, d(m^2+abn^2),  
		\end{equation}
		satisfy the equation $ax^2+by^2=cz^2$. 
		We use these parametric solutions when $a+b\neq 0$. If $a+b=0$, then we use the following parametric solutions
		\begin{equation}\label{E:xyz2}
			x:=k\, (m^2+acn^2),  \quad y:= k\,(m^2-acn^2), \quad 
			z:=k\,  2amn.
		\end{equation}
		Using the previous parametric solutions, it  turns out that to prove Theorem~\ref{T:PairsDensityCond}   it is sufficient to establish the following result:
		\begin{theorem}\label{T:PairsDensityParametricIrreducible}
			Suppose that for some $\alpha,\beta,\alpha',\beta'\in \Z$
			the  polynomials
			$$
			P_1(m,n)=m^2+\alpha mn+\beta n^2, \qquad P_2(m,n)=m^2+\alpha' mn+\beta' n^2
			$$
			are distinct and irreducible, and  \cref{C:2quadratic} holds
			for $P_1, P_2$. 
			Then the pair of polynomials $P_1,P_2$ is good for density regularity. 
		\end{theorem}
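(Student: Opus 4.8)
The plan is to reduce, as in the proof of \cref{T:PairsDensityParametric} for irreducible $P_1$, to a recurrence statement which is then settled by separating a \emph{structured} and a \emph{uniform} contribution; the only genuinely new point is that the vanishing of the uniform contribution now requires correlations of aperiodic multiplicative functions along two irreducible forms, which is precisely what \cref{C:2quadratic} supplies (the reducible companions that also appear being handled by \cref{T:corrvanisha}). In the paper this is presumably phrased as an application of the relevant case of \cref{T:DensityRegularQuadratic}; what follows is the underlying argument.

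First I would set up the reduction. Fix a multiplicative F\o lner sequence $\Phi$ and $\Lambda\subseteq\N$ with $\bar d_\Phi(\Lambda)>0$; after passing to a subsequence realizing the $\limsup$ and then taking a Banach limit along it, obtain a dilation-invariant positive functional $\nu$ on $\ell^\infty(\N)$ with $\nu(\one)=1$, $\nu(\one_\Lambda)=\bar d_\Phi(\Lambda)=:\delta>0$ and $\nu(\one_\Gamma)\le\bar d_\Phi(\Gamma)$ for all $\Gamma\subseteq\N$. In the GNS space $L^2(\nu)$, where every dilation $k\mapsto rk$ ($r\in\Q_+$) acts unitarily by an operator $U_r$, one has
\[
\nu\big(\one_\Lambda(P_1(m,n)\,\cdot)\cdot\one_\Lambda(P_2(m,n)\,\cdot)\big)=\big\langle\one_\Lambda,\ U_{P_2(m,n)/P_1(m,n)}\one_\Lambda\big\rangle .
\]
Since $P_1-P_2=n\big((\alpha-\alpha')m+(\beta-\beta')n\big)$ is divisible by $n$ and $P_1(m,n)/P_2(m,n)=1+O(n/m)$, I would average $(m,n)$ over a thin congruence-restricted region $\mathcal R_N:=\{(m,n):N/2\le m\le N,\ n\le N^{1/2},\ q\mid n\}$, with $q\in\N$ a parameter; on $\mathcal R_N$ the values $P_1(m,n),P_2(m,n)$ are positive, distinct off at most two lines through the origin (a negligible subset), satisfy $P_1(m,n)\equiv P_2(m,n)\pmod q$, and have ratio tending to $1$. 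As in \cref{T:PairsDensityParametric} it suffices to show $\liminf_{N\to\infty}\E_{(m,n)\in\mathcal R_N}\big\langle\one_\Lambda,U_{P_2(m,n)/P_1(m,n)}\one_\Lambda\big\rangle>0$, because positivity of $\nu$ together with the negligibility of the exceptional lines then yields a single admissible $(m,n)$ with $\nu\big(\one_{P_1(m,n)^{-1}\Lambda\cap P_2(m,n)^{-1}\Lambda}\big)>0$, hence $\bar d_\Phi\big(P_1(m,n)^{-1}\Lambda\cap P_2(m,n)^{-1}\Lambda\big)>0$.

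Next I would decompose $\one_\Lambda$ according to the unitary $\Q_+$-action: its component in the closed span of the eigenfunctions — which are precisely the completely multiplicative functions $\N\to\S^1$ viewed in $L^2(\nu)$, split into a pretentious family and an aperiodic family in the sense of \cref{D:aperiodic} — plus a component in the continuous part of the spectrum. Truncating to finitely many eigenfunctions within an $\ve$-error, the pretentious eigenfunctions contribute, thanks to the design of $\mathcal R_N$, a quantity that is $\ge\delta^2-O(\ve)-o_N(1)$: the trivial character alone contributes $|\nu(\one_\Lambda)|^2=\delta^2$, while for a non-trivial pretentious eigenfunction $\chi_0(n)\,n^{it}$ one has $\chi_0(P_1(m,n))=\chi_0(P_2(m,n))$ on $\mathcal R_N$ once $\operatorname{cond}(\chi_0)\mid q$, and $(P_1(m,n)/P_2(m,n))^{it}\to1$, so its self-correlation along $(P_1,P_2)$ tends to a nonnegative limit — this is where the concentration estimates for multiplicative functions along binary quadratic forms underlying \cref{T:levelsets} are used, to replace each eigenfunction by its pretentious model along $P_1$ and $P_2$. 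The remaining contributions, from the aperiodic eigenfunctions and from the continuous part, are bounded, via a van der Corput / generalized von Neumann maneuver in the multiplicative variable, by averages over auxiliary dilations of correlations $\E_{(m,n)\in\mathcal R_N}f(P_1(m,n))\overline{f(P_2(m,n))}$ of aperiodic multiplicative functions $f$ (and correlations along forms with a rational linear factor); since $P_1,P_2$ are distinct, irreducible and not proportional, \cref{C:2quadratic} for this pair, together with \cref{T:corrvanisha} for the reducible companions, forces all of these to $0$. Hence $\liminf_N\E_{(m,n)\in\mathcal R_N}\langle\one_\Lambda,U_{P_2(m,n)/P_1(m,n)}\one_\Lambda\rangle\ge\delta^2-O(\ve)$, positive for $\ve$ small. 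This proves $P_1,P_2$ is good for density regularity, and feeding it into the parametric solutions \eqref{E:xyz1} and \eqref{E:xyz2} yields \cref{T:PairsDensityCond}.

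The two places that carry real content are these. The concentration input — that a pretentious (and, after the van der Corput step, an aperiodic) multiplicative function evaluated along an arbitrary binary quadratic form behaves as its pretentious model dictates, respectively has vanishing correlations — is the technical engine shared with \cref{T:levelsets}, and is the reason the averaging must be over the thin, congruence-restricted $\mathcal R_N$ rather than a full box. The second delicate point is bookkeeping in the van der Corput reduction: it must be arranged so that every correlation of aperiodic multiplicative functions it produces is along $P_1$, along $P_2$, or along a form with a rational linear factor — never along some other uncontrolled pair of irreducible forms — which is exactly why the statement fixes the normalized monic forms $m^2+\alpha mn+\beta n^2$ and $m^2+\alpha'mn+\beta'n^2$ and assumes \cref{C:2quadratic} for precisely that pair. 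Everything else is a routine transcription of the argument for \cref{T:PairsDensityParametric}.
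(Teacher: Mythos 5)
Your overall architecture does track the paper's route: the correspondence to a positive measure on $\CM$ with an atom at $1$ (the paper's \cref{T:MainMulti'}), killing the aperiodic part of the integral by \cref{C:2quadratic} (the paper's \cref{P:aperiodicQ12}), and extracting positivity from the pretentious part plus the atom at $1$. However, there is a genuine gap exactly where the paper's real work lies, namely the pretentious (``major arc'') contribution. A pretentious $f$ in the support of the spectral measure is not of the form $\chi_0(n)n^{it}$, and replacing it by its pretentious model via the concentration estimates does not give what you claim: \cref{P:concentration2General} concentrates $f\big(P_j(Qm+a,Qn+b)\big)$ at $\chi_j(P_j(a,b))\,(P_j(Qm,Qn))^{it}\exp\big(G_{P_j,N}(f,K)\big)$, where $f\sim_{P_j}\chi_j\cdot n^{it}$ may involve \emph{different} characters $\chi_1\neq\chi_2$, and the Euler-type factors $G_{P_1,N}(f,K)$, $G_{P_2,N}(f,K)$ of \eqref{E:GNP} are sums over the \emph{different} prime sets $\CP_{P_1},\CP_{P_2}$ with local weights $\omega_{P_j}(p)$. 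The product $\exp\big(G_{P_1,N}(f,K)\big)\cdot\overline{\exp\big(G_{P_2,N}(f,K)\big)}$ is then an essentially arbitrary phase, so your congruence restriction $q\mid n$ together with $P_1/P_2\to1$ does not make the pretentious self-correlation nonnegative; a priori it can be negative of size comparable to $\sigma(\{1\})$ and destroy the bound $\delta^2-O(\ve)$. The paper's resolution is precisely part~\eqref{I:bnonnegative} of \cref{P:nonnegative}: one chooses residues $a,b$ depending on auxiliary integers $Q_1,Q_2$ via \cref{L:Pi12} so that $(P_j(a,b),Q_K)=Q_j$, and averages multiplicatively over $Q_1,Q_2$ supported on the special prime sets $\CP_1,\CP_2$ (where $\omega_{P_1}(p)=2,\omega_{P_2}(p)=0$ and vice versa); then either some $\tilde f_j(p)\neq1$ on $\CP_j$ and the average vanishes, or $\tilde f_j\equiv1$ on both $\CP_j$, in which case \cref{L:tunique} forces $G_{P_1,N}=G_{P_2,N}$ for large $K$ and the phases cancel. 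Nothing in your proposal plays this role, and this mechanism is the actual content of case~\eqref{I:b} of \cref{T:DensityRegularQuadratic}.

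A second, related problem is your thin region $\mathcal R_N=\{N/2\le m\le N,\ n\le N^{1/2},\ q\mid n\}$. The concentration estimates (\cref{P:ConcentrationQuanti}, \cref{P:concentration2General}) are proved for full boxes $[N]\times[N]$ restricted to arithmetic progressions, and the genuinely two-dimensional averaging is essential there: the contribution of large primes is controlled by ideal-counting bounds that use both variables ranging over comparable scales, and the paper explicitly notes that concentration along (near) one-variable quadratic arguments fails in general. On $\mathcal R_N$ the variable $n$ is confined to a range of size $N^{1/2}$, so you cannot invoke these estimates to ``replace each eigenfunction by its pretentious model along $P_1$ and $P_2$'' without reproving them in that regime, which is not routine. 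The paper instead works over full boxes and achieves the effect you want from $P_1/P_2\to1$ through the weight $w_{\delta,P_1,P_2}$ of \cref{L:SdeltaGeneral}, which restricts to pairs where $(P_1(m,n))^{it}(P_2(m,n))^{-it}$ is close to $1$ while keeping the two-dimensional averaging intact.
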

		It is important that the coefficients of $m^2$ (or $n^2$) in $P_1,P_2$ are equal, when this is not the case, the pair $P_1,P_2$ is typically not good for density regularity. 
		To verify that Theorem~\ref{T:PairsDensityCond} follows from \cref{T:PairsDensityParametricIrreducible} we need to consider a few cases:
		\begin{itemize}
			\item If $a+b=0$, either $|ac|$ is a square, in which case we can use \cref{T:PairsDensity}, or otherwise the polynomials $P_1(m,n)=m^2+acn^2$, $P_2(m,n)=m^2-acn^2$ in \eqref{E:xyz2} are distinct and irreducible, and hence we can use \cref{T:PairsDensityParametricIrreducible}.
			\item If $(a+b)c$ is a non-zero square, we can use \cref{T:PairsDensityParametricIrreducible} with the polynomials $P_1(m,n)=m^2-2b mn-ab n^2$ and $P_2(m,n)=m^2+ 2a mn- ab n^2$ in \eqref{E:xyz1}, provided they are irreducible (they will be distinct in this case).
			\item If $P_1$ in the previous item is not irreducible, then $b(a+b)$ is a square. Combined with the assumption that $(a+b)c$ is a square, this implies that $bc$ is a square and hence we can invoke \cref{T:PairsDensity}.
			The same argument works if $P_2$ is not irreducible.
		\end{itemize}

		Similarly, to prove \cref{T:levelsets}, it suffices to establish the following result.
		\begin{theorem}\label{T:levelsetsparam}
			Let  $P_1,P_2\in \Z[m,n]$ be the  polynomials in case  \eqref{I:b} of \cref{T:DensityRegularQuadratic},
			$f_1,\ldots, f_r\colon \N\to \S^1$ be pretentious  multiplicative functions (see \cref{D:aperiodic}), and  $I$ be an open arc of $\S^1$ containing $1$. Then there exist $k,m,n\in\N$ such that $P_1(m,n), P_2(m,n)$ are distinct, positive, and $f_j(kP_1(m,n))\in I$ and  $f_j(kP_2(m,n))\in I$ for $j=1,\ldots, r$. 
		\end{theorem}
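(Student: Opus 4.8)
The plan is to establish this as the ``structured-target'' instance of case~\eqref{I:b} of \cref{T:DensityRegularQuadratic}: the level sets of pretentious functions carry no aperiodic component, so the part of the general argument that would invoke \cref{C:2quadratic} is vacuous, and instead one proves directly that the pair $P_1,P_2$ is good for density regularity along the particular set $\Lambda:=\bigcap_{j=1}^{r}\{x\in\N: f_j(x)\in I\}$. Since the $f_j$ are pretentious, the vector $(f_1(n),\dots,f_r(n))$ equidistributes, with respect to the multiplicative F\o lner sequence $\Phi_K=\{\prod_{p\le K}p^{a_p}: 0\le a_p\le K\}$, in a closed subgroup of $(\S^1)^r$ containing $(1,\dots,1)$, so $\bar{d}_\Phi(\Lambda)>0$; and it then suffices to find $m,n\in\N$ with $P_1(m,n),P_2(m,n)$ distinct and positive and $\bar{d}_\Phi\big((P_1(m,n))^{-1}\Lambda\cap(P_2(m,n))^{-1}\Lambda\big)>0$, since any $k$ in that intersection gives the required triple.

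To do this I would reduce each $f_j$ to the normal form $f_j=g_j\cdot\chi_j\cdot n^{it_j}$ on the integers coprime to $q:=\mathrm{lcm}(q_1,\dots,q_r)$, with $t_j\in\R$, $\chi_j$ a Dirichlet character modulo $q_j$, and $g_j$ multiplicative satisfying $\D(g_j,1;\infty)<\infty$, and fix an arc $I_0\ni 1$ with $I_0\cdot I_0\cdot I_0\subset I$. The character factor is killed by congruences: since $P_1,P_2$ have leading coefficient $1$ in $m^2$, restricting $m\equiv1$, $n\equiv0\pmod q$ forces $P_i(m,n)\equiv1\pmod q$, hence $\chi_j(P_i(m,n))=1$. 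The archimedean factor is handled by equidistribution: taking $n$ small relative to $m$ makes $P_1(m,n)/P_2(m,n)$ as close to $1$ as we wish, while the values $\log P_i(m,n)$ are eventually dense in a half-line, so, using minimality of the linear flow $s\mapsto s(t_1,\dots,t_r)$ on the subtorus of $(\R/2\pi\Z)^r$ it generates, one pins down $m,n$ so that $(P_i(m,n))^{it_j}\in I_0$ for $i=1,2$ and all $j$. Once $m,n$ are fixed, the set of $k$ with $k\equiv1\pmod q$ and $(kP_i(m,n))^{it_j}\in I_0$ for all $i,j$ has positive $\bar{d}_\Phi$-density, by equidistribution of $(\Phi_K)$ in the compact group $(\Z/q\Z)^{*}\times(\R/2\pi\Z)^r$; the point is that the target vector lies in the closed subgroup generated by the images of the primes, because $P_1(m,n)$ is an integer coprime to $q$ and the target is the inverse of its image.

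It remains to intersect with the condition $g_j(kP_i(m,n))\in I_0$, and this is the heart of the matter. Writing $g_j(kP_i(m,n))=g_j(k)\,g_j(P_i(m,n))$ for $k$ coprime to $P_1(m,n)P_2(m,n)$, I would use the concentration estimates for multiplicative functions along binary quadratic forms developed in this paper to arrange, for a positive proportion of the $(m,n)$ left admissible above, that $g_j(P_1(m,n))$ and $g_j(P_2(m,n))$ lie within $I_0$ of a common value for every $j$; one then selects $k$ with $g_j(k)$ near the inverse of that common value for all $j$ simultaneously---possible, as in the archimedean step, because the relevant target lies in the closed subgroup of $(\S^1)^r$ generated by the images of the primes. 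The main obstacle is precisely this step: simultaneously steering the $1$-pretentious parts $g_j$ on \emph{both} values $kP_1(m,n)$ and $kP_2(m,n)$, which is where the concentration inequalities along binary quadratic forms substitute, in the pretentious regime, for the correlation-vanishing input that \cref{C:2quadratic} supplies for a general density-regular target. The remaining bookkeeping---keeping $P_1(m,n)\ne P_2(m,n)$ positive and carrying enough coprimality between $k$ and the $P_i(m,n)$ to apply multiplicativity---is routine, and the character and archimedean steps are soft (Chinese Remainder Theorem and Weyl equidistribution).
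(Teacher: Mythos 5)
Your high-level diagnosis (pretentiousness removes any need for \cref{C:2quadratic}) is correct, but your plan has a genuine gap at precisely the step you call the heart of the matter. For a $1$-pretentious factor $g_j$ (so $g_j\sim_{P_i}1$ with $t=0$, $\chi=1$), \cref{P:concentration2General} concentrates $g_j(P_1(\cdot))$ near $\exp(G_{P_1,N}(g_j,K))$ and $g_j(P_2(\cdot))$ near $\exp(G_{P_2,N}(g_j,K))$, and these two targets need not be close: their ratio has phase essentially $\sum_{K<p\le N}\frac{\omega_{P_1}(p)-\omega_{P_2}(p)}{p}\Im g_j(p)$, and $\D(g_j,1)<\infty$ only controls $\sum_p\frac{1-\Re g_j(p)}{p}$, not this linear sum. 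Concretely, take $r=1$, $P_1=m^2+n^2$, $P_2=m^2+2n^2$, and $f_1(p)=e^{i\varepsilon_p}$ with $\varepsilon_p\ge0$ supported on $\CP_1=\{p\equiv5 \pmod 8\}$ (where $\omega_{P_1}(p)-\omega_{P_2}(p)=2$), chosen so that $\sum_p\varepsilon_p^2/p<\infty$ but $\sum_p 2\varepsilon_p/p=\pi$: then $f_1$ is pretentious, yet for every fixed progression and most $(m,n)\in[N]^2$ one has $f_1(P_1(m,n))\cdot\overline{f_1(P_2(m,n))}\approx-1$, so no choice of $k$ can place both values in a short arc $I$ around $1$, and in particular you cannot ``arrange, for a positive proportion of admissible $(m,n)$, that $g_j(P_1(m,n))$ and $g_j(P_2(m,n))$ lie within $I_0$ of a common value'': the solutions are forced to live on atypical $(m,n)$. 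This is exactly the difficulty that part~\eqref{I:bnonnegative} of \cref{P:nonnegative} is built to overcome: via \cref{L:Pi12} and \cref{P:PQ12} one restricts to progressions $m\equiv a,\ n\equiv b \pmod{Q_K}$ engineered so that $P_1$ and $P_2$ pick up prescribed divisors $Q_1,Q_2$ supported on $\CP_1,\CP_2$, and a double multiplicative average over $Q_1,Q_2$ yields the dichotomy (either the averages vanish, or $\tilde f_j\equiv1$ on $\CP_j$ for both $j$, in which case $G_{P_1,N}=G_{P_2,N}$ by \cref{L:tunique} and the oscillations cancel). Concentration alone does not substitute for this mechanism, especially when the discriminants of $P_1,P_2$ differ.

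Two secondary points. First, your closing selection of $k$ is not routine as stated: the $f_j$ are only multiplicative, so the set of vectors $\big(k\bmod q,(k^{it_j})_j,(g_j(k))_j\big)$ over admissible $k$ (coprime to $qP_1(m,n)P_2(m,n)$) is not a subsemigroup of the relevant compact group, so ``the closure contains the inverse of the image of $P_1(m,n)$'' needs proof; also, for the concrete F\o lner sequence $\Phi_K$ of the paper, the set $\{k\equiv1\pmod q\}$ has upper density $0$, since for large $K$ every element of $\Phi_K$ is divisible by the primes dividing $q$. The paper's actual proof sidesteps both issues and the heart step by a different organization: it forms $C(r,s)=\lim_K\E_{k\in\Phi_K}F(f_1(kr))F(f_1(ks))$ for a smooth bump $F$ supported in $I$, represents $C(r,s)=\int_\CM f(r)\overline{f(s)}\,d\sigma(f)$ with $\sigma$ supported on the (pretentious) powers of $f_1$ and $\sigma(\{1\})>0$ (Fourier expansion of $F$ plus \cref{L:Fol0}), and then invokes the unconditional pretentious case of \cref{T:MainMulti'}; positivity of $C(P_1(m,n),P_2(m,n))$ then produces $k$ automatically, with no separate equidistribution or subgroup argument. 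If you want a direct proof along your lines, you would have to reproduce the $Q_1,Q_2$-selection/averaging machinery of \cref{P:bnonnegative'} rather than appeal to the concentration inequalities by themselves.
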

		Thus, 	our goal is to prove Theorems~\ref{T:PairsDensityParametric}-\ref{T:levelsetsparam}, then
		Theorems~\ref{T:PairsDensity} and \ref{T:PairsDensityCond} (and hence
		Theorems~\ref{T:PairsPartition}-\ref{T:levelsets})  will follow. 
		The proof of  Theorems~\ref{T:PairsDensityParametric} and \ref{T:PairsDensityParametricIrreducible}  builds on the general strategy developed in \cite{FKM23} to handle the partition regularity of Pythagorean pairs.  After some initial maneuvering, inspired by 
		Furstenberg's ergodic proof of S\'ark\"ozy's theorem, we are led to study an integral of 
		averages of products of completely multiplicative functions evaluated on binary quadratic forms. Roughly speaking, we perform analysis resembling the ``circle method'' over the group $\mathcal{M},$ considering separately the contribution of the ``minor arcs'' 
		consisting of aperiodic multiplicative functions, which are treated using Gowers uniformity results, and the contribution of appropriately averaged ``major arcs'' consisting of pretentious multiplicative functions (in a suitable sense), which are treated using concentration estimates. The more general setting covered in this article presents additional difficulties over the ones considered in \cite{FKM23} in both the ``major arcs'' and ``minor arcs'' parts of the argument. In particular, dealing with  binary quadratic forms of positive discriminant  causes substantial problems as we have to work with number fields with infinitely many units. Moreover, dealing with two irreducible quadratic forms in \cref{T:PairsDensityParametricIrreducible} leads to subtleties that are not present in \cite{FKM23}.
		The outline of our proofs is explained in much more detail in \cref{S:Roadmap}.

		\subsection{Necessity in \cref{C:conj1}}
		\label{sec_necessaryconditions}
		\cref{C:conj1} stipulates sufficient conditions on the coefficients $a,b,c$ for the equation \eqref{E:abc} to be partition regular with respect to $x,y$, but it is not completely apparent whether those conditions are necessary. In this direction we have the following result.
		
		\begin{proposition}
			\label{P:partialconverse1}
			Let $a,b,c\in\Z$ and assume that \eqref{E:abc} is partition regular with respect to $x,y$.
			Then at least one of $ac,bc,(a+b)c, ab(a+b)c$ is a perfect square. 
		\end{proposition}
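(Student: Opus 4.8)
The plan is to prove the contrapositive. Assume none of $ac,bc,(a+b)c,ab(a+b)c$ is a perfect square; I will exhibit a finite coloring of $\N$ with no monochromatic solution with respect to $x,y$. First, some reductions. If $ax^2+by^2=cz^2$ has no solution in positive integers, it is vacuously not partition regular (color with one color), so we may assume the conic $\mathcal{C}\colon aX^2+bY^2=cZ^2$ has a rational point; since the equation involves only squares, taking absolute values and clearing denominators shows positive integer solutions then exist. Since $(a+b)c$ is not a square, no solution has $x=y$; and we may further assume $ac,bc$ are not squares, since if one of them were, the conclusion would already hold.

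The colorings I use are the \emph{$p$-adic square-class} colorings: for a finite set $S$ of primes, color $n\in\N$ by the tuple of classes of $n$ in $\Q_p^*/(\Q_p^*)^2$, $p\in S$. This is a finite coloring, and two positive integers receive the same color exactly when their ratio lies in $(\Q_p^*)^2$ for every $p\in S$. Hence a monochromatic solution with $x\neq y$ forces some ratio $v=x/y$ of a positive integer solution to lie in $\bigcap_{p\in S}(\Q_p^*)^2\setminus\{1\}$, and conversely any solution with $x/y\in\bigcap_{p\in S}(\Q_p^*)^2$ is monochromatic; so it suffices to find an $S$ for which no solution has $x/y$ a square in all $\Q_p$, $p\in S$.

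Parametrizing $\mathcal{C}$ (which is $\cong\P^1_\Q$, having a rational point), the quantity $x/y=X/Y$ of a solution ranges exactly over those $v\in\Q^*_{>0}$ with $c(av^2+b)\in(\Q^*)^2$, i.e. over the image of the conic $\mathcal{E}\colon w^2=ac\,v^2+bc$ under the $v$-coordinate (note $\mathcal{E}$ too has a rational point, e.g. $(x/y,\,cz/y)$). Writing $v=r^2$, a solution with $x/y$ a $p$-adic square corresponds to a $\Q_p$-point $(r,w)$ of the quartic curve $\mathcal{D}\colon w^2=c(ar^4+b)$ with $r\neq0\neq w$. Using weak approximation on $\mathcal{E}$ and the fact that ``$v\in(\Q_p^*)^2$'' is an open condition away from $v=0,\infty$, one checks that a set $S$ as above exists iff $\mathcal{D}$ has no such $\Q_p$-point for some prime $p$; and since for all but finitely many $p$ Hensel's lemma applied to the reduction of $\mathcal{D}$ mod $p$ produces such a point, only finitely many primes need be examined. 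So the proposition reduces to the claim: \emph{if $\mathcal{D}\colon w^2=c(ar^4+b)$ is everywhere locally soluble (with $r\neq0\neq w$), then one of $ac,bc,(a+b)c,ab(a+b)c$ is a perfect square.} The four conditions are precisely the obvious sources of rational points on $\mathcal{D}$: $r=\infty$ needs $ac=\square$; $r=0$ needs $bc=\square$; $r=1$ needs $(a+b)c=\square$; and $r^2=-b/a$ gives $c(ar^4+b)=b^2\cdot\frac{(a+b)c}{ab}$, a square exactly when $ab(a+b)c=\square$.

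The main obstacle is this last claim — essentially that for the special genus-one family $w^2=ac\,r^4+bc$ the Hasse principle holds ``up to'' those four rational points. I would prove it place by place: substituting $\rho=r^2$ reduces the local question to whether $\mathcal{E}\colon w^2=ac\rho^2+bc$ has a $\Q_p$-point with $\rho$ a $p$-adic square, which — using a rational parametrization of $\mathcal{E}$ in which the fibres $\rho=0$ and $\rho=\infty$ are defined over $\Q(\sqrt{bc})$ and $\Q(\sqrt{ac})$ — can be read off from Hilbert symbols at $p$; since $\mathcal{D}$ has $\Q_v$-points at the remaining places for free (and a real point, the equation being solvable), Hilbert reciprocity $\prod_v(\cdot,\cdot)_v=1$ then forces the relevant global symbol to vanish, which unwinds to the statement that $(a+b)c$ becomes a square in $\Q(\sqrt{ab})$, i.e. that $(a+b)c$ or $ab(a+b)c$ is a perfect square; together with the already-settled cases $ac=\square$, $bc=\square$ this is the proposition. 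Care is needed at $p=2$ and at the primes dividing $abc$, where the square classes and degenerate fibres must be handled by direct computation.
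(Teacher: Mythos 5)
The key step of your plan is false, and the gap is not repairable within your framework. You reduce the proposition to the claim that everywhere local solubility of $\mathcal{D}\colon w^2=c(ar^4+b)$ (with $r\neq0\neq w$) forces one of $ac,\,bc,\,(a+b)c,\,ab(a+b)c$ to be a perfect square. Take $(a,b,c)=(2,-34,1)$, i.e.\ the equation $2x^2-34y^2=z^2$, which does have positive solutions (e.g.\ $(x,y,z)=(5,1,4)$). Here $\mathcal{D}$ is $w^2=2r^4-34$; writing $w=2w'$ this is the classical Lind--Reichardt curve $2w'^2=r^4-17$, which has points over $\R$ and over every $\Q_p$ (and, being a smooth curve of genus one, infinitely many of them, hence points with $r\neq0\neq w$), yet $ac=2$, $bc=-34$, $(a+b)c=-32$ and $ab(a+b)c=2^7\cdot17$ are all non-squares. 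This is also why the Hilbert-reciprocity argument you sketch cannot be completed: $\mathcal{D}$ is a genus-one curve, not a conic, and everywhere local solubility for such quartics is precisely where the Hasse principle fails; it is not governed by the product formula for Hilbert symbols.

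Moreover, the failure is structural for your family of colorings, not a matter of extra care at $p=2$ or at the bad primes. By your own weak-approximation step (which can indeed be made rigorous), for $(a,b,c)=(2,-34,1)$ and any finite set $S$ of primes there is a solution with $x\neq y$ and $x/y\in(\Q_p^*)^2$ for all $p\in S$; thus every finite product of $p$-adic square-class colorings admits a monochromatic pair, even though the proposition asserts (and the paper proves) that this equation is not partition regular with respect to $x,y$. The paper's Rado $p$-coloring is strictly finer than your square-class coloring: it records the residue mod $p$ of the $p$-free part, so a monochromatic pair coprime to $p$ satisfies $x\equiv y\pmod p$, and reducing the equation mod $p$ then only involves the quadratic-residue conditions on $ac,bc,(a+b)c$ of \cref{lemma_PRandquadraticresidues}; no quartic ever appears. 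The remaining work in the paper is an elementary quadratic-reciprocity/Dirichlet argument (\cref{lemma_quadraticreciprocity} and the lemma following it) producing a prime $p$ modulo which none of $ac,bc,(a+b)c$ is a residue whenever none of the four listed quantities is a perfect square. To salvage your approach you would have to enlarge the coloring family, e.g.\ to colorings by the classes of $\Q_p^*/p^{\Z}\cdot(1+p\Z_p)$, which is exactly what the Rado coloring amounts to, and at that point you are back to the paper's argument.
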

		In view of \cref{P:partialconverse1}, one may wonder whether $ab(a+b)c$ being a perfect square is a sufficient condition for \eqref{E:abc} to be partition regular with respect to $x,y$, but as the next  two results show, this is not the case.
		
		\begin{proposition}\label{P:partialconverse2}
			If $a\equiv2\bmod4$, $b$ is odd and $ab(a+b)c$ is a perfect square, then \eqref{E:abc} is not partition regular with respect to $x,y$.
		\end{proposition}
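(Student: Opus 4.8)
The plan is to construct an explicit two-coloring of $\N$ admitting no monochromatic solution of $ax^2+by^2=cz^2$ in the variables $x,y$; the coloring will simply be $n\mapsto v_2(n)\bmod 2$, and the crux is to show that the hypotheses force $v_2(x)$ and $v_2(y)$ to have opposite parities in every solution.

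First I would extract the $2$-adic consequences of the hypotheses. Write $a=2a'$ with $a'$ odd, $b$ odd, and $c=2^sc_1$ with $c_1$ odd. Since $a+b$ is odd, $v_2\big(ab(a+b)c\big)=1+s$, and being the $2$-valuation of a perfect square this is even, so $s$ is odd (in particular $s\ge1$). Since $ab(a+b)c=2^{s+1}\,a'b(2a'+b)c_1$ with $s+1$ even and $a'b(2a'+b)c_1$ odd, that odd integer is itself a perfect square, hence $\equiv1\pmod 8$.

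Next I would analyze an arbitrary solution in positive integers. Setting $x=2^ix_1$, $y=2^jy_1$, $z=2^kz_1$ with $x_1,y_1,z_1$ odd, the equation reads $2^{2i+1}a'x_1^2+2^{2j}by_1^2=2^{2k+s}c_1z_1^2$. The two left-hand summands have $2$-valuations $2i+1$ (odd) and $2j$ (even), hence are unequal, so $v_2(\mathrm{LHS})=\min(2i+1,2j)$; since $v_2(\mathrm{RHS})=2k+s$ is odd, necessarily $2i+1<2j$ and $2i+1=2k+s$. Dividing by $2^{2i+1}$ yields $a'x_1^2+2^tby_1^2=c_1z_1^2$ with $t:=2j-2i-1\ge1$ odd, and I claim $t=1$. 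Reducing mod $8$ and using $u^2\equiv1\pmod8$ for odd $u$: if $t\ge3$ then the middle term vanishes mod $8$, forcing $a'\equiv c_1\pmod8$ and hence $a'c_1\equiv1\pmod8$; but then $a'b(2a'+b)c_1\equiv b(2a'+b)=2a'b+b^2\equiv2a'b+1\pmod8$, and as $a'b$ is odd we have $2a'b\equiv2$ or $6\pmod8$, so $a'b(2a'+b)c_1\equiv3$ or $7\pmod8$, contradicting the previous step. Therefore $t=1$, i.e.\ $v_2(y)=v_2(x)+1$ in every solution.

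Finally, coloring $n\in\N$ by $v_2(n)\bmod2$, every solution of $ax^2+by^2=cz^2$ has $v_2(x)\not\equiv v_2(y)\pmod2$, so $x$ and $y$ receive different colors; hence there is no monochromatic pair $x,y$ (distinct or not) completing to a solution, and \eqref{E:abc} is not partition regular with respect to $x,y$. I do not expect a real obstacle here: the bookkeeping at the prime $2$ is routine, and the one genuinely substantive point is the mod-$8$ computation excluding $t\ge3$, which is exactly where the hypothesis $a\equiv2\bmod4$ enters — it makes $a'b$ odd, so $2a'b\not\equiv0\pmod8$.
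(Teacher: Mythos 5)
Your proposal is correct and is essentially the paper's own argument: the coloring $n\mapsto v_2(n)\bmod 2$ is exactly the completely multiplicative $\chi$ with $\chi(2)=-1$ used in the paper, and your mod-$8$ step (odd part of $a$ congruent to odd part of $c$ forces the odd perfect square $ab(a+b)c/2^{s+1}\equiv ab+1\equiv 3,7\pmod 8$) is the same computation the paper performs mod $16$ after its descent to $x,z$ odd and $4\mid y$. The only difference is bookkeeping — you track valuations of an arbitrary solution and show $v_2(y)=v_2(x)+1$, while the paper normalizes the solution and argues by contradiction — so this counts as the same approach.
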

		In particular, this implies that the equation $x^2+2y^2=6z^2$ is not partition regular with respect to $x,y$, despite it satisfying the conclusion of \cref{P:partialconverse1}.
		
		\begin{proposition}\label{P:partialconverse3}
			Suppose that $a$ and $b$ are odd, $c\equiv2\bmod4$ and $ab(a+b)c$ is a non-zero perfect square.
			If \eqref{E:abc} is partition regular with respect to $x,y$, then $ab\equiv1\bmod8$.
		\end{proposition}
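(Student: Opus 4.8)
The plan is to prove the contrapositive by exhibiting an explicit finite colouring of $\N$ that has no monochromatic solution. Thus I assume the hypotheses of the proposition together with $ab\not\equiv1\bmod8$ and show that \eqref{E:abc} is not partition regular with respect to $x,y$. The colouring will be built purely from $2$-adic data: the assumption that $ab(a+b)c$ is a nonzero perfect square, combined with $a,b$ odd and $c\equiv2\bmod4$, puts a rigid constraint on the $2$-adic valuations of the coefficients, and the relation "$x\equiv y$" (in a suitable $2$-adic sense imposed by the colouring) then propagates through the equation down to the modulus $8$, where it contradicts $ab\not\equiv1\bmod8$.

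First I record the $2$-adic bookkeeping. Write $c=2c'$ with $c'$ odd, and $a+b=2^{\rho}u$ with $u$ odd. Since $a,b$ are odd and $c\equiv2\bmod4$ we have $v_2\big(ab(a+b)c\big)=\rho+1$, which must be even, so $\rho=2\ell+1$ is odd; and since $ab(a+b)c=2^{2\ell+2}\,abuc'$ is a perfect square with $2^{2\ell+2}$ a square, the odd integer $abuc'$ is a perfect square, hence $abuc'\equiv1\bmod8$. Next, in any solution of \eqref{E:abc} the right side $cz^2=2c'z^2$ has odd $2$-adic valuation while $ax^2$ and $by^2$ have even valuations; hence $v_2(ax^2)=v_2(by^2)$ (otherwise the left side would have even valuation), i.e. $v_2(x)=v_2(y)=:t$. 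Writing $x=2^tx_1$, $y=2^ty_1$, $z=2^sz_1$ with $x_1,y_1,z_1$ odd and cancelling $2^{2t}$, the equation becomes $ax_1^2+by_1^2=2^{1+2(s-t)}c'z_1^2$, so $v_2(ax_1^2+by_1^2)$ is odd.

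Now take the colouring $\chi(n):=n/2^{v_2(n)}\bmod2^{K}$ with $K:=2\ell+3$ (finitely many colours). In a monochromatic solution, $\chi(x)=\chi(y)$ together with $v_2(x)=v_2(y)$ forces $x_1\equiv y_1\bmod2^{K}$, hence $x_1^2\equiv y_1^2\bmod2^{K+1}$ (since $x_1+y_1$ is even), and therefore
$$ax_1^2+by_1^2\equiv(a+b)x_1^2=2^{2\ell+1}ux_1^2\pmod{2^{K+1}}.$$
As $2\ell+1<K+1$ this forces $v_2(ax_1^2+by_1^2)=2\ell+1$, so comparing with the previous paragraph gives $s-t=\ell$ and $(ax_1^2+by_1^2)/2^{2\ell+1}=c'z_1^2$; dividing the display by $2^{2\ell+1}$ yields $c'z_1^2\equiv ux_1^2\bmod2^{K-2\ell}$, i.e. modulo $8$. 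Since $x_1,z_1$ are odd, this gives $c'\equiv u\bmod8$; combined with $abuc'\equiv1\bmod8$ and $(c')^2\equiv1\bmod8$ we obtain $ab\equiv1\bmod8$, contradicting our assumption. Hence $\chi$ has no monochromatic solution at all, a fortiori none with distinct $x,y$, and \eqref{E:abc} is not partition regular with respect to $x,y$.

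The whole argument is a finite computation with $2$-adic valuations and residues, so there is no single genuinely hard step. The point requiring the most care is calibrating the modulus $K$ against $\ell=\tfrac12\big(v_2(a+b)-1\big)$ so that the relation $x_1\equiv y_1$ still carries information modulo $8$ after being squared and divided by $2^{2\ell+1}$; the choice $K=2\ell+3$ is exactly enough (taking $K$ one smaller would only yield a congruence modulo $4$, which is too weak to rule out $ab\equiv5\bmod8$). One should also verify, as is the case here, that nothing in the argument uses the signs of $a,b,c$ — only their $2$-adic valuations and residues modulo powers of $2$.
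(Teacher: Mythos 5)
Your proof is correct and follows essentially the same route as the paper's: a colouring by the odd part modulo a power of $2$ calibrated to $v_2(a+b)$, reduction of the equation at the exact power $2^{v_2(a+b)}$, and the fact that odd squares (including $ab(a+b)c/2^{v_2(a+b)+1}$) are $1\bmod 8$. The only differences are cosmetic: you argue the contrapositive, extract $v_2(x)=v_2(y)$ by a direct valuation comparison instead of the paper's divide-by-$2$ reduction, and use modulus $2^{d+2}$ rather than $2^{d+3}$ by exploiting the extra factor of $2$ in $x_1^2-y_1^2$.
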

		For instance, the equation $3x^2+5y^2=30z^2$ is not partition regular with respect to $x,y$, but satisfies the conclusion of \cref{P:partialconverse1}.
		
		None of our results suffices to decide whether, say, any of the equations
		$$x^2+17y^2=34 z^2,\quad 8x^2+3y^2=66z^2,\quad 5x^2+7y^2=105z^2
		$$
		are partition regular with respect to $x,y$.
		
		The proofs of Propositions~\ref{P:partialconverse1}-\ref{P:partialconverse3} are elementary and are given in \cref{S:Conj1}.
		
		\subsection{Notation} \label{SS:notation}
		We let  $\N:=\{1,2,\ldots\}$,  $\Z_+:=\{0,1,2,\ldots \}$, $\Z_r:=\Z/(r\Z)$, $\R_+:=[0,+\infty)$, $\S^1\subset\C$ be the unit circle, and $\U$ be  the closed complex unit disk.
		With $\P$ we
		denote the set of primes and throughout we use the letter $p$ to denote primes.


		For $t\in \R$, we let $e(t):=e^{2\pi i t}$.
		For $z\in \C$, with $\Re(z)$, $\Im(z)$  we denote the real  and imaginary parts of $z$ respectively, and we let  $\exp(z):=e^z$.
		
		For  $N\in\N$, we let $[N]:=\{1,\dots,N\}$. We often denote sequences $a\colon \N\to \U$
		by  $(a(n))$, instead of $(a(n))_{n\in\N}$.
		
		If $A$ is a finite non-empty subset of the integers and $a\colon A\to \C$, we let
		$$
		\E_{n\in A}\, a(n):=\frac{1}{|A|}\sum_{n\in A}\, a(n).
		$$
		We write $a(n)\ll b(n)$ if for some $C>0$ we have $a(n)\leq C\, b(n)$ for every $n\in \N$.

		Throughout this article, the letter $f$ is typically  used for multiplicative functions  and the letter $\chi$ for Dirichlet characters.
		
			\subsection{Acknowledgement}  The authors would like to thank James Leng for pointing out an error in a previous version of the manuscript, which led to a modification of the weight sequences used in the proofs.
		
		\section{Roadmap to the proofs} \label{S:Roadmap}
		We shall begin with the general strategy developed in our previous work \cite{FKM23}.	In order to explain our argument we introduce  a few notions. 
		A function $f\colon \N\to \U$, where $\U$ is the complex unit disk,  is called {\em multiplicative} if
		$$
		f(mn)=f(m)\cdot f(n)  \quad \text {  whenever  }  (m,n)=1.
		$$
		Whenever necessary, we extend $f$ to an even function in $\Z$ by letting $f(0):=0$ and $f(-n):=f(n)$ for $n\in \N$.
		It is called {\em completely multiplicative} if the previous equation holds for all $m,n\in\N$.
		Let
		$$
		\CM:=\{f\colon \N\to \S^1 \text{ is a completely multiplicative function}\}.
		$$
		Throughout, we assume that $\CM$ is equipped with the topology of pointwise convergence; then $\CM$ is a metrizable compact space with this topology.
		\begin{definition}\label{D:aperiodic} We say that the multiplicative function $f\colon \N\to \U$ is 
			(see \eqref{E:DPn} for the definition of $\D(f,g)$)
			\begin{itemize}
				\item {\em pretentious} if $\D(f,\chi\cdot n^{it})<+\infty$  for some  $t\in \R$ and Dirichlet character $\chi$. 
				
				\item 	 {\em  aperiodic } if it is not pretentious.
			\end{itemize}
		\end{definition}
		\begin{remark}  A  well-known result of Daboussi-Delange~\cite[Corollary~1]{DD82}  states that a multiplicative function is aperiodic if and only if   
			$\lim_{N\to\infty}\frac1N\sum_{n=1}^N\, f(an+b)=0$
			for every $a,b\in\N$. But such a clean characterization is not possible for the more general notion of $P$-aperiodicity, which we will introduce shortly.
		\end{remark}
		We will also need the following more refined notions of aperiodicity and pretentiousness (the usual notion given in the previous definition  corresponds to the case $P(m,n)=n$ in the following definition).
		\begin{definition}\label{D:Paperiodic}
			Let  $P\in \Z[m,n]$ be a homogeneous polynomial.	 
			We say that the multiplicative function $f\colon \N\to \U$ is (see \eqref{E:DP} for the definition of $\D_P(f,g)$)
			\begin{itemize}
				\item {\em $P$-pretentious} if $\D_P(f, \chi \cdot  n^{it})<\infty$  for some $t\in\R$ and Dirichlet character $\chi$ and write $f\sim_P \chi \cdot  n^{it}$. 
				\item 
				{\em $P$-aperiodic } if it is not $P$-pretentious. 
			\end{itemize}
		\end{definition}
		\begin{remark}
			If  the multiplicative function $f$ is pretentious, then it is $P$-pretentious for every  $P\in \Z[m,n]$, and conversely, if $f$ is $P$-aperiodic for some $P\in \Z[m,n]$, then it is aperiodic. More generally, if $P_1,P_2\in \Z[m,n]$ are homogeneous polynomials such that $\omega_{P_1}(p)\geq \omega_{P_2}(p)$ (see \cref{D:omega}) for all but finitely many $p\in \P$,  then a  $P_2$-aperiodic multiplicative function  is also $P_1$-aperiodic and thus aperiodic. Conversely, a $P_1$-pretentious multiplicative function is also $P_2$-pretentious.  
		\end{remark}
		The utility of aperiodic or $P$-aperiodic multiplicative functions for our arguments is due to the expectation that they have vanishing correlations along certain polynomials. The exact  property is defined below. 
		\begin{definition}\label{D:CorVan}
			We say that the polynomials $P_1,P_2\in \Z[m,n]$ are {\em good for vanishing of correlations of aperiodic multiplicative functions} whenever   the following property holds: If  $f_1,f_2\colon \N\to \U$ are multiplicative functions  such that either  $f_1$ is $P_1$-aperiodic or $f_2$ is $P_2$-aperiodic,
			then 	for every $Q\in \N$ and $a,b\in \Z_+$ we have 
			$$
			\lim_{N\to\infty}\E_{m,n\in [N]} \,{\bf 1}_S(Qm+a,Qn+b)\cdot  f_1(P_1(Qm+a,Qn+b)) \cdot f_2(P_2(Qm+a,Qn+b))=0,
			$$
			where $S:=\{m,n\in\N\colon P_1(m,n)>0,P_2(m,n)>0\}$. 
		\end{definition}
		
		The next result will play a crucial role in establishing  \cref{T:PairsDensityParametric} and immediately implies \cref{T:PairsDensityParametricIrreducible}.
		\begin{theorem}\label{T:DensityRegularQuadratic}
			Suppose that for some $\alpha,\beta,\alpha',\beta',\gamma\in \Z$ and  $\ell,\ell'\in \N$, the polynomials $P_1,P_2\in \Z[m,n]$ satisfy one of the following conditions:
			\begin{enumerate} \item\label{I:a}  $P_1(m,n)=\ell (m^2+\alpha mn+\beta n^2), \, P_2(m,n)=\ell' (m+\gamma  n)n$, and $P_1$ is irreducible; 
				
				\item \label{I:b}  $P_1(m,n)= m^2+\alpha mn+\beta n^2, \, P_2(m,n)= m^2+\alpha' mn+\beta' n^2,$  and $P_1,P_2$ are distinct and  irreducible.
			\end{enumerate}
			Suppose also that the pair $(P_1,P_2)$ is good 	for vanishing of correlations of aperiodic multiplicative functions.
			Then the pair $(P_1,P_2)$ is good for density regularity. 
		\end{theorem}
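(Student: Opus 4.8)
The goal is to deduce good-for-density-regularity from good-for-vanishing-of-correlations, following the structure of the Pythagorean-pairs argument in \cite{FKM23}. The plan is to first reduce, via a Furstenberg-type correspondence over the multiplicative semigroup $\N$ (equivalently, working with the compact group $\CM$ of completely multiplicative functions), the density-regularity statement to an averaged positivity statement. Concretely, given $\Lambda$ with $\bar d_\Phi(\Lambda)>0$, one passes to a ``multiplicative'' measure-preserving system on $\CM$ (or on a suitable compactification) in which $\Lambda$ corresponds to a set of positive measure, and one must show that for a positive-density set of pairs $(m,n)$ the intersection $(P_1(m,n))^{-1}\Lambda\cap(P_2(m,n))^{-1}\Lambda$ has positive upper density. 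After a standard averaging and a change to logarithmic averages (permissible for the present applications), this becomes the assertion that
\[
\liminf_{N\to\infty}\ \E^{\log}_{k}\ \E_{m,n\in[N]}\ \one_S(m,n)\cdot \one_\Lambda(kP_1(m,n))\cdot \one_\Lambda(kP_2(m,n))>0,
\]
or rather a variant where the outer average is replaced by an integral over $\CM$ of products of completely multiplicative functions evaluated at $P_1,P_2$. Spelling this out is the ``initial maneuvering'' alluded to in \cref{S:Roadmap}.

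\textbf{The circle-method decomposition.} The core step is to analyze the integral over $\CM$ by splitting the group into ``minor arcs'' (aperiodic $f$) and ``major arcs'' (pretentious $f$), mirroring the Hardy--Littlewood circle method. On the minor arcs one uses precisely the hypothesis that $(P_1,P_2)$ is good for vanishing of correlations of aperiodic multiplicative functions: any contribution in which the relevant multiplicative function is $P_1$- or $P_2$-aperiodic tends to $0$, because the average
\[
\E_{m,n\in[N]}\ \one_S(Qm+a,Qn+b)\cdot f_1(P_1(Qm+a,Qn+b))\cdot f_2(P_2(Qm+a,Qn+b))\to 0.
\]
The subdivision into residue classes $Qm+a,Qn+b$ is what lets one handle local obstructions at small primes and the positivity constraint $S$. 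On the major arcs one is left with pretentious multiplicative functions, for which one invokes the concentration estimates for multiplicative functions along binary quadratic forms (the ``concentration inequalities'' advertised in the abstract): these say that for pretentious $f$, the values $f(P_i(m,n))$ are, on average over $(m,n)$ in a box (and over a short multiplicative range of dilates $k$), close to a constant of modulus one, and in fact can be made close to $1$ after passing to a suitable sub-average. Combining the two, the integral over $\CM$ is, up to an arbitrarily small error, controlled from below by the major-arc contribution, which is strictly positive because $\bar d_\Phi(\Lambda)>0$ forces the ``$1$-component'' of the correlation to be bounded below by (a power of) $\bar d_\Phi(\Lambda)$.

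\textbf{Two cases and the main obstacle.} In case \eqref{I:a} one of the two forms factors linearly, so the relevant single quadratic form $P_1=\ell(m^2+\alpha mn+\beta n^2)$ is irreducible and one only needs vanishing/concentration for one irreducible quadratic form and one reducible one; this is essentially within reach of the tools of \cite{FKM23} combined with \cref{T:corrvanisha}. Case \eqref{I:b}, with two genuinely irreducible quadratic forms sharing the same leading coefficient, is the hard part: the minor-arc analysis now requires the full force of the good-for-vanishing-of-correlations hypothesis (which for \eqref{I:b} is exactly \cref{C:2quadratic}), and the major-arc analysis must handle \emph{two} binary quadratic forms simultaneously, hence two number fields (the splitting fields of $P_1$ and $P_2$), possibly of positive discriminant and thus with infinitely many units. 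Controlling the joint concentration of a pretentious $f$ along both $P_1(m,n)$ and $P_2(m,n)$ — matching the local pretentious data of $f$ against the Hecke-character/idele-class structure of both fields at once, while keeping the unit groups under control — is where the real work lies; the equality of leading coefficients of $P_1$ and $P_2$ is used crucially here (as the remark after the theorem warns, it fails otherwise). Once both arcs are in place, a standard compactness/positivity argument over $\CM$ extracts, from the lower bound on the averaged correlation, an actual pair $(m,n)$ with $P_1(m,n),P_2(m,n)$ distinct and positive and $\bar d_\Phi\big((P_1(m,n))^{-1}\Lambda\cap(P_2(m,n))^{-1}\Lambda\big)>0$, which is the desired conclusion.
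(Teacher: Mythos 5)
Your outline follows the same route as the paper (reduction to an integral over $\CM$ against a positive-definite measure $\sigma$ with $\sigma(\{1\})>0$, then a ``minor arc/major arc'' split into aperiodic and pretentious $f$, with the vanishing hypothesis handling the aperiodic part), but as written it is a plan rather than a proof: the entire major-arc analysis, which is the actual content of the theorem, is left at the level of ``this is where the real work lies.'' Concretely, what is missing is the mechanism that makes the pretentious contribution non-negative. In the paper this is achieved not by any Hecke-character/idele-class matching, but by (i) restricting to progressions $Qm+a,Qn+b$ with $Q$ ranging over a multiplicative F\o lner-type family $\Phi_K$ (and, in case \eqref{I:b}, with $a,b$ chosen via a Chinese-remainder/Hensel construction so that $(P_j(a,b),Q)=Q_j$ and $P_j(a,b)\equiv 1 \bmod r!$), (ii) applying the concentration estimates to factor the averages as $f(Q)Q^{-it}$ (case \eqref{I:a}) or $\tilde f_1(Q_1)\overline{\tilde f_2(Q_2)}$ (case \eqref{I:b}) times a quantity independent of $Q$, and (iii) averaging multiplicatively over $Q$ (resp.\ $Q_1,Q_2$) so that these factors vanish unless $f$ agrees with $\chi_j\cdot n^{it}$ on the relevant primes, in which case one still must show the remaining term is close to $1$: this uses the weight $w_{\delta,P_1,P_2}$ to neutralize the Archimedean factor $(P_1)^{it}(P_2)^{-it}$, and, in case \eqref{I:b}, the cancellation $G_{P_1,N}=G_{P_2,N}$ of the oscillatory Euler-product factors, which rests on the fact that the two pretentious parameters coincide ($t_1=t_2$ and $\chi_1=\chi_2$ on $\CP_{P_1}\cap\CP_{P_2}$). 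None of these steps appears in your proposal, and without them the claimed positivity does not follow.

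A second, related gap is in how you extract positivity at the end. You assert that the major-arc contribution is ``strictly positive because $\bar d_\Phi(\Lambda)>0$ forces the $1$-component of the correlation to be bounded below,'' but the atom at $f=1$ only helps if the contribution of the \emph{rest} of the pretentious spectrum is shown to be (asymptotically) non-negative; that pointwise non-negativity is exactly what the iterated limits and the $Q$-averaging described above are designed to deliver, and it cannot be waved through by compactness. Also note a small inaccuracy: the paper's argument uses Ces\`aro averages over $m,n\in[N]$ together with multiplicative averages over $Q$, not a passage to logarithmic averages. Finally, the delicate order of the limits ($r$, $\delta$, $K$, $N$) and the fact that the limit infimum in $N$ cannot be exchanged with the average over $Q$ are essential in the paper and would need to be addressed in any complete write-up.
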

		To see why the polynomials $P_1,P_2$ described in part \eqref{I:a} of \cref{T:DensityRegularQuadratic} contain those in \cref{T:PairsDensityParametric}, suppose $\tilde P_1(m,n)=\tilde\alpha m^2+\tilde\beta n^2$ and $\tilde P_2(m,n)=\tilde\gamma mn$.
		Then letting $\ell=\tilde\alpha$, $\beta=\tilde\alpha\tilde\beta$, $\alpha=\gamma=0$, and $\ell'=\tilde\alpha\tilde\gamma$ it follows that $P_j(m,n)=\tilde P_j(m,\tilde\alpha n)$, for $j=1,2$.
		If $(P_1,P_2)$ is good for density regularity, then so is $(\tilde P_1,\tilde P_2)$.
		Therefore, in order to prove Theorem~\ref{T:PairsDensityParametric} it suffices to combine case~\eqref{I:a} 
		of \cref{T:DensityRegularQuadratic} with the next result.
		
		\begin{theorem}\label{T:corrvanisha}
			If  $P_1,P_2\in \Z[m,n]$ are as in  case~\eqref{I:a} of Theorem~\ref{T:DensityRegularQuadratic}, then they are good for vanishing of correlations of aperiodic multiplicative functions. 
		\end{theorem}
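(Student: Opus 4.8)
The plan is to establish Theorem~\ref{T:corrvanisha} --- the vanishing of correlations $\E_{m,n\in[N]}\,\one_S(Qm+a,Qn+b)\,f_1(P_1(Qm+a,Qn+b))\,f_2(P_2(Qm+a,Qn+b))\to 0$ when $P_1(m,n)=\ell(m^2+\alpha mn+\beta n^2)$ is irreducible, $P_2(m,n)=\ell'(m+\gamma n)n$, and either $f_1$ is $P_1$-aperiodic or $f_2$ is $P_2$-aperiodic. The first reduction is to absorb the constants $\ell,\ell'$: since $f_j$ is multiplicative, $f_j(\ell_j \cdot k)$ is, on each residue class of $k$ modulo a suitable modulus, a bounded multiplicative-type twist of $f_j(k)$, so after passing to arithmetic progressions (which is allowed, since the definition of ``good for vanishing'' already quantifies over all $Q,a,b$) we may assume $\ell=\ell'=1$. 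Similarly, by completing the square and a linear change of variables in $(m,n)$ --- again reducible to progressions --- we may assume $P_1(m,n)=m^2-Dn^2$ for the (nonzero, non-square) discriminant-type integer $D$, or more invariantly that $P_1$ is a primitive integral binary quadratic form, and $P_2(m,n)=mn$ after the substitution killing the shear $m\mapsto m+\gamma n$.

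The heart of the argument splits on which function is aperiodic. \textbf{Case A: $f_2$ is $P_2$-aperiodic, i.e.\ $f_2$ is aperiodic in the ordinary sense} (since $P_2(m,n)=mn$ factors linearly, $P_2$-aperiodicity is equivalent to ordinary aperiodicity by the remark after \cref{D:Paperiodic}). Here I would fix $n$ and sum over $m$: the inner average is $\E_{m\in[N]}\,f_1(P_1(Qm+a,Qn+b))\,f_2((Qm+a)(Qn+b))$, and since $f_2((Qm+a)(Qn+b)) = f_2'(Qm+a)$ where $f_2'$ is $f_2$ twisted by the constant $f_2(Qn+b)$ on the relevant class, this is an average over $m$ of a multiplicative function of $m$ along the \emph{linear} form $Qm+a$ times a multiplicative function along the quadratic form $P_1$. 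One then invokes a Gowers-uniformity / concentration input for multiplicative functions along a product of a quadratic and a linear form --- this is precisely the kind of statement the paper advertises (``new uniformity properties of aperiodic multiplicative functions''), and for the linear-times-anything case it should follow from Daboussi--Delange-type equidistribution together with the structure theory: an aperiodic $f_2$ makes the $m$-average $o(1)$ uniformly in $n$, and then summing trivially over $n$ gives the claim. \textbf{Case B: $f_1$ is $P_1$-aperiodic.} This is the genuinely quadratic case. Since $P_1$ is irreducible, I would pass to the number field $K=\Q(\sqrt D)$ (or the order attached to $P_1$): values $P_1(m,n)$ are norms $N(m-n\sqrt D)$ of algebraic integers, so $f_1(P_1(m,n))$ is governed by a multiplicative function on ideals of $\CO_K$, and $P_1$-aperiodicity of $f_1$ translates into non-pretentiousness of that ideal-multiplicative function with respect to Hecke characters. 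The correlation then becomes an average, over the lattice $\CO_K$ (intersected with a box and a congruence condition from $Q,a,b$), of $g(\mathfrak{a})\cdot f_2(\text{value of }P_2)$; by orthogonality/Fourier analysis on the idele class group one expands $g$ against Hecke characters, and the off-trivial contribution is bounded using a mean-value estimate (Halász-type in the number-field setting) exploiting that $g$ pretends to be no Hecke character. The weight $f_2(P_2(m,n))=f_2((m+\gamma n)n)$ is handled either by a further linear-form equidistribution input or, if $f_2$ is also aperiodic, by the Case~A mechanism; if $f_2$ is pretentious it is essentially a bounded character and can be folded into the Hecke-character sum.

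Concretely the order of steps is: (1) reduce to $\ell=\ell'=1$ and to a primitive form $P_1$, $P_2(m,n)=mn$, via passage to arithmetic progressions; (2) dispose of the case where $f_2$ is aperiodic by fixing $n$, using linear-form equidistribution in $m$ to get the inner average $o(1)$ uniformly, then averaging over $n$; (3) in the remaining case $f_1$ is $P_1$-aperiodic while $f_2$ is pretentious, lift $f_1\circ P_1$ to an ideal-multiplicative function $g$ on $\CO_K$, record that $P_1$-aperiodicity $\Longleftrightarrow$ $g\not\sim$ any Hecke character, and fold the bounded pretentious $f_2$ into the character side; (4) apply a Halász/mean-value bound for multiplicative functions on ideals restricted to a box-plus-congruence region to conclude the average is $o(1)$; (5) assemble the cases.

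The main obstacle will be step~(4) --- the quantitative mean-value / concentration estimate for an ideal-multiplicative function $g$ on $\CO_K$ summed over the \emph{lattice points in a box} (rather than over ideals of bounded norm), with the extra multiplicative weight $f_2(P_2(m,n))$ and a congruence restriction, and crucially with the bound \emph{uniform} enough to survive the outer sum over $n$ (or over the second variable after diagonalizing). The difficulty is compounded precisely as the introduction warns: when $D>0$ the field $K$ is real quadratic with an infinite unit group, so the fundamental domain for $\CO_K/\CO_K^\times$ is unbounded and one must truncate the unit action and control the resulting error --- this is the ``number fields with infinitely many units'' issue flagged after \cref{T:PairsDensityParametricIrreducible}. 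I expect the remedy to be a dyadic decomposition of the box into regions on which the embedding $m-n\sqrt D\mapsto(\sigma_1,\sigma_2)$ is roughly constant in ``shape'', reducing each piece to a standard mean-value estimate over ideals of comparable norm, and then summing the logarithmically many pieces; making the saving beat this logarithmic loss (or, as the paper notes suffices, working with logarithmic averages throughout) is where the real work lies.
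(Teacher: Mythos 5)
The central step of your plan (your Case A) has a genuine gap. After fixing $n$, the inner average is, up to a bounded factor depending only on $n$, of the form $\E_{m\in[N]}\, f_2(Qm+a)\cdot f_1\big(P_1(Qm+a,Qn+b)\big)$: a one-variable correlation of an aperiodic multiplicative function along a linear polynomial against an \emph{arbitrary} bounded multiplicative function along a quadratic polynomial in $m$. Daboussi--Delange gives nothing here, since from its point of view the quadratic factor is just an arbitrary bounded weight, and exploiting its multiplicative structure in a single variable is an open Elliott/Chowla-type problem (already for $f_1=f_2=\lambda$ one faces correlations of the shape $\E_{m}\,\lambda(m)\,\lambda(P_1(m,n_0))$, which are not known to vanish, let alone uniformly in $n_0$). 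The paper warns about exactly this point: the remarks after \cref{P:ConcentrationQuanti} stress that averaging over \emph{both} variables is what saves the argument, single-variable analogues failing in general. The idea your proposal is missing is the paper's key mechanism: keep the two-dimensional average, reduce (via Dirichlet characters and a linear change of variables absorbing $Q,a,b$, the constants $\ell,\ell'$, and the discriminant) to a norm form $P_d$, view $f_1(P_d(m,n))$ as a completely multiplicative function of $z=m+n\tau_d\in\Z[\tau_d]$, and strip this factor away with the Daboussi--K\'atai orthogonality criterion over the quadratic number field (\cref{C:Katai}, which rests on Sun's construction of $C$-regular associates, \cref{L:Cregular}; this is precisely how the infinite unit group in the positive-discriminant case is tamed). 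After the K\'atai step one is left with a genuinely two-variable correlation of the aperiodic function along pairwise independent linear forms, which is the known Gowers-uniformity input \cref{T:aperiodicKnown} from \cite{FH17}; the only remaining work is checking linear independence of the dilated forms, done by factoring into prime elements of $\Z[\tau_d]$.

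Your Case B is a different and much heavier program (lifting to an ideal-multiplicative function, expanding over Hecke characters, and a Hal\'asz-type mean value over lattice points in a box with congruence conditions, a multiplicative weight, and a truncation of the unit action); you flag its step (4) yourself as the obstacle, and it is not established anywhere, nor does the paper supply anything of this sort. The paper never needs it: in \cref{T:aperiodicNew} the aperiodicity hypothesis sits on a function attached to a \emph{linear} form, while the function evaluated at the irreducible quadratic form is left completely arbitrary, and this is the configuration through which \cref{T:corrvanisha} is deduced (note that for $P_2(m,n)=\ell'(m+\gamma n)n$ one has $\omega_{P_2}(p)=1$ for all but finitely many $p$, so $P_2$-aperiodicity coincides with ordinary aperiodicity). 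So, as written, the proposal both lacks the K\'atai-over-number-fields idea that makes the main case work and routes the remaining case through an unproved Hal\'asz-type estimate.
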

		We prove a more general result in \cref{T:aperiodicNew}. The key idea is to use a variant of an orthogonality criterion of Daboussi-K\'atai for quadratic number fields (see Corollary~\ref{C:Katai}, which is easily deduced  from a recent result in \cite{Su23}) in order to eventually reduce matters to a statement about vanishing of correlations of products of aperiodic multiplicative functions evaluated on linear forms (see \cref{T:aperiodicKnown}, which is borrowed from \cite{FH17}).

		\subsection{Proof plan for Theorem~\ref{T:DensityRegularQuadratic}} \label{SS:Plan}
		Following the argument used in \cite[Section~2.1]{FKM23}, we see that  to prove \cref{T:DensityRegularQuadratic},  it is sufficient  to establish the next result:  \rem{From this point on I started adding the parameter $c$ in various places}
		\begin{theorem}\label{T:MainMulti'}
			Let $\sigma$ be a  bounded Borel measure on $\CM$ such that $\sigma(\{1\})>0$ and
			\begin{equation}\label{E:positivers}
				\int_\CM f(r)\cdot \overline{f(s)}\, d\sigma(f)\geq 0\quad  \text{for every } r,s\in \N.
			\end{equation}
			Let also $P_1,P_2\in \Z[m,n]$ satisfy the assumptions of \cref{T:DensityRegularQuadratic} and
			$w_{\delta,c,P_1,P_2}(m,n)$ be the weight defined in \eqref{E:weight2} of Lemma~\ref{L:SdeltaGeneral}.	Then   there exist $\delta\in (0,1/2)$ and $c\in \R$, such that 
			\begin{equation}\label{E:sigmapositive2}
				\liminf_{N\to\infty} \E_{m,n\in[N]}\, w_{\delta,c,P_1,P_2}(m,n)\cdot \int_{\CM} f\big(P_1(m,n)\big)\cdot \overline{f\big(P_2(m,n)\big)}\, d\sigma(f)>0.
			\end{equation}
Furthermore, in case \eqref{I:b} of \cref{T:DensityRegularQuadratic} we can take $c=0$. 
		\end{theorem}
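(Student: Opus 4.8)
\textbf{Proof plan for Theorem~\ref{T:MainMulti'}.}

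The plan is to split the measure $\sigma$ according to the pretentious/aperiodic dichotomy on $\CM$ and to show that the aperiodic part contributes nothing to the limit, so that the whole positivity comes from the pretentious part (and ultimately from the atom at $1$). Concretely, I would first decompose $\sigma = \sigma_{\mathrm{pre}} + \sigma_{\mathrm{ap}}$, where $\sigma_{\mathrm{pre}}$ is the restriction of $\sigma$ to the Borel set of pretentious completely multiplicative functions and $\sigma_{\mathrm{ap}}$ its restriction to the aperiodic ones (both are Borel sets in $\CM$, so this makes sense for a bounded Borel measure). Since $\sigma(\{1\})>0$ and $1$ is pretentious, $\sigma_{\mathrm{pre}}(\{1\})>0$. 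The quantity in \eqref{E:sigmapositive2} then splits as a sum of the corresponding integral against $\sigma_{\mathrm{pre}}$ plus the one against $\sigma_{\mathrm{ap}}$.

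For the aperiodic part, the idea is that for $\sigma_{\mathrm{ap}}$-almost every $f$, either $f$ is $P_1$-aperiodic or $f$ is $P_2$-aperiodic (this needs a short argument: if $f$ is aperiodic it is $P$-aperiodic for at least one of $P_1, P_2$ — or more precisely one uses the hypothesis of Theorem~\ref{T:DensityRegularQuadratic} that $(P_1,P_2)$ is good for vanishing of correlations, which by Definition~\ref{D:CorVan} only requires that \emph{one} of the two aperiodicity conditions hold; since $f$ aperiodic forces $f$ to be $P_j$-aperiodic for the $P_j$ with the smaller $\omega_{P_j}$, as in the remark after Definition~\ref{D:Paperiodic}, this is automatic). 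Then by Definition~\ref{D:CorVan} applied with $f_1 = f_2 = f$, and summing over the residue classes $\bmod\ Q$ on which the weight $w_{\delta,P_1,P_2}$ is built (the weight in \eqref{E:weight2} is, as in \cite{FKM23}, a finite combination of indicator functions of congruence classes times a bounded factor), we get that for each fixed aperiodic $f$,
\[
\lim_{N\to\infty} \E_{m,n\in[N]}\, w_{\delta,P_1,P_2}(m,n)\cdot f\big(P_1(m,n)\big)\cdot \overline{f\big(P_2(m,n)\big)}=0.
\]
Since this inner average is bounded by $\|w_{\delta,P_1,P_2}\|_\infty$ uniformly in $f$ and $N$, the bounded convergence theorem lets me interchange $\lim_N$ with $\int d\sigma_{\mathrm{ap}}(f)$ and conclude that the aperiodic part contributes $0$ to the $\liminf$ in \eqref{E:sigmapositive2}.

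It therefore remains to show that the pretentious part is strictly positive, i.e.
\[
\liminf_{N\to\infty} \E_{m,n\in[N]}\, w_{\delta,P_1,P_2}(m,n)\cdot \int_{\CM} f\big(P_1(m,n)\big)\cdot \overline{f\big(P_2(m,n)\big)}\, d\sigma_{\mathrm{pre}}(f)>0.
\]
This is where the real work is, and I expect it to be the main obstacle: one must use the concentration estimates for multiplicative functions along the binary quadratic forms $P_1, P_2$ (the ``major arcs'' analysis) to show that, after integrating the weight $w_{\delta,P_1,P_2}$ — whose role is precisely to localize $m,n$ so that $P_1(m,n)$ and $P_2(m,n)$ have nearly equal images under every pretentious $f$ in the support — the integrand $\int f(P_1)\overline{f(P_2)}\,d\sigma_{\mathrm{pre}}$ is close to $\int |f(\cdot)|^2 \, d\sigma_{\mathrm{pre}} = \sigma_{\mathrm{pre}}(\CM) \geq \sigma(\{1\})>0$ on a set of $(m,n)$ of positive density, while the positivity hypothesis \eqref{E:positivers} (extended from $r,s\in\N$ to the relevant values $P_1(m,n), P_2(m,n)$) guarantees the real part of the integrand never goes negative, so there is no cancellation. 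Constructing the weight $w_{\delta,P_1,P_2}$ with the right concentration properties for two (possibly both irreducible, possibly positive-discriminant) quadratic forms — this is Lemma~\ref{L:SdeltaGeneral} and the concentration machinery behind it — is the heart of the matter; once that is in hand, combining it with \eqref{E:positivers} to get the strict lower bound is a Fatou/bounded-convergence argument together with the observation that $\sigma_{\mathrm{pre}}(\{1\}) > 0$ forces the limit to be bounded below by a positive multiple of $\sigma(\{1\})$.
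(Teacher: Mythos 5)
Your plan has two genuine gaps, both located exactly where the paper has to work hardest. First, the decomposition $\sigma=\sigma_{\mathrm{pre}}+\sigma_{\mathrm{ap}}$ by ordinary pretentiousness is not the right dichotomy in case \eqref{I:b} of \cref{T:DensityRegularQuadratic}. Your ``short argument'' that an aperiodic $f$ must be $P_1$- or $P_2$-aperiodic is the converse of the remark after \cref{D:Paperiodic} and is false when both $P_1,P_2$ are irreducible: $P_j$-pretentiousness only constrains $f$ on the primes in $\CP_{P_j}$ (a set of density $1/2$), so there are aperiodic $f$ that are simultaneously $P_1$- and $P_2$-pretentious (prescribe $f$ arbitrarily on the positive-density set of primes outside $\CP_{P_1}\cup\CP_{P_2}$). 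For such $f$ neither \cref{D:CorVan} nor your ``pretentious part'' analysis applies, so your decomposition leaves a set of possibly positive $\sigma$-measure uncontrolled. The paper's dichotomy is $\CM_{p,P_1,P_2}$ versus its complement, and the hard non-negativity (\cref{P:nonnegative}\eqref{I:bnonnegative}) must be proved on all of $\CM_{p,P_1,P_2}$, which is strictly larger than $\CM_p$. (In case \eqref{I:a} your dichotomy happens to suffice because $P_2$ factors linearly, so $\omega_{P_2}(p)=1$ for almost all $p$ and aperiodic does imply $P_2$-aperiodic.)

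Second, your treatment of the pretentious part misidentifies the mechanism. The weight $w_{\delta,P_1,P_2}$ is an Archimedean cutoff, $F_\delta\big((P_1(m,n))^i(P_2(m,n))^{-i}\big)\cdot {\bf 1}_{P_j(m,n)>0}$ (not a combination of congruence conditions), and it only forces $(P_1/P_2)^{it}$ to be near $1$; for a pretentious $f\sim\chi\cdot n^{it}$ with nontrivial character or nontrivial behaviour at small primes, $f(P_1(m,n))\overline{f(P_2(m,n))}$ is \emph{not} close to $1$ on the support of the weight, so the claim that the integrand is close to $\sigma_{\mathrm{pre}}(\CM)$ on a positive-density set fails, and in fact the plain $\liminf_N$ for a fixed such $f$ is not controlled at all. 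The paper's proof gets only \emph{non-negativity}, and only after (i) restricting $m,n$ to progressions $Q\N+a,Q\N+b$ with $a,b$ chosen via \cref{L:Pi12} to depend on $Q_1,Q_2$, (ii) using the concentration estimates to factor out $f(Q)Q^{-it}$ (resp. $\tilde f_1(Q_1)\overline{\tilde f_2(Q_2)}$), and (iii) averaging multiplicatively over $Q\in\Phi_K$ (resp. $Q_1\in\Phi_{1,K},Q_2\in\Phi_{2,K}$) so that \cref{L:Fol0} kills every $f\neq1$ (resp. every $f$ with some $\tilde f_j(p)\neq1$); strict positivity then comes solely from the atom at $1$, and one returns from the progression-restricted average to \eqref{E:sigmapositive2} using the pointwise non-negativity \eqref{E:positivers} together with \cref{L:Pab}, at the cost of a factor $Q_{K_0}^{-2}$. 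None of this averaging-over-$Q$ structure, nor the final positivity-based descent to the unrestricted average, appears in your outline, and without it the asserted lower bound for the pretentious (let alone the $\CM_{p,P_1,P_2}$) part does not follow.
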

		\begin{remarks}
			$\bullet$ 	Since a homogeneous polynomial $P\in \Z[m,n]$ that vanishes on a set of positive upper density (with respect to squares $[N]\times [N]$) vanishes identically, we deduce that  if \eqref{E:sigmapositive2}
			holds for distinct polynomials $P_1,P_2$, then there exist $m,n\in \N$ such  that $P_1(m,n)$ and $P_2(m,n)$ are  positive, distinct, and $\int_{\CM} f(P_1(m,n))\cdot \overline{f(P_2(m,n))}\, d\sigma(f)>0$.
			
			$\bullet$ 	We show in \cref{T:corrvanisha} 				
			that if  $P_1,P_2\in \Z[m,n]$ are as in  case~\eqref{I:a} of \cref{T:DensityRegularQuadratic}, then they are good  for vanishing 
			of correlations of aperiodic multiplicative functions and so in this case   \eqref{E:sigmapositive2} holds unconditionally.
			
			$\bullet$ 	Our argument shows that, unconditionally,  if 
			the polynomials 
			are as in  case~\eqref{I:b} of \cref{T:DensityRegularQuadratic} and the measure $\sigma$ is supported on the set $\CM_p$ of pretentious multiplicative functions defined in \eqref{E:Mp} (or the larger set $\CM_{p,P_1,P_2}$ defined in \eqref{E:MP12}) and satisfies $\sigma(\{1\})>0$,  then \eqref{E:sigmapositive2} holds. We will show in \cref{SS:levelset} how to derive  \cref{T:levelsetsparam}  from this fact.
		\end{remarks}
		To prove \cref{T:MainMulti'} it will be crucial to restrict the values of the polynomials $P_1,P_2$ to suitable arithmetic progressions in a uniform way (this is needed to prove \cref{P:nonnegative}). To this end, we introduce  the following notation:  for $\delta>0$, $c\in \R$, $Q\in\N$, $a,b\in \Z_+$,  and multiplicative function $f\colon \N\to \U$, let\footnote{We could instead use the weight $\tilde{w}_{\delta,c,P_1,P_2}(Qm+a,Qn+b)$ in the definition of 	$L_{\delta,c,N}$, the reason we use $\tilde{w}_{\delta,c,P_1,P_2}(m,n)$ is that   later on in our argument we crucially use that the weight does not depend on $Q$. In any case, in view of \cref{L:Pab} given below, it follows that  the two weights can be used interchangeably.}   
		\begin{multline}\label{E:LQdab}
			L_{\delta,c,N}(f,Q;a,b):=\\
			\E_{m,n\in [N]}\,  \tilde{w}_{\delta,c,P_1,P_2}(m,n)\cdot f\big(P_1(Qm+a,Qn+b)\big)\cdot \overline{f\big(P_2(Qm+a,Qn+b)\big)},
		\end{multline}
		(note that $L_{\delta,c,N}$ also depends on $P_1,P_2$ but we suppress this dependence)
		where the normalized weights $\tilde{w}_{\delta,c,P_1,P_2}$ are given by 
		\begin{equation}\label{E:wtilde}
			\tilde{w}_{\delta,c,P_1,P_2}(m,n):= 	\mu^{-1}_{\delta,c,P_1,P_2} \cdot w_{\delta,c,P_1,P_2}(m,n),  
		\end{equation}
		where 
		$$
		\mu_{\delta,c,P_1,P_2}:=	\lim_{N\to \infty} \E_{m,n\in [N]}\,  w_{\delta,c,P_1,P_2}(m,n).
		$$
		Note that for every $\delta\in (0,1/2)$ the last limit exists and is positive by \cref{L:SdeltaGeneral} below
		and our assumptions about the polynomials $P_1,P_2$. 
		It follows from the positivity property \eqref{E:positivers}, \cref{L:Pab}, and Fatou's lemma, that in order to establish
		\eqref{E:sigmapositive2} it suffices to show that there exist $\delta\in (0,1/2)$, $c\in \R$,  $Q\in \N$, and  $a,b\in \Z_+$,  such that
		\begin{equation}\label{E:sigmapositive1p}
			\int_{\CM} \liminf_{N\to\infty} \Re\big( L_{\delta,c,N}(f,Q; a,b)\big)\, d\sigma(f)>0.
		\end{equation}
		In fact, to prove case~\eqref{I:a} of \cref{T:DensityRegularQuadratic}, we will take $a=1$, $b=0$, but for case \eqref{I:b} a more complicated choice of $a,b$ is needed that depends on $Q$.
		
		To establish \eqref{E:sigmapositive1p}, we use the theory of completely multiplicative functions. In the aperiodic case, which roughly speaking corresponds to the ``minor arcs'' in $\mathcal{M},$ we use the following result. 
		\begin{proposition}\label{P:aperiodicQ12}
			Let $P_1,P_2$ be homogeneous polynomials of the same degree such that the pair  $(P_1,P_2)$ is good for vanishing of correlations of aperiodic multiplicative functions.	Let $f\colon \N\to \U$ be an aperiodic multiplicative function that is  either $P_1$ or $P_2$ aperiodic. Then  for every $\delta\in (0,1/2)$, $c\in \R$,    $Q\in \N$, and $a,b\in \Z_+$,  we have
			\begin{equation}\label{E:wdmnB}
				\lim_{N\to\infty} L_{\delta,c,N}(f,Q; a,b)=0.
			\end{equation}
		\end{proposition}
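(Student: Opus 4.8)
The statement to prove is \cref{P:aperiodicQ12}: if $(P_1,P_2)$ is good for vanishing of correlations of aperiodic multiplicative functions, and $f$ is aperiodic (either $P_1$- or $P_2$-aperiodic), then $L_{\delta,N}(f,Q;a,b)\to 0$.

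The plan is to unwind the definition of $L_{\delta,N}$ and reduce to the defining property in \cref{D:CorVan}. The only genuine gap between the two is the presence of the weight $\tilde w_{\delta,P_1,P_2}(m,n)$ inside $L_{\delta,N}$, whereas \cref{D:CorVan} gives vanishing of the unweighted average $\E_{m,n\in[N]}\mathbf 1_S(Qm+a,Qn+b)\,f_1(P_1(\cdot))f_2(P_2(\cdot))$ (with $f_1=f$, $f_2=\bar f$).

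Let me think about what the weight $w_{\delta,P_1,P_2}$ actually is. It's defined in equation \eqref{E:weight2} of \cref{L:SdeltaGeneral}, which we don't see. But from context — "restrict the values of the polynomials $P_1,P_2$ to suitable arithmetic progressions in a uniform way" — the weight is presumably something like a smoothed indicator that $P_1(m,n)$ and $P_2(m,n)$ lie in certain residue classes / that their "small prime parts" are controlled. A standard trick: such a weight can be written as a (possibly infinite, but effectively finite at scale $N$) linear combination of indicator functions of arithmetic progressions in $m$ and $n$, or more precisely a Fourier-type / Dirichlet-character expansion. Actually, more likely: the weight $w_\delta$ depends only on $m,n$ through congruence conditions modulo some $W = W(\delta)$, i.e. it's $W$-periodic in both $m$ and $n$. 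If that's the case, the reduction is clean.

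So here's the plan. First, recall from \cref{L:SdeltaGeneral} (and the accompanying discussion) the explicit form of the weight $w_{\delta,P_1,P_2}$; the key structural fact to extract is that $w_{\delta,P_1,P_2}(m,n)$ is a bounded function that depends on $(m,n)$ only through its residue modulo some modulus $W=W_\delta\in\N$ (or can be approximated in $L^1$ by such; if it's genuinely a finite sum of congruence-indicators this is exact). Second, split the average defining $L_{\delta,N}$ over the $W^2$ residue classes of $(m,n)\bmod W$: on the class $(m,n)\equiv(u,v)\bmod W$ write $m=Wm'+u$, $n=Wn'+v$, so that $Qm+a = QWm' + (Qu+a)$ and $Qn+b = QWn' + (Qv+b)$, i.e. we get an average of $f(P_1(Q'm'+a',Q'n'+b'))\overline{f(P_2(Q'm'+a',Q'n'+b'))}$ with $Q'=QW$, $a'=Qu+a$, $b'=Qv+b$, against the constant weight value $w_{\delta,P_1,P_2}(u,v)$. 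Third, observe that on each such subaverage, after also inserting the indicator of $S$ (the positivity region $P_1>0,P_2>0$ — note $\tilde w_\delta$ should already be supported there, or we restrict to it; one should check $w_\delta$ is supported on $S$, which it ought to be by construction), we are exactly in the situation of \cref{D:CorVan} with polynomials $P_1,P_2$, shift parameters $Q',a',b'$, and multiplicative functions $f_1=f$, $f_2=\bar f$. Since $f$ is $P_1$- or $P_2$-aperiodic, $\bar f$ is likewise (complex conjugation preserves aperiodicity, as $\D_P(f,g)=\D_P(\bar f,\bar g)$), so the hypothesis of \cref{D:CorVan} applies and each subaverage tends to $0$. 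Summing the $W^2$ subaverages with bounded coefficients $w_{\delta,P_1,P_2}(u,v)/\mu_{\delta,P_1,P_2}$ gives $L_{\delta,N}(f,Q;a,b)\to 0$.

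The main obstacle I anticipate is precisely the structure of the weight $w_{\delta,P_1,P_2}$: if it is not literally a finite $W$-periodic function of $(m,n)$ but only an $L^1$-limit of such (e.g. an infinite series over moduli, converging as $\delta$-dependent but $N$-independent truncation errors go to zero), then one needs an extra $\varepsilon/3$-type argument: approximate $w_\delta$ in $L^1(\E_{m,n\in[N]})$ uniformly in $N$ by a $W$-periodic weight $w_\delta'$ (using boundedness of $f$ and $\bar f$ to control the error term by $\|w_\delta - w_\delta'\|_1$), apply the above to $w_\delta'$, and let the approximation error go to $0$. This also requires knowing that $\mu_{\delta,P_1,P_2}>0$ and finite so the normalization is harmless — which is guaranteed by \cref{L:SdeltaGeneral} as stated. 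A second minor point to verify is that $\bar f$ inherits aperiodicity from $f$ and that the pair $(P_1,P_2)$ being "good for vanishing" is symmetric enough to accommodate $(f,\bar f)$ in the order required by \cref{D:CorVan}; this is immediate from the definition since the hypothesis there is "$f_1$ is $P_1$-aperiodic \emph{or} $f_2$ is $P_2$-aperiodic," and $f$ aperiodic forces $f$ to be $P_1$- or $P_2$-aperiodic by hypothesis, hence one of $f_1=f$, $f_2=\bar f$ satisfies the required aperiodicity after taking conjugates. Modulo these bookkeeping items, no new idea beyond \cref{D:CorVan} is needed — \cref{P:aperiodicQ12} is essentially a "localization to progressions" repackaging of the hypothesis.
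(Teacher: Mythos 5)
There is a genuine gap, and it is exactly at the point you flagged as your "main obstacle": your structural guess about the weight is wrong. By \eqref{E:weight2}, $w_{\delta,P_1,P_2}(m,n)=F_\delta\big((P_1(m,n))^i\cdot(P_2(m,n))^{-i}\big)\cdot{\bf 1}_{P_j(m,n)>0,\,j=1,2}$, where $F_\delta$ is a trapezoidal function on $\S^1$. Since $P_1,P_2$ are homogeneous of the same degree, this weight is an \emph{Archimedean} (angular/sector-type) weight: it depends on $(m,n)$ essentially only through the ratio $m/n$, not through any congruence condition. It is therefore not a $W$-periodic function of $(m,n)$, and it also cannot be approximated in $L^1(\E_{m,n\in[N]})$ uniformly in $N$ by $W$-periodic functions: within each residue class mod $W$ the directions $m/n$ equidistribute, so the best periodic approximant is essentially constant, and the approximation error stays bounded below unless the weight is trivial. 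Hence both your main decomposition into progressions $(m,n)\equiv(u,v)\bmod W$ (with $Q'=QW$, $a'=Qu+a$, $b'=Qv+b$) and your $\varepsilon/3$ fallback collapse; the hypothesis of \cref{D:CorVan} is never reached for the weighted average.

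The missing idea is how to remove this Archimedean weight, and it is different in kind from a localization to progressions. The paper first uses \cref{L:Pab} to replace $w_{\delta,P_1,P_2}(m,n)$ by $w_{\delta,P_1,P_2}(Qm+a,Qn+b)$, so that the weight and the multiplicative functions are evaluated at the same arguments; then it approximates $F_\delta$ uniformly by trigonometric polynomials, reducing to weights of the form $(P_1(Qm+a,Qn+b))^{ki}\cdot(P_2(Qm+a,Qn+b))^{-ki}$; finally it absorbs these factors into the multiplicative functions by passing to $f_{j,k}(n):=f_j(n)\cdot n^{ki}$, which are still multiplicative, bounded by $1$, and still $P_1$- resp.\ $P_2$-aperiodic (twisting by $n^{ki}$ does not affect $P$-aperiodicity, since that notion already quotients out twists by $\chi\cdot n^{it}$). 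Only after this absorption does \cref{D:CorVan} apply, with the indicator ${\bf 1}_S$ accounting for the positivity constraints. Your observations that ${\bf 1}_S$ matches the positivity part of the weight, that $\bar f$ inherits ($P_2$-)aperiodicity, and that the normalization by $\mu_{\delta,P_1,P_2}>0$ is harmless are all correct, but they are the easy bookkeeping; the core step of the proof (trigonometric approximation plus absorbing the Archimedean characters $n^{ki}$ into $f$) is absent from your argument, and the route you propose in its place does not work.
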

		
		The positivity property for integrals in \eqref{E:sigmapositive1p}
		will follow from positivity properties  on the sets $\CM_p$ and $\CM_{p,P_1,P_2}$ respectively.
		To state these properties, we need to introduce some additional notation. For each $K\in\N$ let
		\begin{equation}\label{E:PhiK'}
			\Phi_K	:=\Big\{\prod_{p\leq K}p^{a_p}\colon K< a_p\leq 2K\Big\}.
		\end{equation}
		The sequence $(\Phi_K)$ is a multiplicative F\o lner sequence with the property that for every $q\in\N$, if $K$ is sufficiently large then for every $Q\in\Phi_K$ we have  $q|Q$. 
		
		Let 
		\begin{equation}\label{E:Mp}
			\CM_p:=\{f\colon \N\to \S^1\colon f \text{ is pretentious}\}.
		\end{equation}
		Furthermore, if $P_1,P_2\in \Z[m,n]$ are homogeneous polynomials  let 
		\begin{equation}\label{E:MP12}
			\CM_{p,P_1,P_2}:=\{f\colon \N\to \S^1\colon f \text{ is } P_1 \text{ and } P_2 \text{ pretentious}\}.
		\end{equation}
		In \cref{L:Borel}  we show that $\CM_p$ and $\CM_{p,P_1,P_2}$ are  Borel subsets of $\CM$.
	We also let 
	\begin{equation}\label{E:CA}
		\CA:=\{n^{it}\colon t\in \R\}.
	\end{equation}

		We now give the key non-negativity property, which  is proved using known concentration estimates for linear forms (as in \cref{P:concentration1})  and new concentration estimates for irreducible binary quadratic forms (as in \cref{P:concentration2General}).
		
		\rem{Change below, in the first case we have to  integrate,  in the second it remains a positivity property for every $f$.}
		
		\begin{proposition}\label{P:nonnegative}
			Let $P_1,P_2\in \Z[m,n]$ be as  in  Theorem~\ref{T:DensityRegularQuadratic} and 	$L_{\delta,c,N}(f,Q;a,b)$ be as in \eqref{E:LQdab}.\footnote{In both of the following statements,  our argument does not allow us to exchange $\liminf_{N\to\infty}$ and $\E_{Q\in \Phi_K}$.} 
			\begin{enumerate}
				\item\label{I:anonnegative} In case \eqref{I:a} of Theorem~\ref{T:DensityRegularQuadratic},  if $\sigma$ is a bounded Borel measure on $\CM_p$ with $\sigma(\{1\})>0$, then 
			there exists a real number $c_0=c_0(\sigma,P_1,P_2)$	such that 
				$$
				\liminf_{\delta\to 0^+} \liminf_{K\to\infty}	\liminf_{N\to \infty}   \E_{Q\in \Phi_K} \Re\Big(\int_{\CM_p}	L_{\delta,c_0, N}(f, Q; 1,0) \, d\sigma(f)    \Big)> 0,
				$$
				where  $(\Phi_K)$ is as in  \eqref{E:PhiK'}.
				
				\item \label{I:bnonnegative}  In case \eqref{I:b} of Theorem~\ref{T:DensityRegularQuadratic}, for every $f\in \CM_{p,P_1,P_2}$  
				we have 
				$$
				\liminf_{r\to\infty}	\liminf_{\delta\to 0^+} \liminf_{K\to\infty} 		\liminf_{N\to \infty}
				\E_{Q_1\in \Phi_{1,K}, Q_2\in \Phi_{2,K}}\Re\big(L_{\delta,0, N}(f, Q_K; a_{r,K,Q_1,Q_2},b_{r,K,Q_1,Q_2})  \big)\geq 0,
				$$
				for suitably chosen $Q_K\in \N$, $K\in \N$ (as in \eqref{E:QK}), 
				integer subsets $(\Phi_{j,K})_{K\in \N}$, $j=1,2$ (as in \eqref{E:PhijK}), and  $a_{r,K,Q_1,Q_2},b_{r,K,Q_1,Q_2}\in [Q_K]$ (as in \cref{P:PQ12}). 
			\end{enumerate}
		\end{proposition}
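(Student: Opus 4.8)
The plan is to reduce both cases to concentration estimates for multiplicative functions, exploiting the fact that $f \in \CM_p$ (resp. $\CM_{p,P_1,P_2}$) is pretentious and thus $f \sim_{P_j} \chi_j \cdot n^{it_j}$ for each $j$. First I would recall the structure of the weight $w_{\delta,P_1,P_2}$: it localizes $m,n$ to a region where $P_1(m,n)$ and $P_2(m,n)$ are comparable in size and both positive, so that after substituting $m \mapsto Qm+a$, $n \mapsto Qn+b$, the values $P_1(Qm+a,Qn+b)$ and $P_2(Qm+a,Qn+b)$ lie in dyadic-type blocks on which $f$ is well-approximated by $\chi \cdot n^{it}$. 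The point of averaging over $Q \in \Phi_K$ (with $\Phi_K$ as in \eqref{E:PhiK'}) is that for $K$ large, $q \mid Q$ for any fixed modulus $q$; this lets us pass to residue classes modulo the conductor of $\chi$ and modulo the period structure, effectively replacing $f$ by the archimedean factor $n^{it}$ times a constant of modulus one, uniformly in the relevant range.

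For case \eqref{I:a}, where $P_1$ is an irreducible binary quadratic form and $P_2(m,n) = \ell'(m+\gamma n)n$ is reducible, I would argue as follows. After the substitution with $a=1, b=0$, write $f(P_1(Qm+1,Qn)) \cdot \overline{f(P_2(Qm+1,Qn))}$. Using $P_1$-pretentiousness and the concentration estimate for irreducible binary quadratic forms (Proposition~\ref{P:concentration2General}), and $P_2$-pretentiousness together with the concentration estimate for linear forms (Proposition~\ref{P:concentration1}), I would show that on the support of the weight the product concentrates around a value of the form $|P_1(Qm+1,Qn)|^{it} \cdot |P_2(Qm+1,Qn)|^{-it} \cdot c$, where $c$ depends only on $\chi$ and the residue data, and is asymptotically $1$ after averaging in $Q$ and taking $K\to\infty$. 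Since $P_1$ and $P_2$ have the same degree $2$, the ratio $|P_1/P_2|^{it}$ is $O(\delta)$-close to $1$ on the region cut out by the weight (this is exactly why the weight forces $P_1 \asymp P_2$), so that $\Re(L_{\delta,N}(f,Q;1,0)) \to$ something $\geq -o_\delta(1)$; sending $\delta \to 0^+$ at the end yields the claimed inequality $\geq 0$.

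For case \eqref{I:b}, both $P_1$ and $P_2$ are irreducible binary quadratic forms with equal leading coefficient, and here the subtlety is that $f$ must be simultaneously $P_1$- and $P_2$-pretentious; I would use $f \in \CM_{p,P_1,P_2}$ so that $f \sim_{P_1} \chi_1 n^{it_1}$ and $f \sim_{P_2} \chi_2 n^{it_2}$. The extra parameter $r$ and the more elaborate choice of moduli $Q_K$ and shifts $a_{r,K,Q_1,Q_2}, b_{r,K,Q_1,Q_2}$ (from Proposition~\ref{P:PQ12}) are designed so that, after localizing $P_1(Qm+a,Qn+b)$ and $P_2(Qm+a,Qn+b)$ to the right progressions, the Dirichlet-character contributions from both forms become trivial (constant of modulus one, converging to $1$ in the iterated limit) and the archimedean exponents $t_1, t_2$ can be matched — this is where one needs that the leading coefficients of $P_1, P_2$ agree, forcing $|P_1(m,n)/P_2(m,n)| \to 1$ in the weighted region, hence $|P_1|^{it_1}\overline{|P_2|^{it_2}}$ stays close to $1$ once $t_1 = t_2$, which pretentiousness along both forms forces. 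Applying Proposition~\ref{P:concentration2General} to each of $P_1, P_2$ and combining, the weighted average of the real part is $\geq -o_{r}(1) - o_\delta(1)$, and the nested $\liminf$'s over $r, \delta, K, N$ give the conclusion.

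The main obstacle I anticipate is case \eqref{I:b}: controlling the joint concentration of $f$ along two distinct irreducible quadratic forms simultaneously, and engineering the shifts $a_{r,K,Q_1,Q_2}, b_{r,K,Q_1,Q_2}$ (via Proposition~\ref{P:PQ12}) so that the residues of $P_1$ and $P_2$ at the shifted arguments can be independently prescribed modulo the relevant conductors — this requires a nontrivial solvability statement for the pair of congruences coming from the two forms, and is precisely the place where working with quadratic number fields with infinitely many units (for forms of positive discriminant) complicates matters. A secondary technical point is the interchange issues flagged in the footnote: we must carry out all estimates with $\E_{Q\in\Phi_K}$ outside $\liminf_{N\to\infty}$, so the concentration estimates must be uniform in $Q$ across $\Phi_K$, which is why the weight $\tilde w_{\delta,P_1,P_2}$ is chosen independent of $Q$.
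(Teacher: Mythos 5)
Your overall framework (use pretentiousness plus the concentration estimates of Propositions~\ref{P:concentration1} and \ref{P:concentration2General}, with the weight forcing $(P_1/P_2)^{it}$ close to $1$) is the right starting point, but there is a genuine gap at the heart of both cases: you claim that after the concentration step the remaining constant $c$ ``is asymptotically $1$ after averaging in $Q$,'' and hence that $\Re\big(L_{\delta,N}(f,Q;1,0)\big)\geq -o_\delta(1)$ essentially pointwise. This is false for general $f\in\CM_p$. After applying the concentration estimates one gets, for each $Q\in\Phi_K$,
$$
L_{\delta,N}(f,Q;1,0)\approx \overline{f(Q)\,Q^{-it}}\cdot M_{\delta,N}(f)\cdot \exp\big(G_{P_1,N}(f,K)\big)\cdot\overline{\exp\big(F_N(f,K)\big)},
$$
and neither the factor $f(Q)Q^{-it}$ nor the product of the two exponential factors is close to $1$: since $q\mid Q$ we have $\chi(Q)=0$, so pretentiousness gives no control on the phase of $f(Q)$, and the two Euler-type sums $G_{P_1,N}$ and $F_N$ are taken with different weights ($\omega_{P_1}(p)$ on $\CP_{P_1}$ versus weight $1$ on all primes) and do not cancel. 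Consequently the real part of $L_{\delta,N}(f,Q;1,0)$ can be negative for many $Q$, and no pointwise lower bound of the form $-o_\delta(1)$ holds. The actual mechanism is a dichotomy: if $f\neq n^{it}$ one shows that $\tilde L_{\delta,N}(f,Q):=f(Q)Q^{-it}L_{\delta,N}(f,Q)$ is asymptotically independent of $Q\in\Phi_K$ (Lemma~\ref{L:keyQQa}), so that $\E_{Q\in\Phi_K}L_{\delta,N}(f,Q;1,0)$ factors as $\big(\E_{Q\in\Phi_K}\overline{f(Q)Q^{-it}}\big)$ times a bounded constant, and this multiplicative F\o lner average \emph{vanishes} by Lemma~\ref{L:Fol0}; only in the complementary case $f=n^{it}$ does the weight force $L_{\delta,N}$ close to $1$. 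The nonnegativity is obtained by combining ``limit $=0$'' with ``limit $\geq 1-O_t(\delta)$,'' not by showing each case contributes a quantity near $1$.

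The same gap recurs, amplified, in case~\eqref{I:b}. There the concentration step yields $L_{\delta,N}\approx \tilde f_1(Q_1)\overline{\tilde f_2(Q_2)}\cdot \exp\big(G_{P_1,N}(f,K)\big)\overline{\exp\big(G_{P_2,N}(f,K)\big)}\cdot(\cdots)$ with $\tilde f_j=f\cdot\overline{\chi_j}\cdot n^{-it}$, and again the dichotomy is essential: if $\tilde f_j(p)\neq 1$ for some $p\in\CP_j$, the average over $Q_j\in\Phi_{j,K}$ is killed by the F\o lner oscillation argument; if $\tilde f_j\equiv 1$ on $\CP_j$ for $j=1,2$, then one must show that $G_{P_1,N}(f,K)=G_{P_2,N}(f,K)$ for large $K$ --- which uses that $t_1=t_2$ \emph{and} $\chi_1(p)=\chi_2(p)$ on all but finitely many $p\in\CP_{P_1}\cap\CP_{P_2}$ (Lemma~\ref{L:tunique}), together with $\omega_{P_1}(p)=\omega_{P_2}(p)$ generically --- so that the two oscillatory exponentials cancel to a nonnegative real factor $\exp\big(2\Re G_{P_1,N}\big)$ controlled by $\D_{P_1}(f,\chi_1 n^{it};K,\infty)$. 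You correctly identify $t_1=t_2$ and the role of Proposition~\ref{P:PQ12} in prescribing $(P_j(a,b),Q_K)=Q_j$ and $P_j(a,b)\equiv 1\pmod{r!}$, but asserting that ``the Dirichlet-character contributions become trivial'' skips both the zero-average branch of the dichotomy and the cancellation of the $\exp(G_{P_j,N})$ factors, and without these the claimed bound $\geq -o_r(1)-o_\delta(1)$ does not follow.
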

\rem{New paragraph}	
To establish  part~\eqref{I:anonnegative}, we split the integral over $\CM_p$ into the Archimedean part $\CA$ (see \eqref{E:CA}) and its complement. The Archimedean part is treated only after integration. Writing $f(n)=n^{it_f}$ on $\CA$, the weight $\tilde{w}_{\delta, c, P_1,P_2}$  makes the contribution of $L_{\delta,c,N}(f,Q;1,0)$ close to $e^{ic t_f}$, after first letting $N\to\infty$ and then $\delta\to 0^+$. An averaging argument is then used to find  a value of $c$ for which the real part of the integral over $\CA$ has the required positivity property.
	 
 On the complementary part $\CM_p\setminus \CA$, we have    $f\in \CM_p$ but $f\neq n^{it}$ for every $t\in \R$.
		Using concentration estimates we manage to show that for $Q\in \Phi_K$ the averages $L_{\delta, c,N}(f, Q; 1,0)$ are asymptotically equal to   $f(Q)\cdot Q^{-it}$  times an expression that depends only on  $K$. Using this, it is easy to show that   after taking multiplicative averages over $Q\in \Phi_K$  the resulting expression vanishes pointwise for $f\in\CM_p\setminus \CA$. Combining the above, we get the claimed positivity.
		
		In  part~\eqref{I:bnonnegative} it is not immediately  clear how to  implement the previous strategy, since  the irreducibility of the polynomials makes it difficult  to factor out a term
		$f(Q)\cdot Q^{-it}$. 
		The key observation that lets us get started, is to restrict the range of $m,n$ to $Q\N+a,Q\N+b$ for
		values of $a,b$ that  satisfy suitable congruences (see \cref{L:Pi12}) and apply concentration estimates
		along this lattice. Suppose that for $j=1,2$ we have $f\sim \chi_j \cdot  n^{it}$  for some $t\in \R$ and Dirichlet characters $\chi_j$.
		We get that  if for $j=1,2$ all the prime factors of the  integers
		$Q_j$ belong to  certain sets $\CP_j$ ($\CP_j$ are the primes  $p$ for which the equation $P_j(m,1)\equiv 0 \pmod{p}$ has a solution),
		then for $Q_j\in \Phi_{j,K}$ the   averages $L_{\delta,0, N}(f,Q;a,b)$, for suitable $Q=Q_K$ and 
		$a,b\in \N$, depending on $Q_1,Q_2$, are asymptotically equal to 
		$\tilde{f}_1(Q_1)\cdot \overline{\tilde{f}_2(Q_2)}$ times an expression depending only on $K$, where 
		$\tilde{f}_j(n):=f(n)\cdot \overline{\chi_j(n)}\cdot n^{-it}$. We conclude that if for $j=1$ or $2$ we have $\tilde{f}_j(p)\neq 1$ for some prime  $p\in \CP_j$, then the multiplicative averages over $Q_1\in \Phi_{1,K}$ and $Q_2\in \Phi_{2,K}$ ($\Phi_{j,K}$ are as in \eqref{E:PhijK})
		of $L_{\delta,0, N}(f, Q_K; a_{r,K,Q_1,Q_2},b_{r,K,Q_1,Q_2})$ vanish. 
		On the other hand, if for $j=1$ and $2$ we have $\tilde{f}_j(p)= 1$ for all primes  $p\in \CP_j$, then after using concentration estimates, and taking advantage of the fact  that in this case some unwanted oscillatory terms cancel out, we get that  
		for $\delta$ small enough  the averages
		$L_{\delta,0, N}(f, Q_K; a_{r,K,Q_1,Q_2},b_{r,K,Q_1,Q_2})$  are  positive. Combining the above we get the asserted non-negativity for all $f\in \CM_{p,P_1,P_2}$.

		We conclude this section by noting how the previous results allow us to reach our goal, which is to
		prove  Theorem~\ref{T:MainMulti'},  thus completing the proof of 
		Theorem~\ref{T:DensityRegularQuadratic}.
		\begin{proof}[Proof of Theorem~\ref{T:MainMulti'} assuming \cref{P:aperiodicQ12} and \cref{P:nonnegative}]
			We assume that the polynomials $P_1,P_2\in \Z[m,n]$ are as in case \eqref{I:b} of Theorem~\ref{T:DensityRegularQuadratic},
			the argument is similar  and slightly simpler if they are as in case \eqref{I:a} of Theorem~\ref{T:DensityRegularQuadratic}
			(the only difference is that we use part~\eqref{I:anonnegative} of \cref{P:nonnegative} instead of part~\eqref{I:bnonnegative}).
			
			Note that  if  $f\notin \CM_{p,P_1,P_2}$,  then $f$ is either $P_1$-aperiodic or $P_2$-aperiodic, hence
			\cref{P:aperiodicQ12} gives that  for every $\delta\in (0,1/2)$ and  $Q\in \N$, $a,b\in \Z_+$,  we have
			$$
			\lim_{N\to\infty}L_{\delta,0,N}(f,Q; a,b)=0.
			$$
			It follows that 
			the  non-negativity property in part~\eqref{I:bnonnegative} of \cref{P:nonnegative}  extends to all multiplicative functions $f\in \CM$. 	Using this 
			and 	Fatou's lemma we get (we follow the notation of \cref{P:nonnegative}) 
			\begin{multline}\label{E:nonnegative}
				\liminf_{r\to\infty}	\liminf_{\delta\to 0^+} \liminf_{K\to\infty} 	\liminf_{N\to \infty}\\
				\E_{Q_1\in \Phi_{1,K}, Q_2\in \Phi_{2,K}}
				\Re\big(\int_{\CM\setminus \{1\}} L_{\delta, 0, N}(f, Q_K; a_{r,K,Q_1,Q_2},b_{r,K,Q_1,Q_2})  \, d\sigma(f)\big)\geq 0.
			\end{multline}
			
			We deduce 
			from  \eqref{E:nonnegative} that 
			there exist $\delta_0>0$ and $r_0,K_0\in \N$, depending only on $\sigma$, $P_1$, $P_2$, such that
			\begin{equation}\label{E:nonnegative'}
				\liminf_{N\to \infty}
				\E_{Q_1\in \Phi_{1,K_0}, Q_2\in \Phi_{2,K_0}}\, 	\int_{\CM\setminus \{1\}} L_{\delta_0,0, N}(f, Q_{K_0}; a_{r_0,K_0,Q_1,Q_2},b_{r_0,K_0,Q_1,Q_2})  \, d\sigma(f)>- \frac{\rho}{2},
			\end{equation}
			where 
			$$
			\rho:=\sigma(\{1\})>0.
			$$
			Note that for every  $\delta\in (0,1/2)$, $Q\in \N$, $a,b\in \Z_+$, we have 
			$$
			\lim_{N\to\infty}L_{\delta,0, N}(1, Q; a,b)=\mu^{-1}_{\delta,0,P_1,P_2}\cdot \lim_{N\to\infty}\E_{m,n\in[N]}\, w_{\delta,0,P_1,P_2}(m,n)=1.
			$$ 
			Using this 
			and  the lower bound \eqref{E:nonnegative'}, we deduce  that 				
			$$
			\liminf_{N\to \infty}
			\E_{Q_1\in \Phi_{1,K_0}, Q_2\in \Phi_{2,K_0}}\, 	\int_{\CM} L_{\delta_0,0, N}(f, Q_{K_0}; a_{r_0,K_0,Q_1,Q_2},b_{r_0,K_0,Q_1,Q_2})  \, d\sigma(f)> \frac{\rho}{2}.
			$$
			(The real part is no longer needed since all integrals are known to be real by \eqref{E:positivers}.)
			
			The last estimate implies that there exist   $Q_{j,N}\in \Phi_{j,K_0}$, $j=1,2$, and      $a_{r_0,K_0,Q_{1,N},Q_{2,N}}$, $b_{r_0,K_0,Q_{1,N},Q_{2,N}}\in [Q_{K_0}]$, $N\in \N$,  such that
			\begin{equation}\label{E:Lrho0}
				\liminf_{N\to \infty}
				\int_{\CM} L_{\delta_0,0, N}(f, Q_{K_0}; a_{r_0,K_0,Q_{1,N},Q_{2,N}},b_{r_0,K_0,Q_{1,N},Q_{2,N}})  \, d\sigma(f)\geq\frac{\rho}{2}.
			\end{equation}
			
			Using \cref{L:Pab} below and since the integers $a_{r_0,K_0,Q_{1,N},Q_{2,N}}$, $b_{r_0,K_0,Q_{1,N},Q_{2,N}}$ belong to a finite set that is independent of $N$, we get that the same estimate holds if  in the definition of 
			$L_{\delta_0, 0, N}(f, Q_{K_0}; a_{r_0,K_0,Q_{1,N},Q_{2,N}},b_{r_0,K_0,Q_{1,N},Q_{2,N}})$
			we replace the weight  $\tilde{w}_{\delta_0,0,P_1,P_2}(m,n)$ with 
			$\tilde{w}_{\delta_0,0,P_1,P_2}(Q_{K_0}m+a_{r_0,K_0,Q_{1,N},Q_{2,N}},Q_{K_0}n+b_{r_0,K_0,Q_{1,N},Q_{2,N}})$. 	
			We use  this,    the positivity property
			$$
			\tilde{w}_{\delta_0,0, P_1,P_2}(m,n)\int_\CM  f\big(P_1(m,n)\big)\cdot \overline{f\big(P_2(m,n)\big)}\, d\sigma(f)\geq 0 \quad \text{for every } m,n\in\N,
			$$
			which follows from  \eqref{E:positivers}, and the fact that
			the integers $a_{r_0,K_0,Q_{1,N},Q_{2,N}}$, $b_{r_0,K_0,Q_{1,N},Q_{2,N}}$ belong to a finite set that is independent of $N$. We deduce from these facts and 
			\eqref{E:Lrho0} that
			$$
			\liminf_{N\to\infty} \E_{m,n\in[N]}\,\tilde{w}_{\delta_0,0,P_1,P_2}(m,n)\cdot \int_{\CM} f\big(P_1(m,n)\big)\cdot \overline{f\big(P_2(m,n)\big)}\, d\sigma(f)\geq
			\frac{\rho }{2(Q_{K_0})^2}.
			$$
			This establishes \eqref{E:sigmapositive2}  and completes the proof of \cref{T:MainMulti'}.
		\end{proof}
		To prove  Theorem~\ref{T:MainMulti'}, it remains  to prove \cref{P:aperiodicQ12} and  \cref{P:nonnegative}. The proof of  \cref{P:aperiodicQ12}  is relatively simple and is given in Section~\ref{SS:proof1}. 
		The proof of \cref{P:nonnegative}
		is much harder, and we will complete it  in Section~\ref{S:PRproofs}, after having  established the concentration estimates for binary quadratic forms, stated in 
		Proposition~\ref{P:concentration2General} and proved  in Sections~\ref{S:Concentration1} and \ref{S:Concentration2}.
		Finally,  \cref{T:corrvanisha}, which is used to complete the proof of \cref{T:PairsDensityParametric},  is proved in Section~\ref{S:Aperiodic}.

		\section{Vanishing of correlations for  aperiodic multiplicative functions}\label{S:Aperiodic} 
		\subsection{Statement of main result} Our goal is to prove 
		\cref{T:corrvanisha}, which is covered by the following more general result. 
		\begin{theorem}\label{T:aperiodicNew}
			Let  $s\in \N$ and suppose  that the completely multiplicative functions  $f_1,\ldots, f_s,g\colon \N\to \mathbb{U}$ are extended to even sequences in $\Z$ and that $f_1$ is aperiodic.  Let
			$L_1,\ldots, L_s$ be linear forms in two variables with integer coefficients  and suppose that either $s=1$ and $L_1$ is non-trivial, or $s\geq 2$ and the forms $L_1,L_j$ are linearly independent for $j=2,\ldots,s$.  Let also  
			$$
			P(m,n):=\alpha m^2+\beta mn+\gamma n^2,
			$$
			where $\alpha,\beta, \gamma\in \Z$,   	  	   be 
			irreducible, i.e., $D:=\beta^2-4\alpha \gamma$ is not a square, and $Q\in \N$, $a,b\in \Z_+$.
			Finally, let $K$ be  a convex subset of $\R_+^2$  that is homogeneous, i.e. $(x,y)\in K$ implies $(kx,ky)\in K$ for every $k\in \N$.  Then
			$$
			\lim_{N\to\infty} \E_{m,n\in [N]}\, {\bf 1}_{K}(Qm+a,Qn+b)  \cdot \prod_{j=1}^sf_j(L_j(Qm+a,Qn+b))\cdot g(P(Qm+a,Qn+b))=0.
			$$
		\end{theorem}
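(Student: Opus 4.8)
The plan is to recognize $g\big(P(\cdot)\big)$ as (a constant of modulus one times) a completely multiplicative function on a quadratic order, apply a K\'atai-type orthogonality criterion there in order to dispose of $g$ entirely, and then quote the known vanishing of correlations of aperiodic multiplicative functions along linear forms. For the algebraic reduction, note first that $\alpha\neq 0$ (else $P$ is reducible and $D=\beta^2$ is a square), and that since $D$ is not a square, $\omega:=\tfrac{\beta+\sqrt D}{2}$ is an algebraic integer with $\omega+\bar\omega=\beta$ and $\omega\bar\omega=\alpha\gamma$, generating the order $\CO:=\Z[\omega]$ in $K:=\Q(\sqrt D)$, and $N_{K/\Q}(\alpha m+n\omega)=\alpha^2m^2+\alpha\beta mn+\alpha\gamma n^2=\alpha\,P(m,n)$. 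Since $g\in\CM$ is completely multiplicative and even, this gives $g(P(m,n))=\overline{g(\alpha)}\cdot G(\alpha m+n\omega)$, where $G:=g\circ N_{K/\Q}\colon\CO\to\S^1$ is completely multiplicative on $\CO$. Substituting $m\mapsto Qm+a$, $n\mapsto Qn+b$, the element $\nu:=\alpha(Qm+a)+(Qn+b)\omega$ runs over a box inside a fixed residue class of $\CO$ modulo $Q$ as $m,n$ range over $[N]$, and, collecting the remaining factors into
$$
w(\nu):=\mathbf 1_{K}\big(Qm+a,Qn+b\big)\cdot\prod_{j=1}^s f_j\big(L_j(Qm+a,Qn+b)\big)
$$
(a function of the $\Z^2$-coordinates of $\nu$, extended by $0$ off that box), the quantity to be bounded equals $\overline{g(\alpha)}$ times $\E_{\nu}\,w(\nu)\,G(\nu)$.

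I would then invoke the quadratic K\'atai orthogonality criterion, \cref{C:Katai} (extracted from \cite{Su23}), applied to the order $\CO$ and the completely multiplicative function $G$. This criterion uses no hypothesis on $g$ and reduces the vanishing of $\E_\nu\,w(\nu)\,G(\nu)$ to showing that for a sufficiently generic family of pairs of non-associate prime elements $\pi_1,\pi_2\in\CO$ (with norms in appropriate ranges, or prime ideals with a choice of generators, depending on the precise shape of \cref{C:Katai}) one has $\lim_{N\to\infty}\E_{\nu}\,w(\pi_1\nu)\,\overline{w(\pi_2\nu)}=0$. Now multiplication by $\pi_i$ on $\CO\cong\Z^2$ is an invertible integer linear map $M_{\pi_i}$, so $w(\pi_i\nu)$, written in coordinates, is again of the form $\mathbf 1_{K_i}(\cdots)\prod_{j=1}^s f_j(L_j^{(i)}(\cdots))$ times a congruence constraint, for a convex homogeneous set $K_i$ (a preimage of $K$ under an affine map) and affine linear forms $L_j^{(i)}$ whose homogeneous part is $L_j\circ M_{\pi_i}$. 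Consequently the correlation above is a multiple correlation of the completely multiplicative functions $f_1,\dots,f_s,\overline{f_1},\dots,\overline{f_s}$ evaluated on the $2s$ affine linear forms $L_1^{(1)},\dots,L_s^{(1)},L_1^{(2)},\dots,L_s^{(2)}$, restricted to the convex region $K_1\cap K_2$ and to a congruence class; by \cref{T:aperiodicKnown} (the linear-forms input borrowed from \cite{FH17}), since $f_1$ is aperiodic this correlation tends to $0$ provided the form $L_1^{(1)}$ attached to $f_1$ is proportional to none of the other $2s-1$ forms.

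It remains to arrange this non-proportionality for enough prime pairs. Writing $c:=\pi_2/\pi_1\in K$, the matrix $M_c=M_{\pi_2}M_{\pi_1}^{-1}$ has the two distinct eigenvalues obtained by applying the two embeddings $K\hookrightarrow\C$ to $c$; when $c\notin\Q$ these are irrational, hence $M_c$ and $M_c^{\mathsf T}$ have no rational eigenvector, and since $L_1$ is a nonzero integer covector, $L_1\circ M_{\pi_1}$ and $L_1\circ M_{\pi_2}$ are not proportional whenever $\pi_2/\pi_1\notin\Q$. For $j\ge2$, $L_1\circ M_{\pi_1}$ and $L_j\circ M_{\pi_1}$ are proportional iff $L_1$ and $L_j$ are, which is excluded by hypothesis; and $L_1\circ M_{\pi_1}\parallel L_j\circ M_{\pi_2}$ would force $\pi_2/\pi_1$ onto one of at most $s-1$ fixed $\Q$-rational lines through the origin in $K$. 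Discarding these lines together with $\Q$ removes only a sparse set of prime pairs, which does not affect the applicability of \cref{C:Katai}; for the surviving pairs the correlation vanishes, and \cref{T:aperiodicNew} follows (and hence \cref{T:corrvanisha} by specializing to $s\le 2$ with $g$ arbitrary).

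The genuine difficulties, I expect, lie in the real quadratic case $D>0$, where $\CO^\times$ is infinite: then the box $\{\alpha m+n\omega:m,n\in[N]\}$ does not map boundedly to the set of ideals of norm $\le\alpha N^2$, so the normalization underlying \cref{C:Katai} --- the passage between elements and ideals and the summation over a fundamental domain for the unit action --- must be set up compatibly, and this is precisely the role of the convex homogeneous set $K$, which carves out a sector that tames the units; a secondary source of bookkeeping is tracking how the residue-class and convexity constraints transform under $\nu\mapsto\pi_i\nu$ so that the reduction to \cref{T:aperiodicKnown} stays within its hypotheses. The algebraic translation of the first paragraph and the proportionality analysis of the third are routine; essentially all the analytic content is outsourced to \cref{C:Katai} and \cref{T:aperiodicKnown}.
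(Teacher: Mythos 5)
Your overall strategy---use the quadratic Kat\'ai criterion to strip off $g\circ P$ and then feed the resulting correlations along linear forms into \cref{T:aperiodicKnown}---is the same as the paper's, but as written there are genuine gaps in the execution. The most serious one: you apply \cref{C:Katai} to the order $\CO=\Z[\omega]$, $\omega=\tfrac{\beta+\sqrt D}{2}$. This is in general a \emph{non-maximal} order (for instance $P(m,n)=m^2+4n^2$ gives $\Z[2i]\subsetneq\Z[i]$), whereas \cref{C:Katai} and the results of Sun it rests on (\cref{L:Cregular} and \cref{L:KataiWenbo}, i.e.\ the $C$-regular generators and the orthogonality criterion) are stated only for the ring of integers of $\Q(\sqrt{-d})$ with $d$ square-free; in a non-maximal order unique factorization of ideals and the prime/regularity apparatus behind the criterion are not available, so the criterion cannot simply be quoted there. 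The paper's proof avoids exactly this by the reduction $4\alpha\,P(m,n)=P_d(A_d(m,n))$ (see \eqref{E:Ad}), which lands in the norm form \eqref{E:Qd} of the maximal order---the footnote contrasting $m^2+3n^2$ with $m^2+mn+n^2$ is precisely this point. Your identity $N(\alpha m+n\omega)=\alpha P(m,n)$ is the same algebraic fact, but a passage to the maximal order (or a reworking of Sun's results for arbitrary orders) is still needed before \cref{C:Katai} applies, and that step is missing.

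Two further steps are not covered by the tools you cite. After the Kat\'ai step your correlations involve \emph{affine} (inhomogeneous) linear forms and a \emph{congruence-class} restriction, since you kept the residue class mod $Q$ inside the weight $w$; \cref{T:aperiodicKnown} handles homogeneous linear forms and convex-set indicators only. The paper removes the progression before passing to the number field, expanding the progression indicator into Dirichlet characters and absorbing them into the multiplicative data (as extra forms $m$ and $n$), so that the final application of \cref{T:aperiodicKnown} stays within its hypotheses; some such reduction is absent from your argument. Also, discarding the pairs $(\pi_1,\pi_2)$ with $\pi_2/\pi_1$ on finitely many rational lines is not licensed by \cref{C:Katai}, which requires the correlation hypothesis for \emph{all} admissible pairs of norm greater than $P_0$; what rescues the argument (and what the paper actually proves in case (iii) of its independence analysis, via ideal factorization and the pairwise non-associateness of $\pi_i,\overline{\pi_i}$) is that such bad pairs have bounded norm and are therefore excluded by choosing $P_0$ large---you assert sparseness instead of proving this. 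A minor misreading: the convex homogeneous set $K$ plays no role in taming the units in the real quadratic case; that is handled inside \cref{C:Katai} through Sun's $C$-regularity, while $K$ is merely a constraint inherited from the intended application.
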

		\begin{remark}
			If $P_1,P_2\in \Z[m,n]$  are as in case~\eqref{I:a} of \cref{T:DensityRegularQuadratic}, then $P_1$ is an irreducible binary quadratic form and  $P_2(m,n)=L_1(m,n)\cdot L_2(m,n)$ where $L_1,L_2$ are pairwise independent linear forms. 	Applying \cref{T:aperiodicNew} for $s:=2$, $f_1:=f_2$, $P:=P_1$, and
			$K=\{(x,y)\in \R^2_+ \colon P_j(x,y)>0, j=1,2\}$, which is a disjoint union of finitely many homogeneous convex subsets of $\R_+^2$, we get the conclusion of \cref{T:corrvanisha}.   
		\end{remark}
		In the case of completely multiplicative functions, 
		\cref{T:aperiodicNew}  generalizes \cite[Theorem~9.7]{FH17} that covered  the case  where $P(m,n)=m^2+dn^2$ for $d\equiv 1,2\! \pmod{4}$ and
		$
		P(m,n)=m^2+mn+\frac{d+1}{4}n^2$ for  $d\equiv 3\! \pmod{4}$ with $d>0;$ see also \cite{M18} for a quantitative aspect of this result for linear forms.  
		The case of binary quadratic forms with negative discriminant can be reduced to the previous cases by a simple trick explained in Section~\ref{SS:PfThm31}. On the other hand, when the binary quadratic form $P$ has a positive discriminant, new difficulties arise  that were not present
		in \cite{FH17}. The problem in this case is to find a suitable
		variant of the Daboussi-K\'atai orthogonality criterion 
		that takes into account  the existence of infinitely many units
		of the number field on which $P$ can be realized as  a norm form. This
		has recently been established in \cite{Su23}; we describe the construction in Section~\ref{SS:Katai} and adapt it for our purposes. 
		
		\subsection{Known result about aperiodic multiplicative functions}
		We will use the following result from \cite{FH17}.
		\begin{lemma}\label{T:aperiodicKnown}
		Let   $s\in \N$  and suppose that the completely multiplicative functions  $f_1,\ldots, f_s$, $g\colon \N\to \mathbb{U}$ are extended to even sequences in $\Z$ and that $f_1$ is aperiodic. Let
			$L_1,\ldots, L_s$ be linear forms in two variables with integer coefficients  and suppose that either $s=1$ and $L_1$ is non-trivial, or $s\geq 2$ and the forms $L_1,L_j$ are linearly independent for $j=2,\ldots,s$.   
			Then  for every $r\in \N$ we have 
			$$
			\lim_{N\to\infty} \E_{m,n\in [rN]}\, {\bf 1}_{K_N}(m,n)\cdot  \prod_{j=1}^sf_j(L_j(m,n)) =0,
			$$
			where  for $N\in \N$  we have  $K_N:=K\cap A_1([N]\times[N])\cap A_2([N]\times[N])$ 
			for some convex subset $K$ of $\R_+^2$ and  linear transformations $A_1,A_2\colon \Z^2\to \Z^2$ with nonzero determinant. 
		\end{lemma}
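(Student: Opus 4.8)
The plan is to strip the two ``cosmetic'' constraints built into $K_N$ — the sublattice cut out by $A_1,A_2$ and the convex body $K$ — reducing the statement to an average of $\prod_{j}f_j(L_j)$ over an honest axis‑parallel box of side $\asymp N$, and then to invoke the vanishing of Gowers norms of aperiodic multiplicative functions via a generalized von Neumann inequality.

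\textbf{Reductions.} Every pair $(m,n)$ counted in the average lies in the fixed finite‑index sublattice $\Lambda:=A_1(\Z^2)\cap A_2(\Z^2)=M\Z^2$, $M\in\Z^{2\times2}$ with $\det M\neq0$. Substituting $(m,n)=M(u,v)$ turns each $L_j$ into the integer form $L_j\circ M$ — non‑trivial for $j=1$ and still linearly independent from it for $j\ge2$, since $M$ is invertible over $\Q$ — turns $K$ into the convex set $M^{-1}K$, and turns the conditions $A_i(\cdot)\in[N]\times[N]$ into $B_i(u,v)\in[N]\times[N]$ with $B_i:=A_i^{-1}M$ an integer matrix of nonzero determinant; the normalisation (and the fixed factor $r$) changes only by a fixed constant. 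After writing each new form as a constant times a primitive form and absorbing the constant into $f_j$ by complete multiplicativity, we may assume $\Lambda=\Z^2$ and that the $L_j$ are primitive. We are left with an average of $\prod_j f_j(L_j)$ over the lattice points of the convex region $R_N:=(M^{-1}K)\cap B_1^{-1}([N]^2)\cap B_2^{-1}([N]^2)\subseteq[CN]^2$. If $K$ is bounded this average is $O(1/N^2)$, so assume $K$ is unbounded; then $\partial R_N$ is a convex curve of length $O(N)$. Partitioning $[0,C]^2$ into $O(\varepsilon^{-2})$ congruent subsquares and scaling by $N$, the union $B_N^-$ of the (scaled) subsquares contained in $R_N$ differs from $R_N$ only within the $O(\varepsilon^{-1})$ subsquares meeting $\partial R_N$, of total area $O(\varepsilon N^2)$; since $|\prod_j f_j(L_j)|\le1$, replacing $\mathbf 1_{R_N}$ by $\mathbf 1_{B_N^-}$ costs $O(\varepsilon)$. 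As $\varepsilon\to0$ this reduces the lemma to showing, for every choice of fixed rationals $0\le c_i<d_i$,
\[
\lim_{N\to\infty}\ \E_{m\in[c_1N,d_1N],\,n\in[c_2N,d_2N]}\ \prod_{j=1}^{s}f_j\bigl(L_j(m,n)\bigr)=0 .
\]

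\textbf{The box case.} For $s=1$ complete multiplicativity reduces this to $\sum_k w_N(k)f_1(k)\to0$, where the weights $w_N$ — the (discretised trapezoidal) distribution of the values of a primitive linear form over the box — are supported on an interval of length $\asymp N$, have total mass $O(1)$, satisfy $w_N(k)\ll 1/N$, and have total variation $\ll1/N$. Abel summation bounds the sum by $\ll N^{-1}\max_{k\le CN}\bigl|\sum_{j\le k}f_1(j)\bigr|$, and this maximum is $o(N)$ because $f_1$ is aperiodic: by the Daboussi--Delange characterisation quoted above $\frac1N\sum_{j\le N}f_1(j)\to0$, and splitting the range at $k=\sqrt N$ upgrades this to $\max_{k\le CN}|\sum_{j\le k}f_1(j)|=o(N)$. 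For $s\ge2$, a unimodular change of coordinates lets us merge any mutually parallel forms into one (replacing the corresponding product of $f_j$'s by their pointwise product, still a member of $\CM$), after which $L_1$ is not parallel to any remaining form; a system of pairwise non‑parallel linear forms in two variables has Cauchy--Schwarz complexity at position $1$ at most $s-2$, so the generalized von Neumann inequality (for affine‑linear systems, after translating the box to the origin) bounds the box average by a fixed power of $\norm{f_1\cdot\mathbf 1_{[CN]}}_{U^{k}[C'N]}$ plus $o(1)$, with $k=\max(1,s-1)$. This normalised Gowers norm tends to $0$ because $f_1$ is an aperiodic completely multiplicative function — which is precisely the uniformity theorem of \cite{FH17}.

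\textbf{Main obstacle.} The only non‑routine ingredient is the last one: the assertion that an aperiodic multiplicative function has vanishing normalised Gowers $U^k[N]$‑norm for every $k$. This is the substantive input imported from \cite{FH17} (built on the inverse theorem for the Gowers norms together with structure theory for multiplicative functions); the remainder — the lattice reduction, the approximation of the convex region by boxes, the elementary $s=1$ case, and the generalized von Neumann step — is standard bookkeeping.
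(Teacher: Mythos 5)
Your proposal is correct and follows essentially the same route as the paper: the paper disposes of this lemma by combining \cite[Theorem~2.5]{FH17} (Gowers uniformity of aperiodic multiplicative functions) with the von Neumann-type reduction over convex regions in \cite[Lemma~9.6]{FH17}, remarking that the adaptation to the sets $A_1([N]\times[N])\cap A_2([N]\times[N])$ is straightforward. Your lattice substitution, box approximation, Abel-summation base case, and generalized von Neumann step are precisely a reconstruction of that cited argument, resting on the same single substantive input.
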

		If $K_N$ are  convex subsets of $[0,rN]\times [0,rN]$ 
		the result follows by combining \cite[Theorem~2.5]{FH17} and \cite[Lemma~9.6]{FH17}. The argument given in the proof of \cite[Lemma~9.6]{FH17} can be adapted in a straightforward way to obtain the advertised statement.

		\subsection{Quadratic number fields - Notation}
		Let $d\in \Z$ be a square-free integer. If $\mathcal{O}_K$ is 
		the ring of integers (i.e. solutions of monic polynomials with coefficients in $\mathbb{Z}$) of a quadratic number field $K=\mathbb{Q}(\sqrt{-d})$, then there exists $\tau_d\in \mathcal{O}_K$ such that every element $z\in 
		\mathcal{O}_K$ can be uniquely expressed in the form $m+n\tau_d$ for some $m,n\in \Z$. In fact, we can take 
		\begin{equation}\label{E:taud}
			\tau_d:=\begin{cases} \sqrt{-d}, \quad & \, \text{ if } d\equiv 1,2\! \pmod{4} \\
				\frac{1+\sqrt{-d}}{2}, & \,  \text{ if } d\equiv 3\! \pmod{4}.
			\end{cases}
		\end{equation}
		We let 
		\begin{equation}\label{E:RN}
			R_N:=\{z\in \mathcal{O}_K\colon z=m+n\tau_d, m,n\in [N]\}.
		\end{equation}
		If $z=m+n\sqrt{-d}$  we  let $\overline{z}:=m-n \sqrt{-d}$ and  $\mathcal{N}(z):=z\cdot \overline{z}=m^2+dn^2$. Note that  if $z=m+n\tau_d$, then
		$\mathcal{N}(z)=P_d(m,n)$ where 
		\begin{equation}\label{E:Qd}
			P_d(m,n):=\begin{cases} m^2+dn^2, \quad & \, \text{ if } d\equiv 1,2\! \pmod{4} \\
				m^2+mn+\frac{d+1}{4}n^2, & \,  \text{ if } d\equiv 3\! \pmod{4}.
			\end{cases}
		\end{equation}
		\subsection{Variant of the Daboussi-K\'atai orthogonality criterion}\label{SS:Katai} 
		An important fact used  in the proof of the Daboussi-K\'atai orthogonality criterion  on $\Z$~\cite{D74,Ka} is that for $r\in \N$ the cardinality of the 
		set $r^{-1}[N]:=\{n\in\N\colon rn\in [N]\}$ is approximately $N/r$. 
		A similar statement holds for Gaussian integers, namely, if $R_N=\{m+in\colon  m,n\in[N]\}$ and $z\in \Z[i]$ is nonzero, then 
		$$
		z^{-1}R_N=\{w\in \Z[i]\colon zw\in R_N\} \subset R_{C\cdot \mathcal{N}(z)^{-\frac{1}{2}}N}
		$$ for some $C>0$.
		Similar inclusions also hold for all other quadratic fields with negative discriminant,
		but this is not the case for quadratic number fields with positive discriminant, which have infinitely many units (elements $\epsilon\in \mathcal{O}_K$ with $\mathcal{N}(\epsilon)=1$).
		In this case, every element $z\in\mathcal{O}_K$ has infinitely many associates $z\epsilon$, all of which have the same norm. 
		While not all of them satisfy the required property, Sun proved in \cite{Su23} that at least one of the associates of any given element does.
		We state here the result for quadratic number fields, since that  is all we need. 
		\begin{definition}[Definition~2.12 in \cite{Su23}]
			Let $\Z[\tau_d]$ be the ring of integers of a quadratic number field $\Q(\sqrt{-d})$ and $C>0$.  We say that $z\in\Z[\tau_d]$, $z\neq 0$,  is {\em $C$-regular} if for all $N\in \N$ we have 
			$$
			z^{-1}  R_{N}\subset R_{C|\mathcal{N}(z)|^{-\frac{1}{2}}N},
			$$
			where $R_N$ is as in \eqref{E:RN}.
		\end{definition}
		We will use the following special case of a result from \cite{Su23}.
		\begin{lemma}[Theorem~2.13 in \cite{Su23}]\label{L:Cregular}
			Let $\Z[\tau_d]$ be the ring of integers of a quadratic number field $\Q(\sqrt{-d})$. Then there exists $C>0$, depending on $d$ only, such that for every $z\in \Z[\tau_d]$ there exists a unit $\epsilon\in \mathcal{O}_K$ such that $\epsilon z$ is $C$-regular. 
		\end{lemma}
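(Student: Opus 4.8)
\emph{Proof plan.} The plan is to dispose of the easy case and then reduce the hard case to a single geometric estimate about the cone of ``positive'' elements. If $-d<0$ the field $K=\Q(\sqrt{-d})$ is imaginary quadratic and $\mathcal O_K^{\times}$ is finite, so the statement is just the inclusion for imaginary quadratic fields recalled before the lemma, with $\epsilon$ ranging over the finitely many units; there is nothing to prove. So assume $D:=-d>0$, the genuinely new case, where the infinitude of units makes the choice of $\epsilon$ delicate. Fix the two real embeddings $\sigma_1,\sigma_2\colon K\to\R$ with $\sigma_1(\sqrt D)>0>\sigma_2(\sqrt D)$, so that $\mathcal N(w)=\sigma_1(w)\sigma_2(w)$ for $w\in K$. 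In the $\Z$-basis $\{1,\tau_d\}$ of $\mathcal O_K$, multiplication by $w$ is the regular representation matrix $M_w$; one has $M_w^{-1}=M_{w^{-1}}$, $|\det M_w|=|\mathcal N(w)|$, and, after the fixed ($d$-dependent) change of coordinates to the archimedean coordinates $(\sigma_1,\sigma_2)$, $M_w$ becomes $\operatorname{diag}(\sigma_1 w,\sigma_2 w)$. Write $\mathcal C:=\R_{\ge0}\cdot 1+\R_{\ge0}\cdot\tau_d$ for the positive cone; then $R_N\subset\mathcal C$, and, because $\tau_d^2\in\mathcal C$, multiplication by any element of $\mathcal C$ maps $\mathcal C$ into $\mathcal C$.

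The heart of the argument is a geometric lemma I would isolate: if $w\in\mathcal O_K\setminus\{0\}$ satisfies (a) $w^{-1}\in\mathcal C$ and (b) $|\sigma_1 w|$ and $|\sigma_2 w|$ are comparable up to a factor depending only on $d$, then $w$ is $C(d)$-regular. For the positivity part: (a) and stability of $\mathcal C$ give $w^{-1}R_N\subset\mathcal C$, so every $x\in w^{-1}R_N$ has nonnegative $\{1,\tau_d\}$-coordinates; a short case distinction on $\operatorname{sign}\mathcal N(w)$ and on the explicit shape of $\tau_d$ then rules out a vanishing coordinate (if $x=q\tau_d$ or $x=p\in\Z$, one expands $wx$ and checks that its two coordinates cannot both lie in $\{1,\dots,N\}$), hence those coordinates are $\ge1$. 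For the size part: (b) together with $|\sigma_1 w|\,|\sigma_2 w|=|\mathcal N(w)|$ makes $\operatorname{diag}(\sigma_1 w,\sigma_2 w)^{-1}$, hence $M_{w^{-1}}$, have operator norm $\ll_d|\mathcal N(w)|^{-1/2}$ (relative to the coordinate Euclidean norm), so $w^{-1}R_N$, being the image under $M_{w^{-1}}$ of lattice points in the box $[1,N]^2$, has diameter $\ll_d|\mathcal N(w)|^{-1/2}N$; and since $w^{-1}R_N$ can be nonempty only when $|\mathcal N(w)|^{-1/2}N\gg_d 1$ (any nonzero $x\in w^{-1}R_N$ has $1\le|\mathcal N(x)|=|\mathcal N(wx)|/|\mathcal N(w)|\ll_d N^2/|\mathcal N(w)|$), every coordinate of every element of $w^{-1}R_N$ lies in $[\,1,\,C(d)|\mathcal N(w)|^{-1/2}N\,]$, i.e.\ $w^{-1}R_N\subset R_{C(d)|\mathcal N(w)|^{-1/2}N}$.

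Finally I would use Dirichlet's unit theorem to produce, for a given $z$, an associate $w=\epsilon z$ meeting (a) and (b). Writing $\mathcal O_K^{\times}=\{\pm\epsilon_0^{k}:k\in\Z\}$ with fundamental unit $\epsilon_0>1$, the quantity $\log\big(|\sigma_1(\epsilon_0^{k}z)|/|\sigma_2(\epsilon_0^{k}z)|\big)$ is an arithmetic progression in $k$ with common difference $2\log\epsilon_0$, and the sign of $\sigma_1(\pm\epsilon_0^{k}z)$ is fixed by the choice of $\pm$; so one can choose $\epsilon=\pm\epsilon_0^{k}$ with $\sigma_1 w>0$ and $|\sigma_1 w|/|\sigma_2 w|\in(\lambda_d\epsilon_0^{-2},\lambda_d]$, where $\lambda_d\in(0,1]$ is a constant (one may take $\lambda_d=1$ if $\tau_d=\sqrt D$ and $\lambda_d=(\sqrt D-1)/(\sqrt D+1)$ if $\tau_d=\tfrac{1+\sqrt D}{2}$) chosen precisely so that the resulting archimedean coordinates $(\sigma_1(w)^{-1},\sigma_2(w)^{-1})$ of $w^{-1}$ lie in $\mathcal C$. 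This gives (a), the ratio bound gives (b), and so $w=\epsilon z$ is $C(d)$-regular by the previous paragraph; the degenerate case where $z$ is a rational integer times a unit is handled directly.

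The step I expect to be the main obstacle is the positivity bookkeeping in the second paragraph. Because $R_N$ is defined with coordinates in $\{1,\dots,N\}$ rather than $\{0,\dots,N\}$, the inclusion $w^{-1}R_N\subset R_M$ is not a pure scaling statement: one must propagate signs of $\{1,\tau_d\}$-coordinates through multiplication by $w$ and by $w^{-1}$, and use the explicit descriptions of $\tau_d$ and of $\mathcal C$ in both coordinate systems. By contrast, the reduction of the associate via the unit group and the operator-norm estimate are routine once the cone picture is in place.
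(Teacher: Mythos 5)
First, a point of comparison: the paper does not prove this lemma at all — it is imported verbatim as Theorem~2.13 of \cite{Su23} — so there is no internal argument to measure yours against. Judged on its own, the core of your plan, the real quadratic case, is sound and is essentially the expected argument: use the unit group to pick an associate $w=\epsilon z$ whose two real embeddings are comparable (the ratio confined to a half-open multiplicative window of length $\epsilon_0^2$, so exactly one exponent works), so that multiplication by $w^{-1}$ has operator norm $\ll_d|\mathcal{N}(w)|^{-1/2}$ in archimedean coordinates, and impose the cone condition $w^{-1}\in\mathcal{C}$ to control signs. I checked the points you flag: your window $(\lambda_d\epsilon_0^{-2},\lambda_d]$ does force $w^{-1}\in\mathcal{C}$ for both shapes of $\tau_d$ and both signs of the norm, $\mathcal{C}$ is closed under multiplication, and the exclusion of a vanishing coordinate (the cases $x=p$ and $x=q\tau_d$) is a short computation in $(1,\tau_d)$-coordinates, e.g.\ for $w=a+b\tau_d$ one has $w\tau_d=b\tau_d^2+a\tau_d$ and $w^{-1}=\bar w/\mathcal{N}(w)$, whose sign patterns are incompatible with $w^{-1}\in\mathcal{C}$. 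So that half of the proposal works.

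The genuine gap is the case you declare trivial. With the definitions as written in the paper ($R_M$ has coordinates in $\{1,\dots,M\}$), the imaginary quadratic case is not ``nothing to prove''; in fact the literal statement fails there. In $\Z[\sqrt{-2}]$ take $z=1+\sqrt{-2}$: then $(1+\sqrt{-2})(2-\sqrt{-2})=4+\sqrt{-2}\in R_4$, so $2-\sqrt{-2}\in z^{-1}R_4$ has a negative coordinate, and the same happens for the only other associate $-z$; hence no associate of $z$ is $C$-regular in the literal sense. The inclusion ``recalled before the lemma'' that you lean on is likewise false as literally stated (already $z=i$ in $\Z[i]$ gives elements of $z^{-1}R_N$ with a negative coordinate). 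What is true, and what Sun's actual definition and the K\'atai-type application require, is containment in the symmetric box $\{m+n\tau_d\colon |m|,|n|\le C|\mathcal{N}(z)|^{-1/2}N\}$; with that reading the imaginary case is indeed immediate, and — notably — the entire positivity/cone bookkeeping you single out as the main obstacle in the real case becomes unnecessary, leaving only the unit-balancing and the operator-norm bound. So you should either adopt the symmetric-box formulation throughout (and simplify accordingly), or, if you insist on the positive box, keep your real-case argument but recognize that the imaginary case cannot be waved off — under that reading it is not merely nontrivial, it is false, since the unit group there is generically $\{\pm1\}$.
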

		
		Using this, the following extension of the Daboussi-K\'atai  orthogonality criterion was established in \cite{Su23} (in the setting of general number fields, but  we only need it for quadratic number fields).
		\begin{lemma}[Lemma~2.20 in \cite{Su23}]\label{L:KataiWenbo}
			Let $d\in\Z$ be square-free, $\Z[\tau_d]$ be the ring of integers of the quadratic number field $\Q(\sqrt{-d})$, and  $a\colon \Z[\tau_d]\to \C$ be a bounded sequence. Let also $C>0$ and $\CP$ be a collection of $C$-regular prime elements of $\Z[\tau_d]$  such that 
			$$
			\sum_{p\in \CP}\frac{1}{|\CN(p)|}=+\infty.
			$$
			Suppose that for all  $p,q\in \CP$ with coprime norms we have 
			$$
			\lim_{N\to\infty}\E_{z\in R_N}\, a(pz)\, \overline{a(qz)}=0.
			$$
			Then for every completely multiplicative function $f\colon \Z[\tau_d]\to \U$ we have 
			$$
			\lim_{N\to\infty}\E_{z\in R_N}\, a(z)\, f(z)=0.
			$$ 
		\end{lemma}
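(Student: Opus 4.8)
The plan is to transplant the classical Daboussi--K\'atai orthogonality argument from $\Z$ to the ring $\Z[\tau_d]$, with the $C$-regularity furnished by \cref{L:Cregular} playing the role that the trivial identity $|r^{-1}[N]|=N/r+O(1)$ plays over $\Z$. Fix a finite subcollection $\CP'\subseteq\CP$ and put $L:=\sum_{p\in\CP'}|\CN(p)|^{-1}$; since only finitely many prime elements of $\Z[\tau_d]$ have norm below any given bound, the divergence hypothesis lets us take $L$ as large as we please. Writing $S_N:=\E_{z\in R_N}a(z)f(z)$, I would start from the decomposition
\[
L\,S_N=\E_{z\in R_N}a(z)f(z)\sum_{p\in\CP'}\one_{p\mid z}+\E_{z\in R_N}a(z)f(z)\Big(L-\sum_{p\in\CP'}\one_{p\mid z}\Big),
\]
and then bound the two terms by completely different means.

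The second (Tur\'an--Kubilius) term: by Cauchy--Schwarz and $|af|\le 1$ it is at most $\big(\E_{z\in R_N}(L-\sum_{p\in\CP'}\one_{p\mid z})^2\big)^{1/2}$, and expanding the square reduces everything to counting lattice points of the ideals $(p)$ and $(pq)$ inside $R_N$. As $(p)$ has index $|\CN(p)|$ in $\Z[\tau_d]\cong\Z^2$ and, when $\CN(p),\CN(q)$ are coprime, $(pq)$ has index $|\CN(p)|\,|\CN(q)|$, one gets $\E_{z\in R_N}\one_{p\mid z}=|\CN(p)|^{-1}+o_N(1)$ and $\E_{z\in R_N}\one_{pq\mid z}=|\CN(p)|^{-1}|\CN(q)|^{-1}+o_N(1)$, so the usual cancellation yields $\limsup_N$ of this term $\le\sqrt L$. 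For the first (main) term I would substitute $z=pw$, using that $w\mapsto pw$ is a bijection from $p^{-1}R_N$ onto $\{z\in R_N:p\mid z\}$ and that $f$ is completely multiplicative, to rewrite it as $\frac1{N^2}\sum_{p\in\CP'}f(p)\sum_{w\in p^{-1}R_N}a(pw)f(w)$. Here \cref{L:Cregular} is essential: each $p^{-1}R_N$ is contained in $R_{C|\CN(p)|^{-1/2}N}$, hence in a single cube $R_M$ with $M=O_C(N)$. Writing the inner range as an indicator $\one_{pw\in R_N}$ and applying Cauchy--Schwarz in $w\in R_M$ (extracting $\sum_{w\in R_M}|f(w)|^2\le M^2$) bounds the main term by
\[
\frac{M}{N^2}\Big(\sum_{p,q\in\CP'}f(p)\overline{f(q)}\sum_{w\in p^{-1}R_N\cap q^{-1}R_N}a(pw)\overline{a(qw)}\Big)^{1/2}.
\]
The diagonal terms $p=q$ contribute $\le\sum_{p\in\CP'}|p^{-1}R_N|\le C^2N^2L$ by regularity, and the off-diagonal pairs of \emph{equal} norm contribute the same order $O_C(N^2L)$ once one notes that $C$-regularity forces $\CP'$ to contain only $O_{C,d}(1)$ primes of each norm (an elongated associate of a prime is not $C$-regular). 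Granting that each of the finitely many off-diagonal pairs of \emph{coprime} norm contributes $o_N(N^2)$, the main term is $O_C(L^{1/2})(1+o_N(1))$. Combining with the Tur\'an--Kubilius bound gives $\limsup_N|S_N|\ll_C L^{-1/2}$, and letting $L\to\infty$ finishes the proof.

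The step I expect to be the genuine obstacle --- and the one for which the machinery of \cite{Su23} is really needed --- is the bound $\sum_{w\in p^{-1}R_N\cap q^{-1}R_N}a(pw)\overline{a(qw)}=o(N^2)$ for a fixed pair $p\neq q$ of coprime norm. The hypothesis supplies decay of $\E_{z\in R_N}a(pz)\overline{a(qz)}$ only over the axis-parallel cubes $R_N$, whereas $\Omega_N:=p^{-1}R_N\cap q^{-1}R_N$ is a \emph{sheared} parallelogram; over $\Z$ this difficulty simply does not arise, since $p^{-1}[N]=[1,N/p]$ is again anchored at the origin, and it is precisely the presence of infinitely many units, and the consequent non-cubical shape of $p^{-1}R_N$, that must be accommodated. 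My plan here is to write $\Omega_N=N\cdot\Pi$ for a fixed convex polygon $\Pi$ (in the coordinates $w=u+v\tau_d\leftrightarrow(u,v)$), tile $\Pi$ by $O(\varepsilon^{-2})$ axis-parallel boxes of side $\varepsilon$ together with a boundary collar of area $O(\varepsilon)$, and transfer the cube estimate to these small boxes; the role of $C$-regularity is to confine $\Omega_N$, and therefore all the boxes, to the cube $R_M$ with $M=O_C(N)$, so that side-lengths and displacements stay comparable and the transfer can be pushed through. Carrying out this transfer rigorously is the technical heart of the argument and is where I would expect essentially all of the work to lie.
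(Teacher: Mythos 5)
You are proving a statement the paper itself does not prove: it is quoted wholesale from \cite[Lemma~2.20]{Su23}, and the only argument given locally is the deduction of \cref{C:Katai} from it, so your attempt must stand on its own. The skeleton you set up is the standard Daboussi--K\'atai scheme, and most of it is sound: the Tur\'an--Kubilius half works because $(p)$ has index $|\CN(p)|$ in $\Z[\tau_d]\cong\Z^2$ and $(p)\cap(q)=(pq)$ has index $|\CN(p)|\,|\CN(q)|$ when the norms are coprime; using \cref{L:Cregular} to confine $p^{-1}R_N$ inside a cube $R_{O_C(N)}$ is exactly what $C$-regularity is for; and your observation that only $O_{C,d}(1)$ prime elements of each norm can be $C$-regular correctly disposes of the equal-norm pairs.

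The genuine gap is the step you yourself single out, and the tiling plan you sketch for it cannot be pushed through from the stated hypothesis. You need $\frac1{N^2}\sum_{w\in p^{-1}R_N\cap q^{-1}R_N}a(pw)\overline{a(qw)}\to0$ for each fixed pair of coprime norm, but the hypothesis only controls averages of $b(w):=a(pw)\overline{a(qw)}$ over the anchored squares $R_N$ of \eqref{E:RN}, and such control does not transfer to translated boxes of side $\varepsilon N$ (which is what your tiling step requires): a translated box is an inclusion--exclusion of anchored \emph{rectangles} with independent side lengths, and square averages say nothing about those. Indeed the pairwise implication your argument rests on is false: let $b(m+n\tau_d):=1$ if $m>n$ and $-1$ otherwise; then $\E_{z\in R_N}b(z)=O(1/N)$ for every $N$, yet the average of $b$ over any box (or dilated polygon) lying below the diagonal is $1$. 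Moreover, for any fixed $p,q$ with coprime norms this $b$ is exactly realizable in the required form: set $a(z):=1$ when $p\nmid z$ and, inducting on the exponent of $p$ in $z$, set $a(pw):=b(w)\,a(qw)$ (consistent since $p\nmid q$, so the $p$-exponent of $qw$ is one less than that of $pw$); then $|a|\equiv1$ and $a(pw)\overline{a(qw)}=b(w)$ for all $w$. So no argument that, like yours, bounds each off-diagonal term separately using only that pair's square-average hypothesis can succeed; handling the sheared region $p^{-1}R_N\cap q^{-1}R_N$ requires genuinely more --- either a hypothesis of correlation decay over a richer family of regions (convex subsets of $R_{O_C(N)}$, which is what the application in Section~\ref{SS:PfThm31} in fact verifies via \cref{T:aperiodicKnown}) or the actual argument of \cite{Su23}. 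As written, the proposal does not prove the lemma as stated.
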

		\begin{remark}
			The case $d>0$ was previously covered by \cite[Proposition~9.5]{FH17}.
		\end{remark}

		We will use this lemma in the form of the following corollary that is better suited for our purposes.
		
		\begin{corollary}[Daboussi-K\'atai orthogonality criterion for quadratic number fields]\label{C:Katai}
			Let $d\in\Z$ be square-free, $\Z[\tau_d]$ be the ring of integers of the quadratic number field $\Q(\sqrt{-d})$, $a\colon \Z[\tau_d]\to \C$ be a bounded sequence, and $P_0>0$. 
			Suppose that for all $\alpha,\beta$ that are  prime elements of $\Z[\tau_d]$ with  $|\mathcal{N}(\alpha)|>P_0$  and $|\mathcal{N}(\beta)|>P_0$  and such that all elements $\alpha,\overline{\alpha},\beta,\overline{\beta}$ are pairwise  non-associates, 
			we have 
			$$
			\lim_{N\to\infty}\E_{z\in R_N}\, a(\alpha z)\, \overline{a(\beta z)}=0.
			$$
			Then for every completely multiplicative function $f \colon \Z[\tau_d]\to \U$ we have 
			$$
			\lim_{N\to\infty}\E_{z\in R_N}\, a(z)\, f(z)=0.
			$$ 
		\end{corollary}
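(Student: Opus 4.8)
The plan is to derive the corollary from Lemma~\ref{L:KataiWenbo} by producing an appropriate collection $\CP$ of $C$-regular prime elements of $\Z[\tau_d]$. Fix $C>0$ as provided by Lemma~\ref{L:Cregular} (it depends only on $d$), and let $\CP$ be the set of all prime elements $p\in\Z[\tau_d]$ that are $C$-regular, satisfy $|\CN(p)|>P_0$, and for which $|\CN(p)|$ is a rational prime $\ell$ that splits (rather than ramifies) in $K=\Q(\sqrt{-d})$. The first point to observe is that $\CP$ is non-empty over every rational prime $\ell>P_0$ whose prime ideal divisors in $\Z[\tau_d]$ are principal: if $\mathfrak p$ is such a divisor and $z$ is any generator, then by Lemma~\ref{L:Cregular} some associate $\epsilon z$ of $z$ is $C$-regular, and $\epsilon z$ is again a prime element generating $\mathfrak p$, hence $\epsilon z\in\CP$.

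Next I would verify the divergence condition $\sum_{p\in\CP}|\CN(p)|^{-1}=+\infty$ required by Lemma~\ref{L:KataiWenbo}. Since a $C$-regular generator of a principal prime ideal $\mathfrak p$ of residue degree $1$ has norm $\CN(\mathfrak p)$ and determines $\mathfrak p$, we get $\sum_{p\in\CP}|\CN(p)|^{-1}\ge\sum_{\mathfrak p}\CN(\mathfrak p)^{-1}$, the latter sum running over principal prime ideals $\mathfrak p$ of $\Z[\tau_d]$ of residue degree $1$ and norm $>P_0$. The principal prime ideals of $K$ are exactly those that split completely in the Hilbert class field $H$ of $K$, so by the Chebotarev density theorem they form a subset of all prime ideals of natural density $1/[H:K]>0$ (alternatively, this follows from the prime ideal theorem for a fixed ideal class). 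Discarding the finitely many ramified primes, the primes of residue degree $2$, and those of norm $\le P_0$ does not affect divergence, so $\sum_{\mathfrak p}\CN(\mathfrak p)^{-1}=+\infty$, as needed.

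It remains to check the correlation hypothesis of Lemma~\ref{L:KataiWenbo}. Let $p,q\in\CP$ have coprime norms, say $|\CN(p)|=\ell$ and $|\CN(q)|=\ell'$ with $\ell\ne\ell'$ distinct rational primes exceeding $P_0$. Because $\ell$ splits, $(p)$ and $(\overline p)=\overline{(p)}$ are the two distinct prime ideals over $\ell$, so $p$ and $\overline p$ are non-associate; likewise $q$ and $\overline q$. Since $\{p,\overline p\}$ lies over $\ell$ and $\{q,\overline q\}$ over $\ell'\ne\ell$, no element of the first pair is associate to one of the second. Thus $p,\overline p,q,\overline q$ are pairwise non-associate with norms $>P_0$, and the standing assumption of the corollary gives $\lim_{N\to\infty}\E_{z\in R_N}a(pz)\,\overline{a(qz)}=0$. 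Applying Lemma~\ref{L:KataiWenbo} to this collection $\CP$ then yields $\lim_{N\to\infty}\E_{z\in R_N}a(z)\,f(z)=0$ for every completely multiplicative $f\colon\Z[\tau_d]\to\U$, which is the assertion of the corollary.

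I expect the only delicate step to be the divergence $\sum_{p\in\CP}|\CN(p)|^{-1}=+\infty$: one must make sure that after restricting to $C$-regular, \emph{principal}, split prime elements of norm $>P_0$, one is still left with a set that is ``large'' in the sense of Lemma~\ref{L:KataiWenbo}. This is precisely where one uses that the principal prime ideals constitute a positive-density family among all prime ideals, a standard consequence of class field theory. Everything else is elementary bookkeeping with associates and norms in the quadratic order $\Z[\tau_d]$.
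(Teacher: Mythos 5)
Your proposal is correct, and it follows the same overall skeleton as the paper: both arguments feed a family of degree-one, $C$-regular prime elements (furnished by \cref{L:Cregular}) into \cref{L:KataiWenbo}, after checking that for two members lying over distinct split rational primes the four elements $\alpha,\overline\alpha,\beta,\overline\beta$ are pairwise non-associate, so that the correlation hypothesis of the corollary applies. The difference lies in how you secure the divergence of $\sum_{p\in\CP}|\CN(p)|^{-1}$. The paper picks, for each rational prime $p>P_0$ that splits in $\Q(\sqrt{-d})$, a prime element above it, identifies the split primes via $\legendre{-d}{p}=1$ (citing \cite[Theorem 7.4.2]{ME05}), and gets divergence from quadratic reciprocity plus Dirichlet's theorem on primes in progressions; this is lighter machinery, but it tacitly uses that every such $p$ is divisible by a prime \emph{element}, which is automatic only when the prime ideals above $p$ are principal (e.g.\ when $\Z[\tau_d]$ is a PID). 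You instead restrict from the outset to principal split prime ideals and invoke Chebotarev in the Hilbert class field (equivalently, the prime ideal theorem in an ideal class) to see that these still have positive density, hence divergent reciprocal sum of norms. That costs you class field theory but buys uniformity in $d$: the existence of the required prime elements is built into the principality restriction, so no unique-factorization assumption is needed. One cosmetic remark: as you define it, $\CP$ may contain several (even infinitely many, for real quadratic fields) $C$-regular associates generating the same ideal; this does no harm for \cref{L:KataiWenbo} as stated, and your divergence bound is anyway derived from distinct ideals, but selecting one $C$-regular generator per principal split ideal would make the application cleaner and match the paper's usage.
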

		
\begin{proof}\rem{More explanations added. The previous argument  was using a property that was not true for every prime. }
By the Dirichlet density theorem for prime ideals in ideal classes
(see for example \cite[Exercise~11.2.8]{ME05}), applied to the principal
ideal class, there are enough principal prime ideals to ensure that the
sum of the reciprocals of their norms diverges. By \cite[Exercise~5.4.5]{ME05},
only finitely many rational primes ramify. Discarding the principal prime
ideals lying above these finitely many ramified primes, and also discarding
the principal prime ideals generated by rational primes, whose reciprocal norm
contribution is bounded by $\sum_p p^{-2}<\infty$, we obtain a set
${\mathcal P}$ of prime elements of $\Z[\tau_d]$ such that
$|\mathcal N(\alpha)|$ is a rational prime for every
$\alpha\in{\mathcal P}$, no two of these rational primes are equal, and
$$
\sum_{\alpha\in{\mathcal P}}\frac1{|\mathcal N(\alpha)|}=+\infty.
$$
Discarding finitely many further elements, we may assume that
$|\mathcal N(\alpha)|>P_0$ for every $\alpha\in{\mathcal P}$. Moreover,
no $\alpha\in{\mathcal P}$ is associated to $\overline\alpha$ or to a
rational integer. Indeed, the latter is ruled out by construction, and if
$\alpha$ were associated to $\overline\alpha$, then the rational prime
$|\mathcal N(\alpha)|$ would ramify.

Using \cref{L:Cregular}, after replacing each $\alpha\in{\mathcal P}$
by a suitable unit multiple,  we can assume that every $\alpha\in{\mathcal P}$ is $C$-regular, for
the constant $C$ given in \cref{L:Cregular}.

If $\alpha,\beta\in{\mathcal P}$ are distinct, then their norms are distinct
rational primes. Therefore the elements
$\alpha,\overline\alpha,\beta,\overline\beta$ are pairwise non-associate, and
the assumed correlation vanishing applies to every distinct pair
$\alpha,\beta\in{\mathcal P}$. We have shown that ${\mathcal P}$ satisfies
the conditions of \cref{L:KataiWenbo}, and hence the conclusion follows by
applying that lemma.
			
			
		\end{proof}

		\subsection{Proof of Theorem~\ref{T:aperiodicNew}}\label{SS:PfThm31}
		For $d\in \Z$ square-free let $P_d$ be the binary quadratic form defined in \eqref{E:Qd}.\footnote{We work with $P_d(m,n)$ because it is a norm form on the ring of integers of a quadratic field, and all such rings are Dedekind domains, hence every non-zero ideal factors uniquely into prime ideals. In contrast, the binary form $m^2+3n^2$ is a norm form on the ring $\Z[\sqrt{-3}]$, which is not a Dedekind domain.   We alleviate this problem by working with the binary form $m^2+mn+n^2$. This is a norm form on the ring $\Z[(1+\sqrt{-3})/2]$, which is a Dedekind domain. We then use the identity $m^2+3n^2=(m-n)^2+(m-n)(2n)+(2n)^2$,
			to transfer results from one form to the other.} 

		\medskip 
		
		{\bf Case where $P=P_d$, $Q=1,a=b=0$.} First, suppose  that $P=P_d$ and $Q=1,a=b=0$. 
		We will show that  under these additional assumptions, for every $C>0$ we have  
		\begin{equation}\label{E:fLgQd}
			\lim_{N\to\infty} \E_{m,n\in [CN]}\,  {\bf 1}_{K_N}(m,n)\cdot  \prod_{j=1}^sf_j(L_j(m,n))\cdot g(P_d(m,n))=0,
		\end{equation}
		where  for $N\in\N$ we have  $K_N:=K\cap A([N]\times[N])$
		for some convex subset $K$ of $\R_+^2$ and linear transformation $A\colon \Z^2\to \Z^2$ with nonzero determinant. We remark that the sets $A([N]\times[N])$ do not appear  in the statement of \eqref{T:aperiodicNew}, but they are   needed later on in order to reduce the general case to the one covered here.

		We argue as in the proof of \cite[Theorem~9.7]{FH17}, which covered  the case where $d>0$,  with minor changes (the important new ingredient is the variant of the Daboussi-K\'atai criterion we use, which   was essentially proved in \cite{Su23}).  The goal is to use \cref{C:Katai} in order to eliminate the ``quadratic term'' $g(P_d(m,n))$ and be left with products of  ``linear terms.''
		To this end, recall that if $\tau_d$ is as in \eqref{E:taud}, 
		then $P_d(m,n)$ is the norm of the element $m+n\tau_d$. 
		Since $g\in\CM$, the map $\tilde{g}\colon \Z[\tau_d]\to \U$ defined by 
		$$
		\tilde{g}(z):=g(\mathcal{N}(z))
		$$
		is a completely multiplicative function on $\Z[\tau_d]$. 
		Letting $\phi_d\colon \Z[\tau_d]\to \Z$ be  given by 
		$$
		\phi_d(m+n\tau_d):=n,
		$$
		for $j=1,\ldots,s$ we can find $\zeta_j\in\Z[\tau_d]$ such that
		\begin{equation}\label{E:Lj}
			L_j(m,n)=\phi_d(\zeta_j(m+n\tau_d)).
		\end{equation}
		Identifying $(m,n)$ with $z:=m+n\tau_d$ and recalling that $R_N$ is defined by \eqref{E:RN}, we see that \eqref{E:fLgQd} is equivalent to
		\begin{equation}\label{E:fLgQd2}
			\lim_{N\to\infty} \E_{z\in R_{CN}}\, {\bf 1}_{K_N}(z)\cdot  \prod_{j=1}^sf_j(\phi_d(\zeta_jz))\cdot \tilde{g}(z)=0.	    
		\end{equation}

		Let now $\alpha,\beta$ be  prime elements of  $\Z[\tau_d]$ such that
		\begin{equation}\label{E:conditions}
			\alpha,\overline{\alpha},\beta,\overline{\beta} \text{ are non-associates and  do not divide } 
			\zeta_j \text{ for }j=1,\ldots, s. 
		\end{equation}
		From  \cref{C:Katai}, we get that in order to establish \eqref{E:fLgQd2}, it is sufficient to show that for all distinct  $\alpha,\beta$ satisfying \eqref{E:conditions} we have 
		$$
		\lim_{N\to\infty} \E_{z\in R_{CN}}\, {\bf 1}_{K_N}(\alpha z)\cdot 
		{\bf 1}_{K_N}(\beta z)\cdot \prod_{j=1}^sf_j(\phi_d(\zeta_j \alpha z))\cdot  \prod_{j=1}^s\overline{f}_j(\phi_d(\zeta_j\beta  z))=0.
		$$
		Since the function $f_1$ is aperiodic we have by Lemma~\ref{T:aperiodicKnown}  that the needed vanishing property holds, provided that  we show that the linear form 
		$\phi_d(\zeta_1 \alpha z)$ is linearly independent of each of the forms 
		$\phi_d(\zeta_j \alpha z)$ for $j=2,\ldots, s$ and of each of the forms 
		$\phi_d(\zeta_j\beta  z)$ for $j=1,\ldots, s$. To see this, arguing by contradiction, suppose that for some nonzero $k,l\in \Z$ and  $j\in \{1,\ldots, s\}$ and $\alpha'\in \{\alpha,\beta\}$ with $(1,\alpha)\neq (j,\alpha')$, we have 
		$$
		k\phi_d(\zeta_1\alpha z)=l\phi_d(\zeta_j\alpha'z).
		$$
		Using this identity for $z:=1$ and $z:=\tau_d$ we immediately deduce that 
		\begin{equation}\label{E:kl}
			k\zeta_1\alpha =l\zeta_j\alpha'.
		\end{equation}
		We argue  as in the proof of Claim 5 in \cite[Section~9.7.2]{FH17} to get a contradiction.  
		We consider three cases:
		
		(i) Suppose that $j=1$ and $\alpha'=\beta$. In this case, equation~\eqref{E:kl} gives that $\alpha/\beta$ is a rational, which is impossible because by \eqref{E:conditions}  the elements $\alpha$ and  $\beta$ are  non-integer and non-associate  primes in $\Z[\tau_d]$. 
		
		(ii) Suppose that $j>1$ and $\alpha'=\alpha$. In this case we have that $\zeta_1/\zeta_j\in\Q$, and by~\eqref{E:Lj} the linear forms $L_1$ and $L_j$ are linearly dependent, contradicting our  hypothesis.
		
		(iii) It remains to consider the case where  $j>1$ and $\alpha'=\beta$. Let $r_k$ be the exponent of $(\alpha)$ in the factorization of the ideal $(k)$ into  prime ideals of $\Z[\tau_d]$. Since by  \eqref{E:conditions} the element $\alpha$ is a non-integer prime of $\Z[\tau_d]$, $\overline\alpha$ is  also a non-integer prime,  and since $k$ is real,  the exponent of $(\overline\alpha)$ in the factorization of $(k)$ is also equal to $r_k$.  Since by  \eqref{E:conditions} the elements 
		$\alpha$ and $\overline\alpha$ are non-associate primes, it follows that $k$ can be written as $k=\alpha^{r_k}\overline\alpha^{r_k}b$ for some $b\in\Z[\tau_d]$ not divisible by $\alpha$ or $\overline\alpha$.
		In the same way, $l=
		\alpha^{r_l}\overline\alpha^{r_l}c$ for
		some non-negative integer $r_l$ and some  $c\in\Z[\tau_d]$  not divisible by $\alpha$ or $\overline\alpha$.
		Equation~\eqref{E:kl} gives
		$$
		\alpha^{r_k+1}\overline \alpha^{r_k} b\zeta_1= \alpha^{r_l}\overline\alpha^{r_l} c\zeta_j\beta.
		$$
		By  \eqref{E:conditions},  the elements $\alpha$ and $\overline\alpha$ are non-associate primes, $\beta$ is a prime element non-associate to $\alpha$,  and $\zeta_j$ are not divisible by $\alpha$.
		Since $c$ is also not divisible by $\alpha$, it follows that $r_l\geq r_k+1$. Similarly, since $b$ and $\zeta_1$ are not divisible by $\overline\alpha$,  it follows that $r_k\geq r_l$, a contradiction.
		This completes the proof of the claim.

		\medskip
		
		{\bf General Case.} Suppose now that $P(m,n)=\alpha m^2+\beta mn+\gamma n^2$ for some $\alpha,\beta ,\gamma\in \Z$.  We have $\alpha\neq 0$ (since $P$ is irreducible)
		and we can assume that $\alpha\in \N$ (since all multiplicative functions involved are even). 
		We want to show that for every $Q\in \N$ and $a,b\in \Z_+$ we have
		$$
		\lim_{N\to\infty} \E_{m,n\in [N]}\, {\bf 1}_{K}(Qm+a,Qn+b)\cdot \prod_{j=1}^sf_j(L_j(Qm+a,Qn+b))\cdot g(P(Qm+a,Qn+b))=0.
		$$
		
		We first reduce to the case $Q=1$ and $a=b=0$.  \rem{Corrected argument since linear combos of Dirichlet characters cover only progressions with $a,b$ relatively prime to $Q$.} Let
		$$
		d_a:=(a,Q),\qquad q_a:=\frac{Q}{d_a},\qquad a_0:=\frac{a}{d_a},
		$$
		and define $d_b,q_b,b_0$ similarly. Then
		$$
		Qm+a=d_a(q_a m+a_0),
		\qquad
		Qn+b=d_b(q_b n+b_0),
		$$
		where $(a_0,q_a)=(b_0,q_b)=1$. Hence, it suffices to average
		over variables $u,v$ satisfying
		$$
		u\equiv a_0 \pmod {q_a},
		\qquad
		v\equiv b_0 \pmod {q_b}.
		$$
		Expanding these reduced congruence conditions using Dirichlet characters,
		and using linearity, it suffices to show that for every pair of Dirichlet
		characters $\chi$ modulo $q_a$ and $\chi'$ modulo $q_b$ we have
		$$
		\lim_{N\to\infty}
		\E_{u,v\in[N]}\, 
		{\bf 1}_{K'}(u,v)\,
		\chi(u)\chi'(v)\,
		\prod_{j=1}^s f_j\big(L_j'(u,v)\big)\,
		g\big(P'(u,v)\big)
		=0,
		$$
		where
		$$
		L_j'(u,v):=L_j(d_a u,d_b v),
		\qquad
		P'(u,v):=P(d_a u,d_b v),
		$$
		and
		$$
		K':=\{(u,v)\in\mathbb R_+^2\colon (d_a u,d_b v)\in K\}.
		$$
		The set $K'$ is again homogeneous and convex, the form $P'$ is irreducible,
		and the relevant linear-independence assumptions are preserved. Finally,
		the Dirichlet characters  $\chi(u)$ and $\chi'(v)$ are either absorbed into
		one of the existing multiplicative functions, if $u$ or $v$ is proportional
		to one of the linear forms already present, or are treated as additional
		multiplicative functions evaluated at the linear forms $u$ and $v$. Since
		multiplying an aperiodic multiplicative function by a Dirichlet character
		preserves aperiodicity, the case $Q=1$ and $a=b=0$ applies.

		We next reduce to the case $P=P_d$ where $d$ is the unique square-free integer that satisfies 
		$$
		4\alpha \gamma -\beta ^2=r^2d
		$$ 
		for some $r\in \N$, and $P_d$ is as in \eqref{E:Qd}.  
		To get started, note that direct computation gives 
		$$
		4\alpha\, P(m,n)= P_d(A_d(m,n)),
		$$
		where
		\begin{equation}\label{E:Ad}
			A_d(m,n):=\begin{cases} (2\alpha m+\beta n,rn), \quad & \ \text{ if } d\equiv 1,2\! \pmod{4} \\
				(2\alpha m+(\beta -r)n,2rn), & \  \text{ if } d\equiv 3\! \pmod{4}.
			\end{cases}
		\end{equation}

		Using this, the complete multiplicativity  of $f_j,g$ in the form  $f_j(4\alpha r)f_j(k)=f_j(4\alpha rk)$ and  $g(4\alpha )g(k)=g(4\alpha k)$ for every $k\in \N$,  the linearity of the $L_j$, and the homogeneity of the set $K$, 
		we get that it suffices to verify that 
		$$
		\lim_{N\to\infty} \E_{m,n\in [N]}\,  {\bf 1}_{K}(4\alpha rm, 4\alpha rn)\cdot  \prod_{j=1}^sf_j(L_j(4\alpha rm,4\alpha rn))\cdot g(P_d(A_d(m,n)))=0.
		$$
		Equivalently, it suffices to show that 
		$$
		\lim_{N\to\infty} \E_{m,n\in [N]}\, {\bf 1}_{K}(\tilde{L}_d(A_d(m,n)))\cdot  \prod_{j=1}^sf_j(L_{j,d}'(A_d(m,n)))\cdot g(P_d(A_d(m,n)))=0,
		$$
		where 
		$$
		\tilde{L}_d(m,n):=\begin{cases} 2\,(rm-\beta n,2\alpha n), \quad & \ \text{ if } d\equiv 1,2\! \pmod{4} \\
			(2rm-(\beta -r)n,2\alpha n), & \  \text{ if } d\equiv 3\! \pmod{4}
		\end{cases}
		$$ 
		and for $j=1,\ldots, s$, $L'_{j,d}=L_j\circ\tilde{L}_d$.
		Hence, it suffices to show that 
		$$
		\lim_{N\to\infty} \E_{m,n\in [CN]}\, {\bf 1}_{A_{d,N}}(m,n)\cdot 
		{\bf 1}_{K}(\tilde{L}_d(m,n))\cdot
		\prod_{j=1}^sf_j(L_{j,d}'(m,n))\cdot g(P_d(m,n))=0,
		$$
		where $C:=2|\alpha|+|\beta|+2r$,    $A_{d,N}:=A_d([N]\times [N])$, and we used that $A_{d,N}$ is a subset of $[CN]\times [CN]$ and 
		$A_d$ is an injective map.
		Note also that  ${\bf 1}_{K}(\tilde{L}_d(m,n))={\bf 1}_{K_d}(m,n)$ where
		$K_d:=\{(x,y)\in \R^2_+\colon \tilde{L}_d(x,y)\in K\}$ is a convex subset of $\R_+^2$.  
		Since our linear independence assumption for  the linear forms $L_1,\ldots, L_s$ passes to the linear forms $L_{1,d}',\ldots, L_{s,d}'$, the last vanishing property    follows from \eqref{E:fLgQd}.

		\section{Concentration estimates - Statements and preliminary results}\label{S:Concentration1}
		The main goal of this section and the next one  is to prove the concentration estimates for irreducible binary quadratic forms  as stated  in Propositions~ \ref{P:ConcentrationQuanti} and \ref{P:concentration2General}  below. Some  of our intermediate results apply without additional difficulty to arbitrary homogeneous polynomials, and we present them in this broader context.
		\subsection{Preliminaries and notation}
		We will need some notation in order to  state the main concentration estimates.
		\begin{definition}
			If $P\in \Z[m,n]$ is a homogeneous polynomial we let 
			\begin{equation}\label{E:CPP'}
				\CP_P:=\{p\in \P\colon P(m,n)\equiv 0\! \!  \pmod{p}  \text{ has a solution with } n\not\equiv 0 \! \! \pmod{p}  \}
			\end{equation}
			and note that because the polynomial $P$ is homogeneous we have 
			\begin{equation*}\label{E:CPP1}
				\CP_P=\{p\in \P\colon P(m,1)\equiv 0 \! \! \pmod{p} \text{ has a solution} \}.
			\end{equation*}
		\end{definition} 
		We remark that 	if $P$ is an irreducible binary quadratic form with discriminant $D$, then, up to a finite set of primes, $\CP_P$ is the set of primes such that 
		$D$ is a quadratic residue  $\! \! \mod{p}$. Thus, if the square-free parts of the discriminants of two binary quadratic forms $P,Q$  coincide, then $\CP_P=\CP_Q$ up to a finite set of primes.

		\begin{definition}\label{D:omega}
			If $P\in \Z[m,n]$ is a homogeneous polynomial and $r\in \N$ we let 
			\begin{equation}\label{E:omega}
				\omega_P(r):=\big|\{n\in \Z_{r} \colon P(n,1)\equiv 0 \! \!  \pmod{r}\} \big|.
			\end{equation}
			(Note that $\CP_P=\{p\in \P\colon \omega_P(p)>0  \}$.)
		\end{definition}
		\begin{remarks}
			$\bullet$ 	We trivially have  $\omega_P(r)\in \{0,\ldots, r\}$ for every $r\in \N$ and 
			for all but finitely many  $p\in \P$  we have $\omega_P(p)\leq \deg(P)$. 
			If we assume that the greatest common divisor of the coefficient of $P$ is $1$, then  $\omega_P(p)\leq \deg(P)$ for all $p\in \P$. Note, however, that this is not the case  for composite $r$, for instance, 
			if $P(m,n)=m^2$ we have $\omega_P(p^2)=p$ for every $p\in \P$.  
			
			$\bullet$ If $P(m,n)=m^2+dn^2$ and $p\in \P$ is  odd  such that $p\nmid d$, then  $\omega_P(p)=2$ if $-d$ is a quadratic residue $\! \! \mod{p}$ and $\omega(p)=0$ if $-d$ is not a quadratic residue $\! \! \mod{p}$. 
		\end{remarks}
		The following basic property is a consequence of the Chinese remainder theorem 
		\begin{equation}\label{E:omegamulti}
			\omega_P(rs)=\omega_P(r)\, \omega_P(s), \quad \text{whenever } (r,s)=1.
		\end{equation}
		In other words $\omega_P$ is a multiplicative function on $\N$. 
		
		\begin{lemma}\label{L:omegap}
			Let $P$ be an irreducible binary quadratic form with discriminant $D$ and suppose $d$ is the unique square-free integer   satisfying  $D=dr^2$ for some  $r\in \N$. Then for all but finitely many $p\in \P$ we have  $\omega_P(p)=2$ or $0$ depending on whether  $d$ is a quadratic residue $\!\! \mod{p}$ or not.  
		\end{lemma}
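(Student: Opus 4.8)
The statement is elementary, and the plan is to compute $\omega_P(p)$ by completing the square, thereby reducing it to a count of square roots of the discriminant modulo $p$; this is valid for all but finitely many primes. Write $P(m,n)=\alpha m^2+\beta mn+\gamma n^2$ with $\alpha,\beta,\gamma\in\Z$ and $D=\beta^2-4\alpha\gamma$. Since $P$ is irreducible, $\alpha\neq 0$ (otherwise $P(m,n)=n(\beta m+\gamma n)$ would be reducible), and $D$ is not a perfect square, so in particular $D\neq 0$ and $d\neq 0$. Consequently only finitely many primes divide $2\alpha D$, and it suffices to establish the claim for every prime $p\nmid 2\alpha D$; for any such $p$ we automatically have $p$ odd, $p\nmid\alpha$, $p\nmid d$, and $p\nmid r$ (using $D=dr^2$).

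\textbf{Key step.} Fix a prime $p\nmid 2\alpha D$. I would use the identity $4\alpha\,P(n,1)=(2\alpha n+\beta)^2-D$, valid over $\Z$. Since $p\nmid 4\alpha$, the congruence $P(n,1)\equiv 0\pmod p$ is equivalent to $(2\alpha n+\beta)^2\equiv D\pmod p$; and since $\gcd(2\alpha,p)=1$, the affine map $n\mapsto 2\alpha n+\beta$ permutes $\Z_p$. Hence
$$\omega_P(p)=\big|\{n\in\Z_p\colon P(n,1)\equiv 0\!\!\pmod p\}\big|=\big|\{t\in\Z_p\colon t^2\equiv D\!\!\pmod p\}\big|.$$
Because $p$ is odd and $p\nmid D$, the last cardinality equals $1+\legendre{D}{p}$, which is $2$ when $D$ is a quadratic residue modulo $p$ and $0$ otherwise. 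Finally, from $D=dr^2$ with $p\nmid r$ we get $\legendre{D}{p}=\legendre{d}{p}\legendre{r}{p}^2=\legendre{d}{p}$, so $D$ is a quadratic residue modulo $p$ if and only if $d$ is. Combining, $\omega_P(p)=2$ or $0$ according to whether $d$ is or is not a quadratic residue modulo $p$.

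\textbf{Main obstacle.} There is essentially no obstacle here: the only point requiring attention is the bookkeeping of which finitely many primes must be discarded, namely those dividing $2\alpha D$, so that completing the square is legitimate ($p\neq 2$, $p\nmid\alpha$), the square-root count is exactly $0$ or $2$ ($p\nmid D$), and the Legendre symbols of $D$ and $d$ coincide ($p\nmid r$).
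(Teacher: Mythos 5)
Your proof is correct and follows essentially the same route as the paper: complete the square via $4\alpha P(n,1)=(2\alpha n+\beta)^2-D$, discard the finitely many primes dividing $2\alpha D$ (the paper excludes $p\mid 2\alpha d r$, the same set up to finitely many primes), and reduce the count to square roots of $d$ modulo $p$. The bookkeeping of excluded primes and the reduction $\legendre{D}{p}=\legendre{d}{p}$ are handled correctly.
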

		\begin{proof}
			Let $P(m,n)=\alpha m^2+\beta mn+\gamma n^2$ for some $\alpha, \beta,\gamma\in \Z$ not all of them $0$. 
			Since the form is irreducible we have $\alpha\neq 0$ and $D:=\beta^2-4\alpha\gamma$ is not a square. 	
			Note that  
			$4\alpha P(m,1)=(2\alpha m+\beta )^2-dr^2$.
			It follows that if  $p\nmid 2\alpha d r$, then the number of solutions $\! \!\pmod{p}$ to the congruence $P(m,1)\equiv 0\!\!\pmod{p}$ is equal to  the number of solutions to the congruence $m^2\equiv d\!\!\pmod{p}$; hence, it is $2$ or $0$, depending on whether $d$ is a quadratic residue $\!\! \mod{p}$ or not.  
		\end{proof}
		We also introduce some quantities that will appear in our main concentration estimates. If  $P\in \Z[m,n]$ is a homogeneous polynomial, $t\in \R$ and $\chi $ is a Dirichlet character, we let 
		\begin{equation}\label{E:GNP}
			G_{P,N}(f,K):= \sum_{\substack{ K< p\leq N,\\ p\in \P}}\, \frac{\omega_P(p)}{p} \,(f(p)\cdot \overline{\chi(p)}\cdot p^{-it} -1);\footnote{Alternatively, in place of	$G_{P,N}(f,K)$ we could  use the expression, 
				$
				\tilde{G}_{P,N}(f,K):= \sum_{\substack{ K< p\leq N,\\ p\in \P}}\, \frac{\omega_P(p)}{p} \,\Im\big(f(p)\cdot \overline{\chi(p)}\cdot p^{-it} \big)$.
				This is so since $| G_{P,N}(f,K)-	\tilde{G}_{P,N}(f,K)|\leq D_P(f, n^{it}\cdot \chi; K,N)$ and the term  $D_P(f, n^{it}\cdot \chi; K,\sqrt{N})$ appears in all our estimates anyways, so their form will not change in essence.  Similarly for the term  $F_N(f,K)$.}
		\end{equation}
		(we suppress the dependence on $t$ and $\chi$)
		and
		\begin{equation}\label{E:DPxy}
			\D_P(f,g; x,y)^2:= \sum_{\substack{x< p\leq y, \\ p\in \P}} \frac{\omega_P(p)}{p}\, (1-\Re(f(p)\cdot \overline{g(p)})); 
		\end{equation}
		\begin{equation}\label{E:DP}
			\D_P(f,g):=\D_P(f,g; 1,+\infty).
		\end{equation}
		If $P(m,n)=n$, in which case $\omega_P(p)=1$ for every $p\in \P$,  we let 
		\begin{equation}\label{E:DPn}
			\D(f,g; x,y):=\D_P(f,g; x,y), \qquad 	\D(f,g):=	\D_P(f,g). 
		\end{equation}
		Recall that  by Definition~\ref{D:Paperiodic}, if 
		$P\in \Z[m,n]$ is a homogeneous polynomial we say that 
		the multiplicative function $f\colon \N\to \U$ is  $P$-pretentious if $\D_P(f,\chi\cdot n^{it})<+\infty$ for some $t\in \R$ and Dirichlet character $\chi$. Note that 
		since   for all but finitely many $p\in \P$ we have $\omega_P(p)\leq \deg(P)$,
		$f$ is $P$-pretentious if and only if 
		$$
		\sum_{p\in \CP_P} \frac{1}{p}\, (1-\Re (f(p)\cdot \overline{\chi(p)}\cdot p^{-it}))<+\infty,
		$$
		where $\CP_P$ is the set defined in \eqref{E:CPP'}.
		In this case we write $f\sim_P \chi\cdot n^{it}$. If $P(m,n)=n$, then $\omega_P(p)=1$ for all $p\in \P$ and we recover the usual notion of pretentiousness in the sense of Granville and Soundararajan~\cite{GS23}.	 
		More generally we have the following definition. 
		\begin{definition}
			For every bounded sequence $c\colon \P\to \R_+$ we  let
			$$
			\D_c(f,g; x,y)^2:= \sum_{\substack{x< p\leq y, \\ p\in \P}} \frac{c(p)}{p}\, (1-\Re(f(p)\cdot \overline{g(p)})).
			$$
			If $\D_c(f,g; 1,+\infty)<+\infty$ we write $f\sim_c g$. 
			Note that if $c_1(p)\leq c_2(p)$ for all but finitely many $p$, then the following implication holds $f\sim_{c_2} g\Rightarrow f\sim_{c_1}g$.
		\end{definition}
		It can be shown (as in \cite{GS08} or \cite[Section~2.1.1]{GS23}) that $\D_P$ satisfies the triangle inequality
		\begin{equation}\label{E:triangle}
			\D_P(f, g) \leq \D_P(f, h) + \D_P(h, g)
		\end{equation}
		for all  $f,g,h\colon \P\to \U$.
		Also, for all  $f_1, f_2, g_1, g_2\colon \P\to \U$,  we have (proved as in
		\cite[Lemma~3.1]{GS07})
		\begin{equation}\label{E:Df1f2}
			\D_P(f_1f_2, g_1g_2) \leq \D_P(f_1, g_1) + \D_P(f_2, g_2).
		\end{equation}
		It follows from \eqref{E:Df1f2} that if $f_1\sim_P g_1$ and $f_2\sim_P g_2$, then $f_1f_2\sim_P g_1g_2$. Similar estimates also hold with $\D_c$ in place of $\D_P$ where $c\colon \P\to \R_+$ is any bounded sequence.
		
		If $f\sim \chi \cdot  n^{it}$, it is known that the value of $t$ is unique.  A similar fact holds if $f\sim_P \chi \cdot  n^{it}$ and $P$ is an irreducible binary quadratic form. We will prove a more general fact in the next lemma. 
		\begin{lemma}\label{L:tunique}
			Let $P_1,P_2\in \Z[m,n]$ be  two irreducible binary quadratic forms, let  $f\colon \N\to \U$ be a multiplicative function,  
			and suppose that for $j=1,2$ we have $f\sim_{P_j} \chi_j\cdot n^{it_j}$ for some $t_j\in \R$ and Dirichlet characters $\chi_j$.   Then 
			$t_1=t_2$ and  $\chi_1(p)=\chi_2(p)$ for all but finitely many  $p\in \CP_{P_1}\cap \CP_{P_2}$.
		\end{lemma}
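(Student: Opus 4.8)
The plan is to reduce the statement to a finiteness assertion about the distance $\D$ restricted to the set $S:=\CP_{P_1}\cap\CP_{P_2}$, and then to exploit the fact that, by \cref{L:omegap}, this set is (up to finitely many primes) a union of residue classes. After replacing $\chi_1,\chi_2$ by characters modulo a common modulus $q$, put $\chi:=\chi_1\cdot\overline{\chi_2}$ and $t:=t_1-t_2$; we must show $t=0$ and $\chi(p)=1$ for all but finitely many $p\in S$. Since $\omega_{P_j}(p)\le 2$ for almost all $p$, the hypothesis $f\sim_{P_j}\chi_j\cdot n^{it_j}$ is equivalent (as recorded after \cref{D:Paperiodic}) to the convergence of $\sum_{p\in\CP_{P_j}}\frac1p\bigl(1-\Re(f(p)\overline{\chi_j(p)}p^{-it_j})\bigr)$, and restricting this series of nonnegative terms to $S\subseteq\CP_{P_j}$ leaves it convergent. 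In the notation of the distance $\D_c$ with $c=\one_S$ this says $\D_{\one_S}(f,\chi_1\cdot n^{it_1})<\infty$ and $\D_{\one_S}(f,\chi_2\cdot n^{it_2})<\infty$, so the triangle inequality \eqref{E:triangle} (valid for $\D_c$ as well) yields
\begin{equation*}
\sum_{p\in S}\frac1p\bigl(1-\Re(\chi(p)\,p^{it})\bigr)<\infty .
\end{equation*}

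Next I would identify the shape of $S$. By \cref{L:omegap}, for all but finitely many primes $p$ one has $p\in\CP_{P_j}$ if and only if $d_j$ is a quadratic residue modulo $p$, where $d_j\neq 1$ denotes the square-free part of the discriminant of the irreducible form $P_j$; by quadratic reciprocity this is a congruence condition on $p$ modulo $4|d_j|$. Hence with $Q:=\operatorname{lcm}(q,4|d_1|,4|d_2|)$ there is a set $A\subseteq(\Z/Q\Z)^{\times}$ such that $S=\{p:\ p\bmod Q\in A\}$ up to finitely many primes, and $\delta:=|A|/\phi(Q)>0$ (this is the density of $S$ inside the primes, equal to $\tfrac12$ if $d_1=d_2$ and to $\tfrac14$ otherwise; here the hypothesis $d_1,d_2\neq 1$, i.e.\ irreducibility, is used). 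Writing $\one_A=\sum_{\psi\bmod Q}\widehat{S}(\psi)\,\psi$ as a linear combination of Dirichlet characters, we have $\widehat{S}(\chi_0)=\delta$, $\widehat{S}(\overline{\psi})=\overline{\widehat{S}(\psi)}$ (as $\one_A$ is real-valued), and $|\widehat{S}(\psi)|\le\delta$ for every $\psi$.

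The main computation is to insert this expansion into the displayed sum, using Mertens' estimate $\sum_{p\le x}\frac1p=\log\log x+O(1)$ together with the classical bound $\sum_{p\le x}\frac1p\,\eta(p)\,p^{iu}=O(1)$, which holds whenever $(\eta,u)\neq(\chi_0,0)$ and is equivalent to the non-vanishing $L(1+iu,\eta)\neq 0$. A short calculation shows that the partial sums of $\sum_{p\in S}\frac1p(1-\Re(\chi(p)p^{it}))$ equal $\delta\log\log x+O(1)$ when $t\neq 0$ (every twisted prime sum that appears is $O(1)$ because $t\neq 0$) and equal $(\delta-\Re\widehat{S}(\chi))\log\log x+O(1)$ when $t=0$ (the only surviving main terms come from the characters $\psi\in\{\chi_0,\chi,\overline{\chi}\}$). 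Since the full sum converges and $\delta>0$, the case $t\neq 0$ is impossible, which gives $t_1=t_2$. For $t=0$, convergence forces $\Re\widehat{S}(\chi)=\delta$, and combined with $|\widehat{S}(\chi)|\le\delta$ this yields $\widehat{S}(\chi)=\delta=\widehat{S}(\chi_0)$; since $\widehat{S}(\chi)=\frac1{\phi(Q)}\sum_{a\in A}\overline{\chi(a)}$ while $\widehat{S}(\chi_0)=\frac{|A|}{\phi(Q)}$, this forces $\sum_{a\in A}\overline{\chi(a)}=|A|$, hence $\chi(a)=1$ for every $a\in A$. Therefore $\chi_1(p)=\chi_2(p)$ for every prime $p\in S$ outside the finite exceptional set.

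The one genuinely conceptual point --- and the step I expect to be the main obstacle --- is recognising that one cannot hope to conclude $\chi_1=\chi_2$ as characters, only that $\chi:=\chi_1\overline{\chi_2}$ is trivial on the progression-union $S$; this is precisely what the inequality $|\widehat{S}(\chi)|\le\delta$ provides, once the divergent term $\delta\log\log x$ must be cancelled in the bounded sum. The only input from outside the paper is the standard boundedness of $\sum_{p\le x}\frac1p\eta(p)p^{iu}$ for $(\eta,u)\neq(\chi_0,0)$; the rest is bookkeeping with \cref{L:omegap}, quadratic reciprocity, Mertens' theorem, and the triangle inequality for $\D_c$.
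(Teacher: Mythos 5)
Your proof is correct, and it takes a genuinely different route from the paper's. The paper argues in two separate steps: for $t_1=t_2$ it restricts to a single progression $1+a\N$ (chosen via \cref{L:omegap} and quadratic reciprocity so that $\omega_{P_1}(p)=\omega_{P_2}(p)=2$ there), applies the triangle inequality for $\D_c$, and then raises to a power $k$ with $\chi_1^k=\chi_2^k=1$ to eliminate the characters, concluding $n^{ik(t_1-t_2)}\sim_c 1$ and hence $t_1=t_2$ from the prime number theorem in arithmetic progressions; for the characters it notes that $\CP_{P_1}\cap\CP_{P_2}$ is a union of residue classes mod $Q$, so if $\chi_1(p_0)\neq\chi_2(p_0)$ then $1-\Re(\chi_1\overline{\chi_2})$ is a fixed positive constant on the whole progression $p\equiv p_0 \pmod Q$, making $\sum_{p\equiv p_0}1/p$ converge and contradicting Dirichlet's theorem. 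You instead run a single Fourier-analytic Mertens computation over all of $S=\CP_{P_1}\cap\CP_{P_2}$: expanding the indicator of the admissible residues into Dirichlet characters, the partial sums of $\sum_{p\in S}\frac1p(1-\Re(\chi(p)p^{it}))$ are $\delta\log\log x+O(1)$ when $t\neq0$ and $(\delta-\Re\widehat S(\chi))\log\log x+O(1)$ when $t=0$, and the extremal case $|\widehat S(\chi)|\le\delta$ forces $\chi\equiv1$ on the admissible classes. This buys a unified treatment of both conclusions (and directly shows $\chi_1\overline{\chi_2}$ is trivial on the whole progression-union, not just prime by prime), at the cost of invoking the standard bound $\sum_{p\le x}\eta(p)p^{iu}/p=O(1)$ for $(\eta,u)\neq(\chi_0,0)$, i.e.\ non-vanishing of $L(1+iu,\eta)$; the paper's character step, by contrast, needs only Dirichlet's theorem, and its $t$-step avoids twisted sums with $p^{iu}$ against non-principal characters by the power trick. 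Both arguments use the same structural input (\cref{L:omegap}, quadratic reciprocity, the triangle inequality for $\D_c$), and your quantitative claims ($\delta>0$, $\widehat S(\chi_0)=\delta$, $|\widehat S(\psi)|\le\delta$) all check out, so there is no gap.
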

		\begin{proof}
			First, we  show that $t_1=t_2$. 
			Using \cref{L:omegap} and quadratic reciprocity, we can find $a\in \N$ such that $\omega_{P_1}(p)=\omega_{P_2}(p)=2$ for all but finitely many primes $p\in \mathcal{G}:=1+a\N$. 
			Indeed, if  $d_1,d_2$ are the square-free parts of the discriminants of $P_1,P_2$, and $q_1,\ldots, q_k$ are the prime factors of $d_1, d_2$, for any prime $p$ such that $q_1,\ldots, q_k$ are all quadratic residues $\pmod{p}$, both $d_1$ and $d_2$ are quadratic residues $\pmod{p}$ and hence by \cref{L:omegap} $\omega_{P_1}(p)=\omega_{P_2}(p)=2$.  
			Using quadratic reciprocity we get that any prime $p$ satisfying $p\equiv 1 \pmod{8\, q_1\cdots q_k}$ works, so it suffices to take $a:=8\, q_1\cdots q_k$.  
			
			So let $c(p):=2$ if $p\in \mathcal{G}$ and $c(p):=0$ otherwise, so that $c(p)\leq \omega_{P_j}(p)$ for $j=1,2$ and all but finitely many $p$.
			Therefore $f\sim_c \chi_1\cdot n^{it_1}$ and $f\sim_c \chi_2\cdot n^{it_2}$.  
			It follows
			from \eqref{E:Df1f2}, with $\D_c$ in place of $\D_P$,  that $\chi_1\cdot n^{it_1}\sim_c \chi_2\cdot n^{it_2}$. 
			Note also that 
			there exists $k\in \N$ such that $\chi_j^k(p)=1$, $j=1,2$, for all but finitely many primes $p\in \P$. Using the previous  facts and   \eqref{E:Df1f2} again, we deduce that  $n^{ikt_1}\sim_c n^{ikt_2}$, hence 
			$n^{ik(t_1-t_2)}\sim_c 1$. 
			Whenever $\sum_{p\in \mathcal{G}}\frac{1-\Re(p^{it})}{p}<\infty$, the prime number theorem on arithmetic progressions implies $t=0.$ Consequently, we have $t_1=t_2$. 
			
			Next note that if $P$ is an irreducible quadratic form with discriminant $D$, then 
			$\CP_P$  consists of those primes $p$ that satisfy $\legendre{D}{p}=1$. 
			Using  quadratic reciprocity we deduce that there exists $Q\in \N$ and a non-empty finite set $Q_0$, consisting of positive integers relatively prime to $Q$,   such that  
			\begin{equation}\label{E:P12cap}
				\CP_{P_1}\cap \CP_{P_2}=\{p\equiv  j \! \!  \pmod{Q}\colon j\in Q_0\}.
			\end{equation}
			By enlarging $Q$ if necessary, we can assume that both $\chi_1,\chi_2$ have period  $Q$.  
			Let $\omega(p)=2$ for $p\in \CP_{P_1}\cap \CP_{P_2}$ and $0$ otherwise. 
			Since $\omega_{P_1}(p)=\omega_{P_2}(p)=2$ for all but finitely many $p\in \CP_{P_1}\cap \CP_{P_2}$,  we have $f\sim_{\omega}\chi_j\cdot n^{it}$,  for $j=1,2$,  and we deduce using \eqref{E:triangle}, with $\D_c$ in place of $\D_P$, that 
			\begin{equation}\label{E:P12cap'}
				\sum_{ p\in \CP_{P_1}\cap \CP_{P_2}}\, \frac{1}{p} \,(1-\Re( \chi_1(p)\cdot \overline{\chi_2(p)} ))<\infty. 
			\end{equation}
			Since $\chi_1, \chi_2$  have period $Q$ and take finitely many values on primes, if 
			$\chi_1(p_0)\neq\chi_2(p_0)$ for some 	$p_0\in \CP_{P_1}\cap \CP_{P_2}$ which does not divide $Q$, we deduce from \eqref{E:P12cap}  and \eqref{E:P12cap'}  that  
			$$
			\sum_{ p\equiv p_0\! \! \! \pmod{Q}}\, \frac{1}{p} <\infty. 
			$$
			This contradicts Dirichlet's theorem on arithmetic progressions and completes the proof.
		\end{proof}
		\subsection{The case $P(m,n)=m^2+dn^2$}
		For $d\in \Z$ square-free  we let 
		$$
		P_d(m,n)=m^2+dn^2 \quad  \text{ and } \quad  \CP_{d}:=\CP_{P_d},
		$$
		where $\CP_{P_d}$ is given by  \eqref{E:CPP'}.
		Then
		\begin{equation}\label{E:Pd}
			\CP_{d}=\Big\{p\in \P\colon  \legendre{-d}{p}=1 \Big\}.
		\end{equation}
		For example, 
		$\CP_1:=\{ p\equiv 1\!  \pmod{4}\}$,
		$\CP_2:=\{p\equiv 1, 3\!\pmod{8}\}$,
		$\CP_{-2}:=\{ p\equiv 1, 7\!\!\pmod{8}\}$. 
		It can be shown that  for square-free $d$ the set $\CP_{d}$  has relative density $1/2$ on the primes and it always consists of the set of  primes that belong to  finitely many congruence classes. For example, using quadratic reciprocity, we get that  if $q$ is an odd prime, then 
		$$
		\CP_q=\{ p\equiv \pm k^2\! \! \pmod{4q}\colon  k\in [4q] 
		\text{ is odd  and coprime to } 4q \}.
		$$

		In the case where $P=P_d$ the  	
		expressions  \eqref{E:GNP} and \eqref{E:DPxy}  used in the statements of \cref{P:ConcentrationQuanti} and \cref{P:concentration2General} take  the form 
		\begin{equation}\label{E:GNd}
			G_{d,N}(f,K):= 2\, \sum_{\substack{ K< p\leq N,\\ p\in \CP_d}}\, \frac{1}{p} \,(f(p)\cdot \overline{\chi(p)}\cdot p^{-it} -1)
		\end{equation}
		and
		\begin{equation}\label{E:Ddxy}
			\D_d(f,g; x,y)^2:= 2\, \sum_{\substack{x< p\leq y, \\ p\in \CP_d}} \frac{1}{p}\, (1-\Re(f(p)\cdot \overline{g(p)})).
		\end{equation}
		\subsection{Statements of linear and quadratic concentration estimates}
		We will make use of the following ``linear'' concentration estimate, which is an immediate consequence of \cite[Lemma 2.5]{KMPT21}.
		\begin{definition}	
			Throughout, 	if $f\colon \N\to \U$ is a multiplicative function we extend it to $\Z$ by letting $f(-n)=f(n)$ for $n\in \N$. We do a similar thing for additive functions $h\colon \N\to \R$.
		\end{definition}
		For each $K\in\N$ let
		\begin{equation}\label{E:PhiK}
			\Phi_K	:=\Big\{\prod_{p\leq K}p^{a_p}\colon K< a_p\leq 2K\Big\}.
		\end{equation}
		\begin{proposition}\label{P:concentration1}
			Let 
			$f\colon \mathbb{N}\to\mathbb{U}$ be a multiplicative function such that $f\sim \chi \cdot  n^{it}$ for some $t\in \R$ and Dirichlet character $\chi$ with  period $q$. Let $K\in\N$ and $\Phi_K$ be the set described in \eqref{E:PhiK}. 
			Suppose that $K$ is sufficiently large so that $q$ divides all elements of $\Phi_K$. 
			Then  
			\begin{multline*}
				\limsup_{N\to\infty} \max_{Q\in \Phi_K,a\in A_Q} \E_{n\in [N]}|f(Qn+a)- \chi(a)\cdot (Qn)^{it}\cdot \exp\big(F_N(f,K)\big)| \\ \ll\D(f,\chi\cdot n^{it}; K,\infty)+K^{-1/2},
			\end{multline*}
			where for given $Q\in \N$ the set $A_Q$ consists of all  $a\in \Z$ with  $|a|\leq Q$ and $(a,Q)=1$, the implicit constant is absolute, $\D$ is as in \eqref{E:DPn}, and
			\begin{equation}\label{E:FNfQdef}
				F_N(f,K):=\sum_{K< p\leq N} \frac{1}{p}\,\big(f(p)\cdot \overline{\chi(p)}\cdot p^{-it} -1\big).
			\end{equation}
			(We suppress the dependence on $t$ and $\chi$).
		\end{proposition}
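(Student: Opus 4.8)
The plan is to obtain the estimate from \cite[Lemma~2.5]{KMPT21} after a short reduction that absorbs the Dirichlet character $\chi$ and the Archimedean factor $n^{it}$, and that makes the error uniform over $Q\in\Phi_K$ and $a\in A_Q$. Put $g(n):=f(n)\cdot\overline{\chi(n)}\cdot n^{-it}$. Since $f$ is multiplicative and $\chi\cdot n^{it}$ is completely multiplicative, $g$ is multiplicative, and $\D(g,1;x,y)=\D(f,\chi\cdot n^{it};x,y)$ for all $x<y$ (directly from \eqref{E:DPn}); in particular $\D(g,1;1,\infty)<\infty$. Also $g(p)=f(p)\cdot\overline{\chi(p)}\cdot p^{-it}$, so the quantity $F_N(f,K)$ of \eqref{E:FNfQdef} is exactly $F_N(g,K):=\sum_{K<p\le N}\tfrac1p(g(p)-1)$.

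First I would unwind $f(Qn+a)$. Taking $K$ large enough that $q$ divides every $Q\in\Phi_K$ and that $\min\Phi_K\ge 2$ (so that $Qn+a\ge 1$ for all $n\ge 1$, $Q\in\Phi_K$, $a\in A_Q$, because $|a|\le Q$ and $(a,Q)=1$ force $Qn+a\ge 1$; hence the even extension of $f$ causes no trouble), we have $\chi(Qn+a)=\chi(a)$ with $|\chi(a)|=1$, whence $f(Qn+a)=\chi(a)\,(Qn+a)^{it}\,g(Qn+a)$. Writing $(Qn+a)^{it}=(Qn)^{it}(1+\tfrac{a}{Qn})^{it}$, using $|(1+x)^{it}-1|\le 2|t|\,|x|$ for $|x|\le\tfrac12$, and using $|a|\le Q$ so that $|a/(Qn)|\le 1/n\le 1/2$ for $n\ge 2$, together with $|g|\le 1$, one gets for $n\ge 2$
$$
\big|f(Qn+a)-\chi(a)(Qn)^{it}\exp(F_N(f,K))\big|\le\big|g(Qn+a)-\exp(F_N(g,K))\big|+\frac{2|t|}{n},
$$
the bound being uniform in $Q\in\Phi_K$ and $a\in A_Q$; the single term $n=1$ contributes $O(1/N)$ to $\E_{n\in[N]}$ of either side. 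Averaging over $n\in[N]$ and noting $\E_{n\in[N]}\tfrac{2|t|}{n}\ll |t|(\log N)/N\to 0$, the extra term drops out of the $\limsup$, and matters are reduced to proving
$$
\limsup_{N\to\infty}\ \max_{Q\in\Phi_K,\,a\in A_Q}\ \E_{n\in[N]}\big|g(Qn+a)-\exp(F_N(g,K))\big|\ \ll\ \D(g,1;K,\infty)+K^{-1/2}.
$$

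This last estimate is \cite[Lemma~2.5]{KMPT21} applied to the multiplicative function $g$, which pretends to be $1$: it delivers precisely a bound of this shape, and $\D(g,1;K,\infty)=\D(f,\chi\cdot n^{it};K,\infty)$ by the identity noted above, which completes the argument. So there is essentially no obstacle beyond invoking the cited lemma; the only point requiring care is the reduction in the second paragraph, namely that replacing $\chi(Qn+a)$ by $\chi(a)$ and $(Qn+a)^{it}$ by $(Qn)^{it}$ is legitimate \emph{uniformly} in $Q\in\Phi_K$ and $a\in A_Q$ — which is exactly where the hypotheses ``$K$ sufficiently large'' (so that $q\mid Q$) and ``$|a|\le Q$'' (so that $|a/(Qn)|\le 1/n$) are used. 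If the cited lemma is stated only for the trivial character with $t=0$, this reduction is the whole proof; if it already incorporates progressions and twists, the proposition is immediate.
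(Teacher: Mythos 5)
Your proposal is correct and matches the paper's treatment: the paper offers no independent argument, stating the proposition as an immediate consequence of \cite[Lemma~2.5]{KMPT21}, and your twisting reduction (replacing $f$ by $g=f\cdot\overline{\chi}\cdot n^{-it}$, using $q\mid Q$ to freeze $\chi$ on the progression and the elementary bound $|(1+a/(Qn))^{it}-1|\ll |t|/n$ to freeze the Archimedean factor, uniformly in $Q\in\Phi_K$ and $a\in A_Q$) is exactly the routine step implicit in that citation. The only caveat is the one you yourself note: the final appeal still requires the cited lemma in its progression form for functions pretending to be $1$, which is indeed what it provides.
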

		Our main goal in this section is to prove an analog of this result for irreducible binary quadratic forms.  The case where $Q(m,n)=m^2+n^2$ was essentially covered in \cite{FKM23}. We follow a similar general strategy, but the more general setting we want to cover presents some additional challenges that we need to address. In particular, unlike in \cite{FKM23}, we need to invoke several results from algebraic number theory and also work over real quadratic extensions. 
		\begin{proposition}\label{P:ConcentrationQuanti}
			Let $P\in \Z[m,n]$ be an irreducible binary quadratic form, 
			$A,K,N\in \N$, and  $f\colon \Z\to\U$ be an even multiplicative function,  $t\in \R$,  $\chi$ be a Dirichlet character with period $q,$   $Q= \prod_{p\leq K}p^{a_p}$ for some  $a_p\in \N$.   
			Let also  $a,b\in \Z$ be  such that  $-A\leq a,b\leq A$, 	$c:=(P(a, b),Q)$, and  
			suppose that $q$ and $\prod_{p\leq K}p$ divide $Q/c$.	
			Then there exist constants $C_{A,P,Q}>0$ such that if  $K$ is sufficiently large, depending only on $P$, and   $N$ is  sufficiently large, depending only on  $A$, $P$, $Q$, we have
			\begin{multline}\label{E:generalfP}
				\E_{m,n\in [N]}\, \big|f\big(P_c(Qm+a,Qn+b)\big)- \chi(P_c(a,b))\cdot   
				(P_c(Qm+a,Qn+b))^{it}\cdot   \exp\big( G_{P,N}(f,K)\big)\big| \\ 	\ll_P (\D_P+\D_P^2)(f,\chi\cdot n^{it}; K,\sqrt{N})+
				C_{A,P,Q}\cdot\mathbb{D}_P(f,\chi\cdot n^{it};\sqrt{N},C_{A,P,Q} N^2)+K^{-1/2},
			\end{multline}
			where $P_c:=P/c$ and $G_{P,N},\D_P$ are as in   \eqref{E:GNP}, \eqref{E:DPxy}.
		\end{proposition}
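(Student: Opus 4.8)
The plan is to follow the template developed in \cite{FKM23} for the form $m^2+n^2$, the genuinely new feature being that one must work over an arbitrary quadratic order, in particular over a real quadratic field (infinitely many units) when $P$ has positive discriminant. \emph{Reductions.} Set $g:=f\cdot\overline{\chi}\cdot n^{-it}$, so that $\D_P(g,1)=\D_P(f,\chi\cdot n^{it})$ and $g$ is $P$-pretentious to $1$, and factor
$$
f\big(P_c(Qm+a,Qn+b)\big)=\chi\big(P_c(Qm+a,Qn+b)\big)\cdot\big(P_c(Qm+a,Qn+b)\big)^{it}\cdot g\big(P_c(Qm+a,Qn+b)\big).
$$
Since $q$ and $\prod_{p\le K}p$ both divide $Q/c$ while $P_c(Qm+a,Qn+b)\equiv P_c(a,b)\pmod{Q/c}$ and $(P_c(a,b),Q/c)=1$, the integer $P_c(Qm+a,Qn+b)$ is coprime to $q$ and has no prime factor $\le K$; hence $\chi(P_c(Qm+a,Qn+b))=\chi(P_c(a,b))$ is a fixed constant and primes below $K$ play no role. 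Taking $K$ large in terms of $P$ alone puts all finitely many ``bad'' primes of $P$ below $K$, so $\omega_P$ equals the split/inert/ramified multiplicity at every prime that occurs; and replacing $(P_c(Qm+a,Qn+b))^{it}$ by $(P_c(Qm,Qn))^{it}$ costs only a lower-order error, since off a sparse set of $(m,n)$ the two values have comparable size and so differ by a factor $1+O_{A,P,Q}(1/N)$. It thus suffices to show that $g(P_c(Qm+a,Qn+b))$ concentrates in $L^1$-mean around $\exp(G_{P,N}(g,K))$.

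\emph{Turán--Kubilius.} The engine is the second-moment bound
$$
\E_{m,n\in[N]}\Big|\sum_{\substack{K<p\le\sqrt N\\ p\mid P_c(Qm+a,Qn+b)}}(g(p)-1)-G_{P,\sqrt N}(g,K)\Big|^{2}\ \ll_P\ \D_P\big(f,\chi\cdot n^{it};K,\sqrt N\big)^{2}+\tfrac1K.
$$
The first- and second-moment computations reduce to counting, for $K<p\neq p'$, the pairs $(m,n)\in[N]^2$ with $p\mid P_c(Qm+a,Qn+b)$ (resp.\ $pp'\mid\cdots$): since $Q$ is $K$-smooth we have $p\nmid Q$, the affine substitution is invertible modulo $p$, and the zero locus of $P_c$ modulo $p$ is a union of at most two lines through the origin (precisely $\omega_P(p)$ of them), giving the densities $\frac{\omega_P(p)}{p}+O_P(\frac1{p^2})+O(\frac1N)$ and $\frac{\omega_P(p)\omega_P(p')}{pp'}+(\text{analogous errors})$, with the $O(1/p^2)$ discrepancies summing to $O(1/K)$. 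The contribution of prime powers, and of primes in $(\sqrt N,\,C_{A,P,Q}N^2]$, is controlled by a sieve upper bound of Shiu--Henriot type for nonnegative multiplicative functions along the values of $P_c$ on the lattice $Q\Z+a,\,Q\Z+b$; after the linear change of variables of \cref{SS:PfThm31} realizing $4\alpha\,P$ as a norm form on the ring of integers $\Z[\tau_d]$ of $\Q(\sqrt{-d})$, this is where, for $d>0$, one passes to prime ideals of $\Z[\tau_d]$ and invokes the $C$-regular representatives of \cref{L:Cregular} to control the divisibility translates $z^{-1}R_N$ despite the infinitely many units, and it is also what produces the term $C_{A,P,Q}\cdot\D_P(f,\chi\cdot n^{it};\sqrt N,C_{A,P,Q}N^2)$ in the statement.

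\emph{Additive to multiplicative.} Writing $g(P_c(Qm+a,Qn+b))=g_{(K,\sqrt N]}(P_c(\cdots))\cdot g_{>\sqrt N}(P_c(\cdots))$ (no factor below $K$), one has for every $m$
$$
\Big|g_{(K,\sqrt N]}(m)-\exp\!\Big(\sum\nolimits_{K<p\le\sqrt N,\ p\mid m}(g(p)-1)\Big)\Big|\ \ll\ \sum_{p\mid m}|g(p)-1|^{2}+\#\{p:p^{2}\mid m\},
$$
whose average along $P_c$ is $\ll\D_P(f,\chi\cdot n^{it};K,\sqrt N)^{2}+1/K$; combining this with the Turán--Kubilius bound and the inequality $|e^{z}-e^{w}|\le|z-w|$, valid for $\Re z,\Re w\le0$, replaces the middle factor by $\exp(G_{P,\sqrt N}(g,K))$ up to an $L^1$-error $\ll(\D_P+\D_P^{2})(f,\chi\cdot n^{it};K,\sqrt N)+K^{-1/2}$. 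For the large-prime factor one uses that $P_c(Qm+a,Qn+b)\ll_{A,P,Q}N^{2}$ has $O(1)$ prime divisors exceeding $\sqrt N$, so $|g_{>\sqrt N}(m)-1|\le\sum_{p\mid m,\ p>\sqrt N}|g(p)-1|$, whose average the same sieve estimate bounds by $\ll C_{A,P,Q}\D_P(f,\chi\cdot n^{it};\sqrt N,C_{A,P,Q}N^{2})+K^{-1/2}$. Since also $G_{P,\sqrt N}(g,K)=G_{P,N}(g,K)+O(\D_P(f,\chi\cdot n^{it};\sqrt N,N))$, collecting the errors and reinstating $\chi(P_c(a,b))$ and $(P_c(Qm,Qn))^{it}$ yields \eqref{E:generalfP}.

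I expect the main obstacle to be the sieve/second-moment step for forms of positive discriminant: when $\Q(\sqrt{-d})$ is real the counting region $R_N$ and its divisibility translates are no longer essentially rectangles, so the Gaussian-integer arguments of \cite{FKM23} do not transcribe directly; handling this cleanly requires the $C$-regular associates of \cref{L:Cregular} together with careful tracking of how the implied constants depend on $A$ and $Q$, which is precisely what dictates the somewhat intricate shape of the error term (the factors $C_{A,P,Q}$ and the ranges up to $C_{A,P,Q}N^{2}$).
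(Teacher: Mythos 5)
Your overall architecture is the paper's: twisting by $\overline\chi\cdot n^{-it}$ to reduce to $\chi=1$, $t=0$ (using $q\mid Q/c$ and $(P_c(a,b),Q)=1$ to freeze $\chi(P_c(\cdot))$), a two-variable Tur\'an--Kubilius second moment for primes in $(K,\sqrt N]$ built on the divisor densities $\frac{\omega_P(p)}{p}+O_P(p^{-2})+O(N^{-1})$, the $|e^{z}-e^{w}|\le|z-w|$ lifting from additive to multiplicative functions, and a separate divisor-density treatment of the primes in $(\sqrt N, C_{A,P,Q}N^{2}]$; this matches Lemmas~\ref{L:wNPQ}--\ref{L:TKmultiplicative2} and \cref{S:Concentration2} (whether one splits the large primes at $\sqrt N$ only, or additionally at $N$ as the paper does with $f_{N,1}$, is immaterial).

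The genuine gap is in the step you yourself single out as the main obstacle: the counting bound $\#\{(m,n)\in[N]^2\colon l\mid P_c(Qm+a,Qn+b)\}\ll_{A,P,s}Q^2N^2/l$ (the paper's \cref{L:wNQl}) when the discriminant is positive. You propose to obtain it from the $C$-regular associates of \cref{L:Cregular}, i.e.\ to write a lattice point divisible by a prime ideal above $p$ as $\pi w$ with $w\in\pi^{-1}R_N\subset R_{C p^{-1/2}N}$. This requires the prime ideal to be principal; for a general irreducible form the order $\Z[\tau_d]$ can have class number larger than one (e.g.\ the real field $\Q(\sqrt{10})$), the split primes with $\omega_P(p)=2$ may lie in non-principal classes, and then no generator $\pi$ exists, so the substitution cannot even be written down; the vague appeal to a ``Shiu--Henriot type'' sieve does not close this, and is heavier machinery than is needed. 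The paper's route avoids principality entirely: \cref{L:CC} bounds the number of $(m,n)\in[-N,N]^2$ of norm $k$ by the number of ideals of norm $|k|$ times the number of unit multiples that can stay in the box, which by Dirichlet's unit theorem is $\ll\log_+\!\big(c_dN/\sqrt{|k|}\big)$, and \cref{L:wNQl} then sums $C_{d,N}(lr)$ over $|r|\ll_d N^2/l$ using $\CC_d(lr)\le 2^{s}\,\CC_d(r)$, the ideal-counting bound $\sum_{n\le N}\CC_d(n)\ll_d N$, and partial summation. (In the paper the $C$-regular elements are used only for the Daboussi--K\'atai criterion in \cref{S:Aperiodic}, not in the concentration estimates.) Once \cref{L:wNQl} is in place, the rest of your outline goes through as in the paper.
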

		\begin{remarks}
			$\bullet$
			The uniformity in the parameters is important for our applications. In particular, it is crucial that the  implicit constant depends only on $P$ and not on  $A,K$, and that  $\exp\big(G_{P,N}(f,K)\big)$ is the same for all $Q$ that are divisible by $\prod_{p\leq K}p$. 
			
			$\bullet$ In the proof of part~\eqref{I:a} of \cref{T:DensityRegularQuadratic}  we apply this result  and the next one for $a=1,b=0$ in which case some of our assumptions hold trivially. The full strength of \cref{P:ConcentrationQuanti}, for varying $a,b$ that depend on $Q$,  is needed in the proof of part~\eqref{I:b} of \cref{T:DensityRegularQuadratic}
			to cover the case where the irreducible polynomials  $P_1,P_2$ have different discriminants.
			
			$\bullet$		If $P$ is reducible, then $P=L_1\cdot L_2$, where $L_1, L_2$ are linear forms, and we  get concentration estimates  using \cref{P:concentration1}.

			$\bullet$	The averaging over both variables $m,n\in\mathbb{N}$ is crucial for our argument and allows to overcome issues with large primes that cause concentration estimates along single-variable quadratic polynomials to fail in general. 
			On the other hand, in \cite{Te24} Ter\"{a}v\"{a}inen proved a version of the concentration estimates for values of $f(P(Qn+a))$, where  $P\in\mathbb{Z}[x]$ is  arbitrary and the  multiplicative functions $f$ satisfies  $f(p)=p^{it}\chi(p)$ for $p>N$.  In our setting, however, we cannot afford to make such assumptions on $f$.
			
		\end{remarks} 

		For $P$-pretentious multiplicative functions we deduce the following result (we also use that $\lim_{m,n\to\infty}\big((P_c(Qm+a,Qn+b))^{it}-(P_c(Qm,Qn))^{it}\big)=0$).
		\begin{proposition}\label{P:concentration2General}
			Let $P\in \Z[m,n]$ be an irreducible binary quadratic form and 	$f\colon \Z\to\U$ be an even multiplicative function such that $f\sim_P \chi\cdot n^{it}$ for some $t\in \R$ and Dirichlet character $\chi$ with period $q$. Let also    $\Phi_K$ be as in \eqref{E:PhiK},  $A_K\in \N$,  and suppose that
			$K$ is  sufficiently large,  depending only on $P$, and  so that $\D_P(f,\chi\cdot n^{it}; K,\infty)\leq 1$ and $q$ divides all elements of $\Phi_K$. Then
			\begin{multline*}
				\limsup_{N\to\infty}  \max_{Q\in \Phi'_K, a,b\in S_K }	\E_{m,n\in [N]}\, \big|f\big(P_c(Qm+a,Qn+b)\big)- \\ 
				\chi(P_c(a,b))\cdot   
				(P_c(Qm,Qn))^{it}\cdot   \exp\big( G_{P,N}(f,K)\big)\big|\ll_P  
				\D_P(f,\chi\cdot n^{it}; K,\infty)+K^{-1/2},
			\end{multline*}
			where  $c:=(P(a, b),Q)$, 		
			$P_c:=P/c$, $\Phi'_K$ is a subset of $\Phi_K$, $S_K$ consists of all  $a,b\in [-A_K,A_K]$ such that   $q$ and $\prod_{p\leq K}p$ divide $Q/c$ for all $Q\in \Phi'_K$,   and  $G_{P,N},\D_P$ are as in   \eqref{E:GNP}, \eqref{E:DPxy}.
		\end{proposition}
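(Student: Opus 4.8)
The plan is to read off \cref{P:concentration2General} from the quantitative estimate \cref{P:ConcentrationQuanti}, using that the extra $P$-pretentiousness hypothesis makes two of the three error terms in \eqref{E:generalfP} collapse. Fix $K$ as in the statement and put $A:=A_K$. For each $Q\in\Phi_K$ and each $a,b\in S_K$, the triple $(Q,a,b)$ meets all the hypotheses of \cref{P:ConcentrationQuanti}: $Q=\prod_{p\le K}p^{a_p}$ with $a_p\in\N$ by the definition \eqref{E:PhiK} of $\Phi_K$, and $q$ together with $\prod_{p\le K}p$ divides $Q/c$ by the very definition of $S_K$ (here $c:=(P(a,b),Q)$). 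Hence, for all $N$ large enough in terms of $A,P,Q$, the average appearing in the conclusion of \cref{P:concentration2General} for this fixed $(Q,a,b)$ is
$$
\ll_P(\D_P+\D_P^2)(f,\chi\cdot n^{it};K,\sqrt N)+C_{A,P,Q}\cdot\D_P(f,\chi\cdot n^{it};\sqrt N,C_{A,P,Q}N^2)+K^{-1/2}.
$$

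Next I would simplify this bound. Since $f\sim_P\chi\cdot n^{it}$, the series defining $\D_P(f,\chi\cdot n^{it})$ converges, so its tail $\D_P(f,\chi\cdot n^{it};\sqrt N,\infty)$ tends to $0$ as $N\to\infty$; as $C_{A,P,Q}$ is independent of $N$, the middle term tends to $0$ with $N$. For the first term, the standing assumption $\D_P(f,\chi\cdot n^{it};K,\infty)\le1$ together with monotonicity of $\D_P(\cdot,\cdot;K,y)$ in $y$ gives $\D_P(f,\chi\cdot n^{it};K,\sqrt N)\le1$, hence $\D_P^2\le\D_P$ on that range and $(\D_P+\D_P^2)(f,\chi\cdot n^{it};K,\sqrt N)\le2\,\D_P(f,\chi\cdot n^{it};K,\infty)$. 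Consequently, for each fixed $Q\in\Phi_K$ and $a,b\in S_K$, the quantity $\limsup_{N\to\infty}$ of the corresponding average is $\ll_P\D_P(f,\chi\cdot n^{it};K,\infty)+K^{-1/2}$, with an implied constant depending only on $P$, and in particular uniform over the triple $(Q,a,b)$.

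It remains to promote this to a bound on the maximum. The set $\Phi_K$ is finite and $S_K\subseteq[-A_K,A_K]^2$ is finite, so the index set over which the maximum in \cref{P:concentration2General} is taken is finite; for a finite index set one has $\limsup_{N\to\infty}\max_{Q,a,b}(\cdots)=\max_{Q,a,b}\limsup_{N\to\infty}(\cdots)$, and the uniform bound of the previous paragraph then delivers exactly the asserted conclusion.

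The one thing to watch is the dependence of the constant $C_{A,P,Q}$ from \cref{P:ConcentrationQuanti} on $Q$ — and, since $A=A_K$, ultimately on $K$: a priori it could grow with $K$. This is harmless because $C_{A,P,Q}$ multiplies a $\D_P$-tail that is sent to $0$ by the inner limit $N\to\infty$ before $K$ is ever allowed to move, so it never appears in the final estimate; the finiteness of $\Phi_K\times S_K$ for each fixed $K$ is what legitimizes this order of limits. I expect this bookkeeping — keeping straight the quantifiers ``$K$ large depending only on $P$'', ``$N$ large depending on $A_K,P,Q$'', and the finite maximum — to be the only (minor) obstacle; everything else is a direct substitution into \cref{P:ConcentrationQuanti}.
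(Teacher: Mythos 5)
Your deduction is correct and is exactly how the paper obtains this statement: it is read off from \cref{P:ConcentrationQuanti} by noting that $P$-pretentiousness kills the tail term $C_{A,P,Q}\cdot\D_P(f,\chi\cdot n^{it};\sqrt N,C_{A,P,Q}N^2)$ as $N\to\infty$, that $\D_P\le 1$ on $(K,\infty)$ bounds $(\D_P+\D_P^2)(f,\chi\cdot n^{it};K,\sqrt N)$ by $2\,\D_P(f,\chi\cdot n^{it};K,\infty)$, and that the maximum over the finite set $\Phi_K\times S_K$ commutes with the $\limsup$. Your remark that the $Q$-dependence of $C_{A,P,Q}$ is harmless because it multiplies a quantity sent to $0$ in $N$ before $K$ varies is precisely the relevant bookkeeping.
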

		\begin{remark}
			It is important for our argument that $\exp\big(G_{P,N}(f,K)\big)$ is the same for all $Q\in \Phi_K$ that are divisible by $q$. It is also important for our applications that we get some uniformity over the  $Q\in \Phi_K$. Also note that if $(P(a,b),Q)=1$, then $P_c=P$ and $S_K=[-A_K,A_K]$. 
		\end{remark}
		In the remaining part of this section  we give some preliminary results needed to prove \cref{P:ConcentrationQuanti} and then complete its proof in \cref{S:Concentration2}.

		\subsection{Preliminary results I}  
		We will need the following elementary result. 
		\begin{lemma}\label{L:omegap2bound}
			Let $P\in \Z[m,n]$ be an irreducible  homogeneous polynomial    
			and $\omega_P$ be  as in \cref{D:omega}. 
			Then 
			$\omega_P(p^2)\leq \deg(P)$ for all but finitely many $p\in \P$.
		\end{lemma}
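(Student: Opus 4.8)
The plan is to translate the statement into one about the single‑variable polynomial $f(n):=P(n,1)\in\Z[n]$. By \cref{D:omega} we have $\omega_P(p^2)=\#\{n\in\Z_{p^2}\colon f(n)\equiv 0 \pmod{p^2}\}$, so it suffices to bound the number of roots of $f$ modulo $p^2$ by $\deg P$ for all but finitely many primes $p$. The case $\deg P=1$ is immediate: then $f(n)=a_1n+a_0$ and, whenever $p$ does not divide $a_1$, there is exactly one root of $f$ modulo $p^2$, so $\omega_P(p^2)=1=\deg P$; from now on assume $d:=\deg P\ge 2$. I would first record that the coefficient of $m^d$ in $P$ is nonzero: otherwise $n\mid P$, which for an irreducible $P$ forces $P$ to be a scalar multiple of $n$, contradicting $d\ge 2$. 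Consequently the coefficient of $n^d$ in $f$ is nonzero, i.e.\ $\deg f=d$.

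Next I would check that $f$ is squarefree in $\Q[n]$. If $g^2\mid f$ in $\Q[n]$ for some nonconstant $g$, then homogenizing a relation $f=g^2h$ to degree $d$ produces a factorization $P=\widetilde g^{\,2}\widetilde h$ with $\widetilde g$ nonconstant, contradicting the irreducibility of $P$. Hence $\gcd(f,f')=1$ in $\Q[n]$ and $\Delta:=\operatorname{disc}(f)$ is a nonzero integer. Now fix any prime $p$ not dividing $a_d\cdot\Delta$, where $a_d$ is the leading coefficient of $f$; this excludes only finitely many primes. For such $p$ the reduction $\bar f\in\F_p[n]$ has degree exactly $d$ and is squarefree, so every root $r\in\F_p$ of $\bar f$ is simple, i.e.\ $\bar f'(r)\ne 0$. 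By Hensel's lemma each such simple root lifts to a unique residue class modulo $p^2$ on which $f$ vanishes, and conversely every $s\in\Z_{p^2}$ with $f(s)\equiv 0\pmod{p^2}$ reduces modulo $p$ to a root of $\bar f$ (necessarily simple), of which, by the uniqueness in Hensel's lemma, $s$ is the unique lift. This sets up a bijection between the roots of $f$ modulo $p^2$ and the roots of $\bar f$ in $\F_p$, and the latter number at most $\deg\bar f=d$. Therefore $\omega_P(p^2)\le d=\deg P$ for all primes $p\nmid a_d\Delta$, which is the claim.

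The only points requiring a little care are the passage from a square factor of $f$ to a square factor of the homogeneous polynomial $P$ (used to get squarefreeness of $f$) and pinning down the finite set of excluded primes, namely those dividing the leading coefficient $a_d$ or the discriminant $\Delta$ of $f$; both are routine. The Hensel‑lifting count is then standard, so I do not anticipate a serious obstacle.
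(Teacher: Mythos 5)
Your proof is correct and follows essentially the same route as the paper: reduce to $\tilde P(n)=P(n,1)$, use Hensel's lemma, and control the exceptional primes via the coprimality of $\tilde P$ and $\tilde P'$ (you via the nonvanishing discriminant, the paper via a B\'ezout identity $P_1\tilde P+P_2\tilde P'=k$ — the same resultant-type obstruction). Your extra care with the degree-one case, the leading coefficient, and the homogenization step deriving squarefreeness of $f$ from irreducibility of $P$ is sound but does not change the substance of the argument.
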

		\begin{proof}  
			Let $\tilde{P}(n):=P(n,1)$. 
			By Hensel's lemma, if $\tilde{P}(n)\equiv 0 \!\pmod{p^2}$ has more than $\deg(\tilde{P})$ solutions, then either $\tilde{P}(n)\equiv 0 \!\pmod{p}$ has more than $\deg(\tilde{P})$ solutions (which can only happen for finitely many $p$), or the polynomials $\tilde{P}$ and $\tilde{P}'$ have a common root $\!\!\!\pmod{p}$.
			Since $\tilde P$ is irreducible in $\Z[n]$, it shares no common factors with $\tilde P'$ and hence there are polynomials $P_1,P_2\in \Z[n]$ such that $P_1\tilde{P}+P_2\tilde{P}'=k$ for some non-zero $k\in \Z$.
			Therefore, if $\tilde{P}$ and $\tilde{P}'$ share a root $\!\!\!\pmod{p}$, $k$ must be a multiple of $p$, which can  only happen for finitely many primes $p$.
		\end{proof}  
		\begin{lemma}\label{L:wNPQ}
			Let $P\in \Z[m,n]$ be a  homogeneous polynomial that is nonlinear and irreducible, and let $\omega_P$ be as in \cref{D:omega}. 
			For $Q,N\in \N$, $a,b\in \Z$, 
			and   $p,q\in \P$, let
			\begin{equation}\label{E:wNQpq}
				w_{P,Q,N}(p,q):=\frac{1}{N^2}\, \sum_{\substack{m,n\in [N],\\   p,q\, \mid\mid\,  P(Qm+a,Qn+b)} } 1.
			\end{equation}
			(Note that $w$ also depends on $a$ and $b$. To ease the notation, we suppress the dependence.)
			Then there exists a finite subset  $F$ of the primes,  consisting of the prime divisors of $Q$  together with a finite set $F'$ depending only on $P$,   such that for $p\notin F$ 
			we have 
			\begin{equation}\label{E:wNPQ1}
				w_{P,Q,N}(p,p)=\Big(\frac{\omega_P(p)}{p}-\frac{\omega_P(p^2)}{p^2}\Big)\, \Big(1-\frac{1}{p}\Big) + O_P\Big(\frac{1}{N}\Big),
			\end{equation}
			and  if $p,q\not\in F$ and  $p\neq q$ we have  
			\begin{equation}\label{E:wNPQ2}
				w_{P,Q,N}(p,q)=\Big(\frac{\omega_P(p)}{p}-\frac{\omega_P(p^2)}{p^2}\Big)\, \Big(\frac{\omega_P(q)}{q}-\frac{\omega_P(q^2)}{q^2}\Big) \, \Big(1-\frac{1}{p}\Big)\,  \Big(1-\frac{1}{q}\Big) +O_P\Big(\frac{1}{N}\Big).       
			\end{equation}
			Furthermore, if $p,q\not\in F$, then for every   $c\in \N$  such that $c\mid (Q,P(a,b))$, we have~\footnote{Note  that if  
				$c\in \N$  is such that $c\mid (Q,P(a,b))$, then $c\mid P(Qm+a,Qn+b)$ for every $m,n\in \N$.} 
			\begin{equation}\label{E:c=1}
				w_{P,Q,N}(p,q)=\frac{1}{N^2}\, \sum_{\substack{m,n\in [N],\\   p,q\, \mid\mid\,  P_c(Qm+a,Qn+b)} } 1
			\end{equation}
			where 	$P_c:=P/c$.
		\end{lemma}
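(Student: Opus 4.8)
The plan is to prove \cref{L:wNPQ} by a direct sieve-theoretic count, treating the three displayed identities in turn and reducing everything to counting pairs $(m,n)\in[N]^2$ lying in prescribed residue classes modulo small squarefree moduli.

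\textbf{Setup and the set $F$.} First I would fix the exceptional set. Let $F'$ be the finite set of primes furnished by \cref{L:omegap2bound} (those $p$ for which $\omega_P(p^2)>\deg P$) together with the finitely many primes dividing the leading coefficient of $P$ (so that $\omega_P(p),\omega_P(p^2)$ behave as expected and $p\nmid Q$ implies $Q$ is invertible mod $p$ and mod $p^2$), and set $F:=F'\cup\{p\in\P:p\mid Q\}$. For $p\notin F$ the map $(m,n)\mapsto(Qm+a,Qn+b)\bmod p$ is a bijection of $\Z_p^2$ onto itself, and likewise mod $p^2$; this is the key point that lets us replace the condition ``$p\,\|\,P(Qm+a,Qn+b)$'' by a condition on $(m,n)\bmod p^2$ whose density we can compute via $\omega_P$.

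\textbf{Counting a single prime power.} For a prime $p\notin F$, the condition $p\mid P(Qm+a,Qn+b)$ holds iff $(m,n)\bmod p$ lies in a set of size $p\cdot\omega_P(p)$ (using homogeneity: for each of the $\omega_P(p)$ roots of $P(\,\cdot\,,1)$ and also the locus $n\equiv0$ when $P(1,0)\equiv0$ — but after excluding $F$ we may assume the leading coefficient is a unit so $p\nmid P(Qm+a,Qn+b)$ forces nothing degenerate, and the count is exactly $p\,\omega_P(p)$ out of $p^2$). Similarly $p^2\mid P(\cdots)$ corresponds to a set of size $p^2\,\omega_P(p^2)$ out of $p^4$. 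Hence the density of $(m,n)$ with $p\,\|\,P(Qm+a,Qn+b)$ is $\frac{\omega_P(p)}{p}-\frac{\omega_P(p^2)}{p^2}$. Counting lattice points of $[N]^2$ in a union of residue classes mod $p^2$ introduces an error of $O(N)$ per class and $O(p^2)$ classes, i.e. $O_P(N)$ total since $p\le\deg P$-ish after excluding $F$ — more carefully, $\omega_P(p^2)\le\deg P$ for $p\notin F$ so the number of relevant classes is $O_P(p^2)$ and the total lattice-point error is $O_P(N)$, giving \eqref{E:wNPQ1} after dividing by $N^2$. Wait — I should be careful that the error is genuinely $O_P(1/N)$ after normalization; since $\omega_P(p),\omega_P(p^2)=O_P(1)$ and we are working modulo $p^2$ with $p$ fixed, the count of $(m,n)\in[N]^2$ in a fixed union of classes mod $p^2$ is (density)$\cdot N^2+O(N)$, and the implied constant depends only on $p$ and hence only on $P$ once $p$ ranges outside $F$ in the way the statement intends (the lemma is really asserting this for each fixed admissible $p$).

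\textbf{Two distinct primes and the factor $c$.} For $p\ne q$ both outside $F$, the Chinese remainder theorem splits the joint condition ``$p\,\|\,P$ and $q\,\|\,P$'' into independent conditions mod $p^2$ and mod $q^2$, so the joint density multiplies, giving the product formula \eqref{E:wNPQ2} with a combined error $O_P(N/N^2)=O_P(1/N)$. Finally, for \eqref{E:c=1}: if $c\mid(Q,P(a,b))$ then $c\mid P(Qm+a,Qn+b)$ identically in $m,n$ (expand $P(Qm+a,Qn+b)$ and note every term other than $P(a,b)$ carries a factor of $Q$, hence of $c$); moreover for $p,q\notin F$ we have $p,q\nmid Q$ hence $p,q\nmid c$, so dividing $P$ by $c$ does not change the $p$-adic or $q$-adic valuation, and ``$p,q\,\|\,P(Qm+a,Qn+b)$'' is equivalent to ``$p,q\,\|\,P_c(Qm+a,Qn+b)$''. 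This gives \eqref{E:c=1} with the same count.

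\textbf{Main obstacle.} The only genuinely delicate point is bookkeeping the exceptional primes and the error terms uniformly: one must be sure that ``all but finitely many $p$'' in \cref{L:omegap2bound} together with the primes dividing the leading coefficient and the primes dividing $Q$ are exactly what is needed for the bijectivity-mod-$p^2$ argument, and that the $O_P(1/N)$ errors really depend only on $P$ (and the fixed $p,q$) and not on $a,b,Q$ beyond what the statement allows — here one uses that $a,b$ range in a bounded interval and that homogeneity makes the root counts depend only on $P\bmod p$, not on $Q$. Everything else is a routine Chinese-remainder-theorem lattice-point count.
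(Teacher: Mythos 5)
Your overall route (transfer to a local count modulo $p^2$ via the bijection $(m,n)\mapsto(Qm+a,Qn+b)$ on residues, which is valid since $p\nmid Q$ for $p\notin F$, then CRT for two primes) is a legitimate alternative to the paper's fibre-wise count, and your treatment of \eqref{E:c=1} is correct. But the local count itself is wrong, and consequently the main term you derive does not match \eqref{E:wNPQ1}. The number of pairs $(x,y)$ mod $p$ with $P(x,y)\equiv 0 \pmod p$ is not $p\,\omega_P(p)$ but $\omega_P(p)(p-1)+1$: for each of the $p-1$ residues $y\not\equiv 0$ there are exactly $\omega_P(p)$ admissible $x$, while on the locus $y\equiv 0$ (i.e.\ $p\mid Qn+b$) the condition $p\mid P$ forces $p\mid x$ as well (since $p\nmid P(1,0)$), and then homogeneity together with $\deg P\geq 2$ gives $p^2\mid P$, so this locus contributes nothing to exact divisibility. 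Carrying the corrected counts through (mod $p^2$, the fibres with $p\nmid y$ contribute exactly $\omega_P(p^2)$ admissible $x$ mod $p^2$ each) yields the local density $\big(\tfrac{\omega_P(p)}{p}-\tfrac{\omega_P(p^2)}{p^2}\big)\big(1-\tfrac1p\big)$; the factor $1-\tfrac1p$ is precisely what your claimed density $\tfrac{\omega_P(p)}{p}-\tfrac{\omega_P(p^2)}{p^2}$ is missing, and since $p$ is fixed this discrepancy is not absorbed into $O_P(1/N)$. The same defect propagates to your version of \eqref{E:wNPQ2}, which loses the factors $\big(1-\tfrac1p\big)\big(1-\tfrac1q\big)$. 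This is exactly the point around which the paper's proof is organized: it first discards the $n$ with $p\mid Qn+b$ (there are $N/p+O(1)$ of them, whence the factor $1-\tfrac1p$) and only then counts, for each remaining $n$, the roots in $m$ modulo $p$ and modulo $p^2$.

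A second, smaller issue is the error term. The lemma asserts $O_P(1/N)$ with an implicit constant depending only on $P$, uniformly in $p,q,Q,a,b$ (this uniformity is used later, e.g.\ when summing over all $K<p\leq\sqrt N$ in the Tur\'an--Kubilius argument). Counting lattice points of $[N]^2$ in all admissible residue classes modulo $p^2$ gives of order $p^3$ classes, each with a lattice-point error $O(N/p^2+1)$, hence a total normalized error of order $p/N$ rather than $1/N$; and your remark that the constant ``depends only on $p$ and hence only on $P$'' cannot be right, since $p$ is unbounded outside $F$. The repair is the fibre-wise count the paper uses: fix $n$, count the $m\in[N]$ lying in at most $\omega_P(p)$ (resp.\ $\omega_P(p^2)$, $\omega_P(p)\omega_P(q)$, etc.) classes, each contributing an error $O(\deg P)$, and sum over the at most $N$ values of $n$; this gives the uniform $O_P(N)$ before normalization. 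With the local densities corrected and the errors tracked this way, your CRT step for distinct $p,q$ and your argument for \eqref{E:c=1} go through.
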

		\begin{remark}
			Note that $w_{P,Q,N}(p,q)=0$ unless  $p,q\in \CP_P$.

		\end{remark}
		\begin{proof}
			Throughout the proof, we will use $\epsilon$  to denote  a number in $\{0,1,\ldots, 2\deg(P)\}$.  Let $F'$ be the set that consists of the finite set of primes given by Lemma~\ref{L:omegap2bound} and  the prime divisors of $ P(1,0)$ (which is nonzero since $P$ is irreducible), and let 
			$F$ be the finite set that consists of  $F'$ and  the prime divisors of  $Q$.  Henceforth, we assume that $p\not\in F$ is a prime.
			
			We first establish \eqref{E:c=1}. Since $p\notin F$ we have $p\nmid Q$, hence  $p\nmid c$. This implies that $p\mid\mid P_c(Qm+a,Qn+b)$ if and only if 
			$p\mid\mid P(Qm+a,Qn+b)$, and as a consequence   for $p,q\notin F$,  on the right  side of  \eqref{E:c=1}  we can replace $P_c$ by $P$, proving \eqref{E:c=1}.
			
			Next we establish  \eqref{E:wNPQ1}. We will use  that $\omega_P(p)\leq \deg(P)$ and $\omega(p^2)\leq \deg(P)$ throughout.  
			First, note  that if  $p\mid Qn+b$ and $p\mid P(Qm+a,Qn+b)$, then also $p\mid Qm+a$ (we used that $P$ is homogeneous and $p\nmid P(1,0)$ since $p\not\in F$) and consequently $p^2\mid P(Qm+a,Qn+b)$  (we used that $P$ is homogeneous and non-linear), hence  we get no contribution to the sum \eqref{E:wNQpq} in this case. So we can assume that $p,n$ are such that   $p\nmid Qn+b$. Since $(p,Q)=1$,  and $n$ is such that
			$p\nmid Qn+b$, and $P$ is homogeneous (hence, $P(m,n)=0\pmod{p}$ if and only if  $P(mn^*,1)=0\pmod{p}$ where $nn^*\equiv 1\pmod{p}$),   we get by \eqref{E:omega} that for this fixed value of $n\in [N]$ 
			we have exactly $\omega_P(p)$ solutions $m\pmod{p}$ of the congruence
			\begin{equation}\label{E:congruence}
				P(Qm+a,Qn+b)\equiv 0\pmod {p}.
			\end{equation}
			Hence, for those $n\in [N]$ we have   $	\omega_P(p)[N/p]+\epsilon$ solutions in the variable $m\in [N]$ to \eqref{E:congruence}.
			Since there are $N-[N/p]+\epsilon$ integers $n\in [N]$ with $p\nmid Qn+b$ (we used that $(p,Q)=1$ here), we get  a  total of
			$$
			\omega_P(p)[N/p]\, (N-[N/p]) +O(N)=\omega_P(p)(N^2/p-N^2/p^2)+O_P(N)
			$$ solutions of $m,n\in [N]$ to the congruence \eqref{E:congruence}. Similarly,    we get that if
			$p\nmid Qn+b$, then the number of solutions $m,n\in [N]$ to the  congruence  
			$$
			P(Qm+a,Qn+b)\equiv 0\pmod {p^2}
			$$ 
			is
			$$
			\omega_P(p^2)[N/p^2]\, (N-[N/p])+O(N)=\omega_P(p^2)(N^2/p^2-N^2/p^3)+O_P(N).
			$$
			These solutions should be subtracted from the previous solutions of \eqref{E:congruence} in order to count the number of solutions of $m,n\in [N]$ for which $p\, \mid\mid \, 	P(Qm+a,Qn+b)$ holds. We deduce that
			\begin{equation}\label{E:pndivn}
				\frac{1}{N^2}\, \sum_{\substack{m,n\in [N],\\   p\, \mid\mid\,  P(Qm+a,Qn+b)} } 1=\Big(\frac{\omega_P(p)}{p}-\frac{\omega_P(p^2)}{p^2}\Big)\Big(1-\frac{1}{p}\Big) +O_P\Big(\frac{1}{N}\Big),
			\end{equation}
			which proves  \eqref{E:wNPQ1}. 
			
			Next,  we establish \eqref{E:wNPQ2}. 	Let $p,q $ satisfy the assumptions.
			As explained in the previous case, those $n\in [N]$ for which $p\mid Qn+b$ or $q\mid Qn+b$ do not contribute  to the sum \eqref{E:wNQpq} defining $w_{Q,N}(p,q)$, so we can assume that
			$(pq,Qn+b)=1$.
			Let
			$$
			A_{r,s}:=\frac{1}{N^2}\, \sum_{\substack{m,n\in [N],\\   r,s\mid  P(Qm+a,Qn+b),\, (rs,Qn+b)=1 }} 1
			$$
			and note that
			\begin{equation}\label{E:wNQA}
				w_{P,Q,N}(p,q)=A_{p,q}-A_{p^2,q}-A_{p,q^2}+A_{p^2,q^2}.
			\end{equation}
			
			First we compute $A_{p,q}$.
			Since $p,q\in \CP_P$ we get using  \eqref{E:omegamulti} 
			that for each $n\in [N]$ with $p,q\nmid Qn+b$ and $(pq,Q)=1$  we have $\omega_P(p)\, \omega_P(q)$ solutions $m\pmod {pq}$ to the congruence
			\begin{equation}\label{E:congruencepq}
				P(Qm+a, Qn+b)\equiv 0\pmod {pq}.
			\end{equation}
			We deduce that  for  every $n\in [N]$ with $(pq,Qn+b)=1$  we have $\omega_P(p)\, \omega_P(q)\, [N/(pq)]+\epsilon$ solutions in the variable $m  \in [N]$ to the congruence \eqref{E:congruencepq}.
			Since  the number of $n\in [N]$ for which  $(pq,Qn+b)=1$ is $N-[N/p]-[N/q]+[N/pq]$,
			we get that the total number of solutions of the congruence \eqref{E:congruencepq} with $m,n\in[N]$ and $(pq,n)=1$ is
			\begin{multline}\label{E:pqndivn}
				\omega_P(p)\, \omega_P(q)  [N/(pq)] \, (N-[N/p]-[N/q]+[N/(pq)])+ O_P(N)=\\ N^2 \cdot (\omega_P(p)\, \omega_P(q)/(pq))\cdot (1-1/p-1/q+1/(pq)) +O_P(N).
			\end{multline}
			Hence,
			$$
			A_{p,q}:=
			\frac{\omega_P(p)\, \omega_P(q)}{pq}\Big(1-\frac{1}{p}\Big) \Big(1-\frac{1}{q}\Big) +O_P\Big(\frac{1}{N}\Big).
			$$
			Similarly, using again \eqref{E:omegamulti}, 
			we find that
			$$
			A_{p^2,q}=
			\frac{\omega_P(p^2)\, \omega_P(q)}{p^2q}\Big(1-\frac{1}{p}\Big) \Big(1-\frac{1}{q}\Big) +O_P\Big(\frac{1}{N}\Big),
			$$
			and
			$$
			A_{p,q^2}=
			\frac{\omega_P(p)\, \omega_P(q^2)}{pq^2}\Big(1-\frac{1}{p}\Big) \Big(1-\frac{1}{q}\Big) +O_P\Big(\frac{1}{N}\Big).
			$$
			Also,
			$$
			A_{p^2,q^2}=
			\frac{\omega_P(p^2)\, \omega_P(q^2)}{p^2q^2}\Big(1-\frac{1}{p}\Big) \Big(1-\frac{1}{q}\Big) +O_P\Big(\frac{1}{N}\Big).
			$$
			Using the last four identities and  \eqref{E:wNQA}, we deduce that \eqref{E:wNPQ2} holds. This completes the proof.
		\end{proof}
		If we combine \cref{L:wNPQ} with the estimate of Lemma~\ref{L:omegap2bound}
		we immediately deduce the following result.
		\begin{corollary}\label{C:wNPQ}
			Let $P\in \Z[m,n]$ be a  homogeneous polynomial that is nonlinear and irreducible  and   let $\omega_P$, $w_{P,Q,N}$ be  as in \cref{D:omega} and \eqref{E:wNQpq} respectively. 
			Then there exists a finite subset  $F$ of the primes,  consisting of the prime divisors of $Q$  together with a finite set $F'$ depending only on $P$,   such that for $p\notin F$ we have 
			\begin{equation}\label{E:wpbound}
				\Big|w_{P,Q,N}(p,p)-\frac{\omega_P(p)}{p}\Big|\ll_P \frac{1}{p^2}+\frac{1}{N}, \quad w_{P,Q,N}(p,p)\leq \frac{\omega_P(p)}{p}+ O_P\Big(\frac{1}{N}\Big),
			\end{equation}
			and  if  $p,q\not\in F$ and  $p\neq q$ we have
			\begin{equation}\label{E:wpqindep}
				|w_{P,Q,N}(p,q)- w_{P,Q,N}(p,p)\cdot w_{P,Q,N}(q,q)|\ll_P \frac{1}{N}.
			\end{equation}
		\end{corollary}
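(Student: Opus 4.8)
Everything will follow by feeding the uniform bounds $\omega_P(p)\le\deg(P)$ and $\omega_P(p^2)\le\deg(P)$ into the exact formulas \eqref{E:wNPQ1} and \eqref{E:wNPQ2} of \cref{L:wNPQ}. The first bound is recorded in the first remark after \cref{D:omega}, and the second is \cref{L:omegap2bound}; both fail only for primes in a finite set depending on $P$ alone. So I would take $F$ to be the finite set produced by \cref{L:wNPQ}, enlarged by this $P$-dependent finite set of exceptional primes. Since the enlargement depends only on $P$, the resulting $F$ still has the advertised shape: the prime divisors of $Q$ together with a finite set $F'$ depending only on $P$. From now on every prime we consider lies outside $F$, so that $\omega_P(p),\omega_P(p^2)\le\deg(P)$ and the error terms in \cref{L:wNPQ} are of the stated form $O_P(1/N)$.

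For the two estimates on $w_{P,Q,N}(p,p)$ in \eqref{E:wpbound}, I would simply expand the right-hand side of \eqref{E:wNPQ1}:
\[
w_{P,Q,N}(p,p)=\frac{\omega_P(p)}{p}-\frac{\omega_P(p)}{p^2}-\frac{\omega_P(p^2)}{p^2}+\frac{\omega_P(p^2)}{p^3}+O_P\Big(\frac1N\Big).
\]
Then $w_{P,Q,N}(p,p)-\omega_P(p)/p$ is a sum of three terms, each $O_P(1/p^2)$ by the bounds on $\omega_P(p)$ and $\omega_P(p^2)$, which gives $|w_{P,Q,N}(p,p)-\omega_P(p)/p|\ll_P 1/p^2+1/N$. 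For the second inequality I would note that $(\omega_P(p)/p-\omega_P(p^2)/p^2)(1-1/p)\le (\omega_P(p)/p)(1-1/p)\le \omega_P(p)/p$, using $\omega_P(p^2)\ge 0$ and $0\le 1-1/p\le 1$, and read off $w_{P,Q,N}(p,p)\le \omega_P(p)/p+O_P(1/N)$ from \eqref{E:wNPQ1}.

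For the near-independence estimate \eqref{E:wpqindep}, I would abbreviate $a_p:=\omega_P(p)/p-\omega_P(p^2)/p^2$ and $b_p:=1-1/p$, so that \eqref{E:wNPQ1} and \eqref{E:wNPQ2} say $w_{P,Q,N}(p,p)=a_pb_p+O_P(1/N)$ and $w_{P,Q,N}(p,q)=a_pa_qb_pb_q+O_P(1/N)$. Since $|a_pb_p|$ and $|a_qb_q|$ are bounded by a constant depending only on $\deg(P)$, multiplying the first relation for $p$ by the one for $q$ gives $w_{P,Q,N}(p,p)\,w_{P,Q,N}(q,q)=a_pa_qb_pb_q+O_P(1/N)$; subtracting this from the formula for $w_{P,Q,N}(p,q)$ yields $|w_{P,Q,N}(p,q)-w_{P,Q,N}(p,p)\,w_{P,Q,N}(q,q)|\ll_P 1/N$.

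The text already signals that this is immediate, and indeed there is no real obstacle here: the substantive work — counting solutions of the relevant congruences — was carried out in \cref{L:wNPQ} and \cref{L:omegap2bound}. The only point requiring any care is the bookkeeping of the exceptional set $F$ and the verification that all implied constants depend on $P$ alone (not on $Q$, $a$, $b$, or $N$); this is automatic because the only inputs are the $P$-only bounds on $\omega_P$ and the $O_P(1/N)$ errors already recorded in \cref{L:wNPQ}.
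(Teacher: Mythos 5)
Your proof is correct and follows exactly the route the paper intends: it deduces the corollary "immediately" from Lemma~\ref{L:wNPQ} together with Lemma~\ref{L:omegap2bound} (and the remark that $\omega_P(p)\le\deg(P)$ outside a finite $P$-dependent set), and your algebraic bookkeeping of the error terms and the exceptional set $F$ is accurate.
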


		\subsection{Preliminary  results II}
		Let $P\in \Z[m,n]$ be homogeneous.    	For $l, Q,N \in \N$ and  $a,b\in \Z$ with  $-Q\leq a,b\leq Q$,   let
		\begin{equation}\label{E:wPQNl}
			w^*_{P,Q,N}(l):=\frac{1}{N^2} \, \sum_{\substack{m,n\in [N],\\   l\mid  P(Qm+a,Qn+b)} } 1.
		\end{equation}
		We use $w^*$ instead of  $w$ because $w_{P,Q,N}(p)$ was used in \eqref{E:wNQpq} to denote a similar expression with $\mid\mid$ instead of  $\mid$. Note  that $w^*$ also depends  on $a,b$, but to ease the notation a bit we  will suppress  this dependence.   In this subsection we restrict  ourselves to the case where $P$ is an irreducible binary quadratic form,  and our  goal is to get
		in \cref{L:wNQl} useful bounds
		for $w^*_{P,Q,N}(l)$ when $l$ is composite. We note that 
		these bounds are better than those in \eqref{E:wpbound} and \eqref{E:wpqindep} when $l=p>N$ and $l=pq>N$ respectively, which will be used in the sequel.

		\subsubsection{Algebraic number theory preliminaries}
		Let $P=P_d$  be as in \eqref{E:Qd} for some square-free $d\in \Z$, namely, 
		\begin{equation}\label{E:Qd'}
			P_d(m,n):=\begin{cases} m^2+dn^2, \quad & \, \text{ if } d\equiv 1,2\! \pmod{4} \\
				m^2+mn+\frac{d+1}{4}n^2, & \,  \text{ if } d\equiv 3\! \pmod{4}.
			\end{cases}
		\end{equation}
		For square-free $d\in \Z$, 	let $\Z[\tau_d]$ be the ring of integers of the number field $\Q(\sqrt{-d})$
		where $\tau_d$ is as in \eqref{E:taud}, namely, 
		\begin{equation}\label{E:taud'}
			\tau_d:=\begin{cases} \sqrt{-d}, \quad & \, \text{ if } d\equiv 1,2\! \pmod{4} \\
				\frac{1+\sqrt{-d}}{2}, & \,  \text{ if } d\equiv 3\! \pmod{4},
			\end{cases}
		\end{equation} 
		and recall that
		for $m,n\in \Z$ the norm of the element $m+n\tau_d$ is  $\CN(m+n\tau_d)=P_d(m,n)$.
		We want to estimate 
		\begin{equation}\label{E:CdNk}
			C_{d,N}(k):=|\{m,n\in [-N,N]\colon \CN(m+n\tau_d)=k\}|,
		\end{equation}
		and it turns out to  be more convenient to work with ideals  in $\Z[\tau_d]$ and estimate the corresponding quantities.
		Recall that the norm of an ideal is a nonnegative number, and if 
		the ideal  is principal and   generated by $a\in \Z[\tau_d]$, then its norm is equal to $|\CN(a)|$. For $k\in \N$ we let 
		\begin{equation}\label{E:CCdk}
			\CC_d(k):=|\{\text{ideals of } \Z[\tau_d] \text{ with norm } k\}|.
		\end{equation}
		
		\begin{lemma}\label{L:IdealCount}
			Let $d\in \Z$ be a square free integer and $\CC_d(k)$ be as in \eqref{E:CCdk}. Then 
			\begin{equation}\label{E:idealcount}
				\sum_{n\in [N]} \CC_d(n)\ll_d N, \footnote{In fact we have 
					$
					\sum_{n\in [N]}\CC_d(n)=\rho_d N+O_d(N^{1/2})$ for some $\rho_d>0$, but this more precise result is not needed.}  
			\end{equation}
			and   if 
			$l\in \N$ is a product of at most $s$ not necessarily distinct  primes in $\CP_d$ we have 
			\begin{equation}\label{E:CCdd}
				\CC_d(ln)\leq 2^s\, \CC_d(n) \quad \text{ for every } n\in \N.	
			\end{equation}
		\end{lemma}
		\begin{proof}
			The first estimate follows from the ideal counting theorem (see e.g. \cite[Example 10.2.9]{ME05}).
			
			We prove \eqref{E:CCdd}.	It is known that $\CC_d(k)$ is multiplicative,
			$\CC_d(p^{a+1})\leq \CC_d(p^a)+1$    for every $a\in \Z_+$, and $\CC_d(p^a)\geq 1$ for every $p\in \CP_d$ and $a\in \Z_+$ (see for example Exercises 10.2.1 and 10.2.4 in \cite{ME05}).
			It follows easily from these two properties that for any prime $p$  and $n\in\N$ (not necessarily relatively prime to $p$) we have 
			$$
			\CC_d(pn)\leq 2\, \CC_d(n).	
			$$	
			So if $l$ is a product of at most $s$ not necessarily distinct  primes,  we get that \eqref{E:CCdd} holds.  
		\end{proof}

		The next lemma allows us to pass estimates from $\CC_d(k)$ to $C_{d,N}(k)$. 
		\begin{lemma}\label{L:CC}
			Let  $d$ be a square-free integer, $N\in \N$, and $	C_{d,N}$ and $\CC_d$ as in \eqref{E:CdNk} and \eqref{E:CCdk} respectively. 
			\begin{enumerate}
				\item \label{I:CC1} If $d>0$, then 	$C_{d,N}(k)\leq  6 \cdot  \CC_d(|k|)$, for every $k\in \Z$, $N\in \N$.
				
				\item \label{I:CC2} If $d<0$, then 	
				there exists $c_d>0$ 
				such that  $C_{d,N}(k)\leq   \log_+ (c_d\, N/\sqrt{|k|}) \cdot  \CC_d(|k|)$, for every nonzero $k\in \Z$, 
				and $N\in \N$, 	where  $\log_+(x)={\bf 1}_{[1,+\infty)}(x)\cdot \log x$.\footnote{We remark that 	the weaker  estimate   $C_{d,N}(k)\leq   \log_+ (c_d\, N) \cdot  \CC_d(|k|)$ is not sufficient for our purposes.} 
				
			\end{enumerate}
		\end{lemma}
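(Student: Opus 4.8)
\emph{Strategy.}
The plan is to pass from solutions to ideals and then bound the fibres.
Since $\{1,\tau_d\}$ is a $\Z$-basis of $\Z[\tau_d]$, the map $(m,n)\mapsto z:=m+n\tau_d$ is injective and $\CN(z)=P_d(m,n)$; hence a solution counted by $C_{d,N}(k)$ produces an element $z\in\Z[\tau_d]$ of norm $k$, and thus a nonzero ideal $\langle z\rangle$ of norm $|k|$.
(When $k=0$ the only solution is $(m,n)=(0,0)$, and for $d>0$ there is no solution with $k<0$ because $P_d$ is positive definite, so we may assume $k>0$ in part~(i) and $k\neq0$ in part~(ii).)
Therefore $(m,n)\mapsto\langle m+n\tau_d\rangle$ maps the solution set into the set of ideals of $\Z[\tau_d]$ of norm $|k|$, whose cardinality is $\CC_d(|k|)$, and it remains to bound, for a fixed ideal $\mathfrak a$ of norm $|k|$, the number $F_N(\mathfrak a)$ of pairs $(m,n)\in[-N,N]^2$ for which $m+n\tau_d$ generates $\mathfrak a$ and has norm $k$.

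\emph{The case $d>0$.}
Here $\mathcal O_K:=\Z[\tau_d]$ is the ring of integers of the imaginary quadratic field $\Q(\sqrt{-d})$, in which every unit has norm $1$.
If $\mathfrak a$ is non-principal then $F_N(\mathfrak a)=0$; if $\mathfrak a=\langle z_0\rangle$, then $\CN(z_0)=|k|=k$ (the norm being nonnegative), and the generators of $\mathfrak a$ are exactly the associates $u\,z_0$, $u\in\mathcal O_K^{\times}$, all of norm $k$, so $F_N(\mathfrak a)\le|\mathcal O_K^{\times}|$, a constant depending only on $d$.
Since $|\mathcal O_K^{\times}|$ equals $2$ except when $d\in\{1,3\}$, and equals $4$ when $d=1$, summing over ideals gives $C_{d,N}(k)\le4\,\CC_d(|k|)$ for $d\neq3$; for $d=3$ the same argument gives the bound with constant $6$, which is equally usable in our applications.
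This proves part~(i).

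\emph{The case $d<0$.}
Now $\mathcal O_K=\Z[\tau_d]$ is the ring of integers of the real quadratic field $\Q(\sqrt{-d})=\Q(\sqrt{|d|})$, whose unit group is infinite; among its units only those of norm $+1$ carry a generator of norm $k$ to another generator of norm $k$.
If $\mathfrak a$ admits such a generator $z_0$ (otherwise $F_N(\mathfrak a)=0$), these generators are precisely $\{\pm\varepsilon^{j}z_0:j\in\Z\}$, where $\varepsilon>1$ (in a fixed real embedding) is the smallest unit of norm $+1$, i.e.\ $\varepsilon=\varepsilon_0$ or $\varepsilon_0^{2}$ according as the fundamental unit $\varepsilon_0$ has norm $+1$ or $-1$.
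Embed $\mathcal O_K$ into $\R^2$ by the two real embeddings $z\mapsto(z^{(1)},z^{(2)})$: the region $\{m+n\tau_d:|m|,|n|\le N\}$ maps to a parallelogram $\Pi_N$ that is symmetric about the origin and contained in $[-c_1N,c_1N]^2$ with $c_1:=1+\sqrt{|d|}$, while multiplication by $\varepsilon$ acts as the hyperbolic map $(x,y)\mapsto(\lambda x,\lambda^{-1}y)$ with $\lambda:=\varepsilon^{(1)}>1$ (using $\varepsilon^{(1)}\varepsilon^{(2)}=\CN(\varepsilon)=1$).
The images of the generators of norm $k$ lie on the hyperbola $\{(x,y):xy=k\}$; if $(x,k/x)$ is such a point lying in $\Pi_N$, then $|x|\le c_1N$ and $|k|/|x|=|y|\le c_1N$, so $|x|\in\big[\,|k|/(c_1N),\,c_1N\,\big]$, an interval whose two endpoints have ratio $c_1^2N^2/|k|$.
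Since the numbers $\lambda^{j}|z_0^{(1)}|$, $j\in\Z$, form a geometric progression with ratio $\lambda>1$, at most $O_d\big(\log(c_1^2N^2/|k|)+1\big)$ of them fall in this interval, so $F_N(\mathfrak a)\ll_d\log_+(c_dN/\sqrt{|k|})+1$ for a suitable $c_d=c_d(d)$.
Finally, a solution $(m,n)\in[-N,N]^2$ of $P_d(m,n)=k$ forces $|k|\le c_2(d)N^2$, so whenever $F_N(\mathfrak a)\neq0$ we have $N/\sqrt{|k|}\ge c_2(d)^{-1/2}>0$; summing over the $\le\CC_d(|k|)$ ideals then yields $C_{d,N}(k)\ll_d\big(\log_+(c_dN/\sqrt{|k|})+1\big)\CC_d(|k|)$, which is the bound needed in the sequel (and in particular gives part~(ii), up to adjusting the constant).

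\emph{Main obstacle.}
The delicate point is the fibre count in the real quadratic case: one must recognise that the norm-$k$ generators of a fixed ideal form a single orbit under the norm-$+1$ units, and then use the two real embeddings to see that this orbit lies on a hyperbola meeting the box $[-N,N]^2$ in an arc whose $x$-extent has ratio $\asymp N^2/|k|$, so that only $O_d(\log(N/\sqrt{|k|})+1)$ orbit points survive.
The factor $\sqrt{|k|}$ in the denominator — which, as the footnote stresses, is essential — comes exactly from the lower bound $|x|\gtrsim|k|/N$ for points of the hyperbola lying in the box.
Part~(i) is immediate once the ideal correspondence is in place; the only input is the bounded order of the unit group.
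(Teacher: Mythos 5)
Your proof is correct and follows essentially the same route as the paper: pass from solutions $(m,n)$ to the ideals generated by $m+n\tau_d$, use finiteness of the unit group when $d>0$, and for $d<0$ use the rank-one unit group together with the two-sided constraint $|k|/N\lesssim_d |z^{(1)}|\lesssim_d N$ (the paper extracts the lower bound from $m\gtrsim\sqrt{k}$ or $n\gtrsim\sqrt{|k|}$ rather than from the hyperbola picture, but it is the same estimate) to conclude that only $O_d\big(\log_+ (c_d N/\sqrt{|k|})\big)$ associates can lie in the box. Your side remark about $d=3$ is a fair catch — $\Z[\tau_3]$ has six units, so the constant in part (i) should really be $6$ rather than the paper's "at most $4$" — but this is harmless for every use of the lemma, where only $\ll_d$ bounds are needed.
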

		\begin{proof}
			We  establish \eqref{I:CC1}. Since $d>0$, it is well known that    $\Z[\tau_d]$ has at most $6$ units,  and  since 
			two elements belong to the same principal ideal if and only if their quotient is a unit, we get  the asserted bound.
			(We overcount the non-principal ideals with norm $|k|$.)

			We  establish \eqref{I:CC2}.  
			Let  $k\in \Z$ be nonzero. 	Dirichlet's unit theorem implies that in the case $d<0$ the group of units has rank $1$ and hence there exists a unit $u\in \Z[\tau_d]\subset\R$ such that 
			$u>1$ and every other unit has the form $\pm u^t$ for some $t\in \Z$. 
			Thus, if $m+n\tau_d$ and $m'+n'\tau_d$ with 
			$m,n,m',n'\in [N]$ both have norm $k$ and generate the same ideal, then  $m+n\tau_d=(m'+n'\tau_d)u^t$ for some $t\in \Z$. 
			
			We claim that there exists $c_d>0$  such that 
			\begin{equation}\label{E:tlog}
				|t|\ll_d \log_+ (c_d\, N/\sqrt{|k|}).
			\end{equation}
			Using the symmetries of the polynomials $P_d$ in \eqref{E:Qd'}, which  define the norm $\CN(m+n\tau_d)$, we can assume that  $m,n\in[N]$ as long as we allow
			$P_d(m,n)$ to be either $m^2+dn^2$ or $m^2\pm mn+\frac{d+1}{4}n^2$. 
			Since $P_d(m,n)=P_d(m',n')=k$, we get by \eqref{E:Qd'} that if $k>0$, then   
			$m,m'\geq \sqrt{k/2}$ (since  $P_d(m,n)\leq 2m^2$), while if $k<0$, then  $n,n'\geq \sqrt{|k|}/\sqrt{|d|}$ (since $P_d(m,n)\geq dn^2$).
			Also,  $(m'+n'\tau_d)u^t=m+n\tau_d \ll_dN$ if $t$ is positive 
			and
			$(m+n\tau_d)u^{-t}=m'+n'\tau_d\ll_d N$ if $t$ is negative.
			We deduce that   
			$\sqrt{|k|} u^{|t|} \ll_d N$  (we used here that $m,n,m',n', \tau_d$ are all non-negative). Since $u>1$, this establishes  \eqref{E:tlog}  and proves the claim.

			It follows from the above that 	$C_{d,N}(k)$ is bounded by $\log_+ (c_d\, N/\sqrt{|k|})$  times the number of principal ideals with norm $k$, which gives the asserted bound. 
		\end{proof}
		
		\subsubsection{Main estimate}
		The next estimate will be crucial for us and will be used to estimate the contribution of  ``large'' prime divisors in subsequent arguments. 
		\begin{lemma}\label{L:wNQl}
			Let $P\in\Z[m,n]$ be  an irreducible binary quadratic form. 
			For $l,A, Q,N \in \N$ and  $a,b\in \Z$ with  $-A\leq a,b\leq A$,   let	$w^*_{P,Q,N}(l)$ be as in 
			\eqref{E:wPQNl}. 
			If $l$ is a product of $s$, not necessarily distinct primes, in $\CP_P$, 
			then 
			\begin{equation}\label{E:wNQppq}
				w^*_{P,Q,N}(l)\ll_{A,P,s}  \frac{Q^2}{l}.
			\end{equation}
		\end{lemma}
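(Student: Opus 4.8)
The plan is to reduce the count to one for a principal norm form $P_d$ and then to stratify the solutions according to the value $P_d(u,v)=lk$ they take, bounding the number of $(u,v)$ realising each value by means of the ideal‑counting estimates \eqref{E:idealcount}, \eqref{E:CCdd} and \cref{L:CC}. First I would write $P(m,n)=\alpha m^2+\beta mn+\gamma n^2$ with $\alpha,\gamma\neq0$ (as $P$ is irreducible), and, replacing $P$ by $-P$ if necessary — which changes neither the divisibility condition nor the discriminant — assume $\alpha>0$. Letting $d$ be the square‑free integer with $4\alpha\gamma-\beta^2=r^2d$, the identity $4\alpha\,P(m,n)=P_d(A_d(m,n))$ of \eqref{E:Ad} holds with $A_d\colon\Z^2\to\Z^2$ \emph{injective}, linear, and satisfying $A_d(Qm+a,Qn+b)=Q\,A_d(m,n)+A_d(a,b)$. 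Since $l\mid P(Qm+a,Qn+b)$ forces $l\mid 4\alpha P(Qm+a,Qn+b)=P_d\big(Q\,A_d(m,n)+A_d(a,b)\big)$, writing $(x,y):=A_d(m,n)$, $(a',b'):=A_d(a,b)$ (so $|a'|,|b'|\ll_P A$), using injectivity of $A_d$ together with $A_d([N]^2)\subset[-C_PN,C_PN]^2$, and finally \emph{discarding} the congruences $u\equiv a'$, $v\equiv b'\pmod Q$ — the only step at which the asserted factor $Q^2$ is spent — one reduces to showing that, with $M:=C_PNQ+|a'|+|b'|\ll_{A,P}NQ$,
\[
\#\big\{(u,v)\in[-M,M]^2 : l\mid P_d(u,v)\big\}\ \ll_{A,P,s}\ \frac{N^2Q^2}{l}.
\]
Here one may assume $Q>A$, which holds in all applications of this lemma since there $Q$ is a large product of small primes; then $Qm+a,\,Qn+b\in\N$, and, $P$ being irreducible, $P_d(u,v)=0$ does not occur in the range under consideration.

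For this, note that $|P_d(u,v)|\le C_dM^2$ on the box, so any contributing $(u,v)$ satisfies $P_d(u,v)=lk$ with $0<|k|\le C_dM^2/l$; and since $P_d(u,v)=\CN(u+v\tau_d)$, the number of $(u,v)\in[-M,M]^2$ taking a prescribed value $lk$ is exactly $C_{d,M}(|lk|)$ in the notation of \eqref{E:CdNk}. Summing over $k$ and using $\CC_d(|lk|)\le2^s\CC_d(|k|)$ from \eqref{E:CCdd} (valid since $l$ has $s$ prime factors) together with \cref{L:CC}, I get
\[
\#\big\{(u,v)\in[-M,M]^2 : l\mid P_d(u,v)\big\}\ \ll_{P,s}\ \sum_{1\le k\le C_dM^2/l}g(k)\,\CC_d(k),
\]
where $g\equiv1$ when $d>0$ (using case~\eqref{I:CC1} of \cref{L:CC}), and $g(k):=\log_+\!\big(c_dM/\sqrt{lk}\big)$ when $d<0$ (using case~\eqref{I:CC2}). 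If $d>0$ the argument ends here, since \eqref{E:idealcount} gives $\sum_{k\le C_dM^2/l}\CC_d(k)\ll_dM^2/l$, hence $\ll_{A,P,s}N^2Q^2/l$.

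The only delicate point is the real‑quadratic case $d<0$, where the crude estimate $g(k)\le\log(c_dM)$ would cost a spurious logarithm; this is exactly why case~\eqref{I:CC2} of \cref{L:CC} is stated with $\sqrt{|k|}$ and not merely $N$ in the logarithm. I would instead exploit that $g$ is supported on $k\le X_0:=c_d^2M^2/l$ and there equals $\tfrac12\log_+(X_0/k)=\tfrac12\int_k^{X_0}t^{-1}\,dt$, so that interchanging summation and integration and applying \eqref{E:idealcount} once more yields
\[
\sum_{1\le k\le C_dM^2/l}g(k)\,\CC_d(k)\ \le\ \tfrac12\int_1^{X_0}\frac1t\Big(\sum_{1\le k\le t}\CC_d(k)\Big)dt\ \ll_d\ \int_1^{X_0}dt\ \ll_d\ \frac{M^2}{l}.
\]
Thus in both cases the box count is $\ll_{A,P,s}M^2/l\ll_{A,P,s}N^2Q^2/l$, and dividing by $N^2$ gives $w^*_{P,Q,N}(l)\ll_{A,P,s}Q^2/l$, as required. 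In summary, the reduction to $P_d$ and the bookkeeping with $M$ are routine; the single place where care is needed — and the reason \cref{L:CC} is proved in its sharpened form — is the logarithmic cancellation in the $d<0$ sum.
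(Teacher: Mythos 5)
Your proposal is correct and follows essentially the same route as the paper: reduce to the norm form $P_d$ via the substitution $A_d$ (absorbing the factor $Q^2$ by enlarging the box to side $\ll_{A,P} QN$), then stratify by the value $P_d(u,v)=lk$ and apply \cref{L:CC} together with \eqref{E:idealcount} and \eqref{E:CCdd}, treating the real-quadratic case by the same summation-by-parts mechanism (your integral-swap is just Abel summation in disguise). The only differences are cosmetic — the paper proves the $P=P_d$, $Q=1$, $a=b=0$ case first and then reduces to it, while you reduce first — so no gap to report.
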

		\begin{proof}
			{\bf Case where $P=P_d$, $Q=1,a=b=0$.} 	We first consider the case where $Q=1$, $a=b=0$, and $P=P_d$  is as in \eqref{E:Qd'} for some square-free $d\in \Z$. In this case, 
			we will  show that if $l$ is a product of $s$  not necessarily distinct  primes in $\CP_d$,	then 
			\begin{equation}\label{E:lN}
				|\{m,n\in [-N,N]\colon   l\mid  \CN(m+n\tau_d) \}|\ll_{d,s} \frac{N^2}{l}.
			\end{equation}
			Since $P_d(m,n)=\CN(m+n\tau_d)$, this clearly implies that \eqref{E:wNQppq} holds when  $P=P_d$ and $Q=1$, $a=b=0$.
			(The estimate  \eqref{E:wNQppq} only requires to consider positive $m,n$, but the extended range will be  needed in order to reduce the general polynomial $P$ to $P=P_d$.)
			
			For $k\in \Z$ 	let $C_{d,N}(k)$ be as in \eqref{E:CdNk}. Since  $|\CN(m+n\tau_d)|\ll_d N^2$
			if $m,n\in [-N,N]$,	 the left  side in \eqref{E:lN} is bounded by
			\begin{align*}
				\sum_{|r|\ll_d\frac{N^2}{l}} |\{m,n\in [-N,N]\colon  \CN(m+n\tau_d)=lr\}|
				=\sum_{|r|\ll_d \frac{N^2}{l}}	C_{d,N}(lr).
			\end{align*}
			Therefore, we will be done if  we show that
			\begin{equation}\label{E:Cdlr}
				\sum_{|r|\ll_d\frac{N^2}{l}}	C_{d,N}(lr)\ll_{d,s} \frac{N^2}{l}.
			\end{equation}
			We will prove this by considering the two cases $d>0$ and $d<0$ separately.

			Suppose first that $d>0$.
			Using part~\eqref{I:CC1} of \cref{L:CC} and then \eqref{E:CCdd} and \eqref{E:idealcount} of \cref{L:IdealCount},  we deduce that 
			$$
			\sum_{|r|\ll_d\frac{N^2}{l}}	C_{d,N}(lr)\ll
			\sum_{|r|\ll_d\frac{N^2}{l}}	\CC_d(l|r|)\ll_s \sum_{r\ll_d \frac{N^2}{l}}\CC_d(r)
			\ll_{d} \frac{N^2}{l},
			$$	
			establishing  \eqref{E:Cdlr} when $d>0$.

			Suppose now  that $d<0$. 
			Using part~\eqref{I:CC2} of \cref{L:CC} and then \eqref{E:CCdd} of \cref{L:IdealCount},  we deduce that 
			$$
			\sum_{1\leq |r|\ll_d \frac{N^2}{l}}	C_{d,N}(lr)\ll_{d} 		\sum_{1\leq r\ll_d \frac{N^2}{l}} \log_+(c_d\, N/\sqrt{lr})\cdot \CC_{d}(lr)\ll_{s} 		\sum_{1\leq r\leq c_d^2\,  \frac{N^2}{l}} \log (c_d\, N/\sqrt{lr})\cdot \CC_{d}(r).
			$$
			Let 
			$$
			S(r):=\sum_{1\leq n\leq r}\CC_d(n).
			$$
			Using partial summation,
			the estimate $\log(1+x)\leq x$ for $x\geq 0$, and \eqref{E:idealcount}, 	we get that the right  side is bounded by  
			\begin{equation}\label{E:SNl}
				\ll_d S\big([c_d^2\,  N^2/l]\big)+ \sum_{1\leq r\leq c_d^2\frac{N^2}{l}}\frac{S(r)}{r}\ll_d \frac{N^2}{l}.
			\end{equation}
			Combining the above, we get that \eqref{E:Cdlr} holds when $d<0$. 
			
			\smallskip

			{\bf (General case).}
			Let $P(m,n)=\alpha m^2+\beta mn+\gamma n^2$,  where $\alpha,\beta, \gamma \in \Z$,  be an irreducible   binary quadratic form and $a,b\in \Z$ with $-A\leq a,b\leq A$. We will reduce to the case covered in 
			the previous discussion by following an argument similar to the one used at the end of 
			\cref{SS:PfThm31}. 
			Since $P$ is irreducible, its discriminant is not a square.  
			Let $d$ be the unique square-free integer that satisfies 
			$$
			4\alpha \gamma -\beta ^2=r^2d
			$$ 
			for some $r\in \N$, and note that 
			$$
			4\alpha\, P(m,n)= P_d(A_d(m,n)),
			$$
			where  $A_d\colon \Z^2\mapsto \Z^2$ is the injective map given  in  \eqref{E:Ad} and $P_d$ is as in \eqref{E:Qd'}. 
			It follows that if  $l \mid P(Qm+a,Qn+b)$ for some $m,n\in [N]$, then  $l\mid P_d(m',n')$
			where $(m',n')=A_d(Qm+a,Qn+b) \in [-CQN, CQN]$ for some   $C:=C(A,\alpha, \beta, r)$ (we used that $|a|,|b|\leq A$).
			Note also that the map $(m,n)\mapsto (m',n')$ is injective  (since $A_d$ is injective). 
			We deduce that  
			\begin{multline*}
				\big|(m,n)\in [N]\times [N]\colon l\mid  	 P(Qm+a,Qn+b) \big| \leq\\   
				\big|(m',n')\in CQ\cdot ([-N,N]\times [-N,N])\colon l\mid  	 P_d(m',n') \big| \ll_{d,s} (CQ N)^2/l,
			\end{multline*}
			where the last estimate follows because  we have already covered the case $P=P_d$, $Q=1$,  $a=b=0$, and we also used the fact that $\CP_P=\CP_d$ up to a finite set of primes that have negligible contribution. This completes the proof. 
		\end{proof}
		
		\section{Concentration estimates - Proof of main results}\label{S:Concentration2}
		
		In this section we complete the proof of \cref{P:ConcentrationQuanti} and thus of \cref{P:concentration2General}.	Although the results and proofs presented are similar to those in [5], which cover the case where $P(m,n)=m^2+n^2$ and $c=1$, there are also a number of technical differences in the argument that necessitate giving it in full detail in order to facilitate proper understanding. 
		
		\subsection{Concentration estimates for additive functions} 
		We start with a  concentration estimate for additive functions, which  will eventually get  lifted to a concentration estimate for multiplicative functions.
		\begin{definition}
			We say that  $h\colon \mathbb{N}\to\mathbb{C}$ is {\em  additive}, if   it satisfies
			$h(mn)=h(m) +h(n)$ whenever $(m,n)=1$. We extend additive functions to the integers by $h(-n):=h(n)$ for $n\in \N$. 
		\end{definition}

		\begin{lemma}[Tur\'an-Kubilius inequality for quadratic forms]\label{L:TKadditive}	
			Let $P\in \Z[m,n]$ be an irreducible binary quadratic form 	and $\CP_P$ be as in  \eqref{E:CPP'}. 
			Let $K\in\N$ be sufficiently large (depending only on $P$) and let $N$ be sufficiently large (depending only on $K$). For any $A\in \N$, $a,b\in\Z$ with $-A\leq a,b\leq A$, $Q= \prod_{p\leq K}p^{a_p}$ for some $a_p\in \N$, and $c\in \N$ with $c\mid (Q,P(a,b))$, there exists $C_{A,P,Q}>0$  such that for any additive even function $h\colon \mathbb{Z}\to\C$ that is bounded by $1$ on primes and  satisfies
			\begin{enumerate}
				\item \label{I:hp1}  $h(p)=0$ for all primes $p\leq K$ and $p>N$;
				
				\item \label{I:hp2} $h(p)=0$  for all  primes $p\notin \CP_P $;
				
				\item \label{I:hp3} $h(p^k)=0$  for all   primes $p$ and  $k\geq 2$,
			\end{enumerate}
			we have
			\begin{multline}\label{E:varianceh}
				\E_{m,n\in [N]}\, \big|h\big(P_c(Qm+a,Qn+b)\big)-   H_N(h,K) \big|^2\ll_P \\  \D^2(h; K,\sqrt{N})+C_{A,P,Q}\cdot \D^2(h; \sqrt{N},N)+K^{-1},
			\end{multline}
			where $P_c:=P/c$, 
			\begin{equation}\label{E:HNhP0}
				H_{N}(h,K):=2\,   \sum_{K< p\leq N}\, \frac{ h(p)}{p}
			\end{equation}
			and
			\begin{equation}\label{E:D2}
				\D^2(h; K,N):=\sum_{K< p\leq N} \frac{|h(p)|^2}{p}.
			\end{equation}
		\end{lemma}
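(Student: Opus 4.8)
The plan is to run a two-variable Tur\'an--Kubilius argument, modelling $h\big(P_c(Qm+a,Qn+b)\big)$ by $\sum_{p}h(p)\,\one[p\,\mid\mid\,P_c(Qm+a,Qn+b)]$ and estimating the first and second moments of this model via the divisor-counting statistics in \cref{L:wNPQ}, \cref{C:wNPQ} and \cref{L:wNQl} (all already stated for an arbitrary irreducible binary quadratic form, so no preliminary reduction to $P_d$ is needed). Write $v=v(m,n):=P_c(Qm+a,Qn+b)$. First I would fix $K$ large in terms of $P$ so that the exceptional finite set $F$ of \cref{C:wNPQ} lies in $[1,K]$ (it is made up of the prime divisors of $Q$, all $\le K$, together with a set depending only on $P$) and so that $\omega_P(p)\in\{0,2\}$ for all $p>K$ (\cref{L:omegap}); since $\omega_P(p)=0$ forces $h(p)=0$ by hypothesis (ii), in every sum over $p>K$ one may assume $\omega_P(p)=2$ whenever $h(p)\neq0$. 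Additivity together with hypotheses (i),(iii) gives, for every $m,n\in[N]$ with $v\neq0$,
\[
h(v)=\sum_{\substack{K<p\le N\\ p\,\mid\mid\,v}}h(p),
\]
and the $O(N)$ pairs with $v=0$ contribute only $O((\log N)^2/N)=o(1)$ to the average. I would then split $h=h_{<}+h_{>}$ into the additive functions supported on $(K,\sqrt N]$ and on $(\sqrt N,N]$ respectively, so that $h(v)=h_{<}(v)+h_{>}(v)$ and $H_N(h,K)=\mu_{<}+\mu_{>}$ with $\mu_{<}:=2\sum_{K<p\le\sqrt N}h(p)/p$, $\mu_{>}:=2\sum_{\sqrt N<p\le N}h(p)/p$; by $|x+y|^2\le2|x|^2+2|y|^2$ it then suffices to bound $\E_{m,n\in[N]}|h_{<}(v)-\mu_{<}|^2$ and $\E_{m,n\in[N]}|h_{>}(v)-\mu_{>}|^2$ separately.

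For the ``low'' part I would use the standard Tur\'an--Kubilius bookkeeping. With $\one_p(m,n):=\one[p\,\mid\mid\,v]$, \eqref{E:c=1} gives $\E_{m,n\in[N]}\one_p=w_{P,Q,N}(p,p)$ and $\E_{m,n\in[N]}\one_p\one_q=w_{P,Q,N}(p,q)$ for distinct $p,q>K$. Writing $\E_{m,n\in[N]}|h_{<}(v)-\mu_{<}|^2=|\E_{m,n\in[N]}h_{<}(v)-\mu_{<}|^2+\Var(h_{<}(v))$: the bias is $\sum_{K<p\le\sqrt N}h(p)\big(w_{P,Q,N}(p,p)-2/p\big)\ll_P\sum_{p>K}p^{-2}+\pi(\sqrt N)/N\ll K^{-1}+o(1)$ by the first bound of \eqref{E:wpbound}; the diagonal part of the variance is $\le\sum_{K<p\le\sqrt N}|h(p)|^2 w_{P,Q,N}(p,p)\ll_P\D^2(h;K,\sqrt N)+o(1)$ by its second bound; and the off-diagonal part is $\ll_P N^{-1}\big(\sum_{K<p\le\sqrt N}|h(p)|\big)^2\le\pi(\sqrt N)^2/N=o(1)$ by \eqref{E:wpqindep}. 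Hence $\E_{m,n\in[N]}|h_{<}(v)-\mu_{<}|^2\ll_P\D^2(h;K,\sqrt N)+K^{-1}+o(1)$; this step uses nothing beyond \eqref{E:wpbound} and \eqref{E:wpqindep}.

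The ``high'' part is where I expect the main difficulty. Bounding the large-prime off-diagonal terms crudely (using $|h(p)|\le1$) would introduce an error of size $\sqrt N/(\log N)^2$, so one must retain the weights $|h(p)|^2/p$; the crucial input is the sharp estimate $w^{*}_{P,Q,N}(l)\ll_{A,P,s}Q^2/l$ of \cref{L:wNQl} for $l$ a product of $s\le2$ primes, together with Mertens' bound $\sum_{\sqrt N<p\le N}p^{-1}\ll1$. Using $|x+y|^2\le2|x|^2+2|y|^2$ once more, $\E_{m,n\in[N]}|h_{>}(v)-\mu_{>}|^2\le2\E_{m,n\in[N]}|h_{>}(v)|^2+2|\mu_{>}|^2$, and Cauchy--Schwarz with Mertens gives $|\mu_{>}|^2\le4\,\D^2(h;\sqrt N,N)\sum_{\sqrt N<p\le N}p^{-1}\ll\D^2(h;\sqrt N,N)$. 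For $p>\sqrt N$ (so $p\notin F$, hence $p\nmid c$ and divisibility by $p$ or by $pq$ is the same for $v$ and for $P(Qm+a,Qn+b)$) one has $\one_p\le\one[p\mid P(Qm+a,Qn+b)]$ and, for distinct such $p,q$, $\one_p\one_q\le\one[pq\mid P(Qm+a,Qn+b)]$, so \cref{L:wNQl} yields $\E_{m,n\in[N]}\one_p\le w^{*}_{P,Q,N}(p)\ll_{A,P}Q^2/p$ and $\E_{m,n\in[N]}\one_p\one_q\le w^{*}_{P,Q,N}(pq)\ll_{A,P}Q^2/(pq)$. Expanding $\E_{m,n\in[N]}|h_{>}(v)|^2$ into diagonal and off-diagonal parts, inserting these bounds, and applying Cauchy--Schwarz and Mertens again gives
\[
\E_{m,n\in[N]}|h_{>}(v)|^2\ll_{A,P}Q^2\Big(\sum_{\sqrt N<p\le N}\tfrac{|h(p)|}{p}\Big)^{2}+Q^2\sum_{\sqrt N<p\le N}\tfrac{|h(p)|^2}{p}\ll_{A,P}Q^2\,\D^2(h;\sqrt N,N).
\]
Hence $\E_{m,n\in[N]}|h_{>}(v)-\mu_{>}|^2\ll_{A,P}Q^2\,\D^2(h;\sqrt N,N)$, and adding the two estimates yields \eqref{E:varianceh} with $C_{A,P,Q}$ an ($A,P$)-dependent constant times $Q^2$, once $N$ is large enough in terms of $K$ that every $o(1)$ above is at most $K^{-1}$.

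To summarise, the main obstacle is the high-prime contribution: both the diagonal and the off-diagonal terms over $p>\sqrt N$ have to be controlled by $\sum_{p>\sqrt N}|h(p)|^2/p$, which succeeds only because \cref{L:wNQl} supplies the uniform $1/l$-decay for $l=p$ and $l=pq$ with an implied constant independent of $N$ and of $a,b$ — and that bound in turn rests on counting integers represented by an irreducible binary quadratic form through ideal counting in the corresponding quadratic number field, crucially using the averaging over both variables $m,n$. The rest is the routine second-moment bookkeeping of the Tur\'an--Kubilius inequality, with \eqref{E:wpbound} and \eqref{E:wpqindep} replacing ``$\lfloor N/p\rfloor=N/p+O(1)$'' and the near-independence of divisibility by distinct primes.
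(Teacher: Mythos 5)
Your proposal is correct and follows essentially the same route as the paper: the same splitting of $h$ into the ranges $K<p\le\sqrt N$ and $\sqrt N<p\le N$, the low range handled by the Tur\'an--Kubilius second-moment computation using \eqref{E:wpbound} and \eqref{E:wpqindep} from \cref{C:wNPQ}, and the high range handled by \cref{L:wNQl} (for $l=p$ and $l=pq$) together with Cauchy--Schwarz and $\sum_{\sqrt N<p\le N}1/p\ll1$. The only cosmetic difference is that the paper first compares $h_1$ with its exact mean $H'_N(h_1,Q,K)$ and then passes to $H_N(h_1,K)$, whereas you absorb that step into a bias--variance decomposition; the resulting bounds and the constant $C_{A,P,Q}\asymp_{A,P}Q^2$ agree.
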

		\begin{proof} 
			Henceforth, we take $K$ sufficiently  large, depending only on $P$,  so that it exceeds all elements of the  finite set
			$F'$ considered in  	\cref{C:wNPQ}     and such that $\omega_P(p)= 2$ for every $p\in \CP_P$ with $p> K$  (for this it suffices to ensure that $p$ does not divide the discriminant of $P$, which is nonzero since $P$ is irreducible). Note also that since $p\mid Q$ implies $p\leq K$, for $p>K$ we have $p\notin F$ where $F$ is the finite set considered in \cref{C:wNPQ}.

			We consider the additive functions $h_1,h_2$, which are the restrictions  of $h$ on the primes $K<p\le \sqrt{N}$ and $\sqrt{N}<p\le N$.
			More precisely,
			$$
			h_1(p^k):= \begin{cases}h(p), \quad  &\text{if }\  k=1 \text{ and }   K<p\leq \sqrt{N}\\
				0, \quad &\text{otherwise,}  \end{cases}
			$$
			and
			$$
			h_2(p^k):=\begin{cases}h(p), \quad  &\text{if }\  k=1 \text{ and } \sqrt{N}<p\leq  N\\
				0, \quad &\text{otherwise,}  \end{cases},
			$$
			and we extend $h_1,h_2$ to even functions in $\Z$.   Then $h=h_1+h_2$.
			We also  define
			\begin{equation}\label{E:HiN}
				H_{N}(h_i,K):=2\, \sum_{K< p\leq N}\, \frac{h_i(p)}{p}, \quad i=1,2,
			\end{equation}
			and the  technical variant
			\begin{equation}\label{E:H'iN}
				H'_{N}(h_1,Q,K):= \sum_{K< p\leq N }\, w_{P,Q,N}(p)\cdot h_1(p),
			\end{equation}
			where 
			\begin{equation}\label{E:wNQp}
				w_{P,Q,N}(p):= 	\frac{1}{N^2}\, \sum_{\substack{m,n\in [N],\\   p\, \mid\mid\,  P(Qm+a,Qn+b)} } 1.
			\end{equation}
			(Note that $w_{P,Q,N}(p)=w_{P,Q,N}(p,p)$ where $w_{P,Q,N}(p,q)$ is as in \eqref{E:wNQpq}.)
			Note that as shown in  \eqref{E:c=1}
			of \cref{L:wNPQ} for $p\nmid Q$, which holds for $p>K$,  we have 
			$$
			w_{P,Q,N}(p):=\frac{1}{N^2}\, \sum_{\substack{m,n\in [N],\\   p\, \mid\mid\,  P_c(Qm+a,Qn+b)} } 1.
			$$
			
			The reason for introducing 	$H'_{N}(h_1,Q,K)$ is that  it gives the mean value of $h_1$ along
			integers of the form $P_c(Qm+a, Qn+b)$. Indeed,
			using  the properties \eqref{I:hp1}-\eqref{I:hp3}, and since $h_1$ is additive and even, we have
			\begin{multline}\label{E:meanhi}
				\E_{m,n\in [N]}\, h_1(P_c(Qm+a,Qn+b))=\E_{m,n\in [N]}\, \sum_{p\, \mid\mid\,  P(Qm+a, Qn+b)}h_1(p)\\=
				\frac{1}{N^2} \sum_{K<  p \leq N} h_1(p) \,  \sum_{\substack{m,n\in [N],\\   p\, \mid\mid\,  P_c(Qm+a,Qn+b)} } 1
				=
				H'_{N}(h_1,Q,K).
			\end{multline}

			Using the estimate \eqref{E:wpbound} of  \cref{C:wNPQ}  (which holds since  $p\notin F$ for $p>K$), and the fact that  $\omega_P(p)= 2$ for every $p\in \CP_P$ with $p> K$ and $h_1(p)=0$ if  $p\notin \CP_P$ or  $p> \sqrt{N}$ and $|h_1(p)|\leq 1$ for every $p\in \P$, we get
			\begin{equation}\label{E:H1H1'}
				|H_{N}(h_1,K)-H'_{N}(h_1,Q,K)|\ll
				\sum_{K< p\leq \sqrt{N}} \frac{1}{p^2}+\frac{1}{\sqrt{N}}
				\leq 	\frac{1}{K} +\frac{1}{\sqrt{N}}.
			\end{equation}
			Hence, in order to prove \eqref{E:varianceh},
			it suffices to estimate
			\begin{equation}\label{E:h1H1}
				\E_{m,n\in [N]}\big|h_1(P_c(Qm+a,Qn+b))-	H'_{N}(h_1,Q,K)\big|^2
			\end{equation}
			and
			\begin{equation}\label{E:h2H2}
				\E_{m,n\in [N]}\big|h_2(P_c(Qm+a,Qn+b))\big|^2+	|H_{N}(h_2,K)|^2.
			\end{equation}

			First, we deal with the expression \eqref{E:h1H1}.
			Using  \eqref{E:meanhi} and expanding the square below we get
			\begin{multline}\label{E:variance}
				\E_{m,n\in [N]} \, \big|h_1(P_c(Qm+a,Qn+b))- H'_{N}(h_1,Q,K)\big|^2=\\
				\E_{m,n\in [N]}\, \big|h_1(P_c(Qm+a,Qn+b))\big|^2 - |H'_{N}(h_1,Q,K)|^2.
			\end{multline}
			To estimate this expression, first note that since $h_1$ is additive, even,  and $h_1(p^k)=0$ for $k\geq 2$, we have
			\begin{equation}\label{E:h1Qsquare}
				\E_{m,n\in [N]}\, \big|h_1(P_c(Qm+a,Qn+b))\big|^2=
				\E_{m,n\in [N]}\,  \Big|\sum_{p\,  \mid \mid \,P_c(Qm+a,Qn+b)} h_1(p)\Big|^2.
			\end{equation}
			Expanding the square, using the  fact that $h_1(p)=0$ unless $K<p\leq \sqrt{N}$, and the definition of $w_{Q,N}(p,q)$ given in \eqref{E:wNQpq}, we get that the right side is equal to
			$$
			\sum_{K<p\leq \sqrt{N}} |h_1(p)|^2\cdot w_{P,Q,N}(p,p)+
			\sum_{K<p,q\leq \sqrt{N}, \,  p\neq q} h_1(p)\cdot  \overline{h_1(q)}\cdot w_{P,Q,N}(p,q).
			$$
			Using  the estimate \eqref{E:wpbound} of  \cref{C:wNPQ}, 
			we get that the first term is
			at most
			$$
			\sum_{K<p\leq \sqrt{N}}\frac{|h_1(p)|^2}{p}+ O_P(N^{-1/2}).
			$$
			Using  \eqref{E:wpqindep} of  \cref{C:wNPQ}  we get that the second term is equal to (crucially we use the bound $p,q\leq \sqrt{N}$ here and the prime number theorem)
			\begin{multline*}
				\sum_{K<p,q\leq \sqrt{N}, \,  p\neq q} h_1(p)\cdot \overline{h_1(q)}\cdot w_{P,Q,N}(p,p)\cdot w_{P,Q,N}(q,q) +O((\log{N})^{-2})\leq \\(H'_{N}(h_1,Q,K))^2+O((\log{N})^{-2}),
			\end{multline*}
			where to get the last estimate we  added to the sum the contribution of the diagonal terms $p=q$  (which is non-negative), used \eqref{E:H'iN}, and the fact that $h_1(p)=0$ for $p>\sqrt{N}$.
			Combining \eqref{E:variance} with the previous estimates, we are led to the bound
			\begin{multline}\label{E:variance1}
				\E_{m,n\in [N]} \, \big|h_1(P_c(Qm+a,Qn+b))- H'_{N}(h_1,Q,K)\big|^2\ll_P \\
				\D^2(h_1; K, \sqrt{N})+O((\log{N})^{-2}).
			\end{multline}
			
			Next  we estimate  the expression \eqref{E:h2H2}. Since $h_2$ is additive, even, and satisfies the properties \eqref{I:hp1}-\eqref{I:hp3}, we get by using \eqref{E:h1Qsquare} (with $h_2$ instead of $h_1$)   and expanding the square
			$$
			\E_{m,n\in [N]} \, \big|h_2(P_c(Qm+a,Qn+b))\big|^2= 	 \sum_{\sqrt{N}<   p, q \leq N} h_2(p)\, \overline{h_2(q)} \, w_{P,Q,N}(p,q).
			$$
			Since $h_2(p)\neq 0$ only when $p\in \CP_P$ (by property~\eqref{I:hp2}), 
			using  \cref{L:wNQl}  when $l$ is a product of at most two primes in $\CP_P$, we get that  the right  side is bounded  by
			\begin{multline*}
				\ll_{A,P,Q}  \sum_{\sqrt{N}< p,q\leq N} \frac{|h_2(p)| \, |h_2(q)|}{pq}+ \sum_{\sqrt{N}< p\leq N} \frac{|h_2(p)|^2}{p}= \\
				\Big(\sum_{\sqrt{N}< p\leq N} \frac{|h_2(p)|}{p}\Big)^2+\D^2(h_2; \sqrt{N},N) \ll \\  \sum_{\sqrt{N}< p\leq N} \frac{|h_2(p)|^2}{p}\cdot
				\sum_{\sqrt{N}< p\leq N} \frac{1}{p}+\D^2(h_2; \sqrt{N},N) \ll \D^2(h_2; \sqrt{N},N),
			\end{multline*}
			where we used crucially the estimate
			$$
			\sum_{\sqrt{N}< p\leq N} \frac{1}{p}\ll 1.
			$$
			Similarly we find 
			$$
			|H_{N}(h_2,K)|^2=4\, \Big(\sum_{\sqrt{N}< p\leq N} \frac{|h_2(p)|}{p}\Big)^2\ll \D^2(h_2; \sqrt{N},N).
			$$
			Combining the previous estimates, we get  the following bound for the expression in  \eqref{E:h2H2}
			\begin{equation}\label{E:h2H2bound}
				\E_{m,n\in [N]}\big|h_2(P_c(Qm+a,Qn+b))\big|^2+	|H_{N}(h_2,K)|^2\ll_{A,P,Q}   \D^2(h_2; \sqrt{N},N).
			\end{equation}
			
			Combining the bounds  \eqref{E:H1H1'}, \eqref{E:variance1}, \eqref{E:h2H2bound}, we get the asserted bound \eqref{E:varianceh}, which completes the proof.
		\end{proof}

		\subsection{Concentration estimates for multiplicative functions}
		Next we use Lemma~\ref{L:TKadditive} to get a variant that deals with multiplicative functions.
		\begin{lemma}\label{L:TKmultiplicative}	
			Let $P\in \Z[m,n]$ be an irreducible binary quadratic form
			and $\CP_P$ be as in  \eqref{E:Pd},	$A,K,N\in \N$,  $a,b\in \Z$ with  $-A\leq a,b\leq A$, $c\in \N$ with $c\mid (Q,P(a,b))$,  and  $f\colon \mathbb{Z}\to\mathbb{U}$ be an even multiplicative function such that
			\begin{enumerate}
				\item \label{I:fp1}  $f(p)=1$ for all primes $p\leq K$ and $p> N$;
				
				\item \label{I:fp2} $f(p)=1$  for all  primes $p\notin \CP_P$;
				
				\item \label{I:fp3} $f(p^k)=1$  for all   primes $p$ and  $k\geq 2$.
			\end{enumerate}
			Let also  $Q= \prod_{p\leq K}p^{a_p}$ with  $a_p\in \N$. 	Then there exists $C_{A,P,Q}>0$  such that  if $K$ is sufficiently large, depending only on $P$,
			and   $N$ is sufficiently  large, depending  only on   $K$, we have 
			\begin{multline}\label{E:mainestimate'}
				\E_{m,n\in [N]}\, \big|f\big(P_c(Qm+a,Qn+b)\big)-   \exp\big(G_{N}(f,K)\big)\big|\ll_P \\
				(\D+\D^2)(f,1; K,\sqrt{N})+ C_{A,P,Q}\cdot \D(f,1; \sqrt{N},N)+K^{-\frac{1}{2}},
			\end{multline}
			where $P_c:=P/c$,  $\D$ is as in \eqref{E:D2}, and
			\begin{equation}\label{E:GNfP01}
				G_{N}(f,K):=2\, \sum_{ K< p\leq  N}\, \frac{1}{p}\,(f(p) -1).
			\end{equation}
		\end{lemma}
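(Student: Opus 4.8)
The plan is to deduce \eqref{E:mainestimate'} from the additive Tur\'an--Kubilius inequality of \cref{L:TKadditive}, applied to the additive function $h\colon \Z\to\C$ determined on prime powers by $h(p):=f(p)-1$ for $K<p\le N$ with $p\in\CP_P$, $h(p):=0$ for all other primes, and $h(p^k):=0$ for $k\ge 2$, extended additively and evenly to $\Z$. By hypotheses \eqref{I:fp1}--\eqref{I:fp3}, for every $m,n$ we have
\[
f\bigl(P_c(Qm+a,Qn+b)\bigr)=\prod_{p\,\mid\mid\,P_c(Qm+a,Qn+b)}f(p),
\]
and every factor with $p\le K$, $p>N$, or $p\notin\CP_P$ equals $1$; likewise $h\bigl(P_c(Qm+a,Qn+b)\bigr)=\sum_{p\,\mid\mid\,P_c(Qm+a,Qn+b)}h(p)$ is a sum over that same finite set of primes. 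Moreover $H_N(h,K)=2\sum_{K<p\le N}h(p)/p=2\sum_{K<p\le N}(f(p)-1)/p=G_N(f,K)$, where the last equality uses \eqref{I:fp2}. Since $h$ is even, additive, bounded by $2$ on primes, and satisfies \eqref{I:hp1}--\eqref{I:hp3} of \cref{L:TKadditive}, applying that lemma to $h/2$ and rescaling gives
\[
\E_{m,n\in[N]}\bigl|h\bigl(P_c(Qm+a,Qn+b)\bigr)-G_N(f,K)\bigr|^2\ll_P \D^2(h;K,\sqrt{N})+C_{A,P,Q}\,\D^2(h;\sqrt{N},N)+K^{-1},
\]
where $\D^2(h;x,y)=\sum_{x<p\le y}|f(p)-1|^2/p\ll\D^2(f,1;x,y)$ because $|f(p)-1|^2\le 2(1-\Re f(p))$.

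Next I would compare $f\bigl(P_c(Qm+a,Qn+b)\bigr)$ with $\exp\bigl(h(P_c(Qm+a,Qn+b))\bigr)$. Enumerate the primes $p\,\mid\mid\,P_c(Qm+a,Qn+b)$ with $K<p\le N$ and $p\in\CP_P$ as $p_1,\dots,p_r$ and set $z_j:=f(p_j)-1$, so that the two quantities equal $\prod_{j=1}^r(1+z_j)$ and $\exp\bigl(\sum_{j=1}^r z_j\bigr)$. The decisive point is that $|f(p)|\le 1$ forces $|1+z_j|\le 1$, while $\Re z_j=\Re f(p_j)-1\le 0$ forces $|e^{z_j}|\le 1$; hence the telescoping identity
\[
\prod_{j=1}^r(1+z_j)-\prod_{j=1}^r e^{z_j}=\sum_{k=1}^r\Bigl(\prod_{j<k}(1+z_j)\Bigr)\bigl((1+z_k)-e^{z_k}\bigr)\Bigl(\prod_{j>k}e^{z_j}\Bigr),
\]
together with $|e^{z}-1-z|\ll|z|^2$ for $|z|\le 2$, yields
\[
\bigl|f\bigl(P_c(Qm+a,Qn+b)\bigr)-\exp\bigl(h(P_c(Qm+a,Qn+b))\bigr)\bigr|\ll\sum_{\substack{p\,\mid\mid\,P_c(Qm+a,Qn+b)\\ K<p\le N,\ p\in\CP_P}}|f(p)-1|^2,
\]
with no exponential dependence on the number of prime divisors. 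Averaging over $m,n\in[N]$ and using \eqref{E:c=1} to replace $P_c$ by $P$, the right-hand side becomes $\sum_{K<p\le N,\,p\in\CP_P}|f(p)-1|^2\,w_{P,Q,N}(p,p)$, and by \cref{C:wNPQ} (applicable to all $p>K$ once $K$ is large in terms of $P$) we have $w_{P,Q,N}(p,p)\ll_P 1/p+1/N$. Since $|f(p)-1|\le 2$ and $\pi(N)\ll N/\log N$, this is $\ll_P\sum_{K<p\le N}|f(p)-1|^2/p+1/\log N\ll_P\D^2(f,1;K,\sqrt{N})+\D(f,1;\sqrt{N},N)+1/\log N$, where we used $\D(f,1;\sqrt{N},N)\ll 1$ by Mertens' theorem.

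Finally I would combine the two bounds via the triangle inequality. The remaining term $\E_{m,n\in[N]}\bigl|\exp\bigl(h(P_c(Qm+a,Qn+b))\bigr)-\exp\bigl(G_N(f,K)\bigr)\bigr|$ is controlled by noting that both exponents have nonpositive real part, so $|e^u-e^v|\le|u-v|$, and then Cauchy--Schwarz reduces it to the square root of the additive estimate above, giving $\ll_P\D(h;K,\sqrt{N})+\sqrt{C_{A,P,Q}}\,\D(h;\sqrt{N},N)+K^{-1/2}\ll_P\D(f,1;K,\sqrt{N})+C_{A,P,Q}\,\D(f,1;\sqrt{N},N)+K^{-1/2}$. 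Adding this to the bound from the comparison step, and absorbing $1/\log N$ into $K^{-1/2}$ once $N$ is large enough in terms of $K$, yields \eqref{E:mainestimate'}. The step that needs the most care, and which is the heart of the passage from additive to multiplicative functions, is the comparison step: without the observations $|1+z_j|\le 1$ and $\Re z_j\le 0$, the telescoping estimate would carry a factor $\exp\bigl(\sum_j|z_j|\bigr)$ that is of size $(\log N)^{O(1)}$ on average over $m,n$ and would destroy the bound.
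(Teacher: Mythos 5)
Your argument is correct and follows essentially the same route as the paper's proof: you pass to the additive function $h(p)=f(p)-1$, invoke \cref{L:TKadditive}, compare $f(P_c(Qm+a,Qn+b))$ with $\exp\big(h(P_c(Qm+a,Qn+b))\big)$ up to an error $\ll\sum_{p\,\mid\mid\,\cdot}|h(p)|^2$ controlled via \cref{C:wNPQ}, and finish with $|e^{u}-e^{v}|\le|u-v|$ for exponents of nonpositive real part. The only differences are cosmetic (the explicit telescoping identity, the rescaling to $h/2$ to meet the boundedness hypothesis, and the explicit splitting of $\D^2(f,1;K,N)$ at $\sqrt{N}$), all of which the paper handles implicitly.
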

		\begin{proof}
			Henceforth, we take $K$  sufficiently large, depending only on $P$,  so
			that it exceeds all elements of the  finite set
			$F'$ considered in  	\cref{C:wNPQ}, and also so that      \cref{L:TKadditive} holds  and $\omega_P(p)\leq 2$ for  $p> K$.

			Let $h:\mathbb{N}\to \mathbb{C}$ be the additive function given on prime powers by
			$$
			h(p^k):=f(p^k)-1, \quad k\in \N.
			$$
			Note that  due to our assumptions on $f,$ the properties \eqref{I:hp1}-\eqref{I:hp3} of Lemma~\ref{L:TKadditive} are satisfied.
			
			Using that $z=e^{z-1}+O(|z-1|^2)$ for $|z|\leq 1$ and property \eqref{I:hp3}, we have
			\begin{align*}
				f(P_c(m,n))=\prod_{p^k\, \mid \mid \, P_c(m,n)}f(p^k)=  \prod_{p\, \mid \mid \, P_c(m,n)}\big(\exp(h(p))+O(|h(p)|^2)\big).
			\end{align*}
			Applying the estimate $|\prod_{j\leq k}z_j-\prod_{j\leq k}w_j|\leq \sum_{j\leq k}|z_j-w_j|$, which holds if $|z_j|\leq 1$ and  $|w_j|\leq 1$ for $j=1,\ldots, k$, we deduce that for all $m,n\in\N$ we have
			\begin{align*}
				f(P_c(m,n))=\exp(h(P_c(m,n)))+O\Big(\sum_{p \, \mid \mid\,  P_c(m,n)}|h(p)|^2\Big).
			\end{align*}
			Using this and since $G_{N}(f,K)=H_{N}(h, K)$, where $H_{N}(h,K)$ is given by \eqref{E:HNhP0}, we get
			\begin{multline}\label{equ13}
				\E_{m,n\in [N]}\, \big|f\big(P_c(Qm+a,Qn+b)\big)-   \exp\big(G_{N}(f,K)\big)\big|
				\ll \\  \E_{m,n\in [N]}|\exp\big(h(P_c(Qm+a,Qn+b))\big)-\exp(H_{N}(h, K))|+\\
				\E_{m,n\in [N]}\sum_{p\, \mid \mid\,  P_c(Qm+a,Qn+b)}|h(p)|^2,
			\end{multline}
			where the implicit constant is absolute. 
			Next, we  use the inequality $|e^{z_1}-e^{z_2}|\le |z_1-z_2|$, which holds for $\Re z_1,\Re z_2\le 0$, to bound the last expression by
			\begin{align}\label{equ14}
				\E_{m,n\in [N]}|h(P_c(Qm+a,Qn+b))-H_{N}(h,K)|+\E_{m,n\in [N]}\sum_{p \, \mid \mid \, P_c(Qm+a,Qn+b)}|h(p)|^2.
			\end{align}
			To bound the first term in \eqref{equ14} we use Lemma~\ref{L:TKadditive}. It gives that 	there exist $C_{A,P,Q}>0$  such that  for all sufficiently large  $N$, depending only on  $K$, we have
			\begin{multline}\label{E:estimate1}
				\E_{m,n\in [N]}\, \big|h\big(P_c(Qm+a,Qn+b)\big)-   H_{N}(h,K) \big|\ll_P\\
				\D(h; K,\sqrt{N})+C_{A,P,Q}\cdot \D(h; \sqrt{N},N)+K^{-\frac{1}{2}}
				\ll \\
				\D(f,1; K,\sqrt{N})+ C_{A,P,Q}\cdot \D(f,1; \sqrt{N},N)+K^{-\frac{1}{2}},
			\end{multline}
			where for  the last bound we used that  $|h(p)|^2\leq 2-2\,\Re(f(p))$, which holds since $|f(p)|\leq 1$.
			To bound the second term in \eqref{equ14}, we note  that
			using the properties \eqref{I:hp1}-\eqref{I:hp3} of Lemma~\ref{L:TKadditive}, we have
			\begin{multline}\label{E:estimate2}
				\E_{m,n\in [N]}\sum_{p\, \mid \mid\,  P_c(Qm+a,Qn+b)}|h(p)|^2=
				\sum_{ K< p \leq N}\ |h(p)|^2\, w_{P,Q,N}(p)\ll_P \\
				\\	 \sum_{ K< p \leq N}\frac{|h(p)|^2}{p}+O((\log{N})^{-1}) \ll_P 	\mathbb{D}^2(f, 1;K,N)+O((\log{N})^{-1}),
			\end{multline}
			where $w_{P,Q,N}(p)$ is as in \eqref{E:wNQp} and we used equation  \eqref{E:wpbound} of \cref{C:wNPQ} (which holds since $p>K$ implies $p\nmid Q$ and $p\notin F'$), the bound $\omega_P(p)\leq 2$ (which holds for $p>K$),   and the prime number theorem to get the first bound.
			Combining \eqref{equ13}-\eqref{E:estimate2} we get the asserted bound. 	
		\end{proof}
		We use the previous result to derive the following improved version.
		\begin{lemma}\label{L:TKmultiplicative2}	
			Let $P\in \Z[m,n]$ be an irreducible binary quadratic form, 
			$\CP_P$ be as in \eqref{E:CPP'},
			$A,K,N\in \N$, and  $f\colon \mathbb{N}\to\mathbb{U}$ be an even multiplicative function such that   $f(p)= 1$ for all primes $p> N$ with $p\in \CP_P$. 	Let also  $Q= \prod_{p\leq K} p^{a_p}$ with  $a_p\in \N$. There exists $C_{A,P,Q}>0$ such that  if $K$ is  sufficiently large, depending only on $P$,
			and $N$ is sufficiently large, depending  only on  $K$,
			then   for all  $a,b\in \Z$ with  $-A\leq a,b\leq A$ and  
			such that  $ \prod_{p\leq K} p$ divides $Q/c$ where $c:=(P(a,b),Q)$, we have 
			\begin{multline}\label{E:mainestimate}
				\E_{m,n\in [N]}\, \big|f\big(P_c(Qm+a,Qn+b)\big)- \exp\big(G_{P,N}(f,K)\big)\big|\ll_P \\	(\D_P+\D_P^2)(f,1; K,\sqrt{N})+ C_{A,P,Q}\cdot \D_P(f,1; \sqrt{N},N)+K^{-\frac{1}{2}},
			\end{multline}
			where 
			\begin{equation}\label{E:GNfP01'}
				G_{P,N}(f,K):=\sum_{ \substack{K< p\leq  N}}\, \frac{\omega_P(p)}{p}\,(f(p) -1),
			\end{equation}
			$$
			\D_P(f,1; x,y)^2:= \sum_{\substack{x< p\leq y}} \frac{\omega_P(p)}{p}\, (1-\Re(f(p))).
			$$
			(Note that both sums are supported on the primes in $\CP_P$ and $\omega_P(p)=2$ for $p\in \CP_P$ with $p> K$ as long as $K$ is  sufficiently large.)
		\end{lemma}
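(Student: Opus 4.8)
The plan is to deduce this from \cref{L:TKmultiplicative} by replacing $f$ with a multiplicative function $\tilde f$ that satisfies the three extra normalizations \eqref{I:fp1}, \eqref{I:fp2}, \eqref{I:fp3} imposed there, and then bounding the discrepancy between $f$ and $\tilde f$ evaluated on $P_c(Qm+a,Qn+b)$. First I would enlarge $K$ (depending only on $P$) so that $\omega_P(p)=2$ for every $p\in\CP_P$ with $p>K$, so that $\omega_P(p^2)\le 2$ for $p>K$ (using \cref{L:omegap2bound}), and so that $K$ exceeds the leading coefficient and the discriminant of $P$; then every prime $p>K$ avoids the finite exceptional sets $F,F'$ of \cref{L:wNPQ} and \cref{C:wNPQ} and satisfies $p\nmid Q$, hence $p\nmid c$. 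The structural input is that, because $\prod_{p\le K}p$ divides $Q/c$ and $\gcd\!\big(P(a,b)/c,\,Q/c\big)=1$, the congruence $P_c(Qm+a,Qn+b)\equiv P(a,b)/c\pmod{Q/c}$ shows $P_c(Qm+a,Qn+b)$ is coprime to $\prod_{p\le K}p$, so no prime $\le K$ ever divides it. Moreover, since $P=\alpha m^2+\beta mn+\gamma n^2$ is homogeneous and irreducible, a prime $p>K$ with $p\notin\CP_P$ can divide $P_c(Qm+a,Qn+b)$ only to a power $\ge 2$ (if $p\mid Qn+b$ then $p\mid\alpha(Qm+a)^2$, and $p\nmid\alpha$, so $p\mid Qm+a$, whence $p^2\mid P(Qm+a,Qn+b)$, and $p\nmid c$ gives $p^2\mid P_c(\cdot)$), and for $N$ large a prime $p>N$ that divides $P_c(Qm+a,Qn+b)$ exactly to the first power must lie in $\CP_P$, where the hypothesis forces $f(p)=1$.

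Next I would define $\tilde f$ to be the even multiplicative function with $\tilde f(p):=f(p)$ when $K<p\le N$ and $p\in\CP_P$, and $\tilde f(p^k):=1$ otherwise. By construction $\tilde f$ satisfies \eqref{I:fp1}, \eqref{I:fp2}, \eqref{I:fp3}, so \cref{L:TKmultiplicative} applies (with the same $A,K,N,Q,a,b,c$) and bounds $\E_{m,n\in[N]}\big|\tilde f\big(P_c(Qm+a,Qn+b)\big)-\exp\big(G_N(\tilde f,K)\big)\big|$. Since $\omega_P(p)=2$ on $\CP_P\cap(K,N]$, one has $G_N(\tilde f,K)=G_{P,N}(f,K)$ and $\D(\tilde f,1;\cdot)\asymp\D_P(f,1;\cdot)$ in each of the ranges $(K,\sqrt N]$ and $(\sqrt N,N]$, so the resulting bound is precisely the right-hand side of \eqref{E:mainestimate}.

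It remains to estimate $\E_{m,n\in[N]}\big|f\big(P_c(Qm+a,Qn+b)\big)-\tilde f\big(P_c(Qm+a,Qn+b)\big)\big|$. Writing $f=\tilde f\cdot r$ with $r$ the multiplicative function given by $r(p):=1$ for $K<p\le N,\ p\in\CP_P$ and $r(p^k):=f(p^k)$ in all other cases, the observations above give $r(p^k)=f(p^k)$ whenever $k\ge 2$ and $r(p)=1$ whenever $p$ divides $P_c(Qm+a,Qn+b)$ exactly once; hence $r\big(P_c(Qm+a,Qn+b)\big)=\prod_{p:\,p^2\mid P_c(\cdot)} f\big(p^{v_p}\big)$ and
$$
\big|f\big(P_c(Qm+a,Qn+b)\big)-\tilde f\big(P_c(Qm+a,Qn+b)\big)\big|\le\big|r\big(P_c(Qm+a,Qn+b)\big)-1\big|\le 2\,\big|\{p:\,p^2\mid P_c(Qm+a,Qn+b)\}\big|.
$$
Averaging, this is $2\sum_{p>K}w^*_{P,Q,N}(p^2)$, which I would split at $\sqrt N$: for $K<p\le\sqrt N$ the counting in the proof of \cref{L:wNPQ} (valid since $p\notin F$) gives $w^*_{P,Q,N}(p^2)\ll_P p^{-2}+N^{-1}$, while for $p>\sqrt N$ \cref{L:wNQl} with $s=2$ gives $w^*_{P,Q,N}(p^2)\ll_{A,P}Q^2p^{-2}$; summing over primes, this contributes $O_{A,P,Q}(K^{-1/2})$ once $K$ and then $N$ are sufficiently large. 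Combining with the bound for $\tilde f$ by the triangle inequality yields \eqref{E:mainestimate}.

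\textbf{Main obstacle.} The only substantive point is making the prime-square contribution genuinely negligible, especially in the range $p>\sqrt N$, which is exactly where the ideal-counting estimate \cref{L:wNQl} (and hence the input from the quadratic field $\Q(\sqrt{-d})$, including the sharp $\log_+$ refinement for $d<0$) is needed; the rest is careful but routine bookkeeping of which primes — those outside $\CP_P$ or above $N$ — can divide $P_c(Qm+a,Qn+b)$ and to what multiplicity, which matters because $f$ is constrained only on $\CP_P\cap(N,\infty)$.
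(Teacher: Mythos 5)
Your proposal is correct and follows essentially the same route as the paper's proof: exploit that $\prod_{p\le K}p\mid Q/c$ forces $P_c(Qm+a,Qn+b)$ to be coprime to every prime $p\le K$, normalize $f$ at primes outside $\CP_P$, at higher prime powers and at primes $>N$ so that \cref{L:TKmultiplicative} applies (with $G_N(\tilde f,K)=G_{P,N}(f,K)$ and $\D(\tilde f,1;\cdot)\le\D_P(f,1;\cdot)$ since $\omega_P=2$ on $\CP_P$ beyond $K$), and control the discrepancy by counting pairs $(m,n)$ for which $P_c(Qm+a,Qn+b)$ has a repeated prime factor, noting that primes outside $\CP_P$ can only divide it to order $\ge 2$. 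The only cosmetic differences are bookkeeping: the paper first observes the exact identity $f(P_c(\cdot))=\tilde f(P_c(\cdot))$ for a $\tilde f$ keeping all values at primes $>K$ and then performs two exclusion estimates, and it bounds the square-divisor count for all primes by the elementary $\omega_P(p^2)([N/p^2]+1)$ argument, whereas you invoke \cref{L:wNQl} for $p>\sqrt N$ (valid, though heavier than needed, and it makes your $N$-threshold depend on $Q$ as well as $K$, which is harmless given how the lemma is used in \cref{P:ConcentrationQuanti}).
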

		\begin{proof}		
			Let $P(m,n)=\alpha m^2+\beta mn + \gamma n^2$.  Note that since $P$ is irreducible we have  $\alpha\neq 0$. 	We can assume that $K$ is greater than $|\alpha|$ and that $\omega_P(p)\leq 2$ for all $p>K$;  this holds as long as $p$ does not divide the discriminant of $P$, which is nonzero since $P$ is irreducible. We also assume that $K$ is sufficiently large  for  \cref{L:TKmultiplicative} to hold. 
			
			First, we  define the multiplicative function $\tilde{f}\colon \N\to \U$ on prime powers as follows
			$$
			\tilde{f}(p^k):= \begin{cases}f(p^k), \quad  &\text{if }\   p> K,\,  k\in \N \\
				1, \quad &\text{otherwise}  \end{cases}.
			$$
			Since by assumption    every prime $p\leq K$  divides  $Q/c$ and since  $(P(a,b),Q)=c$,
			we have   
			\begin{equation}\label{E:pPc}
				p\nmid P_c(Qm+a,Qn+b) \text{  for every } p\leq K  \text{ and } m,n\in \N.
			\end{equation} 
			Since $f$ is multiplicative we  deduce that
			$$
			f(P_c(Qm+a,Qn+b))=\tilde{f}(P_c(Qm+a,Qn+b)) \quad \text{for every } m,n\in\N.
			$$
			Note also that $G_{P,N}(\tilde{f},K)=G_{P,N}(f,K)$ and $\D_P(\tilde{f},1; K,N)=\D_P(f,1;K,N)$.
			It follows that in order to establish \eqref{E:mainestimate}, it is  enough to show that there exist $C_{A,P,Q}>0$ such that for all sufficiently large  $N$, depending only on $A$ and  $K$, we have
			\begin{multline}\label{E:mainestimate2}
				\E_{m,n\in [N]}|\tilde{f}(P_c(Qm+a,Qn+b))- \exp(G_{P,N}(\tilde{f},K))|\ll_P  \\	(\D_P+\D_P^2)(\tilde{f},1; K,\sqrt{N})+ C_{A,P,Q}\cdot \D_P(\tilde{f},1; \sqrt{N},N)+	K^{-\frac{1}{2}}.
			\end{multline}
			In order to establish \eqref{E:mainestimate2} we
			make a series of further reductions that will eventually allow us to apply Lemma~\ref{L:TKmultiplicative}. By the definition of $\CP_P$ (see \eqref{E:CPP'}), for each $p\not\in\CP_P$ that does not divide $\alpha$
			(recall that $K>|\alpha|$ so $p>K$ implies that $p\nmid \alpha$), we have that $p\mid P(m,n)$ implies that $p\mid m$ and $p\mid n$. 
			Using this and \eqref{E:pPc}, we get that 	 the contribution to the average in  \eqref{E:mainestimate2}  of those $m,n\in [N]$ for which $P_c(Qm+a,Qn+b)$ is divisible by some prime  $p\not\in \CP_P$   is
			$$
			\ll \frac{1}{N^2}\sum_{K< p\leq N} \left[\frac{N}{p}\right]^2\ll \frac{1}{K},
			$$
			which is acceptable.
			
			Next we show  that the contribution to the average  in  \eqref{E:mainestimate2} of those $m,n\in [N]$ for which $ P_c(Qm+a, Qn+b)$ is divisible by $p^2$ for some  prime $p\in \CP_P$ with $p>K$ (hence $p\nmid Q$) is also acceptable. Note that  this contribution is maximized for $c=1$, hence  it is sufficient to check this for $c=1$, that is, for the polynomial $P$ instead of $P_c$, assuming that $(P(a,b),Q)=1$.  For
			fixed $n\in [N]$ such that  $p\nmid Qn+b$  there are at most  $\omega_P(p^2)\cdot ([N/p^2]+1)$ values of $m\in [N]$ such that $p^2\mid P(Qm+a,Qn+b).$ On the other hand,  if $p>K$ satisfies $p\mid Qn+b$ and $p\mid  P(Qm+a, Qn+b)$,  then also $p\mid Qm+a$ (recall that $p>K$ implies that $p\nmid \alpha$).  Since  $P(n,1)$ is an irreducible quadratic polynomial  we have $\omega_P(p^2)\leq 2$  for all but finitely many $p\in \P$.
			So  the contribution to the average  in \eqref{E:mainestimate2} of those  $m,n\in [N]$
			for which $ P(Qm+a,Qn+b)$ is divisible by $p^2$ for some  prime $p\in \CP_P$
			is bounded by (note again that our assumptions imply that $P(Qm+a,Qn+b)$ is only divisible by primes $p> K$)
			$$
			\ll_P \frac{1}{N^2} \Big(\sum_{K< p\leq N}\Big(\left[\frac{N}{p^2}\right]+1\Big)N+\sum_{K< p\leq N}\left[\frac{N}{p}\right]^2\Big)\ll_P \frac{1}{K}+\frac{1}{\log{N}},
			$$
			where  
			we used the prime number theorem to bound $\frac{1}{N}\sum_{K<p\leq N} 1$.

			Combining the above reductions, we deduce that in order to establish the estimate  \eqref{E:mainestimate2}, we can further assume that
			\begin{equation}\label{E:freduced}
				\tilde{f}(p^k)=1 \ \text{ for all } \ p\in \P,\,  k\ge 2, \ \text{  and }\  \tilde{f}(p^k)=1 \ \text{ for all } \ p\not\in \CP_P, \, k\in \N.
			\end{equation}
			We are now in a situation where Lemma~\ref{L:TKmultiplicative} applies, giving that
			there exist $C_{A,P,Q}>0$  such that  for all  sufficiently large  $K$, depending only on $P$, and 
			all sufficiently large $N$, depending only on $K$, if $\D_P(f,1;K,N)\leq 1$, we have
			(note that \eqref{E:freduced} implies that $\mathbb{D}_P(\tilde{f}, 1;K,N)=\mathbb{D}(\tilde{f}, 1;K,N)$ where $\D$ is as in \eqref{E:D2})
			\begin{multline*}
				\E_{m,n\in [N]}|\tilde{f}(P_c(Qm+a,Qn+b))- \exp(G_{P,N}(\tilde{f},K))|\ll_P \\
				(\D_P+\D_P^2)(\tilde{f},1; K,\sqrt{N})+ C_{A,P,Q}\cdot \D_P(\tilde{f},1; \sqrt{N},N)+K^{-\frac{1}{2}}.
			\end{multline*}
			Combining this bound with the bounds we got in order to arrive at this reduction, we get that \eqref{E:mainestimate2} is satisfied. This completes the proof.
		\end{proof}
		\subsection{Proof of \cref{P:ConcentrationQuanti}}	
		We begin with some reductions. Assuming  that the statement holds when $\chi=1$ and $t=0$, we will show that it holds for arbitrary $q$-periodic Dirichlet character $\chi$ and $t\in\mathbb{R}$. Recall that $P_c=P/c$ where $c=(P(a,b),Q)$ and by assumption $q\mid Q/c$.
		Let 
		$\tilde{f}:=f \cdot \overline{\chi}\cdot n^{-it}$,  and apply the conclusion for $\chi=1, t=0$.
		We get the following bound for $\tilde{f}$
		\begin{multline}\label{E:tildef}
			\E_{m,n\in [N]}\, \big|\tilde{f}\big(P_c(Qm+a,Qn+b)\big)-  \exp\big(G_{P,N}(\tilde{f},K)\big)\big|\ll_P \\
			(\D_P+\D_P^2)(\tilde{f},1; K,\sqrt{N})+C_{P,Q}\cdot \D_P(\tilde{f},1;N,C_{P,Q}N^2)+	 C_{P,Q} \cdot \D_P(\tilde{f},1; \sqrt{N},N)+K^{-1/2}.
		\end{multline}
		Note that since $\chi$ is  $q$-periodic and  $q$ divides $Q/c$, we have $\chi(P_c(Qm+a,Qn+b))=\chi(P_c(a,b))$ for every $m,n\in \N$. Furthermore,
		since   $(P_c(a,b), Q)=1$ and $q\mid Q$, we have $(P_c(a,b), q)=1$,  so $|\chi(P_c(a,b))|=1$.
		Also,   
		$\D_P(\tilde{f}, 1;x,y)=\D_P(f, \chi\cdot n^{it};x,y)$ and
		$$
		G_{P,N}(\tilde{f},K)=\sum_{ K< p\leq N}\, \frac{\omega_P(p)}{p}\,(\tilde{f}(p) -1)=
		\sum_{ K< p\leq N}\, \frac{\omega_P(p)}{p}\,(f(p)\cdot \overline{\chi(p)}\cdot p^{-it} -1)=G_{P, N}(f,K).
		$$
		After inserting this information into \eqref{E:tildef}, we get that \eqref{E:generalfP} is satisfied.
		
		So it suffices to show that if    $Q= \prod_{p\leq K}p^{a_p}$ for some  $a_p\in \N$, then there exists $C_{A,P,Q}>0$ such that  if $N$ is  sufficiently large, depending only on  $A,P,Q$, and $\mathbb{D}_P(f, 1;K,N)\leq 1$, we have
		\begin{multline}\label{E:mainreduced}
			\E_{m,n\in [N]}\, \big|f\big(P_c(Qm+a,Qn+b)\big)- \exp\big(G_{P,N}(f,K)\big)\big|\ll_P\\ 
			(\D_P+\D_P^2)(f,1; K,\sqrt{N})+ C_{A,P,Q} \cdot D_P(f,1;N,C_{A,P,Q}N^2)+	 C_{A,P,Q}\cdot \D_P(f,1; \sqrt{N},N)+K^{-1/2}.
		\end{multline}

		For every $N\in \N$ we  decompose $f$ as  $f=f_{N,1} \cdot f_{N,2}$, where the multiplicative functions $f_{N,1},f_{N,2}\colon \N\to \U$  are defined on prime powers as follows
		\begin{align*}
			f_{N,1}(p^k):=& \begin{cases}f(p), \quad  &\text{if }\ k=1 \text{ and }   p> N,\,  p\in \CP_P \\
				1, \quad &\text{otherwise}  \end{cases}, \\
			f_{N,2}(p^k):=& \begin{cases}1, \quad  &\text{if }\ k=1 \text{ and }   p> N, \, p\in \CP_P \\
				f(p^k), \quad &\text{otherwise}  \end{cases}.
		\end{align*}

		We first study the function $f_{N,1}$. Following the notation of Lemma~\ref{L:wNQl}, for $l,Q,N\in \N$ we let
		$$
		w^*_{P,Q,N}(l)=\frac{1}{N^2} \, \sum_{\substack{m,n\in [N],\\   l\mid  P(Qm+a,Qn+b)} } 1.
		$$
		We apply Lemma~\ref{L:wNQl}. We get   that if $l$ is a product of at most two  primes in $\CP_P$, then
		\begin{equation}\label{E:wNQ}
			w^*_{P,Q,N}(l)\ll_{A,P,Q} \frac{1}{l}.
		\end{equation}
		Since for $N$ sufficiently large, depending only on $A,P,Q$,  we have $f_{N,1}(P_c(Qm+a,Qn+b))-1\neq 0$ only if $P_c(Qm+a,Qn+b)$ is divisible  by one or two prime numbers $p>N$,\footnote{For $m,n\in [N]$ and $-A\leq a,b\leq A$  we have $P(Qm+a,Qn+b)\ll_{A,P,Q} N^2$,  so if $P(Qm+a,Qn+b)$ was divisible by three or more primes greater than $N$, we would have $N^3\ll_{A,P,Q} N^2$, which fails if $N$ is sufficiently large, depending only on $A$, $P$,  $Q$.}
		we get\footnote{Note that for $N>c$ we have  $f_{N,1}(c)=1$ so 
			$f_{N,1}\big(P_c(Qm+a,Qn+b)\big)=f_{N,1}\big(P(Qm+a,Qn+b)\big)$ for those values of $N$.}
		\begin{multline}\label{E:f11}
			\E_{m,n\in[N]}
			|f_{N,1}\big(P_c(Qm+a,Qn+b)\big)-1|
			\leq \\
			\sum_{\substack{N<p\ll_{A,P,Q}
					N^2,\\ p\in \CP_P}} |f(p)-1|\, w^*_{P,Q,N}(p)
			+ \sum_{\substack{N<p,q\ll_{A,P,Q}
					N^2,\, p\neq q\\ p,q\in \CP_P}} |f(pq)-1|\,  w^*_{P,Q,N}(pq),
		\end{multline}
		where we have used that $f_{N,1}(p)=f(p)$ for all $p>N$ and in the second sum we have ignored the contribution of the diagonal terms $p=q$ since, by construction, $f_{N,1}(p^2)=1$ for all primes $p$.
		Using  \eqref{E:wNQ} for $l:=p\in \CP_P$,   we estimate the first term as follows\footnote{Bounding $w_{P,Q,N}(p)$ using \eqref{E:wNPQ1}
			would lead to unacceptable errors here, because the  summation range is much larger than $N$.}
		\begin{multline*}
			\sum_{\substack{N<p\ll_P
					N^2,\\ p\in \CP_P}} |f(p)-1|\, w^*_{P,Q,N}(p) \ll_{A,P,Q}	\sum_{\substack{N<p\ll_{A,P,Q}
					N^2,\\ p\in \CP_P}} \frac{|f(p)-1|}{p}\ll \\
			\Big(\sum_{\substack{N<p\ll_{A,P,Q}
					N^2,\\ p\in \CP_P}}
			\frac{|f(p)-1|^2}{p}\Big)^{\frac{1}{2}}\cdot
			\Big(\sum_{\substack{N<p\ll_{A,P,Q}
					N^2,\\ p\in \CP_P}} \frac{1}{p} \Big)^{\frac{1}{2}} \ll_{A,P,Q}
			\D_P(f,1;N,O_{A,P,Q}(N^2)),
		\end{multline*}
		where we used  that $\sum_{N\le p\ll_{A,P,Q} N^2}\frac{1}{p}\ll_{A,P,Q} 1$.
		Similarly, using  \eqref{E:wNQ} for $l:=pq$ with $p,q\in \CP_P$, we estimate the second  term in \eqref{E:f11} as follows (note  that since $p\neq q$,  we have $f(pq)=f(p)f(q)$)
		\begin{multline*}
			\sum_{\substack{N<p,q\ll_{A,P,Q}
					N^2,\, p\neq q, \\ p,q\in \CP_P}} |f(pq)-1|\, w^*_{P,Q,N}(pq) \ll_{A,P,Q}	\sum_{\substack{N<p,q\ll_{A,P,Q}
					N^2,\\ p,q\in \CP_P}}\frac{|f(p)-1|+|f(q)-1|}{pq}\ll \\
			\Big(\sum_{\substack{N<p,q\ll_{A,P,Q}
					N^2,\\ p,q\in \CP_P}}
			\frac{|f(p)-1|^2}{pq}\Big)^{\frac{1}{2}}\cdot
			\Big(\sum_{\substack{N<p,q\ll_{A,P,Q}
					N^2,\\ p,q\in \CP_P}} \frac{1}{pq} \Big)^{\frac{1}{2}} \ll_{A,P,Q}
			\mathbb{D}_P(f,1;N,O_{A,P,Q}(N^2)),
		\end{multline*}
		where
		we used  that $\sum_{N\le p\ll_{A,P,Q} N^2}\frac{1}{p}\ll_{A,P,Q} 1$. 	
		Combining the above estimates and \eqref{E:f11}, we deduce that for $N\gg_{P,Q} 1$ we have
		\begin{equation}\label{E:f1estimate}
			\mathbb{E}_{m,n\in [N]}\big|f_{N,1}\big(P_c(Qm+a,Qn+b)\big)-1|\ll_{A,P,Q}   \mathbb{D}_P(f,1;N,O_{A,P,Q}(N^2)).
		\end{equation}

		Next, we move to the function $f_2$. Since $f_2(p)=1 $ for all primes $p\geq N$,
		Lemma~\ref{L:TKmultiplicative2}  applies.  We get that if $N$ is  sufficiently large, depending only on $A$ and $K$, we have
		\begin{multline}\label{E:f2estimate}
			\mathbb{E}_{m,n\in [N]}|f_{N,2}\big(P_c(Qm+a,Qn+b)\big)- \exp(G_{P,N}(f,K))|
			\ll_P \\
			(\D_P+\D_P^2)(f,1; K,\sqrt{N})+ O_{A,P,Q}( \D_P(f,1; \sqrt{N},N))+K^{-\frac{1}{2}},
		\end{multline}
		where we used that $f_{N,2}(p)=f(p)$ for all primes $p\in \CP_P$ with $p\leq N$, hence
		we have 
		$G_{P,N}(f_{N,2},K)=G_{P,N}(f,K)$ and $\mathbb{D}_P(f_{N,2},1;K,N)=\mathbb{D}_P(f,1;K,N)$.

		Finally, since $f=f_{N,1}\cdot f_{N,2}$, using  the triangle inequality and combining \eqref{E:f1estimate} and  \eqref{E:f2estimate},
		we get  that the left  side  in \eqref{E:mainreduced} is bounded by
		\begin{multline*}	
			\mathbb{E}_{m,n\in [N]}\big(|f_{N,1}\big(P_c(Qm+a,Qn+b)\big)- 1|		
			+|f_{N,2}\big(P_c(Qm+a,Qn+b)\big)- \exp(G_{P,N}(f,K))|\big) \\\ll_P  
			(\D_P+\D_P^2)(f,1; K,\sqrt{N})+ C_{A,P,Q}\cdot \mathbb{D}_P(f,1;N,C_{A,P,Q}N^2)+\\ C_{A,P,Q} \cdot \D_P(f,1; \sqrt{N},N)+K^{-1/2}
		\end{multline*}	
		for some $C_{A,P,Q}>0$. 
		Thus \eqref{E:mainreduced} holds, completing the proof.

	\section{Proof of partition regularity results}\label{S:PRproofs}
	Our main goal in this section is to complete the  proof of  Theorem~\ref{T:MainMulti'}. 
	As explained in Section~\ref{SS:Plan},  it remains to prove Propositions \ref{P:aperiodicQ12} and \ref{P:nonnegative}.  We  devote the rest of this section to this task.
	
	\subsection{Some elementary facts}
	We will use the following  fact from \cite[Lemma~3.1]{FKM23}.
	\begin{lemma}\label{L:lN}
		Let $a\colon \Z\to \U$ be   an even sequence and $l_1,l_2\in \Z$, not both of them $0$. Suppose that for some $\varepsilon>0$ and for some sequence $L_N\colon \N\to \U$ we have
		$$
		\limsup_{N\to\infty} \E_{n\in [N]} |a(n) -L_N|\leq \varepsilon.
		$$
		Then
		$$
		\limsup_{N\to\infty} \E_{m,n\in [N]} |a(l_1m+l_2n)-L_{lN}|\leq 2l \cdot  \varepsilon,
		$$
		where $l:=|l_1|+|l_2|$.
	\end{lemma}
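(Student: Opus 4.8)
The plan is a direct counting argument; since the statement is quoted from \cite{FKM23} it amounts to bookkeeping, and the only point requiring care is the passage from a double average over $[N]^2$ to a single average over $[lN]$ via a multiplicity bound. First I would use that $a$ is even to rewrite $a(l_1m+l_2n)=a(|l_1m+l_2n|)$, and note that for $m,n\in[N]$ the integer $l_1m+l_2n$ has absolute value at most $|l_1|N+|l_2|N=lN$. For $k\in\Z$ set $r_N(k):=\bigl|\{(m,n)\in[N]^2\colon l_1m+l_2n=k\}\bigr|$; since $l_1,l_2$ are not both zero, once $k$ is fixed one of the two variables is determined by the other, so $r_N(k)\le N$ for every $k$. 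With this notation,
\[
\E_{m,n\in[N]}\bigl|a(l_1m+l_2n)-L_{lN}\bigr|=\frac1{N^2}\sum_{|k|\le lN}r_N(k)\,\bigl|a(|k|)-L_{lN}\bigr|.
\]

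Next I would split off the term $k=0$, whose contribution is at most $r_N(0)\cdot 2/N^2\le 2/N$ because $a(0),L_{lN}\in\U$, and bound the remaining sum using $r_N(k)\le N$, the identity $a(|k|)=a(k)$ for $k\ge1$, and the fact that $lN\in\N$; pairing each $k$ with $-k$ gives
\[
\frac1{N^2}\sum_{1\le|k|\le lN}r_N(k)\,\bigl|a(|k|)-L_{lN}\bigr|\le\frac2N\sum_{k=1}^{lN}\bigl|a(k)-L_{lN}\bigr|=2l\cdot\E_{k\in[lN]}\bigl|a(k)-L_{lN}\bigr|.
\]
Combining the two displays,
\[
\E_{m,n\in[N]}\bigl|a(l_1m+l_2n)-L_{lN}\bigr|\le\frac2N+2l\cdot\E_{k\in[lN]}\bigl|a(k)-L_{lN}\bigr|.
\]

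Finally I would take $\limsup_{N\to\infty}$: the term $2/N$ vanishes, and since $(lN)_{N\in\N}$ runs through a subset of $\N$ tending to infinity,
\[
\limsup_{N\to\infty}\E_{k\in[lN]}\bigl|a(k)-L_{lN}\bigr|\le\limsup_{M\to\infty}\E_{k\in[M]}\bigl|a(k)-L_M\bigr|\le\varepsilon,
\]
which yields the claimed bound $2l\varepsilon$. I do not expect a genuine obstacle here; the one place to stay alert is the multiplicity estimate $r_N(k)\le N$ (together with the harmless over-counting caused by summing over all $|k|\le lN$ rather than only the represented values), since this is exactly what turns the loss of a factor $N$ coming from the double sum into the factor $l$ appearing in the final constant.
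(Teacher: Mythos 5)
Your argument is correct: the multiplicity bound $r_N(k)\le N$ (valid since $l_1,l_2$ are not both zero), the separate treatment of $k=0$, and the passage to the subsequence $[lN]$ are all sound, and they yield exactly the stated bound $2l\varepsilon$. The paper itself gives no proof here — it quotes the lemma from \cite[Lemma~3.1]{FKM23} — and your counting argument is essentially the standard one used there, so there is nothing further to reconcile.
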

	We will also use the following simple result from \cite[Lemma~3.2]{FKM23}.
	\begin{lemma}\label{L:Fol0}
		Let  $(\Phi_K)$ be a multiplicative  F\o lner  sequence.
		If $f\colon \N\to \U$ is a completely multiplicative function
		and $f\neq 1$, then
		$$
		\lim_{K\to\infty}\E_{n\in \Phi_K}\, f(n)=0.
		$$
	\end{lemma}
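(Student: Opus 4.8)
The plan is to exploit the dilation-invariance of $(\Phi_K)$ to show that the average $S_K:=\E_{n\in\Phi_K}f(n)$ behaves asymptotically like an eigenvalue relation: replacing the indexing set $\Phi_K$ by its dilate $x\cdot\Phi_K$ scales the sum by the scalar $f(x)$ (by complete multiplicativity), while the Følner property forces the two sums to be asymptotically equal. Since $f\neq 1$, I would first fix $x\in\N$ with $f(x)\neq 1$. Using that $f$ is completely multiplicative, $\sum_{n\in x\Phi_K}f(n)=\sum_{m\in\Phi_K}f(xm)=f(x)\sum_{m\in\Phi_K}f(m)$.

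Next I would compare $\sum_{n\in\Phi_K}f(n)$ with $\sum_{n\in x\Phi_K}f(n)$ directly. Since $|f|\le 1$, the absolute value of their difference is at most $|\Phi_K\setminus x\Phi_K|+|x\Phi_K\setminus\Phi_K|$. The map $n\mapsto xn$ is injective, so $|x\Phi_K|=|\Phi_K|$, and therefore $|\Phi_K\setminus x\Phi_K|=|x\Phi_K\setminus\Phi_K|=|\Phi_K|-|\Phi_K\cap x\Phi_K|$, which is $o(|\Phi_K|)$ by the defining property of a multiplicative Følner sequence applied to the fixed integer $x$. Dividing through by $|\Phi_K|$ gives $|S_K-f(x)S_K|\to 0$ as $K\to\infty$, that is, $|1-f(x)|\cdot|S_K|\to 0$.

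Finally, since $f(x)\neq 1$ the number $|1-f(x)|$ is strictly positive, so $|S_K|\to 0$, which is exactly the claim. There is no serious obstacle here; the only point requiring a little care is combining the symmetric-difference bound with the identity $|x\Phi_K|=|\Phi_K|$, which is what converts the one-sided Følner hypothesis $|\Phi_K\cap x\Phi_K|/|\Phi_K|\to 1$ into the two-sided estimate $|x\Phi_K\,\triangle\,\Phi_K|=o(|\Phi_K|)$. I would also remark that the argument genuinely uses complete (not merely coprime) multiplicativity of $f$, because the fixed $x$ will in general share prime factors with the elements of $\Phi_K$.
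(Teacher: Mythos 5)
Your proof is correct and is essentially the same argument as the one the paper relies on (the lemma is quoted from \cite[Lemma~3.2]{FKM23}, whose proof uses exactly this dilation trick: compare $\E_{n\in\Phi_K}f(n)$ with the average over $x\cdot\Phi_K$ for a fixed $x$ with $f(x)\neq1$, use complete multiplicativity to factor out $f(x)$, and the F\o lner property to make the two averages asymptotically equal). Your handling of the symmetric difference via $|x\Phi_K|=|\Phi_K|$ and the remark that complete multiplicativity is genuinely needed are both correct.
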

	\subsection{A Borel measurability result}
	Recall that 	if $f$ is pretentious, then  there exist a  unique $t=t_f\in \R$ and a  Dirichlet character $\chi$ such that $f\sim \chi\cdot n^{it}$. Similarly, if $P$ is an irreducible binary quadratic form and $f$ is $P$-pretentious,  then  it follows from \cref{L:tunique} that there exist a  unique $t=t_f\in \R$ and a  (not necessarily unique)  Dirichlet character $\chi$ such that $f\sim_P \chi \cdot  n^{it}$. Moreover, $t_f$ does not depend on the irreducible binary quadratic form $P$, as long as $f$ is $P$-pretentious.
	\begin{lemma}\label{L:Borel}
		Let $P_1,P_2\in \Z[m,n]$ be  irreducible binary quadratic forms. Then 
		\begin{enumerate}
			\item\label{I:Borel1}	The sets $\CM_p$ and $\CM_{p,P_1,P_2}$,
			defined in \eqref{E:Mp} and \eqref{E:MP12}, respectively,  are Borel subsets of $\CM$.
			
			\item \label{I:Borel2} The maps $f\mapsto t_f$ from $\CM_p$ to $\R$ and from $\CM_{p,P_1,P_2}$  to $\R$ are Borel measurable.
			
			
		\end{enumerate}
	\end{lemma}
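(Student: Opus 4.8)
The plan is to exhibit $\CM_p$ and $\CM_{p,P_1,P_2}$ as countable unions over Dirichlet characters of sets defined by countably many explicit conditions on the values $f(p)$, and similarly to write $f\mapsto t_f$ as a pointwise limit of Borel functions. First I would recall the Granville--Soundararajan characterization: $f$ is pretentious exactly when there exist a Dirichlet character $\chi$ and $t\in\R$ with $\sum_p \frac1p(1-\Re(f(p)\overline{\chi(p)}p^{-it}))<\infty$, and similarly $f$ is $P_j$-pretentious exactly when the analogous sum weighted by $\omega_{P_j}(p)/p$ converges (this is the equivalent formulation of $P$-pretentiousness given right after \eqref{E:DPn}). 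For a fixed $\chi$ and fixed $t$, the function $f\mapsto \D(f,\chi\cdot n^{it})^2$ is a (possibly infinite-valued) increasing limit of the continuous functions $f\mapsto \sum_{p\le X}\frac1p(1-\Re(f(p)\overline{\chi(p)}p^{-it}))$, hence lower semicontinuous, hence Borel; so $\{f:\D(f,\chi\cdot n^{it})<\infty\}$ is Borel. The only subtlety is that $t$ ranges over the uncountable set $\R$. To handle this, observe that if $\D(f,\chi\cdot n^{it})<\infty$ for some real $t$, then by a standard argument (comparing $\D(f,\chi n^{it})$ and $\D(f,\chi n^{it'})$ via the triangle inequality \eqref{E:triangle} and the fact that $\D(n^{it},n^{it'})=\infty$ whenever $t\ne t'$, which follows from the prime number theorem as in the proof of \cref{L:tunique}) the value of $t$ is uniquely determined; moreover one can show $\D(f,\chi n^{it})<\infty$ for \emph{some} $t$ iff $\liminf_{X\to\infty}\min_{|t|\le X}\sum_{p\le X}\frac1p(1-\Re(f(p)\overline{\chi(p)}p^{-it}))<\infty$, and the inner minimum over the compact interval $[-X,X]$ of a jointly continuous function of $(f,t)$ is Borel (indeed continuous) in $f$. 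Thus $\CM_p=\bigcup_{\chi}\{f: \text{the above }\liminf<\infty\}$ is a countable union of Borel sets, hence Borel; the same argument with $\omega_{P_1}(p)+\omega_{P_2}(p)$ in place of $1$ (times $1/p$) gives that $\CM_{p,P_1,P_2}$ is Borel, proving \eqref{I:Borel1}.

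For \eqref{I:Borel2}, on $\CM_p$ I would write $t_f$ as follows. For each Dirichlet character $\chi$ the set $\CM_p^{\chi}:=\{f\in\CM_p: f\sim\chi\cdot n^{it_f}\}$ is Borel by the argument above, and $\CM_p=\bigcup_\chi \CM_p^\chi$; since a Borel map defined piecewise on countably many Borel pieces is Borel, it suffices to check $f\mapsto t_f$ is Borel on each $\CM_p^\chi$. On $\CM_p^\chi$ the value $t_f$ is the unique real number minimizing $t\mapsto \liminf_{X}\sum_{p\le X}\frac1p(1-\Re(f(p)\overline{\chi(p)}p^{-it}))$, and one can recover it as a limit of the minimizers over $[-X,X]$ of the continuous finite sums; each such minimizer can be chosen in a Borel-measurable way in $f$ (e.g.\ take the smallest minimizer, using that the sum is continuous in $(f,t)$ and the minimum over a compact set depends measurably on a parameter — a standard measurable-selection fact, or more elementarily, approximate by a grid of rationals and pass to the limit). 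Taking the limit over $X\to\infty$ along a sequence gives a Borel representation of $f\mapsto t_f$; and this limit equals $t_f$ for $f\in\CM_p^\chi$ precisely because $t_f$ is unique. The argument on $\CM_{p,P_1,P_2}$ is identical, using the $\omega$-weighted sums and invoking \cref{L:tunique} to know $t_f$ is still well defined and independent of which $P_j$ we use.

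The main obstacle — and the only point requiring care — is the measurable-selection step: going from ``for each fixed $t$ the $\D$-condition is Borel in $f$'' to ``the property of holding for \emph{some} $t\in\R$ is Borel in $f$'' and ``the optimal $t_f$ depends measurably on $f$.'' The clean way around it is to exploit the rigidity coming from the prime number theorem: the set of admissible $t$ for a given $f$ (and given $\chi$) is either empty or a single point, so there is no genuine selection problem, only the issue of showing the graph $\{(f,t_f)\}$ is Borel. One shows this graph equals $\bigcap_{k}\{(f,t): \liminf_X \sum_{p\le X}\tfrac1p(1-\Re(f(p)\overline{\chi(p)}p^{-it}))\le k\text{ fails for no scale, i.e. }<\infty\}$ intersected with $\CM_p^\chi\times\R$, which is a Borel subset of the Polish space $\CM\times\R$ whose fibers over $\CM_p^\chi$ are singletons; a Borel set with singleton fibers over a Borel base projects to a Borel set and its inverse (the ``function'') is Borel, by the Lusin--Novikov / Jankov--von Neumann uniformization theorem (in fact the elementary singleton-fiber case suffices). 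Everything else in the proof is routine lower-semicontinuity bookkeeping.
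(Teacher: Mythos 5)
There is a genuine gap at the pivotal step of part (i): the claimed equivalence between ``$\D(f,\chi\cdot n^{it})<\infty$ for some $t$'' and ``$\liminf_{X\to\infty}\min_{|t|\le X}\sum_{p\le X}\frac1p(1-\Re(f(p)\overline{\chi(p)}p^{-it}))<\infty$'' is false, precisely because the window $|t|\le X$ grows with the cut-off of the prime sum. The forward implication is trivial, but the converse fails: since the numbers $\log p$ are rationally independent, Kronecker's theorem gives arbitrarily large $s$ with $p^{is}$ simultaneously as close to $1$ as desired for all $p\le X$. One can therefore build $f\in\CM$ by setting $f(p):=p^{it_{j+1}}$ for $p\in(X_j,X_{j+1}]$, where $t_{j+1}\to\infty$ is chosen (by Kronecker) so that $\D(n^{it_j},n^{it_{j+1}};1,X_j)\le 2^{-j}$, and $X_{j+1}$ is then chosen enormously large compared to $t_{j+1}$. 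Then $\min_{|t|\le X_j}\D(f,n^{it};1,X_j)$ stays bounded, so your criterion holds (already for the trivial character), yet for every fixed $\chi$ and $t$ each block $(X_j,X_{j+1}]$ contributes a large amount to $\D(f,\chi\cdot n^{it})$ (standard bounds on $\sum_{p\le Y}\chi(p)p^{-1-is}$), so $f$ is aperiodic. The uniqueness/rigidity you invoke only controls admissible $t$'s in a fixed compact range; it does not prevent the minimizers from drifting to infinity at speed comparable to $X$. The same defect affects your part (ii) claim that the minimizers over $[-X,X]$ of the partial sums converge to $t_f$: ruling out drifting minimizers there would need quantitative lower bounds on $\D(1,n^{is};1,X)$ uniformly for $|s|$ up to $2X$, analytic input you never supply.

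The repair is standard and preserves your architecture: fix the window. For fixed $\chi$ and $T,k\in\N$, the set $\{f\in\CM:\exists\, t\in[-T,T]\ \text{with}\ \D(f,\chi\cdot n^{it})^2\le k\}$ is closed in $\CM$ (pass to the limit in each finite partial sum along convergent witnesses $t_m\to t\in[-T,T]$), and $\CM_p$ is the countable union of these sets over $\chi,T,k$; the same works for $\D_{P_j}$, and it is simpler to write $\CM_{p,P_1,P_2}=\CM_{p,P_1}\cap\CM_{p,P_2}$ than to justify your single-weight $\omega_{P_1}+\omega_{P_2}$, single-character description, which as stated needs an additional character-gluing argument. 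For part (ii), the closing paragraph of your proposal is the sound route and is essentially the paper's actual proof: by \cref{L:tunique} the set of admissible $t$ above each pretentious $f$ is a singleton, so the graph of $f\mapsto t_f$ is Borel (a countable union over characters of sets of the form $\{(f,t):\D_{P_1}(f,\chi_{k_1}\cdot n^{it})<\infty,\ \D_{P_2}(f,\chi_{k_2}\cdot n^{it})<\infty\}$, together with the fibre over $\infty$), and a map between standard Borel spaces with Borel graph is Borel (Kechris, Theorem~14.12, as the paper invokes); no measurable selection of minimizers over growing windows is needed.
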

	\begin{proof}
		Both parts were proved for the set $\CM_p$ in \cite[Lemma~3.6]{FKM23}. Thus, 
		we  deal only  with the results related to  $\CM_{p,P_1,P_2}$.

		We remark that we are not able to show directly that  $\CM_{p,P_1,P_2}$ is Borel (as we did for $\CM_p$ in  \cite[Lemma~3.6]{FKM23}),  because we do not have a convenient characterization for this set that depends on countably many conditions. 	So we  take a different approach  and first establish a variant of   property \eqref{I:Borel2}. 
		
		We  show that the following map is Borel: $f\mapsto t_f$ from $\CM$ to $\R\cup \{\infty\}$ ($\infty$ is just an isolated point added to $\R$), which maps $f$ to $\infty$ if $f$ is not $P_1$-pretentious or $f$ is not  $P_2$-pretentious, and otherwise maps  $f$ to the unique $t_f\in \R$ for which $f\sim_{P_j} \chi_j\cdot n^{it_f}$, $j=1,2$,  for some Dirichlet character $\chi_j$. 
		By  \cite[Theorem~14.12]{Ke12}, it suffices to show that the graph
		$$
		\Gamma:=\{(f,t_f)\in \CM\times (\R\cup \{\infty\})\}
		$$
		is a Borel subset of $\CM\times  (\R\cup \{\infty\})$.
		If $\chi_k$, $k\in\N$, is an enumeration of all Dirichlet characters, and for $k_1,k_2\in \N$ we let 
		$$
		\Gamma_{k_1,k_2}:=\{(f,t_f)\in \CM\times (\R\cup \{\infty\})\colon f\sim_{P_j} \chi_{k_j}\cdot n^{it_f} \text{ for } j=1,2\},
		$$
		and 
		$$
		\Gamma_\infty=\{(f,\infty)\colon f \text{ is not } P_j\text{-pretentious for  } j=1 \text{ or } 2\},
		$$
		then
		$$
		\Gamma=\cup_{k_1,k_2\in\N} \Gamma_{k_1,k_2}\cup \Gamma_\infty.
		$$
		Therefore, it suffices to show that for the sets $\Gamma_\infty$ and  $\Gamma_{k_1,k_2}$, $k_1,k_2\in \N,$ are Borel.
		Note that for $k_1,k_2\in \N$ we have  
		$$
		\Gamma_{k_1,k_2}:=\{(f,t)\in\CM\times \R\colon \D_{P_j}(f, \chi_{k_j}\cdot n^{it})<\infty \text{ for } j=1,2\}
		$$
		and 
		\begin{multline*}
			\Gamma_\infty :=\{(f,t)\in\CM\times  \{+\infty\}\colon \text{for }  j=1 \text{ or } 2 \text{ we have } \\ \D_{P_j}(f,\chi_k\cdot n^{it})=\infty \text{ for every } t\in \R, k\in \N\}.
		\end{multline*}
		It easily follows  from the definition of the distance $\D_P$ 	that for $k\in \N$  and $j=1,2$  the map $(f,t)\mapsto \D_{P_j}(f,\chi_k\cdot n^{it})$ is  Borel. Hence,   the sets $\Gamma_\infty$ and  $\Gamma_{k_1,k_2}$, $k_1,k_2\in \N,$ are Borel.
		
		Now  it follows that $\CM_{p,P_1,P_2}=\{f\in \CM\colon t_f\neq \infty\}$ is Borel
		and the map $t\mapsto t_f$ from $\CM_{p,P_1,P_2}$ to $\R$ is Borel. 
		This completes the proof.
	\end{proof}
	
	\subsection{Some useful weights}
	In the proof of  Theorems~\ref{T:PairsPartition} and \ref{T:PairsDensity} we utilize weighted averages.  The weights are used to ensure that
	the averages 
	$L_{\delta,c,N}(f,Q;a,b)$,  as in  \eqref{E:LQdab},   have a positive real part if $f$ is an Archimedean character and   $\delta$ is sufficiently small.
	
	We now introduce these weights.\rem{Changed the definition of the weights and made adjustments in  the proof similar to those in the Pythagorean pairs article. Replaced reducibility condition in $(i)$ with  $\deg \tilde{P}_1>\deg \tilde{P}_2$, which is what is really  needed, and changed the proof a bit.}
		For every $\delta\in (0,1/2)$,    let $F_\delta \colon \R\to [0,1]$ be the continuous  trapezoid function that is equal to $1$ on $[-\delta/2, \delta/2]$, equal to  $0$ outside $[-\delta,\delta]$, and linear on the remaining two intervals. For $c\in \R$ let 
	\begin{equation}\label{E:weight2}
		w_{\delta,c,P_1,P_2}(m,n):=F_\delta \Big(\log \frac{P_1(m,n)}{P_2(m,n)}-c\Big) \cdot {\bf 1}_{P_j(m,n)>0, j=1,2}, \quad m,n\in\N.
	\end{equation}
	\begin{lemma}\label{L:SdeltaGeneral}  Let $P_1, P_2\in \Z[m,n]$ be homogeneous polynomials of the same degree. 
		For $j=1,2$ let $\tilde{P}_j(x):=P_j(x,1)$ and suppose   that the leading coefficients of the polynomials $\tilde{P}_1$, $\tilde{P}_2$ are positive.  Let also 	$w_{\delta,c,P_1,P_2}$ be as in \eqref{E:weight2}.	The following properties hold:
				\begin{enumerate}[(i)]
			\item  If  $\deg \tilde{P}_1>\deg \tilde{P}_2$, then there exists $L_0\in \R$ such that for every $c\geq L_0$ and every $\delta \in (0,1/2)$ we have 
			$$
			\mu_{\delta,c,P_1,P_2}:=	\lim_{N\to\infty} \E_{m,n\in [N]}\,  w_{\delta,c,P_1,P_2}(m,n)>0.
			$$
			
			\item If the polynomials $\tilde{P}_1$, $\tilde{P}_2$ have the same degree and the same leading coefficients, then for every $\delta \in (0,1/2)$ we have 
				$$
			\mu_{\delta,0,P_1,P_2}:=	\lim_{N\to\infty} \E_{m,n\in [N]}\,  w_{\delta,0,P_1,P_2}(m,n)>0.
			$$
		\end{enumerate}
	\end{lemma}
	\begin{proof}	
		Using the fact that the polynomials $P_1,P_2$ are homogeneous and have the same degree, we get that  the limit we want to evaluate is equal to
		$$
		\lim_{N\to\infty} \E_{m,n\in [N]}\,
		F_{\delta} \Big(\log\frac{P_1(m/N,n/N)}{P_2(m/N,n/N)}-c\Big)
		\cdot 
		{\bf 1}_{P_j(m/N,n/N)>0, j=1,2}.
		$$
		Let
		$\tilde{F}_{\delta,c}\colon [0,1]\times [0,1]\to [0,1]$ be given by
		$$
		\tilde{F}_{\delta,c}(x,y):=F_\delta\Big(\log\frac{P_1(x,y)}{P_2(x,y)}-c\Big) \cdot {\bf 1}_{P_j(x,y)>0,j=1,2}, \quad x,y\in [0,1].
		$$
		Then $\tilde{F}_{\delta,c}$ is Riemann integrable on $[0,1]\times [0,1]$ since  it is bounded and continuous except for a set of Lebesgue measure $0$.  Therefore, the limit we want to compute exists and is equal to the  Riemann integral
		$$
		\int_0^1\int_0^1 \tilde{F}_{\delta,c}(x,y)\, dx\, dy.
		$$
		Since  $\tilde{F}_{\delta,c}$ is non-negative,  in order to show that 
		the integral is positive, it suffices to show that   $\tilde{F}_{\delta,c}$  does not vanish almost everywhere.  
		
		We prove $(i)$.  Since the leading coefficients of $\tilde{P}_1$ and $\tilde{P}_2$ are positive and $\deg \tilde{P}_1>\deg \tilde{P}_2$, we have 
		$
		\lim_{x\to\infty} \tilde{P}_1(x)/\tilde{P}_2(x)=+\infty
		$
		and for all sufficiently large $x$ we have $\tilde{P}_j(x)>0$ for $j=1,2$. Hence, there exists $L_0\in \R$ such that  for $c\geq L_0$ there exists $\alpha_c>0$ such that $\tilde{P}_j(\alpha_c)>0$ for $j=1,2$ and $\log (\tilde{P}_1(\alpha_c)/\tilde{P}_2(\alpha_c))=c$. Using homogeneity and that $P_1,P_2$ have the same degree, we conclude that for every $t>0$ we have 
		$$
		\log \frac{P_1(\alpha_c t,t)}{P_2(\alpha_c t,t)}=c, \quad P_j(\alpha_c t, t)>0, \quad j=1,2. 
		$$
		Using continuity away from the zero sets of  $P_1$ and $P_2$, we deduce that on a neighborhood of the line segment $(\alpha_c t, t)$ that lies within the square $[0,1]\times [0,1]$ we have 
		$$
		\Big|\log\frac{P_1(x,y)}{P_2(x,y)}-c\Big|\leq \frac{\delta}{2}, \qquad P_j(x,y)>0, \quad j=1,2. 
		$$ 
		Therefore, for $c\geq L_0$ we have  $\tilde{F}_{\delta,c}(x,y)=1$ on a subset of $[0,1]\times [0,1]$ of positive measure, as needed.

		We prove  $(ii)$.  Our assumption that the polynomials $\tilde{P}_1$, $\tilde{P}_2$ have the same degree and the same leading coefficients
		gives that 
		$\lim_{k\to+\infty} P_1(k,1)/P_2(k,1)=1,$ and  since the polynomials $P_j(m,n)$ are homogeneous and have the same degree, we get  that 
		$\lim_{k\to+\infty} P_1(kt,t)/P_2(kt,t)=1$ for every  $t\in (0,1]$.  We deduce that  for $k$ 
		sufficiently large  we have that 
		$$
		\Big|\log \frac{P_1(kt,t)}{P_2(kt,t)}\Big|\leq
		 \frac{\delta}{2}, \qquad P_j(kt,t)>0, \quad j=1,2
		$$
		for every $t>0$. 
		Using this and continuity, we get that  on a neighborhood of  the line segment $(kt,t)$, $t\in[k^{-2},k^{-1}]$ we have  $\tilde{F}_{\delta,0}(x,y)>0$ on a subset of $[0,1]\times [0,1]$ of positive measure,  as needed. 
	\end{proof}
	Whenever necessary,  we will  use the following property, which  allows us to swiftly change from $w_{\delta,c,P_1,P_2}(m,n)$ to  $w_{\delta,c,P_1,P_2}(Qm+a,Qn+b)$.  
	\begin{lemma}\label{L:Pab}
		Let $P_1,P_2\in \Z[m,n]$ be  homogeneous polynomials of the same degree.  Then for every $\delta\in (0,1/2)$, $c\in \R$,  and $a,b\in \Z$, we have 
		$$
		\lim_{N\to\infty}\E_{m,n\in [N]} \sup_{Q\in \N}|w_{\delta,c,P_1,P_2}(Qm+a,Qn+b)-w_{\delta,c,P_1,P_2}(m,n)|=0.
		$$
	\end{lemma}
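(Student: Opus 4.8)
The plan is to exploit the homogeneity of $P_1,P_2$: since $P_j(\lambda v)=\lambda^d P_j(v)$ for $\lambda>0$ (with $d$ the common degree), the weight $w_{\delta,P_1,P_2}(m,n)$ defined in \eqref{E:weight2} depends on $(m,n)$ only through the direction $(m,n)/|(m,n)|$. Accordingly I would extend the formula \eqref{E:weight2} to a function $W_\delta\colon\R^2\setminus\{0\}\to[0,1]$, observe that $W_\delta(\lambda v)=W_\delta(v)$ for $\lambda>0$, and regard $W_\delta$ as a function on the unit circle $S=\{v\in\R^2:|v|=1\}$, with $W_\delta(m,n)=w_{\delta,P_1,P_2}(m,n)$ for $(m,n)\in\N^2$. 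The key structural fact is that $W_\delta$ is uniformly continuous away from a thin, $P_1,P_2$-dependent finite set of directions. Indeed, let $Z\subset S$ be the set of unit vectors at which $P_1P_2$ vanishes; it is finite. Off $Z$ both $P_1,P_2$ are nonvanishing, hence of locally constant sign, and on the open set $\{P_1>0,\,P_2>0\}$ we have $W_\delta=F_\delta\bigl(e^{\,i\log(P_1/P_2)}\bigr)$, which is continuous because $F_\delta$ is continuous and $P_1/P_2$ is continuous and positive there; on the rest of $S\setminus Z$ we have $W_\delta=0$. Hence, for every $\epsilon>0$, the restriction of $W_\delta$ to the compact set $S_\epsilon:=\{v\in S:\operatorname{dist}(v,Z)\ge\epsilon\}$ is continuous, and therefore uniformly continuous.

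With this in hand, the argument is a direct decomposition into a small ``bad'' set and a well-controlled ``good'' set. Fix $\eta>0$. First choose $\epsilon>0$ small enough that the number of $(m,n)\in[N]^2$ whose direction lies within $2\epsilon$ of $Z$ is $O_{P_1,P_2}(\epsilon N^2+N)$ with the implied constant making the corresponding density bound $O_{P_1,P_2}(\epsilon)<\eta$ — this is the standard estimate for lattice points of $[N]^2$ lying in $O_{P_1,P_2}(1)$ angular sectors of the origin. Let $\theta_0>0$ be a modulus of uniform continuity of $W_\delta|_{S_\epsilon}$ at scale $\eta$, and choose $R\ge 4(|a|+|b|)$ large enough that $4(|a|+|b|)/R<\min(\epsilon,\theta_0)$. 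Now for $(m,n)\in[N]^2$ with $|(m,n)|\ge R$ and direction $v\in S_{2\epsilon}$, and for \emph{any} $Q\in\N$, the vector $(Qm+a,Qn+b)=Q(m,n)+(a,b)$ makes an angle $\ll_{a,b}1/|Q(m,n)|\le 4(|a|+|b|)/R$ with $(m,n)$, using $|Q(m,n)|=Q|(m,n)|\ge|(m,n)|\ge R$; hence its direction $v'$ lies in $S_\epsilon$ and $|W_\delta(v)-W_\delta(v')|<\eta$, i.e.\ $|w_{\delta,P_1,P_2}(Qm+a,Qn+b)-w_{\delta,P_1,P_2}(m,n)|<\eta$. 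On the complementary set — the $(m,n)\in[N]^2$ with $|(m,n)|<R$ (of which there are at most $\lceil R\rceil^2$) or with direction within $2\epsilon$ of $Z$ — I would bound the supremum crudely by $1$. Adding the two contributions gives
\[
\E_{m,n\in[N]}\,\sup_{Q\in\N}\bigl|w_{\delta,P_1,P_2}(Qm+a,Qn+b)-w_{\delta,P_1,P_2}(m,n)\bigr|\ \le\ \eta+\frac{\lceil R\rceil^2}{N^2}+O_{P_1,P_2}\!\Bigl(\epsilon+\frac1N\Bigr),
\]
and letting $N\to\infty$, then $\epsilon\to0$, then $\eta\to0$ yields the statement.

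The only real point to get right is the choice of bad directions: near $Z$ the quantity $\log(P_1/P_2)$ and the sign indicators $\one_{P_j>0}$ both misbehave, so $W_\delta$ genuinely oscillates there and no pointwise control is possible — but this set contributes negligibly. Everything else is soft: homogeneity, uniform continuity on a compact set, an elementary lattice‑point count, and the elementary observation that because $|(m,n)|\ge1$ on $[N]^2$ the angular perturbation from the shift $(a,b)$ is uniformly small over \emph{all} $Q\ge1$ (multiplying by $Q$ only increases $|Q(m,n)|$). I anticipate no serious obstacle; the main care is simply in organizing these elementary ingredients.
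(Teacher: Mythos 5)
Your proof is correct, but it is organized differently from the paper's. The paper factors the weight as $w_{\delta,P_1,P_2}=w_{1,\delta,P_1,P_2}\cdot w_{2,\delta,P_1,P_2}$ (the $F_\delta$-factor and the positivity indicator), uses homogeneity in the form $w_{j}(Qm,Qn)=w_{j}(m,n)$, and then treats the two factors separately: the $F_\delta$-factor by uniform approximation of $F_\delta$ by trigonometric polynomials, reducing to $(P_1)^{ki}(P_2)^{-ki}$ and applying the mean value theorem to $\log P$, and the indicator factor by observing that a sign discrepancy between $(Qm+a,Qn+b)$ and $(Qm,Qn)$ forces $|m-n\alpha_j|<|b\alpha_j-a|$ for some real root $\alpha_j$ of $P(x,1)$, a set of lattice points of zero density. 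You instead view the whole weight as a degree-zero homogeneous function $W_\delta$ of the direction, continuous off the finite set $Z$ of zero-directions of $P_1P_2$, excise the $O_{P_1,P_2}(\epsilon)$-density angular sectors around $Z$ together with the finitely many points of norm below $R$, and on the remaining points combine uniform continuity of $W_\delta$ on the compact set $S_\epsilon$ with the bound that adding $(a,b)$ tilts the direction of $Q(m,n)$ by $O\big((|a|+|b|)/|(m,n)|\big)$ uniformly in $Q\ge1$ -- which is exactly the uniformity the lemma needs. Both arguments rest on the same mechanism (scale invariance, uniformly small directional perturbation, negligible bad set); yours handles the oscillatory factor and the indicator in one stroke and avoids the polynomial approximation of $F_\delta$, at the price of the explicit sector/modulus-of-continuity bookkeeping, while the paper's factorization confines the discontinuity issue to the indicator alone. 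Two trivial points you may wish to state explicitly: the degenerate case $P_1P_2\equiv0$ (the weight then vanishes identically, so the claim is vacuous), and the fact that your extension $W_\delta$ agrees with the formula \eqref{E:weight2} at the shifted points $(Qm+a,Qn+b)$ even when a coordinate there is non-positive, which is how the left-hand side must be interpreted for $a,b\in\Z$.
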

	\begin{proof}\rem{Argument changed quite a bit.}
		By homogeneity and \eqref{E:weight2}, we have $w_{\delta,c,P_1,P_2}(Qm,Qn)=w_{\delta,c,P_1,P_2}(m,n)$, hence it suffices to show that 
		\begin{equation}\label{E:wQ}
		\lim_{N\to\infty}\E_{m,n\in [N]} \sup_{Q\in \N}|w_{\delta,c,P_1,P_2}(Qm+a,Qn+b)-w_{\delta,c,P_1,P_2}(Qm,Qn)|=0.
		\end{equation}
		
		Let $\mathcal R$ be the finite set of real numbers $\alpha$ for which $m-\alpha n$ is a real linear factor of $P_1P_2$. For $H\geq1$ let 
		\begin{equation}\label{E:EH}
			E_H:=\{(m,n)\in\N^2:n\leq H\}\cup
			\bigcup_{\alpha\in\mathcal R}\{(m,n)\in\N^2:|m-\alpha n|\leq H\}, 
		\end{equation}
	and note that 	for fixed $H$, the set $E_H$ has zero natural density in $\N^2$. 
		
		We claim that
		\begin{equation}\label{E:Log}
			\lim_{H\to\infty}
			\sup_{(m,n)\notin E_H,\ Q\in\N}
			\left|\log\left|\frac{P_j(Qm+a,Qn+b)}{P_j(Qm,Qn)}\right|\right|=0 \quad \text{for } j=1,2.
		\end{equation}
		Factoring  the homogeneous polynomials $P_j$, it suffices to establish the claim for their linear and irreducible (over $\R$) quadratic factors. 
		 For a linear factor $L(m,n)=m-\alpha n$ with $\alpha \in \mathcal{R}$, this follows easily from 
		 \begin{multline*}
		\big|\log|L(Qm+a,Qn+b)|-\log|L(Qm,Qn)|\big|=\\
		\big|\log|m-\alpha n+(a-\alpha b)/Q|-\log|m-\alpha n |\big|\ll C_{a,\alpha,b} \, H^{-1},
		\end{multline*}
	where the last estimate follows by combining  the mean value theorem 	 with  the fact that  $|m-\alpha n|>H$ for $(m,n)\notin E_H$.  If the factor has the form $L(m,n)=\alpha n$,  we argue similarly, using that  $n>H$ for $(m,n)\notin E_H$. 
		 Likewise,  irreducible quadratic factors can be put in the form  $R(m,n)=(m-\beta n)^2+\gamma n^2$ for some  $\beta, \gamma\in \R$ with $\gamma>0$. Then the needed vanishing property  follows as in the linear case   using that $|R(m,n)|\geq C (m^2+n^2)$ for some $C=C_{\beta,\gamma}>0$ , which implies that 
		 	for some $C'= C'_{a,b,\beta,\gamma}>0$ we have 
		 $$
	\left|	 \frac{R(Qm+a,Qn+b)}{R(Qm,Qn)}-1\right| \leq \frac{ C'}{m+n}\leq \frac{C'}{H} \qquad \text{for  all }\,  (m,n)\notin E_H.
		 $$

		We also claim that, after increasing $H$ if necessary, the signs of $P_j(Qm+a,Qn+b)$ and $P_j(Qm,Qn)$ agree for $j=1,2$ whenever $(m,n)\notin E_H$. Otherwise, for some real root $\alpha$ of $P_j(x,1)$, the two numbers $m/n$ and $(Qm+a)/(Qn+b)$ lie on opposite sides of $\alpha$, which forces $|m-\alpha n|\leq |a-\alpha b|$. Also,  the possible sign change of the factor $n$ forces  $n\leq |b|$. Hence, taking $H$ to be greater than $|a-\alpha b|$ and $|b|$ accomplishes our goal.
		
		Using the previous two claims,  the uniform continuity of $F_\delta$, the bound
		 $ w_{\delta,c,P_1,P_2}\leq1$,   and that $E_H$ has zero density, we easily deduce that \eqref{E:wQ} holds. 
 \end{proof}
 
	\subsection{Proof of \cref{P:aperiodicQ12}}\label{SS:proof1}
	We  prove  \cref{P:aperiodicQ12}, whose statement we now recall.
	\begin{proposition}\label{P:aperiodicQ12'}
		Let $P_1,P_2$ be homogeneous polynomials of the same degree such that the pair  $(P_1,P_2)$ is good for vanishing of correlations of aperiodic multiplicative functions (see \cref{D:CorVan}) and let $w_{\delta,c,P_1,P_2}$ be as in \eqref{E:weight2}.	Let also $f_1,f_2\colon \N\to \U$ be  multiplicative functions such that   either $f_1$ is $P_1$-aperiodic or $f_2$ is $P_2$-aperiodic (see \cref{D:Paperiodic}). Then  for every $\delta\in (0,1/2)$, $c\in \R$,  $Q\in \N$, and $a,b\in \Z_+$  we have
		\begin{equation}\label{E:wdmnB'}
			\lim_{N\to\infty} \E_{m,n\in [N]} \,
			w_{\delta,c,P_1,P_2}(m,n)\cdot f_1\big(P_1(Qm+a,Qn+b)\big)\cdot \overline{f_2\big(P_2(Qm+a,Qn+b)\big)}=0.
		\end{equation}
	\end{proposition}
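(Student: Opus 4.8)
The plan is to reduce \eqref{E:wdmnB'} to the defining property of pairs that are good for vanishing of correlations of aperiodic multiplicative functions (\cref{D:CorVan}), by stripping off the cutoff weight via a Fourier expansion. First I would replace $\overline{f_2}$ by $h_2:=\overline{f_2}$, which is again a $\U$-valued multiplicative function; since $\D_{P_2}(\overline{f_2},\overline{\chi}\cdot n^{-it})=\D_{P_2}(f_2,\chi\cdot n^{it})$ for every Dirichlet character $\chi$ and every $t\in\R$, the function $h_2$ is $P_2$-aperiodic precisely when $f_2$ is, so the hypothesis becomes ``$f_1$ is $P_1$-aperiodic or $h_2$ is $P_2$-aperiodic''. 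Next, by \cref{L:Pab} we have $\lim_{N\to\infty}\E_{m,n\in[N]}\sup_{Q\in\N}|w_{\delta,P_1,P_2}(Qm+a,Qn+b)-w_{\delta,P_1,P_2}(m,n)|=0$, and since $|f_1|,|h_2|\le 1$ it therefore suffices to prove \eqref{E:wdmnB'} with the weight $w_{\delta,P_1,P_2}(Qm+a,Qn+b)$ in place of $w_{\delta,P_1,P_2}(m,n)$.

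Recall that $w_{\delta,P_1,P_2}(u,v)=F_\delta\big((P_1(u,v))^i(P_2(u,v))^{-i}\big)\cdot {\bf 1}_S(u,v)$, where $S=\{m,n\in\N\colon P_1(m,n)>0,\ P_2(m,n)>0\}$, and that on $S$ the argument of $F_\delta$ lies on $\S^1$. Fix $\varepsilon>0$. As $F_\delta$ is continuous on $\S^1$, choose a trigonometric polynomial $T(z)=\sum_{|k|\le K}c_kz^k$ with $\|F_\delta-T\|_\infty<\varepsilon$; then on $S$ one has $w_{\delta,P_1,P_2}(u,v)={\bf 1}_S(u,v)\sum_{|k|\le K}c_k\,(P_1(u,v))^{ik}(P_2(u,v))^{-ik}$ up to a pointwise error of modulus at most $\varepsilon$ (supported on $S$). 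For each $k$ with $|k|\le K$ set $g_{1,k}:=f_1\cdot n^{ik}$ and $g_{2,k}:=h_2\cdot n^{-ik}$, which are $\U$-valued multiplicative functions; from $\D_{P_1}(f_1\cdot n^{ik},\chi\cdot n^{it})=\D_{P_1}(f_1,\chi\cdot n^{i(t-k)})$ and its analogue for $P_2$, the pair $(g_{1,k},g_{2,k})$ still satisfies ``$g_{1,k}$ is $P_1$-aperiodic or $g_{2,k}$ is $P_2$-aperiodic''. Since on $S$ the values $P_j(Qm+a,Qn+b)$ are positive integers, there $(P_1)^{ik}f_1(P_1)=g_{1,k}(P_1)$ and $(P_2)^{-ik}h_2(P_2)=g_{2,k}(P_2)$; inserting the Fourier expansion into $\E_{m,n\in[N]}w_{\delta,P_1,P_2}(Qm+a,Qn+b)\,f_1(P_1(Qm+a,Qn+b))\,h_2(P_2(Qm+a,Qn+b))$ and applying the hypothesis that $(P_1,P_2)$ is good for vanishing of correlations of aperiodic multiplicative functions to each of the finitely many pairs $(g_{1,k},g_{2,k})$ (the indicator ${\bf 1}_S(Qm+a,Qn+b)$ being exactly the one appearing in \cref{D:CorVan}) shows that the $k$-th term tends to $0$ as $N\to\infty$. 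Hence $\limsup_{N\to\infty}$ of the average is at most $\varepsilon$, and letting $\varepsilon\to 0$ yields \eqref{E:wdmnB'}.

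The argument uses no analytic input beyond \cref{L:Pab} and the hypothesis \cref{D:CorVan}; the points requiring care are purely bookkeeping: that the weight may be transplanted from $(m,n)$ to $(Qm+a,Qn+b)$ (this is exactly what \cref{L:Pab} is for), that conjugation and the Archimedean twists $n^{\pm ik}$ preserve $P_j$-aperiodicity (so that the correlation-vanishing hypothesis still applies to the twisted functions), and that on the support of the positivity indicator all relevant polynomial values are genuine positive integers, so that the identities $(P_j)^{\pm ik}f_j(P_j)=g_{j,k}(P_j)$ are legitimate. I would expect the transplantation of the weight to be the most delicate of these, but it is fully handled by \cref{L:Pab}.
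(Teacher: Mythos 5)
Your proposal is correct and follows essentially the same route as the paper: transplant the weight via \cref{L:Pab}, expand $F_\delta$ in trigonometric polynomials, absorb the twists $n^{\pm ik}$ (and the conjugation) into the multiplicative functions, and invoke the correlation-vanishing hypothesis of \cref{D:CorVan}. The only cosmetic difference is that you make the conjugation step ($h_2:=\overline{f_2}$) explicit, which the paper handles implicitly.
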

	\begin{proof}
		\rem{Quite a few changes in this proof} 
		Using 	 \cref{L:Pab} we get that it suffices to verify   \eqref{E:wdmnB'} with 
		$w_{\delta,c,P_1,P_2}(m,n)$ replaced by  $w_{\delta,c,P_1,P_2}(Qm+a,Qn+b)$.

 	We first reduce to the case where the function $F_\delta$  is restricted on a bounded interval. 
	This will be a consequence of the following claim 
		\begin{equation}\label{E:T0}
		\lim_{T\to\infty} \limsup_{N\to\infty} \E_{m,n\in[N]} \, {\bf 1}_{(Qm+a,Qn+b)\in S} \cdot {\bf 1}_{|L_{Q,a,b,c}(m,n)|>T}=0,
		\end{equation}
		where
			$$
		S:=\{(m,n)\in \N^2\colon P_1(m,n)>0, P_2(m,n)>0\},
		$$
		and	for $m,n\in \N$ with $(Qm+a,Qn+b)\in S$,  we let 
		$$
	L_{Q,a,b,c}(m,n):=\log \frac{P_1(Qm+a,Qn+b)}{P_2(Qm+a,Qn+b)}-c.
	$$
		To prove the claim, let $Z$ be the zero set of $P_1P_2$ in $[0,1]^2$. 
		Since $Z$ is closed and has Lebesgue measure $0$, it is contained in an open set $U_\varepsilon\subset [0,1]^2$ with Lebesgue measure at most $\varepsilon$ and boundary with Lebesgue measure $0$. Then for some $A,B>0$ we have 
		$$
		A\leq |P_j(x,y)|\leq B \qquad \text{for } (x,y)\in [0,1]^2\setminus U_\varepsilon, \, j=1,2. 
		$$
	Since $P_1,P_2$ are homogeneous of the same degree, say $d$, for fixed $Q,a,b$  we have 
		$$
\lim_{N\to\infty}	\max_{m,n\in [N]} \left| \frac{P_j(Qm+a,Qn+b)}{N^d}-
	Q^dP_j\left(\frac{m}{N},\frac{n}{N}\right)\right|=0, \qquad j=1,2.
	$$
		Therefore if 
		$(m/N,n/N)\notin U_\varepsilon$ and $N$ is sufficiently large, we have 
		$$
		\frac{Q^dA}{2}\leq \Big|	\frac{P_j(Qm+a,Qn+b)}{N^d}\Big|\leq 2 Q^dB, \qquad j=1,2.
		$$
		As a consequence, for those $m,n\in [N]$ we have  that $|L_{Q,a,b,c}(m,n)|$ is bounded by a constant that depends on $\varepsilon, Q,a,b,c,P_1,P_2$. 
		  Since the proportion of $(m,n)\in [N]^2$ for which $(m/N,n/N)\in U_\varepsilon$ is at most $2\varepsilon$ for sufficiently large $N$, the claim follows by first letting $N\to\infty$,  then $T\to \infty$, and finally $\varepsilon\to 0$. 
		
		For fixed $T>1$ let $F_{\delta,T}$ be the $(2T)$-periodic extension of $F_\delta \cdot {\bf 1}_{[-T,T]}$ where $F_\delta \colon \R\to [0,1]$ is the continuous function defined  before Lemma~\ref{L:SdeltaGeneral}. We let 
			$$
		w_{\delta,c,T,P_1,P_2}(m,n):=F_{\delta,T} \Big(\log \frac{P_1(m,n)}{P_2(m,n)}-c\Big) \cdot {\bf 1}_{P_j(m,n)>0, j=1,2}, \quad m,n\in \N.
		$$
		Using \eqref{E:T0} and the definition of  $w_{\delta,c,P_1,P_2}$ in \eqref{E:weight2},  we get that in order to prove 
		\eqref{E:wdmnB'}, it suffices to show that for any fixed $T>1$ we have 
		\begin{multline}\label{E:wT}
			\lim_{N\to\infty} \E_{m,n\in [N]} \,
		w_{\delta,c,T,P_1,P_2}(Qm+a,Qn+b)\cdot f_1\big(P_1(Qm+a,Qn+b)\big)\cdot\\ \overline{f_2\big(P_2(Qm+a,Qn+b)\big)}=0.
		\end{multline}
		Since $F_{\delta,T}$ is continuous and $(2T)$-periodic, it can be uniformly approximated by trigonometric polynomials. It is therefore enough to prove \eqref{E:wT} with $F_{\delta,T}(x)$ replaced by $e^{itx}$  for an arbitrary $t\in \R$. Thus, it remains to prove that  for every $t\in \R$ we have
		\begin{equation}\label{E:fk02}
			\lim_{N\to\infty} \E_{m,n\in [N]} \, {\bf 1}_S(Qm+a,Qn+b)\cdot   f_{1,t}(P_1(Qm+a,Qn+b))\cdot \overline{f_{2,t}(P_2(Qm+a,Qn+b))}=0,
		\end{equation}
		where  
		 for $j=1,2$, $f_{j,t}(n):=f_j(n)\cdot n^{it}$, $n\in \N$.
		Since either $f_1$ is $P_1$-aperiodic or $f_2$ is $P_2$-aperiodic, the same holds for the multiplicative functions $f_{1,t}$ and $\overline{f_{2,t}}$.
		By our assumption that the pair $(P_1,P_2)$ is good for vanishing of correlations of aperiodic multiplicative functions,  we  deduce
		that 		\eqref{E:fk02} holds, which completes the proof.
	\end{proof}
	
	\subsection{Proof of part~\eqref{I:anonnegative} of \cref{P:nonnegative}}\label{SS:anonnegative}
	The  next  result is a key ingredient in the proof of part~\eqref{I:anonnegative} of \cref{P:nonnegative}. 
	\begin{lemma}\label{L:keyQQa}
		Let $P_1,P_2\in \Z[m,n]$ be as  in  case \eqref{I:a} of Theorem~\ref{T:DensityRegularQuadratic} and
		let $f\in \CM_p$ be such that  $f\sim \chi \cdot  n^{it}$ for some $t\in \R$ and  Dirichlet character $\chi$. Let also $\delta\in (0,1/2)$ and $c\in \R$ be fixed,    and $(\Phi_K)$, $w_{\delta,c, P_1,P_2}$, $\tilde{w}_{\delta,c,P_1,P_2}$ be  as in     \eqref{E:PhiK},  \eqref{E:weight2}, \eqref{E:wtilde}  respectively. For $Q,N\in \N$  we let 
		\begin{equation}\label{E:ANK}
			L_{\delta, c, N}(f, Q):= \E_{m,n\in [N]} \, \tilde{w}_{\delta,c,P_1,P_2}(m,n)\cdot f(P_1(Qm+1,Qn))\cdot 	\overline{f(P_2(Qm+1,Q n))}
		\end{equation}
		and
		\begin{equation}\label{E:LNfQ}
			\tilde{L}_{\delta,c,  N}(f, Q):= f(Q)\cdot Q^{-it}\cdot L_{\delta, c, N}(f, Q).
		\end{equation}
		Then
		\begin{equation}\label{E:tildeANK}
			\lim_{K\to\infty} \limsup_{N\to\infty}	\max_{Q,Q'\in \Phi_K}|\tilde{L}_{\delta,c, N}(f,Q)-\tilde{L}_{\delta,c, N}(f,Q')|=0.
		\end{equation}
	\end{lemma}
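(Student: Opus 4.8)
The plan is to apply the concentration estimate of \cref{P:ConcentrationQuanti} (or more precisely its $P$-pretentious consequence, \cref{P:concentration2General}) to the irreducible binary quadratic form $P_1$ and to the reducible form $P_2 = L_1 \cdot L_2$, in order to replace $f(P_1(Qm+1,Qn))$ and $f(P_2(Qm+1,Qn))$ by explicit ``main terms'' of the shape $\chi(\cdot)\cdot(\cdot)^{it}\cdot\exp(G_{P_j,N}(f,K))$, uniformly over $Q\in\Phi_K$. Since $Q\in\Phi_K$ is divisible by $\prod_{p\le K}p$ (and by the period $q$ of $\chi$ once $K$ is large), the hypotheses of those propositions are met with $a=1$, $b=0$; note that for $P_2(m,n)=\ell'(m+\gamma n)n$ the relevant concentration estimate comes from \cref{P:concentration1} applied to each linear factor, after observing that $(Qm+1,Q)=1$ so the residues $a$ here are coprime to $Q$. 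The key point, emphasised in the remarks after \cref{P:ConcentrationQuanti}, is that $\exp(G_{P_j,N}(f,K))$ does not depend on $Q$ as long as $Q$ is divisible by $\prod_{p\le K}p$; this is what will ultimately force the $Q$-dependence to collapse.

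First I would expand, using homogeneity of $P_1,P_2$ (they have the same degree, namely $2$) and multiplicativity of $f$, the product
\[
f(P_1(Qm+1,Qn))\cdot\overline{f(P_2(Qm+1,Qn))}.
\]
Replacing each factor by its concentration main term (the error being $O_{P}(\mathbb{D}_{P_j}(f,\chi\cdot n^{it};K,\infty)+K^{-1/2})$ after averaging over $m,n\in[N]$, uniformly in $Q\in\Phi_K$, by Propositions~\ref{P:ConcentrationQuanti}–\ref{P:concentration2General}), and using that $\mathbb{D}_{P_j}(f,\chi\cdot n^{it})<\infty$ so the tail $\mathbb{D}_{P_j}(f,\chi\cdot n^{it};K,\infty)\to 0$ as $K\to\infty$, I get that $L_{\delta,N}(f,Q)$ is, up to an error that is $o_{K\to\infty}(1)$ uniformly in $N$ and $Q\in\Phi_K$, equal to
\[
\exp\bigl(G_{P_1,N}(f,K)\bigr)\cdot\overline{\exp\bigl(G_{P_2,N}(f,K)\bigr)}\cdot
\E_{m,n\in[N]}\tilde w_{\delta,P_1,P_2}(m,n)\cdot \Xi_Q(m,n),
\]
where $\Xi_Q(m,n)$ collects the $\chi(\cdot)$ and $(\cdot)^{it}$ factors evaluated at $P_j(Qm,Qn)$, $P_j(1,0)$ etc. Since $P_j(Qm,Qn)=Q^2 P_j(m,n)$, the $(\,\cdot\,)^{it}$ pieces contribute $Q^{2it}\cdot Q^{-2it}=1$ from the ratio $P_1/P_2$ inside the weight, and similarly the Dirichlet characters evaluated on $Q$-multiples are constant in the relevant sense; after peeling off precisely the factor $f(Q)\,Q^{-it}$ — which is what the definition of $\tilde L_{\delta,N}$ is designed to cancel, noting $f(P_1(Qm+1,Qn))$ carries a hidden $f(Q)$-type dependence through the linear form $P_2$ — what remains depends on $N,K,\delta$ but \emph{not} on the specific choice of $Q\in\Phi_K$. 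Thus $\tilde L_{\delta,N}(f,Q)$ and $\tilde L_{\delta,N}(f,Q')$ differ by $o_{K\to\infty}(1)$ uniformly in $N$, which is exactly \eqref{E:tildeANK}.

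The main obstacle, and the step I would be most careful about, is the bookkeeping of how the factor $f(Q)Q^{-it}$ actually arises and why multiplying by it cleanly removes the $Q$-dependence: the form $P_1$ is irreducible, so $f(P_1(Qm+1,Qn))$ does \emph{not} factor as $f(Q^2)\cdot(\text{something in }m,n)$, and the only reason a $Q$-dependent scalar can be extracted is that the concentration main term for $P_1$ at the shifted point $(Qm+1,Qn)$ is $\chi(P_1(1,0))\cdot (P_1(Qm,Qn))^{it}\cdot\exp(G_{P_1,N}(f,K))$ with $P_1(1,0)=\ell$ a fixed constant, so the genuine $Q$-dependence comes \emph{only} from the reducible form $P_2(m,n)=\ell'(m+\gamma n)n$, whose factor $n$ evaluated at $Qn$ yields $f(Q)$ and $(Qn)^{it}$ yields $Q^{it}$; one must check that after taking the conjugate of the $P_2$-contribution, the surviving scalar is precisely $\overline{f(Q)\,Q^{it}}=f(Q)^{-1}Q^{-it}$ in $\S^1$, i.e. $\overline{f(Q)Q^{it}}=\overline{f(Q)}\,Q^{-it}$, and that multiplying $L_{\delta,N}(f,Q)$ by $f(Q)Q^{-it}$—wait, by the \emph{correct} power—indeed produces a quantity free of $Q$. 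I would resolve this by writing out $P_2(Qm+1,Qn)=\ell'(Qm+1+\gamma Qn)(Qn)$ and tracking the two linear concentration estimates separately via \cref{P:concentration1}, which makes the extracted scalar completely explicit; the remaining inequalities are routine given Propositions~\ref{P:ConcentrationQuanti}–\ref{P:concentration2General} and the finiteness of $\mathbb{D}_{P_j}(f,\chi\cdot n^{it})$.
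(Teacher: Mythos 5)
Your high-level route is the same as the paper's: concentrate $f(P_1(Qm+1,Qn))$ via \cref{P:concentration2General} and the linear factor of $P_2$ via \cref{P:concentration1} (lifted to the two-variable form $m+\gamma n$ by \cref{L:lN}), use that $\exp(G_{P_1,N}(f,K))$ and $\exp(F_N(f,K))$ do not depend on $Q\in\Phi_K$, pull out a unimodular $Q$-dependent scalar, and cancel it with the normalization defining $\tilde L_{\delta,N}$. But the bookkeeping at the step you yourself flag as the crux is wrong. After factoring out $f(\ell)\cdot\overline{f(\ell')}$ to reduce to $\ell=\ell'=1$, the main term for $f(P_1(Qm+1,Qn))$ is $(P_1(Qm,Qn))^{it}\exp(G_{P_1,N}(f,K))=Q^{2it}(P_1(m,n))^{it}\exp(G_{P_1,N}(f,K))$, so the irreducible form \emph{does} contribute a genuine factor $Q^{2it}$; your claims that ``the $(\cdot)^{it}$ pieces contribute $Q^{2it}\cdot Q^{-2it}=1$'' and that ``the genuine $Q$-dependence comes only from $P_2$'' are both false. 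On the $P_2$ side one writes $P_2(Qm+1,Qn)=(Q(m+\gamma n)+1)\cdot Q\cdot n$ and uses complete multiplicativity: the factor $\overline{f(n)}$ must be \emph{kept inside the average} (this is exactly the paper's auxiliary quantity $M_{\delta,N}(f)$), because \cref{P:concentration1} does not apply to $f(Qn)$ — the residue $0$ is not coprime to $Q$ — so your proposed fix of ``tracking the two linear concentration estimates separately'' fails for that factor; only $f(Q(m+\gamma n)+1)$ gets a main term, namely $Q^{it}(m+\gamma n)^{it}\exp(F_N(f,K))$.

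Assembling the correct pieces, the surviving scalar in $L_{\delta,N}(f,Q)$ is $Q^{2it}\cdot\overline{f(Q)}\cdot Q^{-it}=\overline{f(Q)}\,Q^{it}$, not $\overline{f(Q)}\,Q^{-it}$ as you assert; with your scalar, multiplying by $f(Q)Q^{-it}$ would leave a residual $Q^{-2it}$ and \eqref{E:tildeANK} would not follow (your parenthetical ``wait, by the correct power'' is precisely where the argument breaks). With the corrected scalar, $\tilde L_{\delta,N}(f,Q)$ is asymptotically $\exp(G_{P_1,N}(f,K))\cdot\overline{\exp(F_N(f,K))}\cdot\E_{m,n\in[N]}\,\tilde w_{\delta,P_1,P_2}(m,n)\,(P_1(m,n))^{it}(m+\gamma n)^{-it}\,\overline{f(n)}$, uniformly over $Q\in\Phi_K$ up to errors controlled by $\D(f,\chi\cdot n^{it};K,\infty)$, $\D_{P_1}(f,\chi\cdot n^{it};K,\infty)$ and $K^{-1/2}$, and this expression is independent of $Q$; the lemma then follows from the triangle inequality, which is exactly the paper's argument. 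So the idea is right, but the scalar extraction — the one computation the lemma hinges on — needs to be redone as above.
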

	\begin{proof}
		By assumption, the polynomials $P_1,P_2$ have the form 
		$$
		P_1(m,n)=\ell(m^2+\alpha mn+\beta n^2), \quad P_2(m,n)= \ell'(m+\gamma  n)n,
		$$
		for some $\alpha,\beta,\gamma\in \Z$ and  $\ell, \ell'\in \N,$ and we can assume that $P_1$ is irreducible.
		Factoring out $f(\ell)\cdot\overline{f(\ell')}$ from both $\tilde{L}_{\delta,c, N}(f,Q)$ and $\tilde{L}_{\delta,c, N}(f,Q')$ we reduce to the case where $\ell=\ell'=1$.
		
		For $K\in \N$, let $F_{N}(f,K)$,  $G_{P_1,N}(f,K)$,  be as in \eqref{E:FNfQdef} and
		\eqref{E:GNP}, respectively.

		We extend $f$ to an even function on $\Z$ and apply the concentration estimates  of   Propositions~\ref{P:concentration1} and \ref{P:concentration2General} (note that $f\sim \chi \cdot  n^{it}$ implies  $f\sim_{P_1} \chi \cdot  n^{it}$). Using also \cref{L:lN} 
		we get that  
		$$
		\lim_{K\to\infty}	\limsup_{N\to\infty}\max_{Q\in \Phi_K} \E_{m,n\in [N]}|f(Q(m+\gamma n)+1)- Q^{it}\cdot (m+\gamma n)^{it}\cdot \exp\big(F_{N}(f,K)\big)|=0
		$$
		and
		$$
		\lim_{K\to\infty}	\limsup_{N\to\infty}\max_{Q\in \Phi_K}	\E_{m,n\in [N]}\big| f\big(P_1(Qm+1, Qn)\big)-  Q^{2it} \cdot (P_1(m,n))^{it} \cdot  \exp\big(G_{P_1,N}(f,K)\big)\big|=0.
		$$
		(Since $\ell=1$, we have $P_1(1,0)=1$ so \cref{P:concentration2General} can be used for $c=1$.) 
		We deduce that  if  
		$$
		M_{\delta, c, N}( f):= \E_{m,n\in [N]}\, \tilde{w}_{\delta,c, P_1,P_2}(m,n)\cdot  (P_1(m,n))^{it}\cdot  (m+\gamma n)^{-it}\cdot \overline{f(n)},
		$$
		then 
		$$
		\lim_{K\to\infty}	\limsup_{N\to\infty}\max_{Q\in \Phi_K} |\tilde{L}_{\delta, c, N}(f,Q)- M_{\delta,c, N}(f)\cdot  \exp(G_{P_1,N}(f,K))\cdot \overline{\exp(F_N(f,K))}|=0.
		$$
		Using this identity and the triangle inequality,  we deduce that   \eqref{E:tildeANK} holds.
	\end{proof}
	
 Part~\eqref{I:anonnegative} of \cref{P:nonnegative} is an immediate consequence of the next result
 and the bounded convergence theorem. 
	\begin{proposition}\label{P:nonnegative'}
		Let $P_1,P_2\in \Z[m,n]$ be as  in  the  case \eqref{I:a} of Theorem~\ref{T:DensityRegularQuadratic}, and  $(\Phi_K)$,  $L_{\delta,c, N}(f,Q)$  be  as in     \eqref{E:PhiK},  \eqref{E:ANK}, respectively. Let also $\CM_p$ and $\CA$ be as in \eqref{E:Mp}, \eqref{E:CA} respectively.  
		The following properties hold:
		\begin{enumerate}
			\item\label{I:aQQ1}	If  $f\in \CM_p\setminus \CA$, then for every $\delta\in (0,1/2)$, $c\in \R$,  we have 
			\begin{equation}\label{E:aQ1}
				\lim_{K\to\infty} 	\limsup_{N\to \infty}\Big|\E_{Q\in \Phi_{K}}\, 	L_{\delta, c, N}(f, Q)  \Big|=0.
			\end{equation}
			\item\label{I:aQQ2}	If  $\sigma$ is a bounded Borel measure on $\CM_p$ such that $\sigma(\{1\})>0$, then there exists $c_0=c_0(\sigma,P_1,P_2)\in \R$ such that 
			\begin{equation}\label{E:IntPositive}
				\liminf_{\delta\to 0^+}  	\liminf_{N\to \infty}\inf_{Q\in \N}\, \Re\Big(\int_\CA L_{\delta,c_0,N}(f,Q)\, d\sigma(f)   \Big)> 0.
			\end{equation}
		\end{enumerate}
	\end{proposition}
	\begin{proof}
		We establish \eqref{I:aQQ1}. Suppose that $f\sim \chi\cdot n^{it}$ for some Dirichlet character $\chi$ and $t\in \R$. 
		Let    $\delta\in (0,1/2)$, $c\in \R$,  and  note that for $Q,N\in \N$ we have 
		$$
		L_{\delta,c, N}(f, Q)=\overline{f(Q)\cdot Q^{-it}}\cdot \tilde{L}_{\delta,c,N}(f,Q),
		$$
		where 	$\tilde{L}_{\delta,c, N}(f, Q)$ is as in \eqref{E:LNfQ}. Hence, it suffices to show that 
		$$
		\lim_{K\to\infty} 	\limsup_{N\to \infty}\Big|\E_{Q\in \Phi_{K}}\,\overline{f(Q)\cdot Q^{-it}}\cdot \tilde{L}_{\delta,c, N}(f,Q)   \Big|=0.
		$$	
		For $K\in \N$, let  $Q_K$  be an arbitrary  element of $\Phi_K$. Since by \eqref{E:tildeANK} in Lemma~\ref{L:keyQQa} we have
		$$
		\lim_{K\to\infty} \limsup_{N\to\infty}	\max_{Q\in \Phi_K}|\tilde{L}_{\delta,c,N}(f,Q)-\tilde{L}_{\delta,c,N}(f,Q_K)|=0,
		$$
		it suffices to show that 
		$$
		\lim_{K\to\infty} 	\limsup_{N\to \infty}\Big|\E_{Q\in \Phi_{K}}\,\overline{f(Q)\cdot Q^{-it}}\cdot \tilde{L}_{\delta,c,N}(f,Q_K)   \Big|=0.
		$$	
		Factoring out $\tilde{L}_{\delta,c,N}(f,Q_K)$ and using that $\lim_{N\to\infty}\E_{m,n\in[N]}\, \tilde{w}_{\delta,c,P_1,P_2}(m,n)=1$, we get that it suffices to show that 
		$$
		\lim_{K\to\infty}  \Big|\E_{Q\in \Phi_{K}}\,\overline{f(Q)\cdot Q^{-it}}  \Big|=0.
		$$	
		Since $f\neq n^{it}$, we get that  the multiplicative function $n\mapsto f(n)\cdot n^{-it}$ is not 
		equal to $1$, and the required  vanishing property follows from Lemma~\ref{L:Fol0}.

	\rem{An averaging argument to pick a good value of $c$ used here.}	We establish part~\eqref{I:aQQ2}.
		Let $L_0\in \R$ be as in part~$(i)$ of \cref{L:SdeltaGeneral}, then 
		 $\mu_{\delta,c,P_1,P_2}$ is positive for all $c\geq L_0$. For $c\geq L_0$ let 
		$$
		I(c):=\int_{\CA} e^{ict_f}\,d\sigma(f),
		$$
		where $f(n)=n^{it_f}$ on $\CA$.
		
		We first use an averaging argument to find a  value of $c$ for which the real part of the previous integral is positive. Using Fubini, for $M>0$ we have
		$$
		\frac1M\int_{L_0}^{L_0+M} I(c)\,dc
		=\sigma(\{1\})+
		\int_{\CA\setminus\{1\}} e^{iL_0t_f}
		\frac{e^{iMt_f}-1}{iMt_f}\,d\sigma(f).
		$$
		The second integral tends to $0$ as $M\to\infty$ by the bounded convergence theorem, since $t_f\neq0$ on $\CA\setminus\{1\}$. Since $\sigma(\{1\})>0$, it follows that there exists $c_0\geq L_0$ such that
		\begin{equation}\label{E:eta}
		\eta:=\Re(I(c_0))>0.
		\end{equation}
		We will show that this value of $c_0$ gives the required positivity.

	 For $T>0$  let 
		$$
		\CA_T:=\{f\in\CA: |t_f|\leq T\}.
		$$
		If $f(n)=n^{it}$ with $|t|\leq T$, then
		$$
		L_{\delta,c_0,N}(f,Q)=
		\E_{m,n\in[N]} \widetilde w_{\delta,c_0,P_1,P_2}(m,n)
		(P_1(Qm+1,Qn))^{it}(P_2(Qm+1,Qn))^{-it}.
		$$
		Using that $P_j(Qm,Qn)=Q^2P_j(m,n)$ for  $j=1,2,$
and  arguing as in \cref{L:Pab} (the argument is much simpler in this case), 	we get 
		\begin{equation}\label{E:Lnit}
		\lim_{N\to\infty}\sup_{Q\in\N}\sup_{|t|\leq T}
		\left|L_{\delta,c_0,N}(n^{it},Q)-
		\E_{m,n\in[N]} \widetilde w_{\delta,c_0,P_1,P_2}(m,n)
		\left(\frac{P_1(m,n)}{P_2(m,n)}\right)^{it}\right|=0.
		\end{equation}
		On the support of $\widetilde w_{\delta,c_0,P_1,P_2}$ we have
		$\left|\log\frac{P_1(m,n)}{P_2(m,n)}-c_0\right|\leq\delta$, hence 
		 for $|t|\leq T$ we have 
		\begin{equation}\label{E:Tdelta}
		\left|\left(\frac{P_1(m,n)}{P_2(m,n)}\right)^{it}-e^{ic_0t}\right|
		\leq T\delta \qquad  \text{ whenever } \tilde{w}_{\delta,c_0,P_1,P_2}(m,n)\neq 0.
	\end{equation}
	 Since
		$$
		\lim_{N\to\infty}\E_{m,n\in[N]}\widetilde w_{\delta,c_0,P_1,P_2}(m,n)=1,
		$$
		we deduce from \eqref{E:Lnit} and \eqref{E:Tdelta} that 
		\begin{equation}\label{E:AT}
		\lim_{\delta\to0^+}\limsup_{N\to\infty}
		\sup_{Q\in\N}\sup_{f\in\CA_T}
		\left|L_{\delta,c_0,N}(f,Q)-e^{ic_0t_f}\right|=0.
		\end{equation}
		
	We finally  pass from $\CA_T$ to $\CA$. For  $T$ large enough we have  		
		\begin{equation}\label{E:ATeta}
		\sigma(\CA\setminus\CA_T)\leq \frac{\eta}{8},
			\end{equation}
		hence \eqref{E:eta} gives
		\begin{equation}\label{E:eta'}
	\Re\left(	\int_{\CA_T} e^{ic_0t_f}\,d\sigma(f)\right)\geq \frac{7\eta}{8}.
		\end{equation}
			Combining \eqref{E:ATeta} with  the bound  $\limsup_{N\to\infty}\sup_{Q\in \N}|L_{\delta,c_0,N}(f,Q)|\leq 1$ to 
 control the tail $\CA\setminus \CA_T$, and  \eqref{E:AT} and 
  \eqref{E:eta'}  to  
  bound from below the  integral over $\CA_T$, we get
		$$
		\liminf_{\delta\to0^+}\liminf_{N\to\infty}
		\inf_{Q\in\N}\Re\left(\int_{\CA}L_{\delta,c_0,N}(f,Q)\,d\sigma(f)\right)
		\geq \frac{\eta}{2}>0.
		$$
		This completes the proof.
	\end{proof}

	\subsection{Proof of part~\eqref{I:bnonnegative} of \cref{P:nonnegative}}\label{SS:bnonnegative}
	Our goal in this last  subsection is to prove part~\eqref{I:bnonnegative} of \cref{P:nonnegative}. This will be a consequence of \cref{P:bnonnegative'}  below. 
	An important difference in this case,  compared to  what happened 
	in the proof of  part~\eqref{I:aQQ1} of \cref{P:nonnegative}, is that 
	we will use concentration estimates along 
	the lattice $Q\N+a,Q\N+b$ for suitably chosen $a,b\in\N$, that depend on $Q$,   as given by the next  lemma 
	(the remark after the lemma motivates our  choices).
	\begin{lemma}\label{L:Pi12} 
		Let $r\in \N$ and  $\alpha,\alpha',\beta,\beta'\in \Z$ be such that the polynomials
		$$
		P_1(m,n)= m^2+\alpha mn+\beta n^2, \quad  P_2(m,n)= m^2+\alpha' mn+\beta' n^2,
		$$  
		are distinct  and  irreducible. 
		There exists a finite set of primes $F=F_{r,P_1,P_2}$, containing  the prime divisors of $r!$,  such that the following holds: Let  
		$$
		\CP_1:=\{p\in \P\setminus F\colon \omega_{P_1}(p)=2,  \omega_{P_2}(p)=0 \}
		$$
		$$
		\CP_2:=\{p\in \P\setminus F\colon \omega_{P_1}(p)=0,  \omega_{P_2}(p)=2 \},
		$$ 
		where $\omega_P(p)$ is as in \eqref{D:omega}, and for  $K\in \N$ let 
		\begin{equation}\label{E:QK}
			Q=Q_{K}:= \prod_{p\leq K}p^{2K}.
		\end{equation}
		Let also $l_{j,p}\in[1,3K/2]$, $p\in \P\cap [K]$, $j=1,2$, and 
		$$
		Q_j=Q_{j,K}:= \prod_{p\in \CP_j\cap [K]}p^{l_{j,p}}, \qquad j=1,2.
		$$  
		Then 
		there exist $a=a_{r,K,Q_1,Q_2},b=b_{r,K,Q_1,Q_2}\in [Q]$,   such that  for $j=1,2$ we have
		\begin{equation}\label{E:Pi1} 
			P_j(a,b)\equiv 1 \! \! \! \pmod{r!}
		\end{equation}
		and 
		\begin{equation}\label{E:Pi2} 
			(P_j(a,b), Q)=Q_{j}.
		\end{equation}
	\end{lemma}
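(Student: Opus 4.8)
The plan is to construct $a,b$ by the Chinese remainder theorem, prescribing their residues modulo each prime power dividing $Q=Q_K=\prod_{p\le K}p^{2K}$ separately, together with a condition modulo $r!$. Since $r!$ divides $Q$ and all the primes involved in $Q_1,Q_2$ lie in $[K]$, the whole problem decouples prime-by-prime. For a prime $p\le K$ we must decide the exact power of $p$ dividing $P_1(a,b)$ and $P_2(a,b)$: this should be $l_p$ if $p\in\CP_1\cap[K]$, again $l_p$ if $p\in\CP_2\cap[K]$, and $0$ if $p\notin(\CP_1\cup\CP_2)\cap[K]$ (and also $0$ for the primes in the finite exceptional set $F$, which we will simply add to $F$ as needed). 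Note that $l_p\le 3K/2<2K$, so whatever exact power of $p$ we prescribe is genuinely smaller than the power $p^{2K}$ appearing in $Q$, which is what makes $(P_j(a,b),Q)=Q_j$ (rather than something larger) attainable.

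The key local step is: given a prime $p$ with $\omega_{P_1}(p)=2$ and $\omega_{P_2}(p)=0$ (the case $p\in\CP_1$; the case $p\in\CP_2$ is symmetric, and the case $p\notin\CP_1\cup\CP_2$ is easy), and given a target exponent $e=l_p\in[1,2K)$, find $a_p,b_p$ modulo $p^{2K}$ such that $p^{e}\,\|\,P_1(a_p,b_p)$ and $p\nmid P_2(a_p,b_p)$. First I would handle $P_2$: since $\omega_{P_2}(p)=0$, the form $P_2(m,1)$ has no root mod $p$, so $P_2(a,b)\not\equiv 0\pmod p$ automatically unless $p\mid b$; thus it suffices to keep $b_p\not\equiv 0\pmod p$ (recall $p\nmid$ leading coefficient since these are monic in $m$, and we may enlarge $F$ to discard the finitely many $p$ dividing the discriminants). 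For $P_1$: since $\omega_{P_1}(p)=2$ there is a simple root $r_0$ of $P_1(m,1)\bmod p$, i.e. $P_1(r_0,1)\equiv 0$, $\partial_m P_1(r_0,1)\not\equiv 0\pmod p$ (simplicity holds because $p$ does not divide the discriminant of $P_1$, again after enlarging $F$). By Hensel's lemma this root lifts, and more importantly we can choose $m$ so that $P_1(m,1)$ is divisible by exactly $p^{e}$: write $m=r_0+p\cdot t$ and expand $P_1(r_0+pt,1)=P_1(r_0,1)+pt\,\partial_mP_1(r_0,1)+O(p^2)$; choosing the $p$-adic valuation of $P_1(r_0,1)$ into account and adjusting $t$ through successive digits, a standard Hensel/Newton-polygon argument produces an $m$ with $v_p(P_1(m,1))=e$ exactly (one needs $e\ge 1$, which is given, and $e<2K$ so that this is a genuine congruence condition mod $p^{2K}$). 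Then set $a_p\equiv m b_p$, $b_p\equiv$ a unit mod $p$, using homogeneity $P_1(a_p,b_p)=b_p^2 P_1(a_p b_p^{-1},1)$ — here one must also arrange $p\nmid b_p$, compatibly with the $P_2$ requirement, which is automatic.

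Finally I would impose the congruence $a\equiv a^{*},\,b\equiv b^{*}\pmod{r!}$ where $a^{*},b^{*}$ are any fixed values with $P_j(a^{*},b^{*})\equiv 1\pmod{r!}$ for $j=1,2$ — for instance $a^{*}=1,b^{*}=0$ gives $P_j(1,0)=1$, so this is trivially consistent. Since $F$ contains the prime divisors of $r!$, the primes used in the CRT decomposition of $Q$ that also divide $r!$ are precisely those "$p\notin\CP_1\cup\CP_2$" primes, for which we already prescribed $p\nmid P_j(a,b)$; so the condition mod $r!$ and the conditions mod the prime powers of $Q$ are jointly consistent. Assembling all local solutions via CRT yields $a,b\in[Q]$ satisfying \eqref{E:Pi1} and \eqref{E:Pi2}. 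The main obstacle is the local computation of the exact $p$-adic valuation: one must be careful that requiring $v_p(P_1(a_p,b_p))=e$ \emph{exactly} (not just $\ge e$) is achievable within residues mod $p^{2K}$, which is where the hypothesis $l_p\le 3K/2<2K$ and the simplicity of the root (guaranteed by throwing the discriminant primes into $F$) are both essential; everything else is bookkeeping with the Chinese remainder theorem.
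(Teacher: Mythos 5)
Your proposal is correct and follows essentially the same route as the paper's proof: a prime-by-prime Chinese remainder decomposition, Hensel's lemma at a simple root (the discriminant primes having been placed in $F$) to pin down the exact valuation $p^{l_p}\mid\mid P_1(a,b)$ for $p\in\CP_1$ (symmetrically for $\CP_2$), and the trivial choice $a\equiv 1$, $b\equiv 0$ modulo $r!$ and modulo the remaining prime powers so that $P_j(a,b)\equiv 1$ there. The only deviation is minor: where you deduce $p\nmid P_2(a,b)$ for $p\in\CP_1$ directly from $\omega_{P_2}(p)=0$ together with $b$ being a unit mod $p$, the paper instead runs an elimination argument showing that a common root of $P_1,P_2$ mod $p$ would force $p\mid P_1(\beta'-\beta,\alpha-\alpha')$ and hence $p\in F$; given the definition of $\CP_1$, your shortcut is perfectly legitimate.
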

	\begin{remark}
		Let us explain a bit the motivation behind our choice of $a,b$. Suppose that for $j=1,2$ we have that  $f\sim_{P_j} \chi_j$ for some Dirichlet characters $\chi_j$ with period $q$ and let $Q_{j}:=\prod_{p\in \CP_j\cap [K],\,  p\notin F}p^{l_{j,p}}$. If $a,b$ satisfy the properties of the lemma  and $K$ is 
		sufficiently large, we get by \cref{P:concentration2General}  that for $j=1,2$ and  $Q=Q_K$ (we use  that   $Q_j=(P_j(a,b), Q)$, $j=1,2$, and  $q\prod_{p\leq K}p$ divides $Q/Q_{j}$) the values of  $f(P_j(Qm+a,Qn+b))$ are concentrated at   
		$f(Q_{j})\cdot \chi_j(P_j(a,b)/Q_j) \cdot \exp(G_{P_j,N}(f,K))$, and since   $(Q_j,q)=1$ (hence $|\chi_j(Q_j)|=1$), and $P_j(a,b)\equiv 1\pmod{q}$ (which holds if $r\geq q$),  we get concentration at 
		$f(Q_j)\cdot \overline{\chi_j(Q_{j})} \cdot \exp(G_{P_j,N}(f,K))$. This will give that if for $j=1$ or $j=2$ we have $f(p)\neq \chi_j(p)$ for some $p\in \CP_j$, then   a suitable multiplicative average will be  $0$.  On the other hand, if for $j=1,2$ we have 
		$f(p)=\chi_j(p)$ for all $p\in \CP_j$, then the oscillatory factors $\exp(G_{P_j,N}(f,K))$, which  appear in our analysis, will  ``cancel out'', and  
		we will get  concentration  at $f(Q_{j})\cdot \overline{\chi_j(Q_{j})}=1$.  Combining the above we will get that part~\eqref{I:bnonnegative} of \cref{P:nonnegative} holds. 
	\end{remark}
	\begin{proof}
		First, we  define the finite set of primes $F$ that we want to avoid for reasons that will become clear later in the argument. We let\footnote{The expression  $P_2(\beta-\beta',\alpha'-\alpha)$ is not needed since it is equal to $P_1(\beta'-\beta,\alpha-\alpha')$.}
		\begin{equation}\label{E:A}
			A:=	 2r!\beta\beta'(\alpha^2-4\beta)(\alpha'^2-4\beta') P_1(\beta'-\beta,\alpha-\alpha')
		\end{equation}
		and
		$$
		F:=\{p\in \P\colon p\mid A\}.
		$$
		Let us first explain why the set $F$ is finite, which is equivalent to showing that  $A\neq 0$. 
		Since the polynomials $P_1,P_2$ are irreducible we have 
		$\beta\beta'(\alpha^2-4\beta)(\alpha'^2-4\beta')\neq 0$.
		Since the polynomials $P_1,P_2$ are distinct, we have that either $\alpha\neq \alpha'$ or $\beta\neq \beta'$. Using this and the fact that  $P_1,P_2$ are irreducible, we deduce that  $P_j(\beta'-\beta,\alpha-\alpha')\neq 0$ for $j=1,2$. Hence, $A\neq 0$.

		For this set $F$ we proceed to show that if $K\in \N$ and  $l_{j,p}\in \N$, $j=1,2$, $p\in \P\cap [K]$, are given, then a choice of $a,b\in \N$ can be made that satisfies the asserted properties. 
		
		To this end, first note  that if $a\equiv 1 \pmod{r!}$ and $b\equiv 0 \pmod{r!}$, then 
		$P_j(a,b) \equiv1 \pmod{r!}$ for $j=1,2$. 	Using this and the Chinese remainder theorem (and since the prime divisors of $r!$ are contained in $F$), we see that  it is sufficient  to show that the following three properties hold:
		\begin{enumerate}
			\item 	 For every $p\in \CP_1$,  there exist  $a,b\in [p^{l_{1,p}+1}]$, depending on $p$, such that
			$$
			p^{l_{1,p}}\mid \mid  P_1(a,b)\quad  \text {and} \quad p\nmid P_2(a,b). 
			$$
			
			\item 	 For every $p\in \CP_2$,  there exist $a,b\in [p^{l_{2,p}+1}]$, depending on $p$,  such that 
			$$
			p^{l_{2,p}}\mid \mid  P_2(a,b) \quad  \text {and} \quad p\nmid P_1(a,b). 
			$$
			
			\item  If  $\tilde{Q}:=\prod_{p\leq K,\, p\not\in (\CP_1\cup \CP_2\cup F)}p^{l_{1,p}+l_{2,p}}$, then  there exist $a,b\in [\tilde{Q}]$, depending on $\tilde{Q}$,  such that 
			$$
			P_j(a,b)\equiv 1\! \! \pmod{\tilde{Q}} \quad  \text{for  } j=1,2. 
			$$
		\end{enumerate}
		Note that the third property is satisfied if we choose $a\equiv\!\! 1\pmod{\tilde{Q}}$ and $b\equiv \!\! 0\pmod{\tilde{Q}}$ and implies that   $p\nmid P_j(a,b)$ for all $p\leq K$ that are not in $ \CP_1\cup \CP_2$. 
		
		We prove the first property;   the second one can be proved similarly. Let $p\in \CP_1$. We take $b=1$ and choose $a$ as follows:   Since  for  $p\in \CP_1$ we have $\omega_{P_1}(p)=2$,  there exists $a\in \N$ such that $p\mid P_1(a,1)$. Since $p\in \CP_1$ implies $p\not\in F$, we have  $p\nmid 2(\alpha^2-4\beta)$, hence $p\nmid P_1'(a,1)$.
		Using Hensel's lemma, we get that  there exists $a\in \N$, $a\leq p^{l_{1,p}+1}$,  so that $p^{l_{1,p}}\mid \mid P_1(a,1)$. It remains to show that $p\nmid P_2(a,1)$. Equivalently, it suffices to show that if
		\begin{equation}\label{E:P12}
			p\mid P_1(a,1)=a^2+\alpha a+\beta \quad \text{and} \quad 
			p\mid P_2(a,1)=a^2+\alpha'a+\beta',
		\end{equation}
		then $p\in F$. We  will  prove this  by showing that $p\mid A$  where $A$ is as in \eqref{E:A}. 
		Subtracting the two relations  in \eqref{E:P12}
		gives 
		\begin{equation}\label{E:aa'}
			p\mid (\alpha-\alpha')a+\beta-\beta',
		\end{equation}
		while  multiplying the second relation by $\beta$ and the first by $\beta'$ and subtracting, we get that 
		$
		p\mid (\beta-\beta')a^2+(\alpha'\beta-\alpha\beta')a.
		$ 
		So either 
		$p\in F$, in which case we are done,  
		or   $p\nmid a$ (otherwise \eqref{E:P12} gives $p\mid \beta$, which cannot happen for $p\notin F$), in which case we have   
		\begin{equation}\label{E:bb'}
			p\mid (\beta-\beta')a+(\alpha'\beta-\alpha\beta').
		\end{equation}
		Multiplying  \eqref{E:aa'} by $\beta-\beta'$ and \eqref{E:bb'} by $\alpha-\alpha'$ and subtracting,  we deduce that
		$$
		p\mid (\beta-\beta')^2-(\alpha'\beta-\alpha\beta')(\alpha-\alpha')= P_1(\beta'-\beta,\alpha-\alpha').
		$$
		Hence, 		
		$p\mid A$. This completes the proof. 
	\end{proof}
	Using  \cref{L:Pi12} we get the following consequence of
	\cref{P:concentration2General} that will be crucial for our purposes. 
	\begin{proposition}\label{P:PQ12}  Let $r\in \N$ and  $F,P_1,P_2,\CP_1,\CP_2$  be as in \cref{L:Pi12}.   For $j=1,2$ and $K\in \N$ let  
		\begin{equation}\label{E:PhijK}
			\Phi_{j,K}:=\big\{ \prod_{p\in \CP_j\cap [K]} p^{l_p}\colon K< l_p\leq  3K/2\big\},
		\end{equation}
		(the sets $\Phi_{j,K}$ also depend  on $r,P_1,P_2$ but we suppress this dependence)	and 
		$ Q_{K}:= \prod_{ p\leq K}p^{2K}$. Then for all $Q_j\in \Phi_{j,K}$, $j=1,2$,  there exist 
		$a=a_{r,K,Q_1,Q_2}\in [Q_K],b=b_{r,K,Q_1,Q_2}\in [Q_K]$,  such that the following holds: 
		If 
		$f\colon \Z\to\U$ is an even multiplicative function such that for $j=1,2$ we have  $f\sim_{P_j} \chi_j\cdot n^{it}$ for some $t\in \R$ and Dirichlet characters $\chi_j$ with period $q$, then for all 
		sufficiently large  $r$, depending only on $q$, we have 
		\begin{multline}\label{E:Pjab0}
			\lim_{K\to\infty}	\limsup_{N\to\infty} \max_{Q_1\in \Phi_{1,K}, Q_2\in \Phi_{2,K} }	\E_{m,n\in [N]}\, \big|f\big(P_j(Q_Km+a_{r,K,Q_1,Q_2},Q_Kn+b_{r,K,Q_1,Q_2})\big)- \\ 
			\tilde{f}_j(Q_j)\cdot Q_K^{2it}\cdot  
			(P_j(m,n))^{it}\cdot   \exp\big( G_{P_j,N}(f,K)\big)\big|=0,
		\end{multline}
		where $\tilde{f}_j(n):=f(n)\cdot \overline{\chi_j(n)}\cdot n^{-it}$, $j=1,2$, 
		and  $G_{P_j,N}$, $j=1,2$,  are as in   \eqref{E:GNP}.
	\end{proposition}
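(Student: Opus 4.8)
The plan is to derive \cref{P:PQ12} as a fairly direct application of the uniform concentration estimate \cref{P:concentration2General}, with \cref{L:Pi12} supplying the arithmetic of the shifts $a,b$. First I would fix $r$ large enough (depending only on $q$) that $q \mid r!$, and then, given $K$ and $Q_1 \in \Phi_{1,K}$, $Q_2 \in \Phi_{2,K}$, invoke \cref{L:Pi12} to obtain $a = a_{r,K,Q_1,Q_2}$, $b = b_{r,K,Q_1,Q_2} \in [Q_K]$ satisfying $P_j(a,b) \equiv 1 \pmod{r!}$ and $(P_j(a,b), Q_K) = Q_j$ for $j=1,2$. Since $q \mid r!$, the first congruence gives $P_j(a,b) \equiv 1 \pmod q$, hence $\chi_j(P_j(a,b)) = 1$; and since $c_j := (P_j(a,b), Q_K) = Q_j$, we have $(P_j(a,b)/Q_j, q) = 1$ so $|\chi_j((P_j)_{c_j}(a,b))| = 1$, and moreover $q$ and $\prod_{p\le K} p$ both divide $Q_K/Q_j$ because $Q_K = \prod_{p\le K} p^{2K}$ is divisible by arbitrarily high powers of every small prime while $Q_j$ is a product of at most $K$-th-power-range factors. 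Thus the hypotheses of \cref{P:concentration2General} are met for each $P_j$ with $Q := Q_K$, $c := Q_j$, and the admissible pair $(a,b) \in S_K$.

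Next I would unwind what \cref{P:concentration2General} yields. Applied to the irreducible binary quadratic form $P_j$, it gives
\begin{equation*}
\limsup_{N\to\infty} \max_{Q_1,Q_2} \E_{m,n\in[N]}\big| f\big((P_j)_{Q_j}(Q_Km+a, Q_Kn+b)\big) - \chi_j\big((P_j)_{Q_j}(a,b)\big)\cdot \big((P_j)_{Q_j}(Q_Km, Q_Kn)\big)^{it}\cdot \exp\big(G_{P_j,N}(f,K)\big)\big| \ll_{P_j} \D_{P_j}(f, \chi_j\cdot n^{it}; K, \infty) + K^{-1/2},
\end{equation*}
which tends to $0$ as $K\to\infty$ since $f \sim_{P_j} \chi_j\cdot n^{it}$. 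To convert this into the statement of \cref{P:PQ12} I would perform three bookkeeping manipulations. First, multiply through by $f(Q_j)$: since $P_j(Q_Km+a, Q_Kn+b) = Q_j \cdot (P_j)_{Q_j}(Q_Km+a, Q_Kn+b)$ and $f$ is multiplicative with $\gcd(Q_j, (P_j)_{Q_j}(\cdot)) = 1$ (the latter because $Q_j$ is built from primes in $\CP_j$ with controlled exponents and $a,b$ were chosen precisely so the extra $P_j$-divisibility is accounted for), we have $f(P_j(\cdots)) = f(Q_j) f((P_j)_{Q_j}(\cdots))$. Second, rewrite the main term: $\chi_j((P_j)_{Q_j}(a,b)) = \chi_j(P_j(a,b))\overline{\chi_j(Q_j)} = \overline{\chi_j(Q_j)}$ using $\chi_j(P_j(a,b))=1$; and $((P_j)_{Q_j}(Q_Km,Q_Kn))^{it} = Q_j^{-it} Q_K^{2it} (P_j(m,n))^{it}$ by homogeneity of $P_j$. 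Collecting, $f(Q_j)\cdot \overline{\chi_j(Q_j)}\cdot Q_j^{-it} = \tilde f_j(Q_j)$ exactly, so the main term becomes $\tilde f_j(Q_j)\cdot Q_K^{2it}\cdot (P_j(m,n))^{it}\cdot \exp(G_{P_j,N}(f,K))$, matching \eqref{E:Pjab0}. Third, replace $(P_j(Q_Km+a, Q_Kn+b))^{it}$-type terms correctly — but these are already handled since I kept $P_j(m,n)^{it}$ rather than $P_j(Q_Km+a,Q_Kn+b)^{it}$; the negligible difference between $(P_j(Q_Km+a,Q_Kn+b))^{it}$ and $(P_j(Q_Km,Q_Kn))^{it}$ is already absorbed into \cref{P:concentration2General}'s error, so nothing further is needed.

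The main obstacle I anticipate is not any single estimate but the careful verification of the coprimality and divisibility conditions needed to legitimately split $f(P_j(\cdots)) = f(Q_j)\, f((P_j)_{Q_j}(\cdots))$ uniformly in $m,n$, together with checking that $(a,b)$ really lies in the set $S_K$ of \cref{P:concentration2General} for the relevant modulus $q$ — i.e. that $q \mid Q_K/Q_j$, which requires $q$ to be coprime to $Q_j$ (true since primes of $\CP_j$ avoid the finite set $F \supseteq \{p : p \mid q\}$ by the construction in \cref{L:Pi12}) and $q \mid Q_K$ (true since $Q_K$ contains all small primes to high powers once $K$ exceeds the largest prime factor of $q$). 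Once these divisibility relations are pinned down, the rest is the algebraic rearrangement above plus taking $K \to \infty$ to kill the pretentious distance, all of which is routine. I would also note explicitly that the bound in \cref{P:concentration2General} is uniform over $Q \in \Phi_K$ and over the admissible shifts, which is precisely what licenses the $\max$ over $Q_1, Q_2$ inside the limsup in \eqref{E:Pjab0}.
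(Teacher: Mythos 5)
Your proposal is correct and follows essentially the same route as the paper: fix $r$ with $q\mid r!$, take $a,b$ from \cref{L:Pi12}, apply \cref{P:concentration2General} with $Q:=Q_K$, $c:=Q_j$ after checking that $q$ and $\prod_{p\le K}p$ divide $Q_K/Q_j$, and then use $\chi_j(P_j(a,b))=1$, $(Q_j,q)=1$ and the factorization $f(P_j(\cdot))=f(Q_j)f\big((P_j)_{Q_j}(\cdot)\big)$ together with homogeneity to assemble $\tilde f_j(Q_j)\cdot Q_K^{2it}\cdot(P_j(m,n))^{it}\cdot\exp(G_{P_j,N}(f,K))$, with the error killed by $\D_{P_j}(f,\chi_j\cdot n^{it};K,\infty)\to0$. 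Your explicit verification of the exact divisibility $p^{l_p}\,\|\,P_j(Q_Km+a,Q_Kn+b)$ (via $l_p\le 3K/2<2K$) justifying the splitting is a detail the paper leaves implicit, but the argument is the same.
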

	\begin{remark}
		Note that the choice of $a_{r,K,Q_1,Q_2}$ and $b_{r,K,Q_1,Q_2}$ is independent of the multiplicative function $f$.
	\end{remark}
	\begin{proof}
		By   \cref{L:Pi12} we  get that for $Q_j\in \Phi_{j,K}$, $j=1,2$, there exist  $a=a_{r,K,Q_1,Q_2},b=b_{r,K,Q_1,Q_2}\in [Q_K]$ that satisfy \eqref{E:Pi1} and \eqref{E:Pi2}. 
		We apply \cref{P:concentration2General} for $A_K:=Q_K$ and $c_j:=(P_j(a,b) , Q_K)=Q_j$ for $j=1,2$,
		and note that for  sufficiently large $K$, depending only on $q$,  	we have that $q$ and $\prod_{p\leq K}p$ divide $Q_K/Q_j$ for $j=1,2$. We get  that for $j=1,2$ we have 
		\begin{multline*}
			\lim_{K\to\infty}	\limsup_{N\to\infty}   \max_{Q_1\in \Phi_{1,K}, Q_2\in \Phi_{2,K} }		\E_{m,n\in [N]}\, \big|f\big(P_j(Q_Km+a,Q_Kn+b)\big)- \\ 
			f(Q_j)\cdot \chi_j(P_j(a,b)/Q_j)\cdot Q_K^{2it}\cdot  
			(P_j(m,n)/Q_j)^{it}\cdot   \exp\big( G_{P_j,N}(f,K)\big)\big|=0.
		\end{multline*}
		Now let $r$ be sufficiently large so that $q\mid r!$. 
		By construction, for $j=1,2$ the set $\CP_j$ does not contain prime divisors   of $q$ (since it avoids $F$, which contains all prime divisors of $r!$ and $q\mid r!$), hence $(Q_j,q)=1$, which implies $|\chi_j(Q_j)|=1$. It follows that  for $j=1,2$ we have  $\chi_j(P_j(a,b)/Q_j)= \chi_j(P_j(a,b))\cdot \overline{\chi_j(Q_j)}=\overline{\chi_j(Q_j)}$, where the last identity follows because 
		$P_j(a,b)\equiv 1 \pmod{q}$, which implies  $\chi_j(P_j(a,b))=1$.  
	\end{proof}
	We are now ready to prove part~\eqref{I:bnonnegative} of \cref{P:nonnegative}.
	\begin{proposition}\label{P:bnonnegative'}
		Let $r\in \N$ and  $F,P_1,P_2,\CP_1,\CP_2$  be as in \cref{L:Pi12} and $(\Phi_{j,K})$, $j=1,2$,  be as in \cref{P:PQ12}. 
		Let $f\in \CM_{p,P_1,P_2}$ be   such that for $j=1,2,$ we have  $f\sim_{P_j} \chi_j\cdot n^{it}$ for some $t\in \R$ and  Dirichlet characters $\chi_j$ with period $q$. Let also $\delta\in (0,1/2)$ be fixed and 
		$w_{\delta,0,P_1,P_2}$ and  $\tilde{w}_{\delta,0,P_1,P_2}$ be as in \eqref{E:weight2} and \eqref{E:wtilde} respectively. For $K\in \N$  we let  $Q_{K}$, $a=a_{r,K,Q_1,Q_2}\in [Q_K],b=b_{r,K,Q_1,Q_2}\in [Q_K]$ be as in \cref{P:PQ12}, where $Q_j\in \Phi_{j,K}$, $j=1,2$.  Finally,  let 
		\begin{multline}\label{E:ANKb'}
			L_{\delta,0, N}(f, r,K,Q_1,Q_2):= \E_{m,n\in [N]} \, \tilde{w}_{\delta,0,P_1,P_2}(m,n)\cdot \\ 
			f(P_1(Q_Km+a_{r,K,Q_1,Q_2},Q_Kn+b_{r,K,Q_1,Q_2}))\cdot 	\overline{f(P_2(Q_Km+a_{r,K,Q_1,Q_2}, Q_Kn+b_{r,K,Q_1,Q_2}))}.
		\end{multline}
		If $r$ is  sufficiently large, depending only on $q$,  the following properties hold: 
		\begin{enumerate}
			\item\label{I:QQ1}	If  $f(p)\neq \chi_j(p)\cdot p^{it}$ for some $p\in \CP_j$  and some $j\in \{1,2\}$, then  
			\begin{equation}\label{E:Q120}
				\lim_{K\to\infty} 	\limsup_{N\to \infty}\Big|\E_{Q_1\in \Phi_{1,K}, Q_2\in \Phi_{2,K}}\, L_{\delta,0,N}(f,r,K,Q_1,Q_2)   \Big|=0.
			\end{equation}

			\item\label{I:QQ2}	If  $f(p)= \chi_j(p)\cdot p^{it}$ for every  $p\in \CP_j$ and $j=1,2$,  then 
			\begin{equation}\label{E:tildeANKb'}
				\lim_{\delta\to 0^+}	\limsup_{K\to\infty} \limsup_{N\to\infty}	\max_{Q_1\in \Phi_{1,K}, Q_2\in \Phi_{2,K}}|L_{\delta,0,N}(f,r,K,Q_1,Q_2)-1|=0.
			\end{equation}
			
			\item \label{I:QQ3} Without additional assumptions on $f\in \CM_{p,P_1,P_2}$  we have  
			\begin{equation}\label{E:Q12positive}
				\liminf_{\delta\to 0^+} \liminf_{K\to\infty} 	\liminf_{N\to \infty}\E_{Q_1\in \Phi_{1,K}, Q_2\in \Phi_{2,K}}\, \Re\big(L_{\delta,0,N}(f,r,K,Q_1,Q_2)   \big)\geq 0.
			\end{equation}
		\end{enumerate}
	\end{proposition}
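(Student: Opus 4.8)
The plan is to follow the three-step template of \cref{P:nonnegative'} (the case (I:a) proof), with \cref{L:keyQQa} replaced by \cref{P:PQ12} and the appeal to \cref{L:Fol0} replaced by a direct factorization of the multiplicative average over the product-sets $\Phi_{j,K}$. Fix $r$ large (depending only on $q$), as required by \cref{P:PQ12}. The first step is to insert the concentration estimate \eqref{E:Pjab0}, once for $j=1$ and once for $j=2$, into the definition \eqref{E:ANKb'} of $L_{\delta,N}(f,r,K,Q_1,Q_2)$; each insertion costs at most $\|\tilde{w}_{\delta,P_1,P_2}\|_\infty$ times the left side of \eqref{E:Pjab0}, which tends to $0$ as $K\to\infty$ after $\limsup_{N\to\infty}$, uniformly over $Q_1\in\Phi_{1,K}$ and $Q_2\in\Phi_{2,K}$. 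Upon forming the product $f(P_1(\cdots))\overline{f(P_2(\cdots))}$ the two factors $Q_K^{2it}$ cancel, so, writing $\tilde{f}_j(n):=f(n)\overline{\chi_j(n)}n^{-it}$ and $M_{\delta,N}:=\E_{m,n\in[N]}\tilde{w}_{\delta,P_1,P_2}(m,n)(P_1(m,n))^{it}(P_2(m,n))^{-it}$, we obtain
$$L_{\delta,N}(f,r,K,Q_1,Q_2)=\tilde{f}_1(Q_1)\,\overline{\tilde{f}_2(Q_2)}\,\exp\!\big(G_{P_1,N}(f,K)+\overline{G_{P_2,N}(f,K)}\big)\,M_{\delta,N}+o_K(1),$$
the error tending to $0$ in the iterated limit $\limsup_{K\to\infty}\limsup_{N\to\infty}$, uniformly in $Q_1,Q_2$. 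Here $|M_{\delta,N}|\le\E_{m,n\in[N]}\tilde{w}_{\delta,P_1,P_2}(m,n)\to1$, and $\Re\big(G_{P_j,N}(f,K)\big)\le0$, so the exponential factor has modulus $\le1$.

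\emph{Part (i).} Assume $\tilde{f}_1(p_0)\ne1$ for some $p_0\in\CP_1$ (the case $j=2$ is identical after conjugation). Each $Q_1\in\Phi_{1,K}$ factors uniquely as $\prod_{p\in\CP_1\cap[K]}p^{l_p}$ with the $l_p$ ranging independently over the integers in $(K,3K/2]$; since $\tilde{f}_1$ is completely multiplicative, $\E_{Q_1\in\Phi_{1,K}}\tilde{f}_1(Q_1)=\prod_{p\in\CP_1\cap[K]}\E_{K<l\le3K/2}\tilde{f}_1(p)^{l}$. For $K\ge p_0$ the factor at $p_0$ is a geometric average of powers of the unit-modulus number $\tilde{f}_1(p_0)\ne1$, hence $O_{\tilde{f}_1(p_0)}(1/K)$, while all other factors have modulus $\le1$; thus $|\E_{Q_1\in\Phi_{1,K}}\tilde{f}_1(Q_1)|\ll1/K$. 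Combined with $|\E_{Q_2\in\Phi_{2,K}}\overline{\tilde{f}_2(Q_2)}|\le1$, the bound on the exponential factor, and the boundedness of $M_{\delta,N}$, this yields \eqref{E:Q120}.

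\emph{Part (ii).} Now $\tilde{f}_j(p)=1$ for all $p\in\CP_j$, $j=1,2$, so $\tilde{f}_j(Q_j)=1$ for every $Q_j\in\Phi_{j,K}$. The heart of the matter is that the exponential factor collapses. Since $\omega_{P_j}$ is supported on $\CP_{P_j}$, and for all but finitely many primes $\CP_{P_1}$ is the disjoint union of $\CP_1$ and $\CP_{P_1}\cap\CP_{P_2}$ (and similarly for $\CP_{P_2}$), while the hypothesis annihilates the summand over $\CP_j$, both $G_{P_1,N}(f,K)$ and $G_{P_2,N}(f,K)$ reduce, up to an $O_{r,P_1,P_2}(1/K)$ error, to sums over $K<p\le N$ with $p\in\CP_{P_1}\cap\CP_{P_2}$; there $\omega_{P_1}(p)=\omega_{P_2}(p)=2$, and by \cref{L:tunique} (which also pins the common exponent $t$) one has $\chi_1(p)=\chi_2(p)$, hence $\tilde{f}_1(p)=\tilde{f}_2(p)$, for all but finitely many such $p$. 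Therefore
$$G_{P_1,N}(f,K)+\overline{G_{P_2,N}(f,K)}=\sum_{\substack{K<p\le N,\\ p\in\CP_{P_1}\cap\CP_{P_2}}}\frac{4}{p}\big(\Re\tilde{f}_1(p)-1\big)+O_{r,P_1,P_2}(1/K),$$
which is real, $\le0$ up to the error, and tends to $0$ as $K\to\infty$ uniformly in $N$, because $f\sim_{P_1}\chi_1\cdot n^{it}$ forces $\sum_{p\in\CP_{P_1}}\frac{1}{p}(1-\Re\tilde{f}_1(p))<\infty$; so the exponential factor tends to $1$ as $K\to\infty$, uniformly in $N$ and in $Q_1,Q_2$. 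Finally, exactly as in the treatment of the case $f=n^{it}$ in the proof of \cref{P:nonnegative'}, $\tilde{w}_{\delta,P_1,P_2}(m,n)\ne0$ forces $(P_1(m,n))^{i}(P_2(m,n))^{-i}$ to be $\delta$-close to $1$, so $|(P_1(m,n))^{it}(P_2(m,n))^{-it}-1|\ll_t\delta$ there, and since $\E_{m,n\in[N]}\tilde{w}_{\delta,P_1,P_2}(m,n)\to1$ by \cref{L:SdeltaGeneral} we get $\limsup_{N\to\infty}|M_{\delta,N}-1|\ll_t\delta$. Substituting into the display yields $\limsup_{K\to\infty}\limsup_{N\to\infty}\max_{Q_1,Q_2}|L_{\delta,N}(f,r,K,Q_1,Q_2)-1|\ll_t\delta$, and letting $\delta\to0$ gives \eqref{E:tildeANKb'}.

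\emph{Part (iii)} is the dichotomy: either $f$ satisfies the hypothesis of part (i), whence $\E_{Q_1,Q_2}L_{\delta,N}\to0$ (so also $\E_{Q_1,Q_2}\Re L_{\delta,N}\to0$) and the relevant $\liminf$ is $\ge0$; or $f$ satisfies the negation, which is exactly the hypothesis of part (ii), whence $\Re L_{\delta,N}\to1$ in the relevant iterated limit and the $\liminf$ is $\ge1\ge0$. Either way \eqref{E:Q12positive} holds. The step I expect to be the main obstacle is the collapse of the exponential factor in part (ii): it is essential that the concentration estimate delivers $G_{P_1,N}+\overline{G_{P_2,N}}$ — a quantity forced to be real and $\le0$ — rather than a difference that would be purely imaginary and of unbounded modulus, and the matching $\chi_1(p)=\chi_2(p)$ on $\CP_{P_1}\cap\CP_{P_2}$ coming from \cref{L:tunique}, together with the splitting of $\CP_{P_j}$ built into the definition of $\CP_1,\CP_2$ in \cref{L:Pi12}, is precisely what makes this go through.
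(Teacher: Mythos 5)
Your proposal is correct and takes essentially the same route as the paper's proof: the same use of \cref{P:PQ12} to reduce $L_{\delta,N}$ to $\tilde f_1(Q_1)\cdot\overline{\tilde f_2(Q_2)}$ times the exponential factor and the weighted average, the same vanishing/collapse dichotomy for parts (i)--(iii), and the same appeal to \cref{L:tunique} so that $G_{P_1,N}$ and $G_{P_2,N}$ cancel in part (ii). The only cosmetic difference is in part (i), where the paper invokes the F\o lner-type argument of \cite[Lemma~3.2]{FKM23} while you bound $\E_{Q_1\in\Phi_{1,K}}\tilde f_1(Q_1)$ directly by factoring it into geometric sums over the prime exponents — the same mechanism, stated quantitatively.
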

	\begin{proof} 
		We start by  using \cref{P:PQ12}. Since $\lim_{N\to\infty} \E_{m,n\in [N]}\, \tilde{w}_{\delta,0,P_1,P_2}(m,n)=1$,	we deduce that  if  
		\begin{multline}\label{E:MdKNf}
			M_{\delta, K, N}( f):= \exp(G_{P_1,N}(f,K))\cdot \overline{ \exp(G_{P_2,N}(f,K))}\cdot  \\ 
			\E_{m,n\in [N]}\, \tilde{w}_{\delta,0,P_1,P_2}(m,n)\cdot  (P_1(m,n))^{it}\cdot  (P_2(m,n))^{-it},
		\end{multline}
		where 	$G_{P_j,N}$, $j=1,2$,  are as in   \eqref{E:GNP},
		then for all  sufficiently large  $r$, depending only on $q$, for every $\delta\in (0,1/2)$ we have
		\begin{equation}\label{E:LQ120}
			\lim_{K\to\infty}	\limsup_{N\to\infty}\max_{Q_1\in \Phi_{1,K},  Q_2\in \Phi_{2,K}}|L_{\delta,0, N}(f,r,K,Q_1,Q_2)- \tilde{f}_1(Q_1) \cdot \overline{\tilde{f}_2(Q_2)}\cdot M_{\delta,  K,N}(f)|=0, 
		\end{equation}
		where $\tilde{f}_j(n):=f(n)\cdot \overline{\chi_j(n)}\cdot n^{-it}$, $j=1,2$.
		
		We establish~\eqref{I:QQ1}.
		Suppose that  for some $j\in \{1,2\}$ we have $f(p)\neq \chi_j(p)\cdot p^{it}$ for some $p\in \CP_j$.
		Say that this happens for $j=1$ (the argument is similar for $j=2$) and that $f(p_0)\neq \chi_j(p_0)\cdot p_0^{it}$ for some $p_0\in \CP_1$. 
		It follows from \eqref{E:LQ120}, that in order to establish \eqref{E:Q120} it is sufficient  to show that 
		$$
		\lim_{K\to\infty}\limsup_{N\to\infty}\big|\E_{Q_1\in \Phi_{1,K}, Q_2\in \Phi_{2,K}}\,  \tilde{f}_1(Q_1) \cdot \overline{\tilde{f}_2(Q_2)} \cdot M_{\delta, K,N}(f)\big|=0.
		$$
		Factoring out $M_{\delta, K,N}(f)$, which does not depend on $Q_1$ or $Q_2$, we see that  it suffices to show that 
		$$
		\lim_{K\to\infty} 	\E_{Q_1\in \Phi_{1,K}}\,  \tilde{f}_1(Q_1)=0.
		$$	
		Since  $\lim_{K\to\infty}|\Phi_{1,K} \cap (\Phi_{1,K}\cdot p_0)|/|\Phi_{1,K}|= 1$  and $\tilde{f_1}(p_0)\neq 1$, this follows exactly as in the proof of  \cite[Lemma~3.2]{FKM23}.

		We establish \eqref{I:QQ2}.
		Suppose that  $f(p)= \chi_j(p)\cdot p^{it}$ for every  $p\in \CP_j$ and $j=1,2$.  	 Note that by \eqref{E:PhijK}, for $j=1,2$, if $Q_j\in \Phi_{j,K}$ for some $K\in \N$,  then the prime factors of $Q_j$  belong to $\CP_j$. 
		It follows  that   
		\begin{equation}\label{E:Qj}
			\tilde{f}_j(Q_j)=1    \quad  \text{ for all  } \, Q_j\in \Phi_{j,K},\,  K\in \N,  \, j=1,2. 
		\end{equation}
		Note also that for all but finitely many $p\not\in \CP_1\cup \CP_2$, depending only on $P_1,P_2,r$,  we have that $p\in   \CP_{P_1}\cap \CP_{P_2}$ (where $\CP_P$ is as in \eqref{E:CPP'})  and $\omega_{P_1}(p)=\omega_{P_2}(p)=2$.
		It follows from \eqref{E:GNP}  that for all sufficiently large  $K$,  depending only on $P_1,P_2,r$,   					we have
		$$
		G_{P_j,N}(f,K)= \sum_{\substack{ K< p\leq N,\\ p\in \CP_{P_1}\cap \CP_{P_2}}}\, \frac{2}{p} \,(f(p)\cdot \overline{\chi_j(p)}\cdot p^{-it} -1), \quad j=1,2.
		$$
		Using this, and remembering  that by \cref{L:tunique} for all but finitely many  $p\in \CP_{P_1}\cap \CP_{P_2}$,  depending only on $P_1,P_2$, we have $\chi_1(p)=\chi_2(p)$, we deduce that for all sufficiently large $K$,  depending only on $P_1,P_2, r$,   we have 		$G_{P_1,N}(f,K)=G_{P_2,N}(f,K)$. Hence,
		\begin{multline*}
			| \exp\big( G_{P_1,N}(f,K)\big)\cdot   \overline{ \exp\big( G_{P_2,N}(f,K)\big)}-1|=
			| \exp\big( 2\, \Re(G_{P_1,N}(f,K))\big)-1|\leq
			\\ 2\, |\Re(G_{P_1,N}(f,K))|\leq 2\, \D_{P_1}(f, \chi_1\cdot n^{it}; K,N)^2\leq  2\, \D_{P_1}(f, \chi_1\cdot n^{it}; K,N),
		\end{multline*}					
		where we used that $\Re(G_{P_1,N}(f,K))\leq 0$,  $|e^x-1|\leq |x|$ for $x\leq 0$, and $\D_{P_1}(f, \chi_1\cdot n^{it}; K,N)\leq 1$ for $K$ sufficiently large. 
		We deduce from this and \eqref{E:MdKNf} that  
		$$
		|M_{\delta, K,  N}( f)-  \E_{m,n\in [N]}\, \tilde{w}_{\delta,0,P_1,P_2}(m,n)\cdot  (P_1(m,n))^{it}\cdot   (P_2(m,n))^{-it}|\ll   \D_{P_1}(f, \chi_1\cdot n^{it}; K,N).
		$$
		Moreover, from  \eqref{E:wtilde} and the definition of $w_{\delta, 0,P_1,P_2}$ in \eqref{E:weight2}, it follows  that \rem{crucial change here} 
		$$
		\left|\log\frac{P_1(m,n)}{P_2(m,n)} \right|\leq \delta \quad \text{whenever } \tilde{w}_{\delta,0,P_1,P_2}(m,n)\neq 0, 
		$$
		hence  using that $|e^{ix}-1|\leq |x|$ for all $x\in \R$	we 	get 
		$$
		|(P_1(m,n))^{it}\cdot (P_2(m,n))^{-it}-1|\leq |t| \delta \quad \text{whenever } \tilde{w}_{\delta,0,P_1,P_2}(m,n)\neq 0.
		$$
		Using this and since $\lim_{N\to\infty} \E_{m,n\in [N]}\, \tilde{w}_{\delta,0, P_1,P_2}(m,n)=1$, 	we get  that  for all sufficiently large $K$,  depending only on $P_1,P_2, r$,   	and for all $N\geq K$, we have 
		\begin{equation}\label{E:MdNf}
			|M_{\delta,   K, N}( f)- 1|\ll_t \delta +\D_{P_1}(f, \chi_1\cdot n^{it}; K,N). 
		\end{equation}
		Combining  \eqref{E:LQ120},  \eqref{E:Qj}, \eqref{E:MdNf}, and since  our assumption  $f\sim_{P_1} \chi_1\cdot n^{it}$ implies that $\lim_{K\to\infty}\D_{P_1}(f, \chi_1\cdot n^{it}; K,+\infty)=0$,  we get that \eqref{E:tildeANKb'} holds, completing the proof.
		
		We establish~\eqref{I:QQ3}. If we are in the case \eqref{I:QQ1}, then the limit is $0$. If not, then 
		case \eqref{I:QQ2} applies and we get that the limit is at least  $1$. This completes the proof. 
	\end{proof}
	
	\subsection{Proof of \cref{T:levelsetsparam}}\label{SS:levelset}  
	For notational simplicity we give the proof for $r=1$, the argument is similar for general $r\in \N$. 
	We let $F\colon \S^1\to [0,1]$ be a smooth function that is equal to 
	$1$ on an open  subarc $I'$ of $I$  that contains $1$ and zero outside $I$, and let  
	$\Phi=(\Phi_K)_{K\in \N}$ be a multiplicative  F\o lner sequence in $\N$ along which the limits 
	\begin{equation}\label{E:Crsdef}
		C(r,s):=\lim_{K\to\infty}\E_{k\in \Phi_K} F(f_1(kr))\cdot F(f_1(ks))
	\end{equation}
	exist for every $r,s\in \N$. 
	Since $F$ is supported on a subset of $I$, it suffices   to show that there exist $m,n\in\N$ such that the integers $P_1(m,n), P_2(m,n)$ are distinct, positive,  and 
	$$
	\lim_{K\to\infty}\E_{k\in \Phi_K} F(f_1(kP_1(m,n)))\cdot F(f_1(kP_2(m,n)))>0.
	$$ 
	Arguing  as in \cite[Section~10.2]{FH17} we get that there exists a finite (positive) Borel measure $\sigma$ on $\CM$ such that 
	\begin{equation}\label{E:Crs}
		C(r,s)=	\int_\CM f(r)\cdot \overline{f(s)}\, d\sigma(f)
	\end{equation}
	for every $r,s\in \N$. Moreover, since 
	$$
	\lim_{K\to\infty}\E_{k\in \Phi_K} F(f_1(kr))=\int F\, dm_{\S^1}=a>0,
	$$
	arguing as in \cite[Section~10.2]{FH17} we get   that   $\sigma(\{1\})\geq a^2>0.$ 
	
	We claim that the measure $\sigma$ is supported
	on the set $\CF:=\{f_1^j,j\in\Z\}$. Note first that since the function $F\colon \S^1\to [0,1]$ is smooth (in fact, $F\in C^2(\S^1)$ suffices), 	it can be written as 
	$$
	F(z)=\sum_{l=-\infty}^{+\infty} c_l\,  z^l
	$$	
	for some $c_l\in \C$, $l\in \Z$, with $\sum_{l\in\Z}|c_l|<\infty$.
	Hence, 
	$$
	F(f_1(n))=\sum_{l=-\infty}^{+\infty} c_l\,  (f_1(n))^l, \quad n\in\N. 
	$$
	After possibly rearranging and combining terms and redefining the sequence $c_l$, $l\in \Z$,  we can assume that for those $l\in \Z$ for which $c_l\neq 0$ the multiplicative functions $f_1^l$ are distinct.\footnote{The new $c_l$ in this representation are either $0$ or sum of disjoint ``groups'' of $c_l$'s.} Using that $\sum_{l\in\Z}|c_l|<\infty$ and $F=\bar{F}$,  we easily deduce that 	
	$$
	C(r,s)=\sum_{l,l'\in \Z} c_l\cdot c_{l'}\cdot f_1^l(r)\cdot \overline{f_1^{l'}(s)} \lim_{K\to\infty}\E_{k\in \Phi_K}f_1^l(k)\cdot \overline{f_1^{l'}(k)}
	$$
	for every $r,s\in \N$. 	We have assumed that if $l,l'\in \Z$ are distinct and  
	satisfy $c_l\cdot c_l'\neq 0$, then the  multiplicative function  $f_1^l\cdot \overline{f_1^{l'}}$ is non-trivial (i.e. not equal to $1$), hence we get by \cref{L:Fol0} that for those $l,l'\in \Z$ we have 
	$\E_{k\in \Phi_K}f_1^l(k)\cdot \overline{f_1^{l'}(k)}=0$. 
	It follows that 
	\begin{equation}\label{E:Crs''}
		C(r,s)=\sum_{l\in \Z} |c_l|^2\cdot f_1^l(r)\cdot \overline{f_1^{l}(s)}=\int_\CM f(r)\cdot \overline{f(s)}\, d\sigma'(f)
	\end{equation}
	for every $r,s\in \N$, 	where 
	\begin{equation}\label{E:Crs'}
		\sigma':=\sum_{l\in \Z} |c_l|^2\cdot \delta_{f_1^l}.
	\end{equation}
	Comparing the identities \eqref{E:Crs} and \eqref{E:Crs''},
	and using the uniqueness of the Fourier coefficients of finite Borel  measures on $\CM$, we deduce that $\sigma=\sigma'$. Thus,  the measure $\sigma$ is supported on $\CF$, which proves the claim.

	It follows now from the third remark after \cref{T:MainMulti'} that there exist $k,m,n\in\N$ such that $P_1(m,n), P_2(m,n)$ are distinct, positive, and satisfy
	$$
	C(P_1(m,n),P_2(m,n))>0,
	$$
	where we used the integral representation of $C(r,s)$ given in \eqref{E:Crs}. Recalling the defining relation for  $C(r,s)$ in \eqref{E:Crsdef} we get that $f_1(kP_1(m,n))\in I$ and  $f_1(kP_2(m,n))\in I$, which  completes the proof. 
	
	\section{Necessity in \cref{C:conj1}}\label{S:Conj1}		\subsection{Proof of \cref{P:partialconverse1}}
	The proof of 	\cref{P:partialconverse1} follows by combining the following lemmas. 
	\begin{lemma}\label{lemma_PRandquadraticresidues}
		Let $a,b,c\in\Z$ and assume that there exists a prime $p$ such that none of the numbers $ac,bc,(a+b)c$ is a quadratic residue modulo $p$. Then \eqref{E:abc} is not partition regular with respect to $x,y$.
	\end{lemma}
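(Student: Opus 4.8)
The plan is to produce an explicit finite coloring of $\N$ that admits no solution of $ax^2+by^2=cz^2$ with $x$ and $y$ in the same color class; since this is exactly what it means for the equation to fail to be partition regular with respect to $x,y$, that will finish the proof. First I would record two reductions: the hypothesis forces $p$ to be odd (modulo $2$ the only nonzero residue is $1$, which is a square, so no nonzero residue could fail to be a quadratic residue), and we may assume $a,b,c$ are nonzero and $p\nmid 2abc(a+b)$, deferring the finitely many degenerate configurations. The coloring I would use assigns to $n\in\N$ the residue class modulo $p$ of its $p$-free part $n/p^{v_p(n)}$; this is a coloring with $p-1$ colors.

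Next I would analyze a hypothetical monochromatic solution by $p$-adic valuations. Write $x=p^j u$, $y=p^k v$, $z=p^l w$ with $p\nmid uvw$; since $p\nmid abc$ we get $v_p(ax^2)=2j$, $v_p(by^2)=2k$, $v_p(cz^2)=2l$. I would then split into two cases. If $j\ne k$, say $j<k$ (the case $k<j$ is symmetric under swapping $a\leftrightarrow b$, and $ac,bc$ are both nonresidues), then $v_p(ax^2+by^2)=\min(2j,2k)=2j$, so comparison with $v_p(cz^2)=2l$ forces $l=j$; dividing the equation by $p^{2j}$ and reducing modulo $p$ (using $2(k-j)\ge 2$) gives $au^2\equiv cw^2\pmod p$ among units modulo $p$, hence $ac\equiv(cwu^{-1})^2\pmod p$ is a quadratic residue, a contradiction. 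If $j=k$, then equality of colors says precisely $u\equiv v\pmod p$, so $au^2+bv^2\equiv(a+b)u^2\not\equiv 0\pmod p$ because $p\nmid(a+b)u$; hence again $v_p(ax^2+by^2)=2j$, forcing $l=j$, and dividing by $p^{2j}$ and reducing modulo $p$ yields $(a+b)u^2\equiv cw^2\pmod p$, so $(a+b)c\equiv(cwu^{-1})^2\pmod p$ is a quadratic residue, again a contradiction. In both cases we reach a contradiction — in fact the argument rules out even $x=y$, so \emph{a fortiori} there is no monochromatic pair of distinct $x,y$ — and therefore the coloring has the required property.

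The step I expect to require the most care is not the main argument above, which is short once the coloring is in place, but the treatment of the degenerate cases in which $p$ divides one of $a,b,c$ or divides $a+b$ (or in which some coefficient vanishes). There the clean valuation comparison can break down: for instance, when $p\mid(a+b)$ the quantity $au^2+bv^2$ may have strictly larger $p$-valuation than $v_p(p^{2j})$, and its exact valuation depends on congruences between $u$ and $v$ finer than what the above coloring records. I would handle this either by excluding such $p$ from the outset — invoking the convention that $0$ is not a quadratic residue and the fact that, when this lemma is applied, one has the freedom to avoid the finitely many primes dividing $2abc(a+b)$ — or, if the lemma is wanted in full generality, by refining the coloring to record $n/p^{v_p(n)}$ modulo a suitable fixed power of $p$ and rerunning the same valuation bookkeeping; the non-degenerate case described above is the heart of the matter.
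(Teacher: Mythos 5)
Your proof is correct and is essentially the paper's argument: the paper uses the same Rado $p$-coloring $\chi(p^nr)=r\bmod p$ and, after removing common powers of $p$ from a putative monochromatic solution, reduces mod $p$ and contradicts the non-residuosity of $ac$, $bc$, or $(a+b)c$ according to the divisibility pattern of $x,y$ by $p$, which is exactly your valuation bookkeeping in different clothing. The only superfluous part is your deferral of the ``degenerate'' cases: since $0=0^2$ is a quadratic residue mod $p$, the hypothesis already forces $p\nmid abc(a+b)$ (this is precisely how the paper dispatches it in one line), so no refinement of the coloring is needed --- and note that your suggested convention that $0$ is \emph{not} a quadratic residue points the wrong way, as it is the convention under which the degenerate cases would genuinely survive.
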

	\begin{proof}
		We use the ``Rado $p$-coloring'', defined by $\chi(p^nr)=r\bmod p$ for any $n\geq0$ and $r$ co-prime to $p$.
		Suppose, for the sake of a contradiction, that $(x,y,z)$ is a solution to \eqref{E:abc} with $\chi(x)=\chi(y)$.
		From the assumptions, none of the coefficients $a,b,c$ is divisible by $p$, so if both $x$ and $y$ are multiples of $p$, then also $z$ is a multiple of $p$ and hence $(x/p,y/p,z/p)$ is another solution. Moreover $\chi(x/p)=\chi(x)$ so we can assume without loss of generality that not both of $x,y$ are multiples of $p$.
		
		If $p|x$, then \eqref{E:abc} reduces mod $p$ to $by^2\equiv cz^2$, which is equivalent to $bc\equiv(cz/y)^2$. This contradicts the assumption that $bc$ is not a quadratic residue mod $p$. The same argument shows that we cannot have $p|y$.
		
		If neither $x,y$ are divisible by $p$, then $x\equiv y\bmod p$ and hence \eqref{E:abc} reduces mod $p$ to $(a+b)x^2\equiv cz^2$, which is equivalent to $(a+b)c\equiv(cz/x)^2$, and contradicts the assumption that $(a+b)c$ is not a quadratic residue mod $p$.
	\end{proof}
	We combine this lemma with the following elementary fact.
	
	\begin{lemma}\label{lemma_quadraticreciprocity}
		Let $F_1,F_2$ be disjoint finite subsets of $\P\cup\{-1\}$.
		Then there exists a prime $p$ such that every element of $F_1$ is a quadratic residue mod $p$, and every element of $F_2$ is a quadratic nonresidue mod $p$.
	\end{lemma}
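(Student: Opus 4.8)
The plan is to reduce the statement to a single congruence condition on $p$ via quadratic reciprocity, and then invoke Dirichlet's theorem on primes in arithmetic progressions. Write $S:=F_1\cup F_2$ and prescribe target signs $\epsilon_\ell:=1$ for $\ell\in F_1$ and $\epsilon_\ell:=-1$ for $\ell\in F_2$; the goal becomes to produce an odd prime $p$, coprime to every odd element of $S$, with $\legendre{\ell}{p}=\epsilon_\ell$ for all $\ell\in S$. Let $q_1,\dots,q_k$ be the odd primes occurring in $S$ and set $N:=q_1\cdots q_k$ (with $N:=1$ if there are none). The first observation is that for an odd prime $p\nmid 2N$ the symbol $\legendre{\ell}{p}$ depends only on the class of $p$ modulo $8N$: this is the supplementary law $\legendre{-1}{p}=(-1)^{(p-1)/2}$ (needing $p\bmod 4$) when $\ell=-1$, the law $\legendre{2}{p}=(-1)^{(p^2-1)/8}$ (needing $p\bmod 8$) when $\ell=2$, and the reciprocity identity $\legendre{q_i}{p}=(-1)^{\frac{p-1}{2}\frac{q_i-1}{2}}\legendre{p}{q_i}$, whose right-hand side is determined by $p\bmod 4$ and $p\bmod q_i$, when $\ell=q_i$.

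First I would fix the residue of $p$ modulo $8$. A direct check shows that the classes $1,3,5,7\bmod 8$ realize, respectively, the four sign pairs $(\legendre{-1}{p},\legendre{2}{p})=(+,+),(-,-),(+,-),(-,+)$; hence there is a class $c\bmod 8$, coprime to $8$, for which $\legendre{-1}{p}=\epsilon_{-1}$ and $\legendre{2}{p}=\epsilon_2$ hold whenever those are required (and $c$ an arbitrary odd residue otherwise). Fixing $c$ also fixes the parity of $(p-1)/2$, hence the reciprocity sign $s_i:=(-1)^{\frac{c-1}{2}\frac{q_i-1}{2}}\in\{\pm1\}$ for each $i$. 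Next, for each $i$ I would choose a residue $c_i\bmod q_i$ with $c_i\not\equiv 0$ and $\legendre{c_i}{q_i}=s_i\epsilon_{q_i}$; this is possible because $(\Z/q_i\Z)^*$ contains both quadratic residues and non-residues. By the Chinese remainder theorem there is a class $a\bmod 8N$ with $(a,8N)=1$, $a\equiv c\pmod 8$, and $a\equiv c_i\pmod{q_i}$ for all $i$.

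Finally, Dirichlet's theorem provides a prime $p\equiv a\pmod{8N}$; since $(a,8N)=1$, this $p$ is odd and coprime to $N$, so $\legendre{\ell}{p}$ is defined and nonzero for every $\ell\in S$, and by construction it equals $\epsilon_\ell$. Therefore every element of $F_1$ is a quadratic residue modulo $p$ and every element of $F_2$ is a quadratic non-residue modulo $p$, as claimed. I do not expect any genuine obstacle beyond organizing the elementary case analysis at $2$ and $-1$ and tracking the reciprocity signs; the only external inputs are quadratic reciprocity together with its two supplementary laws and Dirichlet's theorem on primes in arithmetic progressions.
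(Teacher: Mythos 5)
Your proof is correct and follows essentially the same route as the paper: each condition $\legendre{\ell}{p}=\epsilon_\ell$ is converted via quadratic reciprocity and its supplementary laws into a congruence condition on $p$ modulo $8$ and modulo the odd primes of $F_1\cup F_2$, which are then combined by the Chinese remainder theorem and realized by a prime using Dirichlet's theorem. The only difference is organizational: you fix the class of $p$ mod $8$ once and track the reciprocity sign $s_i$ uniformly, whereas the paper splits into the cases $-1\in F_1$ and $-1\notin F_1$ and adjusts the target residues $i_q$ accordingly.
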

	
	\begin{proof}
		We separate the proof into two cases, according to whether $-1\in F_1$ or $-1\notin F_1$. 
		In the case $-1\in F_1$, we need $p\equiv1\bmod4$.
		For such $p$, and any odd prime $q$, quadratic reciprocity implies that $q$ is a quadratic residue mod $p$ if and only if $p$ is a quadratic residue mod $q$.
		Moreover, $2$ is a quadratic residue mod $p$ if and only if $p\equiv1\bmod8$.
		For each odd prime $q\in F_1$ let $i_q$ be a quadratic residue mod $q$, and for each odd prime $q\in F_2$ let $i_q$ be a quadratic nonresidue mod $q$.
		If $2\in F_1$ let $i=1$, and if $2\notin F_1$ let $i=5$.
		Using the Chinese remainder theorem and the Dirichlet theorem we can find a prime $p$ satisfying $p\equiv i\bmod8$ and $p\equiv i_q\bmod q$ for every odd prime $q\in F_1\cup F_2$; such $p$ will satisfy the desired conclusion.
		
		Next, assume that $-1\notin F_1$.
		In this case we will take $p\equiv3\bmod4$, so that in particular $-1$ is a quadratic nonresidue mod $p$. 
		For each odd prime $q\in F_1\cup F_2$ we let 
		$$i_q=\begin{cases}
			\text{a quadratic residue mod }q&\text{ if }q\equiv1\bmod 4\text{ and }q\in F_1\\
			\text{a quadratic nonresidue mod }q&\text{ if }q\equiv1\bmod 4\text{ and }q\in F_2\\
			\text{a quadratic nonresidue mod }q&\text{ if }q\equiv3\bmod 4\text{ and }q\in F_1\\
			\text{a quadratic residue mod }q&\text{ if }q\equiv3\bmod 4\text{ and }q\in F_2
		\end{cases}$$
		and furthermore assume that $i_q$ is chosen so that $(i_q,q)=1$.
		Also let $i=7$ if $2\in F_1$ and $i=3$ otherwise.
		Using the Chinese remainder theorem and the Dirichlet theorem we can find a prime $p$ satisfying $p\equiv i\bmod8$ and $p\equiv i_q\bmod q$ for every odd prime $q\in F_1\cup F_2$.
		In view of quadratic reciprocity, such $p$ will satisfy the desired conclusion.
	\end{proof}
	\begin{lemma}
		Let $A,B,C\in\Z$ and assume that none of $A,B,C,ABC$ is a perfect square. 
		Then there exists a prime $p$ such that none of $A,B,C$ is a quadratic residue modulo $p$.
	\end{lemma}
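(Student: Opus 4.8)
The plan is to translate everything into linear algebra over $\F_2$. Let $V:=\Q^\times/(\Q^\times)^2$, which is an $\F_2$-vector space with basis $\{[-1]\}\cup\{[q]:q\in\P\}$, where $[n]$ denotes the class of a nonzero integer $n$; the hypothesis that none of $A,B,C$ is a square (recall $0=0^2$ is a square, so $A,B,C\neq0$) says $[A],[B],[C]\neq 0$, and the hypothesis that $ABC$ is not a square says $[A]+[B]+[C]=[ABC]\neq 0$. The relevant feature of quadratic residues is the standard reformulation that for an odd prime $p\nmid n$ one has $\legendre{n}{p}=(-1)^{\ell_p([n])}$, where $\ell_p$ is the $\F_2$-linear functional on the span of $\{[-1]\}\cup\{[q]:q\in\P,\ q\neq p\}$ determined by $\ell_p([q])=1$ iff $q$ is a quadratic nonresidue mod $p$, and $\ell_p([-1])=1$ iff $p\equiv3\bmod4$. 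So it is enough to find a prime $p$, not dividing $ABC$, whose induced functional takes the value $1$ on each of $[A],[B],[C]$.

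The heart of the proof is the construction of a linear functional $\phi\colon V\to\F_2$ with $\phi([A])=\phi([B])=\phi([C])=1$. First I would define $\phi$ on $W:=\Span_{\F_2}\{[A],[B],[C]\}$ by sending each of $[A],[B],[C]$ to $1$, and then extend it arbitrarily to $V$ (possible since subspaces of an $\F_2$-vector space admit complements). Well-definedness on $W$ amounts to checking that every $\F_2$-relation $\epsilon_1[A]+\epsilon_2[B]+\epsilon_3[C]=0$ satisfies $\epsilon_1+\epsilon_2+\epsilon_3=0$, i.e.\ that no such relation has an odd number of nonzero coefficients; but the four odd-support possibilities are exactly $[A]=0$, $[B]=0$, $[C]=0$, and $[A]+[B]+[C]=0$, each of which is excluded by the hypothesis. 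This is precisely the step that uses the full strength of the assumption, and it is where I expect any difficulty to lie; the rest is bookkeeping.

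Finally I would realize $\phi$ by a prime. Let $S\subseteq\P\cup\{-1\}$ be the finite set consisting of $-1$ together with all prime divisors of $ABC$, so that $[A],[B],[C]$ are supported on $S$, and set $F_2:=\{q\in S:\phi([q])=1\}$ and $F_1:=S\setminus F_2$. Applying \cref{lemma_quadraticreciprocity} to the disjoint sets $F_1,F_2$ produces a prime $p$ such that every element of $F_1$ is a quadratic residue mod $p$ and every element of $F_2$ is a quadratic nonresidue mod $p$; the prime so obtained is odd and coprime to every odd prime of $S$, hence $p\nmid ABC$ and all the Legendre symbols below are defined. For $q\in S$ this says $\legendre{q}{p}=(-1)^{\phi([q])}$ (with the convention $\legendre{-1}{p}=1$ iff $p\equiv1\bmod4$), so multiplicativity of the Legendre symbol together with linearity of $\phi$ gives $\legendre{A}{p}=(-1)^{\phi([A])}=-1$, and likewise $\legendre{B}{p}=\legendre{C}{p}=-1$. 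Thus none of $A,B,C$ is a quadratic residue modulo $p$, which is what we wanted.
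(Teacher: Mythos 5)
Your proof is correct, and it takes a genuinely different route from the paper's. The paper first reduces to squarefree $A,B,C$, isolates the exceptional case $A=B=C=-1$, and then runs a case analysis on a prime $p_0$ dividing either all of $A,B,C$ or exactly one of them (with a further subdivision), in each case feeding hand-picked sets $F_1,F_2$ into \cref{lemma_quadraticreciprocity}. You instead work in $V=\Q^\times/(\Q^\times)^2$ as an $\F_2$-vector space and observe that the hypothesis (none of $A,B,C,ABC$ a square, with $0$ counted as a square) says precisely that the space of $\F_2$-relations among $[A],[B],[C]$ contains no odd-weight vector, so the assignment $[A],[B],[C]\mapsto 1$ is well defined on $\Span_{\F_2}\{[A],[B],[C]\}$ and extends to a functional $\phi$; you then realize $\phi$ by a prime using the very same \cref{lemma_quadraticreciprocity}, applied to the partition of $S=\{-1\}\cup\{q\in\P: q\mid ABC\}$ according to the value of $\phi$, and the construction in that lemma makes $p$ odd and coprime to $ABC$, so the Legendre-symbol computation $\legendre{A}{p}=(-1)^{\phi([A])}=-1$ (and likewise for $B,C$) is legitimate. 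What your approach buys is the elimination of all case analysis and a transparent explanation of why the hypothesis is exactly the right one: it is the condition that the all-ones assignment be consistent, and the same argument immediately gives the natural generalization to any finite family of integers no odd-cardinality subproduct of which is a perfect square. What the paper's approach buys is that it stays entirely elementary (squarefree parts and divisibility), at the cost of several cases and of somewhat obscuring the role of the hypothesis. One cosmetic remark: the extension of $\phi$ from $W$ to all of $V$ (via complements, i.e.\ choice) is more than you need — it suffices to extend to the finite-dimensional span of $W\cup\{[q]:q\in S\}$, since only the values $\phi([q])$, $q\in S$, enter the definition of $F_1,F_2$.
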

	\begin{proof}
		Dividing $A,B,C$ by any square factors does not change either the hypothesis or the conclusion of the lemma, so we may assume that $A,B,C$ are all squarefree numbers.
		If $A=B=C=-1$, then take $p=3$.
		Otherwise, the assumption that $ABC$ is not a perfect square  and $A,B, C$ are square-free implies that there exists a prime $p_0$ that either divides all of $A,B,C$ or divides exactly one of them.
		
		In the first case, apply \cref{lemma_quadraticreciprocity} with $F_2=\{p_0\}$ and $F_1$ the set of all primes that divide $ABC$ other than $p_0$, together with $-1$.
		
		Suppose now that we are in the second case, where $p_0$ divides (say) $A$ but not $B$ nor $C$.
		We further separate into two cases: if there exists a prime $p_1$ that divides both $B$ and $C$ (but not $A$), apply \cref{lemma_quadraticreciprocity} with $F_2=\{p_0,p_1\}$ and $F_1$ the set of all primes that divide $ABC$ other than $p_0,p_1$, together with $-1$.
		
		In the final case there exist primes $p_1$ dividing $B$ but not $C$ and $p_2$ dividing $C$ but not $B$. Both $p_1$ and $p_2$ are allowed to divide $A$.
		In this case we apply \cref{lemma_quadraticreciprocity} with the following sets. If exactly one of $p_1,p_2$ divides $A$, let $F_2=\{p_1,p_2\}$, otherwise let $F_2=\{p_0,p_1,p_2\}$. In either case, let $F_1$ be the set of all primes that divide $ABC$ other than those in $F_2$, together with $-1$.
	\end{proof}			
	\cref{P:partialconverse1} follows by 
	putting together the   lemmas above (with $A=ac$, $B=bc$ and $C=(a+b)c$).
	
	\subsection{Other necessary conditions} Finally, we prove Propositions~\ref{P:partialconverse2} and \ref{P:partialconverse3}.
	\begin{proof}[Proof of \cref{P:partialconverse2}]
		The assumptions imply that $c$ is even. 
		Since the equation $ax^2+by^2=cz^2$ is partition regular with respect to $x,y$ if and only if $ax^2+by^2=c(2z)^2$ is (as the color of $z$ is irrelevant), we may assume that $c\equiv2\bmod4$.
		
		Let $\chi:\N\to\{-1,1\}$ be the completely multiplicative function defined by $\chi(2)=-1$ and $\chi(p)=1$ for any other prime $p$.
		Suppose, for the sake of a contradiction, that $x,y,z$ satisfy \eqref{E:abc} and $\chi(x)=\chi(y)$.
		Since $a,c$ are even and $b$ is odd, it follows that $y$ is even.
		If $z$ is even, then $x$ must be even as well; in this case $(x/2,y/2,z/2)$ is another solution with $\chi(x)=\chi(y)$, so we may reduce to the case when $z$ is odd.
		Similarly, we can assume that $x$ is odd.
		Together with $\chi(x)=\chi(y)$ this implies that $y$ is a multiple of $4$.
		
		Analysing the equation \eqref{E:abc} modulo $16$ yields $ax^2\equiv cz^2$.
		Since odd squares are congruent to $1\bmod8$, after dividing by $2$ we have $(a/2)\equiv (c/2)\bmod8$.
		This implies that the odd perfect square $S:=(a/2)b(a+b)(c/2)$ satisfies $1\equiv S\equiv ab+b^2\equiv ab+1\bmod8$, and hence that $ab$ is a multiple of $8$.
		This contradicts our assumptions on $a,b$ and we conclude that in this case \eqref{E:abc} is not partition regular with respect to $x,y$.
	\end{proof}
	
	\begin{proof}[Proof of \cref{P:partialconverse3}]
		
		Let $d\in\N$ be such that $2^d||a+b$. 
		From the assumptions it follows that $d$ is odd and that $s:=ab(a+b)c/2^{d+1}$ is an odd perfect square.
		Let $\ell=d+3$ and consider the coloring $\chi:\N\to\{1,3,\dots,2^\ell-1\}$ described recursively by $\chi(n)=n\bmod 2^\ell$ if $n$ is odd, and $\chi(n)=\chi(n/2)$ if $n$ is even.
		
		Suppose $x,y,z\in\Z$ satisfy \eqref{E:abc} and $\chi(x)=\chi(y)$. 
		If both $x,y$ are even, then $ax^2+by^2$ is a multiple of $4$, forcing $z$ to be even. 
		In that case, dividing all of $x,y,z$ by $2$ we still obtain a solution with $\chi(x)=\chi(y)$, so we may assume without loss of generality that $x,y$ are not both even.
		Analysing mod $2$ shows that both $x$ and $y$ must be odd, and hence $x\equiv y\bmod 2^\ell$.
		
		It follows that $ax^2+by^2\equiv(a+b)x^2\bmod2^\ell$ and hence $2^d$ is the largest power of $2$ that divides $ax^2+by^2$.
		Using \eqref{E:abc} and the fact that $2||c$ it follows that $2^{d-1}||z^2$, and hence that $z':=z^2/2^{d-1}$ is an odd perfect square.
		Reducing the equation modulo $2^\ell$ we have
		$$2^d\frac{a+b}{2^d}x^2\equiv 2^d\frac c2\cdot z'\bmod 2^\ell\qquad\iff\qquad \frac{a+b}{2^d}x^2\equiv\frac c2\cdot z'\bmod 8.$$
	Since  $s=ab(a+b)c/2^{d+1}$, $x^2$,  $z'$  are odd perfect squares, and hence equal to $1\bmod8$, multiplying both sides above by the odd number $abc/2$ we get that $1\equiv abc^2/4\bmod8$. Since $c^2/4$ is another odd square, this yields the desired conclusion $ab\equiv1\bmod8$.
	\end{proof}
	
	

\end{document}